\title{
    Quantum Differential Equation for Slices\\
    of the Affine Grassmannian
}
\author{Ivan Danilenko}
\date{}
\begin{document}
\maketitle

%Abstract Page

\begin{abstract}
    The affine Grassmannian associated to a reductive group $\G$ is an affine analogue of the usual flag varieties. It is a rich source of Poisson varieties and their symplectic resolutions. These spaces are examples of conical symplectic resolutions dual to the Nakajima quiver varieties. In this work, we study their quantum connection. We use the stable envelopes of D. Maulik and A. Okounkov\cites{MOk} to write an explicit formula for this connection. The classical part of the multiplication comes from \cites{Da}. The computation of the purely quantum part is done based on the deformation approach of A. Braverman, D. Maulik and A. Okounkov\cites{BMO}. For the case of simply-laced $\G$, we identify the quantum connection with the trigonometric Knizhnik-Zamolodchikov equation for the Langlands dual group $\GLangDual$.
\end{abstract}

\section{Introduction}
\label{SecIntroduction}

\subsection{Overview}

The central object in this work is the moduli space called the affine Grassmannian $\Gr$. It is associated to a complex connected reductive group $\G$, and one can think of it as an analogue of flag varieties for a corresponding Kac-Moody group. It is known to have deep connections with representation theory and Langlands duality \cites{G,MVi1,MVi2}. More precisely, the geometric objects we study are transversal slices in affine Grassmannians. To give an idea of what these slices are, recall that the affine Grassmannian $\Gr$ has a cell structure similar to the Schubert cells in the ordinary flag variety. A transversal slice $\Gr^\lambda_\mu$ describes how one orbit is attached to another. The slices $\Gr^\lambda_\mu$ first appeared in mathematical physics as Coulomb branches in supersymmetic theories \cite{SW}. They are naturally algebraic Poisson varieties and, in some cases, admit a smooth symplectic resolution $\Gr^{\underline{\lambda}}_\mu \to \Gr^\lambda_\mu$, which is a notion of independent mathematical interest \cites{K1, K2, BK}. The goal of this work is to study the quantum cohomologies of these resolutions.

One of our motivations is 3d mirror symmetry. It is known to exchange the Coulomb ($X$) and the Higgs branch ($X^\vee$) of vacuua in a 3d supersymmetric field theory \cites{IS,SW}. In \cite{MOk} D. Maulik and A. Okounkov studied the quantum cohomology for Nakajima quiver varieties \cites{N2,N3,N4}, symplectic resolutions which naturally arise as the Higgs branch of vacua $X^\vee$. Mirror symmetry is a source of conjectural relations between the enumerative geometry of $X$ and $X^\vee$, and understanding the Coulomb side makes it possible to verify this relation directly. 

In this project the main algebraic structure which captures the enumerative geometry of $X$ is the (equivariant) quantum cohomology $\QCoHlgy{\T}{X}$. It is a deformation of usual (equivariant) cohomology by curve counts called Gromov-Witten invariants. The multiplication in $\QCoHlgy{\T}{X}$ for symplectic resolutions $X\to X_0$ gives rise to an interesting flat connection with base $\CoHlgyk{X}{2}$ and fiber $\EqCoHlgy{\T}{X}$, called the quantum connection. It appears as a differential equation on important generating functions in enumerative geometry. The quantum connection was identified with well-known representation-theoretic connections for some symplectic resolutions. For the case of Nakajima quiver varieties this connection was computed in \cite{MOk}, for the Hilbert scheme or $A_n$ resolution in \cite{MOb}, for the cotangent bundle of a flag variety in \cite{Su}. Here we identify the quantum connection in the case of resolutions of slices of the affine Grassmannian $\Gr^{\underline{\lambda}}_\mu$, with the trigonometric Knizhnik-Zamolodchikov connection coming from the representation theory of a reductive group $\G^\vee$. Moreover, the group $\G$ in the definition of $\Gr$ and $\G^\vee$ in the Knizhnik-Zamolodchikov equations turn out to be Langlands-dual, which is natural because of Langlands nature of the geometry of $\Gr$.

The identification of the quantum connection with the Knizhnik-Zamolodchikov equations recently plays an important role in the project of M. Aganagic \cites{A1,A2}. One of the goals of this project is to provide a new approach to categorified knot invariants. The appropriate category on the B-side of mirror symmentry is the derived category of coherent sheaves on $\Gr^{\underline{\lambda}}_\mu$, and the K-theoretic vertex functions satisfy the quantum differential equation. If we know that it coincides with the Knizhnik-Zamolodchikov equation, as we show in this paper, then we are sure that the monodromies act on K-theory of $\Gr^{\underline{\lambda}}_\mu$ exactly as R-matrices in quantum link invariants \cites{RT,W}. Hence, the categorified braid group action on the derived category of $\Gr^{\underline{\lambda}}_\mu$ specializes under decategorification to the well-known link invariants.

\subsection{Overview of the main result}

Quantum (equivariant) cohomology is a deformation of ordinary equivariant cohomology by three-point Gromov-Witten invariants. We define it by the following formula 
\begin{equation*}
	\left\langle 
		\gamma_1, 
		\gamma_2 
		*
		\gamma_3 
	\right\rangle 
	= 
	\left\langle 
		\gamma_1, 
		\gamma_2 
		\cup
		\gamma_3 
	\right\rangle  
	+
	\sum_
	{
		\dd\in \HlgyEffk{ X, \mathbb{Z} }{2}
	}
	\left\langle 
		\gamma_1, 
		\gamma_2,
		\gamma_3 
	\right\rangle_{0,3,\dd} q^\dd
\end{equation*}
for any $\gamma_1, \gamma_2, \gamma_3  \in \EqCoHlgy{\T}{X}$. Here $\left\langle -, -, - \right\rangle^X_{0,3,d}$ are genus zero three-point degree $d$ equivariant Gromov-Witten invariants and $q$ is a formal parameter. We refer to the extra terms in this formula as purely quantum part.

In this work the spaces of interest are resolutions of slices of the affine Grassmannian $X = \Gr^{\ucolambda}_{\comu}$ which are parametrized by a reductive group $\G$, a sequence of minuscule coweights $\ucolambda = \left( \colambda_1, \dots, \colambda_l \right)$ and a dominant coweight $\comu$.

The quantum connection requires the quantum multiplication of special kind, when $\gamma_3$ is a divisor, i.e. $\gamma_3=D\in \EqCoHlgyk{\T}{X}{2}$. To simplify the problem, we can apply the divisor equation \cite{HKK+}:

\begin{equation*}
	\left\langle 
		\gamma_1, 
		\gamma_2, 
		D 
	\right\rangle^X_{0,3,\dd} 
	= 
	\left\langle 
		D, \dd 
	\right\rangle
	\left\langle 
		\gamma_1, 
		\gamma_2
	\right\rangle^X_{0,2,\dd}.
\end{equation*}

We compute the purely quantum part in this paper. The main idea follows \cite{BMO}, it's the reduction to the wall cases. Let us outline what we use to do it.

\subsubsection{Stable basis and purely quantum multiplication}

The first step is to construct a basis in the equivariant cohomology $\EqCoHlgy{\T}{X}$ which is easy to work with. One of the choices naturally comes after localization, i.e. passing to a related algebra
\begin{equation*}
	\EqCoHlgy{\T}{X}_{loc} 
	= 
	\EqCoHlgy{\T}{X} 
	\otimes_{\EqCoHlgy{\T}{\pt}} 
	\Frac \EqCoHlgy{\T}{\pt},
\end{equation*}
where $\Frac \EqCoHlgy{\T}{\pt}$ is the fraction field.

In all the spaces $X$ we consider, the fixed locus $X^\T$ is a finite set of points. This allows us to define a basis by pushing forward the units $1_p\in\EqCoHlgy{\T}{p}$, $p\in X^\T$ under inclusions $i_p\colon p\hookrightarrow X$. The localization Theorem of Atiyah-Bott\cite{AB} implies that this is a basis, called the fixed point basis.

The fixed point basis was used by M. Viscardi in his PhD thesis \cite{V} to compute the purely quantum multiplication in the first nontrivial case, i.e. when $\lambda$ is a sum of two minuscule coweights, and only in the leading order. This computation shows the limitation of this approach. Viscardi computed only the leading term in the purely quantum part of the multiplication and finding the correction terms looks extremely hard if even possible.

The way how we overcome this obstacle and protect ourselves from corrections is to use non-localized classes and rely heavily on dimensional arguments to have an automatic vanishing of corrections. Using pushforward classes $i_{p,*}1$ is not enough because of their high degree, $2 \dim X$. Such a degree still allows our formulas to have barely computable corrections. This is why we use the stable basis $\StabCEpt{\C}{\epsilon}{p}$ introduced in \cite{MOk}, which has degree $\dim X$. It is also given by the images of $1_p\in\EqCoHlgy{\T}{p}$, but under a different map, going in the "wrong" way
\begin{equation*}
	\StabCE{\C}{\epsilon} 
	\colon 
	\EqCoHlgy{\T}{X^\A} 
	\to 
	\EqCoHlgy{\T}{X}.
\end{equation*}
The main choice one has to make is the choice of attracting directions in a torus, which is denoted here by $\C$ as a Weyl chamber. Informally, the classes $\StabCEpt{\C}{\epsilon}{p}$ are "corrected" versions of the (Poincar\'{e} dual) fundamental classes of the attracting varieties to $p$. The notion of attracting variety clearly depends on a choice of $\C$.

We use the classes $\StabCEpt{\C}{\epsilon}{p}$ for our computations. Strictly speaking, these form a basis only after localization, but we still refer to these classes as the stable basis.

These classes behave well for symplectic resolutions. If we have a ($\T$-equivariant) symplectic resolution $\left( X, \omega \right)$
\begin{equation*}
	X
	\to
	X_0
\end{equation*}
of an affine Poisson variety $X_0$, then it is known\cite{KV} to have a universal family of deformations of a pair $\left( X, \omega \right)$
\begin{equation} \label{EqUniversalFamily}
	\begin{tikzcd}
		X \arrow[r,hook] \arrow[d]
		&
		\widetilde{X}^u \arrow[d]
		\\
		0 \arrow[r,hook]
		&
		\CoHlgyk{X}{2}
	\end{tikzcd}
\end{equation}
where the base can be identified with the second cohomology of $X$. Then the stable envelopes naturally extend to this family. In a certain sense they are invariant under such deformations.

If $X$ is a conical symplectic resolution, then we have two simplifications: 

\begin{itemize}
\item
	The two-point Gromov-Witten invariants are divisible by $\hbar$:
	\begin{equation*}
		\left\langle 
			\gamma_1, 
			\gamma_2
		\right\rangle^X_{0,2,\dd}
		=
		0
		\mod \hbar.
	\end{equation*}
\item
	The off-diagonal restrictions of the stable envelopes are divisible by $\hbar$:
	\begin{equation*}
		\StabCEptpt{\C}{\epsilon}{p}{q}
		=
		0
		\mod \hbar.
	\end{equation*}
\end{itemize}

If we take $\gamma_1 = \StabCEpt{-\C}{\DualPol{\epsilon}}{q}$ and $\gamma_2 = \StabCEpt{\C}{\epsilon}{p}$ then by dimension argument
\begin{equation*}
	\left\langle 
		\StabCEpt{-\C}{\DualPol{\epsilon}}{q}, 
		\StabCEpt{\C}{\epsilon}{p}
	\right\rangle^X_{0,2,\dd}
	\in \hbar \mathbb{Q}
\end{equation*}

Then one can reduce the computation of these invariants to the localization with respect to subtorus $\A = \ker \hbar$. As said above, almost all restrictions of stable envelopes vanish and the computation becomes similar to the computation of the leading term in the fixed point basis.

In \refSec{SecQuantum} we do this computation following the method of A. Braverman, D. Maulik and A. Okounkov\cite{BMO} of reducing the enumerative problem to contributions of walls in K\"{a}hler parameters, that is, in the base of the family \refEq{EqUniversalFamily}.

The following explicit formula
\begin{align*}
	\ChernE{\T}{i} \qdot \StabCEpt{\C}{\epsilon}{p}
	=
	&
	-\hbar
	\sum\limits_{i<j}
	\StabCE{\C}{\epsilon}
	\left[
		\dfrac{q^{e_i - e_j}}{1-q^{e_i - e_j}}
		\OmegaOperatorTilde{ij}{\mathrm{st},\epsilon}
		\left(
			p
		\right)
		+
		\dfrac{q^{2(e_i - e_j)}}{1-q^{2(e_i - e_j)}}
		\OmegaOperatorTilde{ij}{\mathrm{lg},\epsilon}
		\left(
			p
		\right)
	\right]
	\\
	&+\hbar
	\sum\limits_{j<i}
	\StabCE{\C}{\epsilon}
	\left[
		\dfrac{q^{e_j - e_i}}{1-q^{e_j - e_i}}
		\OmegaOperatorTilde{ji}{\mathrm{st},\epsilon}
		\left(
			p
		\right)
		+
		\dfrac{q^{2(e_j - e_i)}}{1-q^{2(e_j - e_i)}}
		\OmegaOperatorTilde{ji}{\mathrm{lg},\epsilon}
		\left(
			p
		\right)
	\right]
\end{align*}
is the main result of \refSec{SecQuantum}. We refer reader to this section for the notation.

\subsubsection{Identification}

Combining the results on purely quantum multiplication with the classical multiplication in \cite{Da}, we get for a simply-laced $\G$
\begin{align}
	\notag
	\ChernE{\T}{i} * \StabCEpt{\C}{\epsilon}{p}
	&=
	\StabCE{\C}{\epsilon}
	\left[
		\Hterm{i}
		\left(
			p
		\right)
		+
		\hbar
		\sum\limits_{j \neq i}
		\dfrac
		{
			q^{e_i}
			\OmegaOperator{ij}{\C,\epsilon}
			\left(
				p
			\right) 
			+
			q^{e_j}
			\OmegaOperator{ij}{-\C,\epsilon}
			\left(
				p
			\right)
		}
		{
			q^{e_i} - q^{e_j}
		}
	\right]
	\\
	&-
	\hbar
	\StabCE{\C}{\epsilon}
	\left[
		\sum\limits_{j < i}
		\dfrac
		{
			q^{e_j}
			\Kterm{ij}
			\left(
				p
			\right)
		}
		{
			q^{e_i} - q^{e_j}
		}
		+
		\sum\limits_{j > i}
		\dfrac
		{
			q^{e_i}
			\Kterm{ij}
			\left(
				p
			\right)
		}
		{
			q^{e_i} - q^{e_j}
		}
	\right],
	\label{EqQuantMultFormula}
\end{align}
This formula is a power series via the expansion in the region chosen with respect to the effective cone $\HlgyEffk{X}{2} \subset \Hlgyk{X}{2}$
\begin{equation*}
	q^{e_1}
	\ll 
	q^{e_2} 
	\ll 
	\dots 
	\ll 
	q^{e_l}.
\end{equation*}

The quantum connection is a connection on a vector bundle over an open part of $\EqCoHlgyk{\T}{X}{2}$ with a fiber $\EqCoHlgy{\T}{X}$. The of sections of these bundle in a formal neighbourhood can be identified with $\QCoHlgy{\T}{X}$. Then the quantum connection is given by the following formula
\begin{equation*}
	\nabla^Q_D
	=
	\partial_D - D *
\end{equation*}
for the tangent vector in the base $D \in \EqCoHlgyk{\T}{X}{2}$.

Taking into account the formula \refEq{EqQuantMultFormula}, we see that it looks similar to the trigonometric Knizhnik-Zamolodchikov connection (see \cite{EFK}, for example). Explicitly one can write it as a family of commuting differential operators
\begin{equation*}
	\nabla^{KZ}_i
	=
	z_i\dfrac{\partial \phantom{z_i}}{\partial z_i}
	-
	H^i
	- 
	\hbar
	\sum_{j\neq i} 
	\dfrac
	{
		z_i \Omega_{\C}^{ij}+z_j \Omega_{-\C}^{ij}
	}
	{
		z_i - z_j
	}
\end{equation*}
on the space
\begin{equation*}
	\mathbb{C}[\hbar](z_1,\dots,z_l)
	\otimes
	V_1 
	\otimes
	\dots 
	\otimes 
	V_l
\end{equation*}
or, more geometrically, a flat connection on sections of a certain vector bundle over a base with coordinates $z_i$.

The trigonometric Knizhnik-Zamolodchikov connection appeared in conformal field theory \cite{KZ} and representation theory of affine Lie groups \cite{EFK}.

The terms with $\Kterm{ij}$ in \refEq{EqQuantMultFormula} can be removed by applying a gauge transform. Then the main thing remaining in the identification of the operators $\OmegaOperator{ij}{\C,\epsilon}$ and $\Hterm{i}$ with $\Omega^{ij}_{\C}$ and $H^i$. To do it we first have to identify variables, spaces and parameters on both sides.

To make the identification with the quantum connection of $\Gr^{\ucolambda}_{\comu}$, we take the representations $V_i$ to be $V_{\colambda_i}$, the irreducible finite dimensional representations with highest weight $\colambda_i$. Moreover, we restrict the tensor product to the subspace of weight $\comu$
\begin{equation*}
	\mathbb{C}[\hbar](z_1,\dots,z_l)
	\otimes
	V_{\colambda_1}
	\otimes
	\dots 
	\otimes 
	V_{\colambda_l}.
\end{equation*}

Then we identify similar objects on two sides:

\begin{center}
	\begin{tabular}{ | c | c | }
	\hline
		Trigonometric KZ
		&
		Quantum
	\\
	\hline
		$z_i$ 
		& 
		$q^{e_i}$
	\\
	\hline
		Expansion region $|z_i|\ll|z_j|$ 
		& 
		Effective cone $e_i-e_j>0$
	\\
	\hline
		Choice of $\C$ in $\Omega_{\C}$ 
		& 
		Choice of $\C$ in $\Stab_{\C}$
	\\
	\hline
		Basis $v_{\mu_1}\otimes\dots\otimes v_{\mu_l}$ 
		&
		Stable envelopes $\StabCpt{\C}{p}$\\
	\hline
	\end{tabular}
\end{center}

These identifications are the main content of \refSec{SecRepTheory}. Finally, the \refThm{ThmKZequalsQ} we prove that the trigonometric Knizhnik-Zamolodchikov connection matches with the quantum connection of $\Gr^{\ucolambda}_{\comu}$ for a simply-laced $\G$.

\subsection{Structure}

The paper is organized as follows. In \refSec{SecSlices} we recall the fundamental results about the slices in the affine Grassmannian. In \refSec{SecQuantum} we do the main computation, the quantum multiplication by a divisor. In \refSec{SecRepTheory} we show how the result from \refSec{SecQuantum} matches with the trigonometric Knizhnik-Zamolodchikov equation if one introduces appropriate identification with objects from representation theory. Finally, in Appendix \ref{AppendixA} and Appendix \ref{AppendixB} we summarize the results about $\T$-invariant curves and families of deformations that we use in the main text.

\subsection*{Acknowledgments}

The author is thankful to Mina Aganagic, Joel Kamnitzer, Henry Liu, Davesh Maulik, Andrei Negut, Andrei Okounkov, Andrey Smirnov, Changjian Su, and Shuai Wang for discussions related to this paper.

\section{Slices of the affine Grassmannian}
\label{SecSlices}

In this chapter we recall some facts about slices of the affine Grassmannians.

\subsection{Representation-theoretic notation}

Let us present notation we use for common representation-theoretic objects. One major difference from standard notation is that we use non-checked notation for coobjects (coweights, coroots, etc) and checked for ordinary ones (weighs, roots, etc). This is common in the literature on affine Grassmannian since it simplifies notation. And unexpected side effect is that weights in equivariant cohomology will have checks. We hope this won't cause confusion.

\begin{itemize}
\item
	$\G$ is a connected simple complex group unless stated otherwise,
\item
	$\A\subset \G$ is a maximal torus,
\item
	$\B\supset \A$ is a Borel subgroup,
\item
	$\mathfrak{g}$ is the Lie algebra of $\G$
%\item
%	$\CharLattice{\A} = \Hom(\A,\Gm)$ is the character lattice of $\A$,
\item
	$\CocharLattice{\A} = \Hom(\Gm, \A)$ is the cocharacter lattice,
\item
	$\DominantCocharLattice{\A} \subset \CocharLattice{\A}$ the submonoid of dominant cocharacters.
%\item
%	$\WeightLattice \supset \CharLattice{\A}$ is the weight lattice of $\A$, $\WeightLattice = \CharLattice{\A}$ if $\G$ is simply-connected.
\item
	$\CoweightLattice \supset \CocharLattice{\A}$ is the coweight lattice, $\CoweightLattice = \CocharLattice{\A}$ if $\G$ is adjoint type.
\item
	$W = N(\A)/\A$ is the Weyl group of $\G$,
\item
	$\mathfrak{g} = \mathfrak{h} \oplus \bigoplus\limits_{\wtalpha} \grootsub{\wtalpha}$ is the root decomposition of $\mathfrak{g}$.
\item
	$\wtrho$ is the halfsum of positive roots.
\item
	$\CorootScalar{\bullet}{\bullet}$ Weyl-invariant scalar product on coweight space $\CoweightLattice \otimes_{Z} \mathbb{R}$, normalized in such a way that for the shortest coroot the length squared is $2$ (equivalently, the length squared of the longest root is $2$).
\end{itemize}

We identify all weights with Lie algebra weights (in particular we use additive notation for the weight of a tensor product of weight subspaces).

\subsection{Affine Grassmannian}
Let $\OO = \mathbb{C}[[t]]$ and $\K = \mathbb{C}((t))$. We will refer to $\Disk = \Spec \OO$ and $\PDisk = \Spec \K$ as the formal disk and the formal punctured disk respectively.

Let $G$ be a connected reductive algebraic group over $\mathbb{C}$. The \textbf{affine Grassmannian} $\Gr_\G$ is the moduli space of

\begin{equation} \label{EqGrAsModuli}
	\left\lbrace
		\left( 
			\mathcal{P}, 
			\varphi 
		\right)
		\left\vert 
			\begin{matrix}
				\mathcal{P}
				\text{ is a }
				\G
				\text{-principle bundle over }
				\Disk,
				\\
				\varphi
				\colon 
				\mathcal{P}_0\vert_{\PDisk} 
				\xrightarrow{\sim} 
				\mathcal{P}\vert_{\PDisk}
				\text{ is a trivialization of }
				\mathcal{P}
				\\
				\text{ over the punctured disk }
				\PDisk
			\end{matrix}
		\right.
	\right\rbrace
\end{equation}
where $\mathcal{P}_0$ is the trivial principal $\G$-bundle over $\Disk$. It is representable by an ind-scheme.

\medskip

One can give a less geometric definition of the $\mathbb{C}$-points of the affine Grassmannian
\begin{equation} \label{EqGrAsCoset}
	\Gr_\G(\mathbb{C}) 
	= 
	\GK / \GO
\end{equation}
where we use notation $\GRing{R}$ for $R$-points of the scheme $\G$ given a $\mathbb{C}$-algebra $R$. We will use this point of view when we talk about points in $\Gr_\G$.

The see that these are exactly the $\mathbb{C}$-points of $\Gr_\G$ (i.e. the pairs $\left( \mathcal{P}, \varphi \right)$ as in \refEq{EqGrAsModuli}), note that on $\Disk$ any $\G$-principle bundle is trivializable, so take one trivialization $\psi\colon \mathcal{P}_0 \xrightarrow{\sim} \mathcal{P}$. This gives a composition of isomorphisms
\begin{equation} \label{EqGrEquivalenceOfDefs}
	\mathcal{P}_0\vert_{\PDisk} 
	\xrightarrow{\psi\vert_{\PDisk} } 
	\mathcal{P}\vert_{\PDisk} 
	\xrightarrow{\varphi^{-1}} 
	\mathcal{P}_0\vert_{\PDisk}
\end{equation}
which is a section of $\mathcal{P}_0\vert_{\PDisk}$, i.e. an element of $\GK$. Change of the trivialization $\psi$ precomposes with a section of $\mathcal{P}_0$, in other words, precomposes with an element of $\GO$. This gives a quotient by $\GO$ from the left.

\medskip

We restrict ourselves to the case of connected simple (possibly not simply-connected) $\G$ in what follows without loosing much generality. Restating the results to allow arbitrary connected complex reductive group is straightforward.

From now on we fix a connected simple complex group $\G$. Since it can't cause confusion, we omit later $\G$ in the notation $\Gr_\G$ and just write $\Gr$.

\medskip

The group $\GK\rtimes \Gm$ naturally acts on $\Gr$. In the coset formulation the $\GK$-action is given by left multiplication and $\Gm$-acts by scaling variable $t$ in $\K$ with weight $1$. In the moduli space description $\GK$ acts by changes of section $g \cdot \left( \mathcal{P}, \varphi \right) = \left( \mathcal{P}, \varphi g^{-1} \right)$ (these two actions are the same action if one takes into account identification \refEq{EqGrEquivalenceOfDefs}), $\Gm$ scales $\Disk$ such that the coordinate $t$ has weight $1$. We will later denote this $\Gm$ as $\LoopGm$ when we want to emphasize that we use this algebraic group and its natural action.

We will need actions by subgroups of this group, namely $\A \subset \G \subset \GO \subset \GK$. It will also be useful to consider extended torus $\T = \A \times \LoopGm$, where $\LoopGm$ part comes from the second term in the product $\GK\rtimes \LoopGm$.

The canonical projection $\T\to\LoopGm$ gives a character of $\T$ which we call $\hbar$ (this explains the subscript $\hbar$ in the notation $\LoopGm$). Then the weight of coordinate $t$ on $\Disk$ is $\hbar$ by the construction of the $\LoopGm$-action.

\begin{remark}
We call the maximal torus of $\G$ by $\A$ to have notation similar to \cite{MOk}, i.e. $\T$ is the maximal torus acting on the variety, and $\A$ is the subtorus of $\T$ preserving the symplectic form.
\end{remark}

\medskip

The partial flag variety $\G/\PP$ (where $\PP$ is parabolic) has a well-known decomposition by orbits of $\B$-action called Schubert cells. The affine Grassmannian has a similar feature.

One can explicitly construct fixed points of the $\A$-action. Given a cocharacter ${\colambda} \colon \Gm \to \T$, one can construct a map using natural inclusions
\begin{equation*}
	\PDisk 
	\hookrightarrow 
	\Gm 
	\xrightarrow{\colambda} 
	\T
	\hookrightarrow 
	\G
\end{equation*}
i.e. an element of $\GK$. Projecting it naturally to $\Gr$ we get an element $\Tfixed{\colambda} \in \Gr$. These points are $\A$-fixed and even the following stronger statements are true.

\begin{proposition} \label{PropGrFixedLocus}
	\leavevmode
	\begin{enumerate}
	\item 
		$\Gr^\A = \bigsqcup\limits_{{\colambda} \in \CocharLattice{\A}} \lbrace \Tfixed{\colambda} \rbrace $
	\item 
		Moreover, $\Gr^\T = \Gr^\A$
	\end{enumerate}
\end{proposition}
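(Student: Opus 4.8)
I would establish part~(1) first; part~(2) then follows with almost no extra work. One inclusion in~(1), namely $\bigsqcup_{\colambda}\{\Tfixed{\colambda}\}\subseteq\Gr^\A$, is already recorded above, and is elementary: for $a\in\A$ one has $a\,\colambda(t)\GO=\colambda(t)\,a\,\GO=\colambda(t)\GO$, using that $\A$ is abelian and $a\in\G(\mathbb{C})\subset\GO$. The points $\Tfixed{\colambda}$ are moreover pairwise distinct --- if $\Tfixed{\colambda}=\Tfixed{\mu}$ then $(\colambda-\mu)(t)\in\GO$, which for a cocharacter forces $\colambda=\mu$. So the real content of~(1) is the reverse inclusion $\Gr^\A\subseteq\bigsqcup_{\colambda}\{\Tfixed{\colambda}\}$.

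For this I would run a contraction (Bia{\l}ynicki--Birula) argument along a one-parameter subgroup. Pick a regular dominant cocharacter $\gamma\colon\Gm\to\A$; then $\Gr^\A\subseteq\Gr^{\gamma(\Gm)}$, so it is enough to identify $\Gr^{\gamma(\Gm)}$. Here I invoke the Iwasawa (semi-infinite orbit) decomposition $\Gr=\bigsqcup_{\colambda\in\CocharLattice{\A}}S_\colambda$ with $S_\colambda=U(\K)\cdot\Tfixed{\colambda}$, where $U$ is the unipotent radical of $\B$ --- a standard structural fact (see e.g.\ \cite{MVi1}). Since $\gamma(\Gm)\subset\A$ normalizes $U$, each $S_\colambda$ is $\gamma(\Gm)$-stable, and the flow contracts it to $\Tfixed{\colambda}$: for $u\in U(\K)$ one computes $\gamma(s)\cdot u\,\Tfixed{\colambda}=\bigl(\gamma(s)u\gamma(s)^{-1}\bigr)\cdot\Tfixed{\colambda}$, because $\gamma(s)$ and $\colambda(t)$ lie in the abelian group $\A(\K)$ while $\gamma(s)\in\GO$, and $\gamma(s)u\gamma(s)^{-1}\to 1$ as $s\to0$, since conjugation by $\gamma(s)$ rescales each positive-root coordinate $u_{\wtalpha}(f(t))$ of $u$ by the positive power $\wtalpha(\gamma(s))$ of $s$. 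Hence the orbit closure of any point of $S_\colambda$ contains $\Tfixed{\colambda}$, so any $\gamma(\Gm)$-fixed point of $S_\colambda$ must equal $\Tfixed{\colambda}$. Together with the first inclusion this proves~(1).

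Part~(2): $\Gr^\T\subseteq\Gr^\A$ is trivial since $\A\subset\T$. For the converse it suffices, by~(1), to show every $\Tfixed{\colambda}$ is $\LoopGm$-fixed. Loop rotation sends $\colambda(t)$ to $\colambda(st)=\colambda(s)\colambda(t)$, and since $\colambda(s)\in\G(\mathbb{C})\subset\GO$ commutes with $\colambda(t)$ inside $\A(\K)$ we get $\colambda(st)\GO=\colambda(t)\GO$; thus $\LoopGm$ fixes $\Tfixed{\colambda}$, so $\Gr^\A=\bigsqcup_{\colambda}\{\Tfixed{\colambda}\}\subseteq\Gr^\T$, giving equality.

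The main obstacle is exactly the inclusion $\Gr^\A\subseteq\bigsqcup_{\colambda}\{\Tfixed{\colambda}\}$: excluding exotic fixed points needs a genuine input controlling all of $\Gr$ --- here packaged as the Iwasawa decomposition --- whereas everything else is formal torus bookkeeping. If one prefers to avoid quoting that decomposition, one can instead fix a faithful representation $\G\hookrightarrow GL_N$ and argue in the lattice model: an $\A$-fixed lattice in $\mathbb{C}((t))^N$ splits as a direct sum of lattices along the weight lines of $\A$, which pins it down to the form $\Tfixed{\colambda}$, after which one descends the statement from $GL_N$ back to $\G$.
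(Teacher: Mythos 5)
Your proof is correct. The paper states this proposition without proof, treating it as a standard fact about the affine Grassmannian, so there is no in-text argument to compare against. Your argument is the standard one: the easy inclusion and distinctness of the $\Tfixed{\colambda}$ are elementary; the reverse inclusion is obtained by restricting to a regular dominant one-parameter subgroup, invoking the semi-infinite (Iwasawa) decomposition $\Gr=\bigsqcup_{\colambda}S_\colambda$ with $S_\colambda=U(\K)\cdot\Tfixed{\colambda}$, and observing that the contracting $\Gm$-flow on each $S_\colambda$ pins the fixed point to $\Tfixed{\colambda}$; and part~(2) reduces, given~(1), to the observation that loop rotation $t\mapsto st$ sends $\colambda(t)\GO$ to $\colambda(s)\colambda(t)\GO=\colambda(t)\GO$ since $\colambda(s)\in\G(\mathbb{C})\subset\GO$ commutes with $\colambda(t)$. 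The only point where some care is needed --- taking the limit $\gamma(s)u\gamma(s)^{-1}\to 1$ inside the ind-group $U(\K)$ --- is standard and you have set it up correctly. The lattice-model alternative you sketch is an equally valid route and is sometimes preferable for classical groups.
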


Using these elements we define
\begin{equation*}
	\Gr^{\colambda} 
	= 
	\GO \cdot \Tfixed{\colambda}
\end{equation*}
as their orbits.
% We will also need their closures
%\begin{equation*}
%\Gr^{\overline{\colambda}} = \overline{ \Gr^{\colambda} }
%\end{equation*}
These have the following properties

\begin{proposition} \label{PropGrCells}
	\leavevmode
	\begin{enumerate}
	\item
		For any $w\in W$ one has $\Gr^{w{\colambda}} = \Gr^{\colambda}$.
	\item 
		$\Gr = \bigsqcup\limits_{{\colambda} \in \DominantCocharLattice{\A} } \Gr^{\colambda}$
	\item 
		$\overline{\Gr^{\colambda}} = \bigsqcup\limits_{\genfrac{}{}{0pt}{}{{\comu} \leq {\colambda}}{{\comu} \in \DominantCocharLattice{\A} }} \Gr^{\comu} $
	\item
		As a $\G$-variety, $\Gr^{\colambda}$ is isomorphic to a the total space of a $\G$-equivariant vector bundle over a partial flag variety $\G/\PP^-_{\colambda}$ with parabolic $\PP^-_{\colambda}$ whose Lie algebra contains all roots $\wtalpha$ such that $\left\langle \wtalpha, \colambda \right\rangle \leq 0$.
	\item
		$\Disk$-scaling $\LoopGm$ in $\T$ scales the fibers of this vector bundle over $\G/\PP^-_{\colambda}$ with no zero weights.
	\item
		$\Gr^{\colambda}$ is the smooth part of $\overline{\Gr^{\colambda}}$. In particular, $\overline{\Gr^{\colambda}}$ is smooth iff ${\colambda}$ is a minuscule coweight or zero.
	\end{enumerate}
\end{proposition}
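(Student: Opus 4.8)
The plan is to deduce the whole proposition from the Cartan decomposition of the loop group, $\GK=\bigsqcup_{\colambda\in\DominantCocharLattice{\A}}\GO\,t^{\colambda}\,\GO$, together with $\Gr=\GK/\GO$ and an orbit--stabilizer analysis of each $\Gr^{\colambda}$; with that backbone, parts (1)--(2) and one half of (3) become formal. For (1), picking a lift $\dot w\in N(\A)\subset\G\subset\GO$ of $w$ gives $\dot w\cdot t^{\colambda}\GO=(\dot w\,t^{\colambda}\,\dot w^{-1})\,\dot w\,\GO=t^{w\colambda}\,\dot w\,\GO=t^{w\colambda}\GO$ since $\dot w\in\GO$, so $\Tfixed{w\colambda}$ and $\Tfixed{\colambda}$ lie in one $\GO$-orbit. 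Part (2) is the Cartan decomposition transported through $\Gr=\GK/\GO$, its disjointness being the uniqueness clause of that decomposition together with the fact that each $W$-orbit of cocharacters has a unique dominant representative (compatibly with (1)). For (3), I would obtain $\Gr^{\comu}\subseteq\overline{\Gr^{\colambda}}$ for $\comu\le\colambda$ from an explicit one-parameter family inside $\Gr^{\colambda}$ degenerating $\Tfixed{\colambda}$ to $\Tfixed{\comu}$ (equivalently, from $\GO$-stability of the closure together with a reduction to rank one); the opposite inclusion --- that every orbit occurring in $\overline{\Gr^{\colambda}}$ is indexed by some $\comu\le\colambda$ --- is the first genuinely non-formal point, for which I would invoke the convexity of supports of Mirkovi\'{c}--Vilonen cycles (or, for classical $\G$, the lattice model), citing \cite{MVi1}.

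For (4) and (5) I would compute the stabilizer $H_{\colambda}:=\GO\cap t^{\colambda}\,\GO\,t^{-\colambda}$ of $\Tfixed{\colambda}$, so that $\Gr^{\colambda}\cong\GO/H_{\colambda}$. Since $\mathrm{Ad}(t^{\colambda})$ acts on $\grootsub{\wtalpha}$ by $t^{\langle\wtalpha,\colambda\rangle}$, the Lie algebra of $H_{\colambda}$ is $\mathfrak{h}\otimes\OO$ together with $\grootsub{\wtalpha}\otimes t^{\max(0,\langle\wtalpha,\colambda\rangle)}\OO$ for each root $\wtalpha$; evaluation at $t=0$ then identifies the image of $H_{\colambda}$ under $\GO\to\G$ with the parabolic $\PP^-_{\colambda}$ of Lie algebra $\mathfrak{h}\oplus\bigoplus_{\langle\wtalpha,\colambda\rangle\le 0}\grootsub{\wtalpha}$. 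Hence $\GO/H_{\colambda}\to\G/\PP^-_{\colambda}$ is a $\G$-equivariant fibration whose fibre over the base point is the quotient of the preimage of $\PP^-_{\colambda}$ in $\GO$ by $H_{\colambda}$; comparing Lie algebras shows this fibre to be pro-unipotent with tangent space $\bigoplus_{\wtalpha\,:\,\langle\wtalpha,\colambda\rangle\ge 2}\grootsub{\wtalpha}\otimes\bigl(t\,\mathbb{C}[t]/t^{\langle\wtalpha,\colambda\rangle}\bigr)$, which is concentrated in strictly positive $t$-degrees, so the fibre is an affine space. The residual $\LoopGm$ (scaling $t$, hence acting on $t^{k}$ with weight $k\hbar$) then contracts each fibre onto the zero section $\G/\PP^-_{\colambda}=(\Gr^{\colambda})^{\LoopGm}$ with strictly positive, in particular nonzero, weights --- which is exactly (5) --- and this contraction linearizes the bundle, identifying $\Gr^{\colambda}$, as a $\G$-variety, with $\G\times_{\PP^-_{\colambda}}\mathfrak{n}$ for the evident $\PP^-_{\colambda}$-module $\mathfrak{n}$ (alternatively one invokes the Bia{\l}ynicki--Birula linearization of a fibrewise contracting action). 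As a check this yields $\dim\Gr^{\colambda}=\#\{\wtalpha>0:\langle\wtalpha,\colambda\rangle>0\}+\sum_{\wtalpha>0}\max(\langle\wtalpha,\colambda\rangle-1,\,0)=\langle 2\wtrho,\colambda\rangle$, the expected value.

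For (6): if $\colambda$ is minuscule or zero it is, classically, the only dominant coweight $\le\colambda$, so $\overline{\Gr^{\colambda}}=\Gr^{\colambda}$ by (3), which is then smooth by (4); otherwise the boundary $\overline{\Gr^{\colambda}}\setminus\Gr^{\colambda}$ is a nonempty union of smaller strata. To see this boundary is exactly the singular locus I would use that the transverse slice to a stratum $\Gr^{\comu}$ with $\comu<\colambda$ inside $\overline{\Gr^{\colambla}}$ --- a positive-dimensional conical affine variety of the kind studied in this paper --- is singular at its cone point (equivalently, the $\mathrm{IC}$ sheaf of $\overline{\Gr^{\colambda}}$ has nontrivial stalks along $\Gr^{\comu}$); I would cite this rather than reprove it. The main obstacles are therefore the ``downward'' half of (3) and this last identification of the boundary with the singular locus in (6); everything else is bookkeeping with the Cartan decomposition and the Lie-algebra computation above.
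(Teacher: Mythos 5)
The paper does not actually prove \refProp{PropGrCells}: it is recalled as standard background on the affine Grassmannian, with no argument given. So there is no in-paper proof to compare against; what you have written is a fresh proof of a known result.

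Your proof is correct and follows the standard route in the literature. Parts (1)--(2) via a lift $\dot w\in N(\A)\subset\GO$ and the Cartan decomposition $\GK=\bigsqcup_{\colambda}\GO\,t^{\colambda}\GO$ are the usual argument. The stabilizer computation in (4)--(5) is the key step and you carry it out cleanly: the Lie algebra of $H_{\colambda}=\GO\cap t^{\colambda}\GO t^{-\colambda}$ is $\mathfrak{h}\otimes\OO\oplus\bigoplus_{\wtalpha}\grootsub{\wtalpha}\otimes t^{\max(0,\langle\wtalpha,\colambda\rangle)}\OO$, evaluation at $t=0$ lands in $\PP^-_{\colambda}$, and the fibre over the base point is pro-unipotent with tangent space concentrated in strictly positive $t$-degree, hence contracted by $\LoopGm$ with nonzero weights; the Bia\l{}ynicki--Birula linearization (or the observation that a pro-unipotent group with a contracting $\Gm$ is its own Lie algebra) gives the vector-bundle structure. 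The dimension check $\#\{\wtalpha>0:\langle\wtalpha,\colambda\rangle>0\}+\sum_{\wtalpha>0}\max(\langle\wtalpha,\colambda\rangle-1,0)=\langle 2\wtrho,\colambda\rangle$ is a useful sanity check. You correctly flag the two genuinely non-formal points --- the containment $\overline{\Gr^{\colambda}}\subseteq\bigsqcup_{\comu\le\colambda}\Gr^{\comu}$ in (3), and the identification of the boundary with the singular locus in (6) --- and outsource them (to Mirkovi\'{c}--Vilonen convexity of MV cycles / semicontinuity for the first, and to the fact that a positive-dimensional conical symplectic singularity is singular at its cone point, or equivalently nontriviality of the $\mathrm{IC}$ stalks, for the second). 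Both citations are appropriate; neither admits a short self-contained proof at this level of generality, and the paper itself treats them as known. Since the paper offers no proof, your write-up is strictly more informative than what the paper provides, and I find no gap in it.
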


\medskip

\begin{corollary} \label{CorTCellSuructure}
	$\Gr$ admits a $\T$-invariant cell structure.
\end{corollary}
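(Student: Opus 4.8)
The plan is to refine the $\GO$-orbit stratification $\Gr = \bigsqcup_{\colambda \in \DominantCocharLattice{\A}} \Gr^{\colambda}$ of \ref{PropGrCells}(2) into a decomposition into affine cells, each of them manifestly $\T$-invariant and with the correct closure behavior. Each stratum $\Gr^{\colambda} = \GO \cdot \Tfixed{\colambda}$ is already $\T$-invariant: it is $\A$-invariant because $\A \subset \GO$, and it is $\LoopGm$-invariant because $\LoopGm$ normalizes $\GO$ and fixes $\Tfixed{\colambda}$ (equivalently, by \ref{PropGrCells}(5), $\LoopGm$ moves only along the fibers of the bundle below). Since $\Gr$ is exhausted by the finite-dimensional projective varieties $\overline{\Gr^{\colambda}}$ of \ref{PropGrCells}(3), it suffices to give each $\Gr^{\colambda}$ a $\T$-invariant affine paving that is compatible with these closures.

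Fix $\colambda$. By \ref{PropGrCells}(4), $\Gr^{\colambda}$ is the total space of a $\G$-equivariant vector bundle $p\colon E_{\colambda} \to \G/\PP^-_{\colambda}$, of the form $E_{\colambda} = \G \times_{\PP^-_{\colambda}} V$ for some representation $V$ of $\PP^-_{\colambda}$. Decompose the base into Bruhat cells, $\G/\PP^-_{\colambda} = \bigsqcup_w C_w$ ($\B$-orbits); each $C_w$ is locally closed, $\A$-invariant, and --- being the orbit of a product of root subgroups through an $\A$-fixed point --- is $\A$-equivariantly isomorphic to a linear representation of $\A$ on an affine space, with the usual closure order among the $C_w$. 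Then $p^{-1}(C_w)$ is a $\T$-equivariant vector bundle over this affine space.

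The key point is to trivialize $p^{-1}(C_w)$ $\T$-equivariantly. Here $\T$ acts linearly on the affine space $C_w$ (with the $\LoopGm$-factor acting trivially on the base, and with the fiber weights of \ref{PropGrCells}(5)), and it is a standard fact that a $\T$-equivariant algebraic vector bundle over a linear representation of a torus $\T$ is $\T$-equivariantly trivial; if desired one may first split off the $\LoopGm$-weight spaces of $V$, each a $\PP^-_{\colambda}$-subrepresentation carrying a single nonzero $\LoopGm$-weight. Hence $p^{-1}(C_w) \cong C_w \times V$ as $\T$-varieties, an affine space with linear $\T$-action, so $\{ p^{-1}(C_w) \}_w$ is a $\T$-invariant affine paving of $\Gr^{\colambda}$; assembling these over all $\colambda$ and invoking the closure relations of \ref{PropGrCells}(3) yields a $\T$-invariant cell structure on $\Gr$.

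I expect the main obstacle to be precisely the equivariant triviality invoked above: over a general reductive group, equivariant vector bundles on representations need not be equivariantly trivial, so it is essential that it is needed here only for the torus $\A$ (and for $\T$, which is again a torus), where it does hold. An alternative that sidesteps this point is to choose a generic cocharacter $\sigma\colon \Gm \to \T$, whose fixed locus on $\Gr$ is exactly $\{ \Tfixed{\colambda} \}$ by \ref{PropGrFixedLocus}; the corresponding attracting sets are the orbits of an Iwahori subgroup $I \subset \GO$ (with $\A \subset I$ and $\LoopGm$ normalizing $I$), and one would then only need to check directly that these $I$-orbits are affine spaces with the expected closures.
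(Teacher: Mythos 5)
Your argument is correct and follows essentially the same route as the paper: Schubert cells in $\G/\PP^-_{\colambda}$, pulled back through the $\G$-equivariant vector bundle of \refProp{PropGrCells}(4), with $\LoopGm$-invariance coming for free since $\LoopGm$ only scales fibers. The extra care you take about $\T$-equivariant triviality of the restricted bundle over each Schubert cell (valid precisely because $\T$ is a torus) is a reasonable refinement of a point the paper passes over silently.
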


\begin{proof}
	Each $\G/\PP^-_{\colambda}$ admits an $\A$-invariant cell structure, namely Schubert cells. $\A$-equivariant vector bundles over these cells give $\A$-invariant cells for each of $\Gr^{\colambda}$. Hence we have an $\A$-invariant cell structure for $\Gr$.

	Moreover this cell structure is invariant under $\Disk$-scaling $\LoopGm$-action, because it just scales the fibers of the vector bundles. Finally, we get that this cell decomposition $\T$-invariant.
\end{proof}

\begin{corollary} \label{CorOrbitsInGrGr}
	Orbits of diagonal $\GK$-action on $\Gr\times\Gr$ are in bijection with dominant cocharacters.
\end{corollary}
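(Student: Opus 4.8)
The plan is to reduce the classification of diagonal $\GK$-orbits on $\Gr\times\Gr$ to the classification of $\GO$-orbits on $\Gr$, and then read off the answer from Proposition~\ref{PropGrCells}. Recall from \refEq{EqGrAsCoset} that $\GK$ acts transitively on $\Gr$, with the stabilizer of the base point $[e]=\GO$ being $\GO$ itself. So, given a pair $(x_1,x_2)\in\Gr\times\Gr$, I would first normalize the first coordinate: pick $g\in\GK$ with $g\cdot x_1=[e]$, so that $(x_1,x_2)$ is diagonally $\GK$-equivalent to $([e],\,g\cdot x_2)$. This defines a map from the set of diagonal $\GK$-orbits on $\Gr\times\Gr$ to the set of $\GO$-orbits on $\Gr$, sending the orbit of $(x_1,x_2)$ to the $\GO$-orbit of $g\cdot x_2$; it is well defined because any two admissible choices $g,g'$ satisfy $g'g^{-1}\in\GO$, and it is injective because two normalized pairs $([e],y)$ and $([e],y')$ can only be $\GK$-conjugate through an element fixing $[e]$, hence through an element of $\GO$. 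It is clearly surjective, so it is a bijection.

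It then remains to identify the $\GO$-orbits on $\Gr$. But by definition $\Gr^\colambda=\GO\cdot\Tfixed{\colambda}$ is a single $\GO$-orbit, and Proposition~\ref{PropGrCells} gives the disjoint decomposition $\Gr=\bigsqcup_{\colambda\in\DominantCocharLattice{\A}}\Gr^\colambda$. Hence the $\GO$-orbits on $\Gr$ are exactly the cells $\Gr^\colambda$, labelled bijectively by dominant cocharacters, and composing with the bijection above proves the corollary. Equivalently, this is nothing but the Cartan decomposition $\GK=\bigsqcup_{\colambda\in\DominantCocharLattice{\A}}\GO\, t^{\colambda}\,\GO$; geometrically, a diagonal $\GK$-orbit records precisely the relative position of the two $\G$-bundles near the puncture, and that relative position is classified by a dominant coweight.

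I do not expect a genuine obstacle here: the argument is essentially a repackaging of the orbit stratification already recorded in Proposition~\ref{PropGrCells}, which I am allowed to assume. The only points that need a line of care are the well-definedness of the normalization map and the injectivity of the resulting orbit correspondence — i.e., that distinct dominant cocharacters really do produce distinct $\GK$-orbits — but the latter is exactly the disjointness of the $\Gr^\colambda$ in Proposition~\ref{PropGrCells}, so both are formal.
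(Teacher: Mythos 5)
Your proof is correct and follows essentially the same route as the paper: the paper invokes the general bijection between diagonal $H$-orbits on $(H/K)^2$, the double cosets $K\backslash H/K$, and $K$-orbits on $H/K$, then applies it with $H=\GK$, $K=\GO$ and Proposition~\ref{PropGrCells}. You simply unpack that abstract bijection by normalizing the first coordinate, which is the same argument spelled out.
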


\begin{proof}
	First recall that for any group $H$ and a subgroup $K$ one has a bijection
	\begin{equation*}
		\left\lbrace
			\begin{matrix}
				H\text{-diagonal orbits of}\\
				H/K \times H/K
			\end{matrix}
		\right\rbrace
		\simeq
		K\backslash H / K
		\simeq
		\left\lbrace
			\begin{matrix}
				K\text{-orbits of}\\
				H/K
			\end{matrix}
		\right\rbrace
	\end{equation*}
	Applying this for $H = \GK$ and $K = \GO$ and that the rightmost set is in bijection with dominant cocharacters by \refProp{PropGrCells} we get the statement.
\end{proof}

We write $L_1\xrightarrow{\colambda} L_2$ if $(L_1,L_2)\in \Gr \times \Gr$ is in the $\GK$-orbit indexed by a dominant ${\comu} \leq {\colambda}$.

\begin{remark}
	Explicitly, one says that $L_1\xrightarrow{\colambda} L_2$ iff picking representatives $L_1 = g_1 \GO$, $L_2 = g_2 \GO$ with $g_1,g_2 \in \GK$ we have
	\begin{equation*}
		\GO g^{-1}_1 g_2 \GO 
		\subset 
		\GO \Tfixed{{\comu}} 
		\text{ for } 
		{\comu} \leq {\colambda}
	\end{equation*}
	or, equivalently,
	\begin{equation*}
		g^{-1}_1 g_2 \GO 
		\in 
		\overline{ \GO \cdot \Tfixed{\colambda} }
	\end{equation*}
	Note that this independent on the choice of the representatives of $L_1$, $L_2$.
\end{remark}

\subsection{Transversal slices}
As one can see, elements $g(t)$ of $\GO \subset \G$ can be characterized as the elements of $g(t)\in\GK$ such that $\lim\limits_{t\to 0} g(t) \in \G \subset \GK$.

We can use a similar way to define a subgroup transversal to $\GO$. Let
\begin{equation*}
	\GOne 
	= 
	\left\lbrace 
		g(t)\in\G 
		\bigg\vert \; 
		\lim_{t\to \infty} g(t) 
		= 
		\UnitG 
	\right\rbrace
\end{equation*}
This is a subgroup of $\GK$.

$\GO$ and $\GOne$ are transversal in the sense that
\begin{equation*}
	T_\UnitG \GK 
	= 
	T_\UnitG \GO 
	\oplus 
	T_\UnitG \GOne
\end{equation*}
because
\begin{align*}
	&T_\UnitG \GK = \gK{t} \\
	&T_\UnitG \GO = \gO{t} \\
	&T_\UnitG \GOne = \gOne{t}
\end{align*}

One uses $\GOne$ to analyze how $\GO$-orbits in $\Gr$ are attached to each other. The ind-varieties
\begin{equation*}
	\Gr_{\comu} 
	= 
	\GOne \cdot \Tfixed{{\comu}}
\end{equation*}
is the slice transversal to $\Gr^{\comu}$ at $\Tfixed{{\comu}}$. We will use use the convention that ${\comu}$ is dominant.

The main object of interest for us are the following varieties
\begin{equation*}
	\Gr^{\colambda}_{\comu} 
	= 
	\overline{\Gr^{\colambda}} 
	\cap 
	\Gr_{\comu}
\end{equation*}
These are non-empty iff ${\comu} \leq {\colambda}$.

\begin{proposition} \label{PropSlicesProperties}
	\leavevmode
	\begin{enumerate}
	\item 
		$\Gr^{\colambda}_{\comu}$ is a $\T$-invariant subscheme of $\Gr$;

	\item 
		The only $\A$-fixed (and $\T$-fixed) point of $\Gr^{\colambda}_{\comu}$ is $\Tfixed{{\comu}}$;

	\item 
		The $\Disk$-scaling $\LoopGm$-action contracts $\Gr^{\colambda}_{\comu}$ to $\Tfixed{{\comu}}$.
	
	\item 
		$\Gr^{\colambda}_{\comu}$ is a normal affine variety of dimension $\langle 2\wtrho , {\colambda} - {\comu} \rangle$.
\end{enumerate}
\end{proposition}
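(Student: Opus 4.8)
The plan is to establish the four assertions in the order (1), (3), (2), (4): the first three by elementary manipulations in the coset model $\Gr = \GK/\GO$, and the last by a dimension count combined with known normality and affineness statements for Schubert varieties and slices. For (1), I would check that $\overline{\Gr^{\colambda}}$ and $\Gr_{\comu}$ are each $\T$-invariant and then intersect. The closure $\overline{\Gr^{\colambda}} = \overline{\GO\cdot\Tfixed{\colambda}}$ is a union of $\GO$-orbits, hence $\A$-invariant because $\A\subset\GO$; it is $\LoopGm$-invariant since $s\cdot\bigl(g(t)\,\colambda(t)\GO\bigr) = g(st)\,\colambda(st)\GO = g(st)\,\colambda(t)\GO$, using that $\colambda(st) = \colambda(s)\colambda(t)$ with $\colambda(s)\in\A\subset\GO$ commuting with $\colambda(t)$, and that $g(st)\in\GO$ whenever $g\in\GO$. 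For $\Gr_{\comu} = \GOne\cdot\Tfixed{\comu}$: conjugation by $a\in\A$ preserves $\GOne$ since it does not alter the value at $t=\infty$, and $a\cdot\Tfixed{\comu} = \Tfixed{\comu}$ because $a$ commutes with $\comu(t)$ and lies in $\GO$; the substitution $t\mapsto st$ shows likewise that $\LoopGm$ preserves $\GOne$ and fixes $\Tfixed{\comu}$. An intersection of $\T$-invariant subschemes is $\T$-invariant.

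For (3), write a point of $\Gr_{\comu}$ as $x = g(t)\,\comu(t)\GO$ with $g\in\GOne$; then $s\cdot x = g(st)\,\comu(st)\GO = g(st)\,\comu(t)\GO$ (again $\comu(s)\in\GO$ commutes with $\comu(t)$ and is absorbed into $\GO$). Since $g(t)\to\UnitG$ as $t\to\infty$ by definition of $\GOne$, the reparametrization $t\mapsto st$ makes $g(st)\to\UnitG$ as $s\to\infty$, so $\lim_{s}s\cdot x = \comu(t)\GO = \Tfixed{\comu}$; this is an honest limit of a morphism because $\Gr^{\colambda}_{\comu}$ is of finite type, so one works inside a fixed finite-dimensional piece. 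Assertion (2) then follows: by \refProp{PropGrFixedLocus} every $\A$-fixed point of $\Gr$ is automatically $\T$-fixed, hence $\LoopGm$-fixed, so any $\A$-fixed point of $\Gr^{\colambda}_{\comu}$ must be fixed by the contracting $\LoopGm$ of (3) and therefore equal its unique attracting point $\Tfixed{\comu}$; conversely $\Tfixed{\comu} = \UnitG\cdot\Tfixed{\comu}\in\Gr_{\comu}$ and $\Tfixed{\comu}\in\overline{\Gr^{\colambda}}$ since $\comu\leq\colambda$, so indeed $\Tfixed{\comu}\in\Gr^{\colambda}_{\comu}$.

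For (4), observe first that $\Gr^{\colambda}_{\comu}$ is connected — it is contracted to a point by (3) — and, granted normality, irreducible, so ``variety'' and ``dimension'' are meaningful. Affineness: $\Gr_{\comu}$ is an affine scheme (recalled from the literature on these slices), and $\Gr^{\colambda}_{\comu} = \overline{\Gr^{\colambda}}\cap\Gr_{\comu}$ is closed in it, hence affine. The dimension is forced by transversality of the slice: $\Gr_{\comu}$ meets the $\GO$-orbit $\Gr^{\comu}$ at $\Tfixed{\comu}$ in complementary dimension inside $\Gr$, so $\dim\Gr^{\colambda}_{\comu}$ equals the codimension of $\Gr^{\comu}$ in $\overline{\Gr^{\colambda}}$, which is the standard value $\langle 2\wtrho,\colambda-\comu\rangle$. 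Normality I would deduce from the known normality of the affine Grassmannian Schubert variety $\overline{\Gr^{\colambda}}$ together with the local product decomposition $\overline{\Gr^{\colambda}}\cong\Gr^{\colambda}_{\comu}\times(\text{open in }\Gr^{\comu})$ near $\Tfixed{\comu}$, with $\Gr^{\comu}$ smooth (normality descends along the smooth second projection), propagated across the intermediate strata $\Gr^{\comu'}$, $\comu\leq\comu'\leq\colambda$, using $\GO$-homogeneity; alternatively one invokes the realization of $\Gr^{\colambda}_{\comu}$ as a Coulomb branch, which is normal and affine by construction.

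The genuinely non-formal inputs all sit in (4): the affineness of $\Gr_{\comu}$ and the normality of $\overline{\Gr^{\colambda}}$ are imported rather than proved here, and the point needing care is the precise formulation of ``transversal slice'' — that $\Gr_{\comu}$ meets $\Gr^{\comu}$ (and the intermediate strata) transversally with the expected complementary dimension, which is what simultaneously delivers the dimension formula and the local product decomposition used for normality. Parts (1)--(3) are short direct computations in $\GK/\GO$ and present no real obstacle.
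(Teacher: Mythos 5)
The paper does not actually supply a proof of this proposition — it is a recall of standard facts about transversal slices (ultimately from \cite{MVi1,MVi2,KWWY} and the Schubert variety literature) — so there is no in-text argument to compare against. Your proof is correct on its own terms. The coset computations in (1) and (3) are right: $\GO$ and $\GOne$ are both stable under $t\mapsto st$ and under $\A$-conjugation, and $\colambda(st)\GO=\colambda(t)\GO$ since $\colambda(s)\in\A\subset\GO$ commutes with $\colambda(t)$. Deriving (2) from (3) via \refProp{PropGrFixedLocus} is the clean route. Two small points worth flagging. First, in (3) your pointwise computation shows each orbit limits to $\Tfixed{\comu}$, but a contraction in the algebro-geometric sense is the stronger statement that the action map $\Gm\times\Gr^{\colambda}_{\comu}\to\Gr^{\colambda}_{\comu}$ extends over $0$ (equivalently, that the grading on the coordinate ring of the affine variety $\Gr^{\colambda}_{\comu}$ induced by $\LoopGm$ is concentrated in one sign); your remark about working inside a finite-type piece is the right idea, but really one should either cite affineness from (4) or argue directly that the generating functions of the coordinate ring are $\LoopGm$-semi-invariants with nonnegative weight. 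Second, in (4), the propagation of normality from the local product decomposition at $\Tfixed{\comu}$ to the rest of $\Gr^{\colambda}_{\comu}$ requires transversality of $\Gr_{\comu}$ to \emph{all} the intermediate strata $\Gr^{\comu'}$ (not only $\Gr^{\comu}$), together with the local factorization statement at those strata; ``$\GO$-homogeneity'' gestures at this but on its own moves you between points of a single stratum, not between base points $\Tfixed{\comu}$ and $\Tfixed{\comu'}$ of different slices. This transversality-along-all-strata is the genuinely non-formal input, as you correctly identify, and is where one needs the Mirkovi\'{c}--Vilonen local structure theorem or an equivalent. Apart from these caveats, the reasoning is sound, and your honest flagging of what is being imported from the literature is exactly appropriate for a statement the paper itself only recalls.
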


\subsection{Poisson structure}

The affine Grassmannian has a natural Poisson structure. Let us describe it using the Manin triples introduced by V. Drinfeld\cites{Dr1,Dr2}. First recall that $\GK$ is a Poisson ind-group. It is given by a Manin triple
\begin{equation*}
	\left(
		\gO{t},
		\gOne{t},
		\gK{t}
	\right),
\end{equation*}
if we equip $\gK{t}$ with the standard $\G$-invariant scalar product
\begin{equation} \label{EqPairing}
	\left(
		x t^n,
		y t^m
	\right)
	=
	\CorootScalar{x}{y}
	\delta_{n+m+1,0},
\end{equation}
where $\delta_{\bullet,\bullet}$ is the Kronecker delta. Then the subalgebras $\gO{t}$, $\gOne{t}$ are isotropic and the pairing between them is non-degenerate. Thus $\gK{t}$ is a Lie bialgebra and $\gO{t}$ is a Lie subbialgebra.

We have that $\GK$ is a Poisson ind-group and $\GO$ is its Poisson subgroup. By theorem of Drinfeld\cite{Dr2} the quotient $\Gr = \GK/\GO$ is Poisson. The explicit construction shows that since the pairing \refEq{EqPairing} has weight $\hbar$, we have the following important property

\begin{proposition}
    The Poisson bivector on $\Gr$ is a $\T$-eigenvector in $\Hlgyk{\Gr, \Lambda^2 \Gr}{0}$ with weight $-\hbar$.
\end{proposition}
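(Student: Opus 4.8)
The plan is to trace the weight of the Poisson bivector through the explicit construction of the Poisson structure on $\Gr = \GK/\GO$ coming from the Manin triple $\left( \gO{t}, \gOne{t}, \gK{t} \right)$, and to show that the single appearance of the pairing \refEq{EqPairing} — which carries $\T$-weight $\hbar$ — enters the formula for the bivector in an ``inverse'' fashion, hence with weight $-\hbar$. First I would recall that $\T$ acts on $\gK{t}$ compatibly with the bialgebra structure: the $\A$-action is by the adjoint action on $\mathfrak{g}$ (weight zero on the pairing in the $\mathfrak{g}$-directions since $\CorootScalar{\bullet}{\bullet}$ is $\G$-invariant), and the $\LoopGm = \LoopGm[\hbar]$-action scales $t$ with weight $\hbar$, so the monomial $t^n$ has weight $n\hbar$. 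Thus the pairing $\left( x t^n, y t^m \right) = \CorootScalar{x}{y}\, \delta_{n+m+1,0}$ is nonzero only when $n + m = -1$, and on that component the two arguments have total $t$-weight $(n+m)\hbar = -\hbar$; since the value $\CorootScalar{x}{y}$ sits in the trivial $\T$-representation $\mathbb{C}$, the pairing, viewed as a map $\gK{t} \otimes \gK{t} \to \mathbb{C}(\hbar)$, is $\T$-equivariant of weight $+\hbar$ — equivalently, the induced identification $\gO{t} \xrightarrow{\sim} \left(\gOne{t}\right)^*$ shifts weights by $\hbar$.

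Next I would unwind how the bivector is produced. By Drinfeld's theorem the Poisson structure on the homogeneous space $\GK/\GO$ is built from the $r$-matrix / cobracket: the cobracket $\delta \colon \gK{t} \to \Lambda^2 \gK{t}$ is dual, via the pairing, to the bracket on the complementary isotropic subalgebra, and the Poisson bivector at a point of $\Gr$ is obtained by pushing forward (the image of) the $r$-matrix under the infinitesimal action, i.e.\ from the skew element $r \in \gK{t} \widehat{\otimes}\, \gK{t}$ determined by the splitting $\gK{t} = \gO{t} \oplus \gOne{t}$ together with the pairing. Concretely $r = \sum_i e_i \otimes e^i$ where $\{e_i\}$ is a basis of $\gOne{t}$ (or $\gO{t}$) and $\{e^i\}$ the dual basis of $\gO{t}$ (resp.\ $\gOne{t}$) with respect to \refEq{EqPairing}. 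I would then observe that passing to a dual basis with respect to a pairing of weight $+\hbar$ sends weight-$w$ vectors to weight-$(-w-\hbar)$ functionals; hence in $r$ the paired factors $e_i \otimes e^i$ together have total $\T$-weight $-\hbar$, independent of $i$. Since the infinitesimal action map $\gK{t} \to T\Gr$ is $\T$-equivariant of weight zero, the bivector $\pi \in \Hlgyk{\Gr, \Lambda^2 T\Gr}{0}$ inherited from $r$ is a $\T$-eigenvector of weight $-\hbar$, which is exactly the assertion. (Here I am writing $\Lambda^2\Gr$ for $\Lambda^2 T\Gr$ as in the statement.)

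The step I expect to be the main obstacle — or at least the one requiring care rather than a one-line invocation — is making the ``$r$-matrix is built from the pairing'' heuristic rigorous in the \emph{ind}-scheme setting: $\gK{t}$ is infinite-dimensional, the naive element $\sum_i e_i \otimes e^i$ does not lie in the algebraic tensor square, and one must instead argue locally, on each finite-dimensional piece of the ind-variety $\Gr$ where the bivector is an honest algebraic section, or work with the cobracket $\delta$ directly (which \emph{is} well defined, $\delta\colon \gK{t} \to \Lambda^2\gK{t}$). The clean way to handle this is to note that the Poisson bracket on functions on $\Gr$ is, by Drinfeld's construction, expressed through $\delta$ and the action, and $\delta$ is manifestly $\T$-equivariant of weight $-\hbar$: indeed $\delta$ is dual to the bracket $\Lambda^2 \gOne{t} \to \gOne{t}$ (a weight-zero map) under the pairing, so dualizing against a weight-$\hbar$ pairing twice versus once produces a net weight of $-\hbar$. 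I would spell this out on each Schubert-type finite-dimensional stratum from \refCor{CorTCellSuructure}, where everything is genuinely algebraic, and conclude by density/continuity that $\pi$ is globally a $\T$-eigensection of weight $-\hbar$ in $\Hlgyk{\Gr, \Lambda^2 T\Gr}{0}$.
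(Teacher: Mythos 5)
Your argument is correct and reduces to the same observation as the paper's own one-line proof: the Manin-triple pairing \refEq{EqPairing} is $\A$-invariant and homogeneous of $t$-degree $-1$ (equivalently, of weight $+\hbar$ as a functional), and this single occurrence of $\hbar$ propagates to give the bivector weight $-\hbar$. Your careful weight-bookkeeping through the dual basis / cobracket and the caveat about the ind-scheme setting merely spell out the ``so'' that the paper leaves implicit, so the approach is essentially the same.
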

\begin{proof}
    The pairing is invariant with respect to $\A$, and is of degree $-1$ in $t$, so the $\T$ torus acts with weigh $-\hbar$.
\end{proof}

The slices $\Gr^{\colambda}_{\comu} \subset \Gr$ are Poisson subvarieties. Moreover, their smooth parts are symplectic leaves, hovewer not all symplectic leaves have this form: they might be shifted by the $\G$-action.

\subsection{Resolutions of slices}

One of the reasons what motivates us to study varieties $\Gr^{\colambda}_{\comu}$ is that they provide a family of symplectic resolutions. Let us recall some basics on them. See \cite{K1} for more details.

A \textbf{symplectic resolution} is a smooth algebraic variety $X$ equipped with a closed non-degenerate 2-form $\omega$ such that the canonical map
\begin{equation*}
	X \to X_0 = \Spec \CoHlgyk{X,\OO_X}{0}
\end{equation*}
is a birational projective map.

Well-known examples of the symplectic resolutions are Du Val singularities, the Hilbert scheme points on a plane (or a Du Val singularity), cotangent bundles to flag varieties, Nakajima quiver varieties. 

$X_0$ carries a natural structure of (possible singular) Poisson affine scheme. One can pose a question if affine Poisson varieties $\Gr^{\colambda}_{\comu}$ we study admit a symplectic resolution, i.e. a smooth symplectic $X$ with $X_0 = \Gr^{\colambda}_{\comu}$ such that the assumptions above hold. Unfortunately, these spaces do not always possess a symplectic resolution. We'll follow \cite{KWWY} to construct resolution in special cases.

For construction of resolutions it is convenient to assume that the cocharacter lattice is as large as possible, that is, equal to the coweight lattice. This happens when $\G$ is of adjoint type (the other extreme from being semi-simple). This assumption doesn't change the space we are allowed to consider: If $\widetilde{\G} \to \G$ is a finite cover, then there is a natural inclusion $\Gr_{\G} \hookrightarrow \Gr_{\widetilde{\G}}$ with the image being a collection of connected components of $\Gr_{\widetilde{\G}}$. Since the slices are connected, they belong only to one component and can be thought as defined for $\widetilde{\G}$. Last but not least the $\T$-equivariant structures are compatible if one takes into account that the torus for $\G$ is a finite cover over the torus for $\widetilde{\G}$.

From now on we assume that $\G$ is a connected simple group of adjoint type (so the center is trivial) and make no distinction between cocharacters and coweights unless overwise is stated.

Consider a sequence of dominant weights $\ucolambda = \left( \colambda_1, \dots \colambda_l  \right)$ Define the (closed) convolution product:

\begin{equation*}
	\Gr^{ \ucolambda } =
	\left\lbrace 
		\left(
			L_1,\dots,L_l
		\right) 
		\in \Gr^{\times l}
		\left\vert
			\UnitG \cdot \GO
			\xrightarrow{\colambda_1}
			L_1 
			\xrightarrow{\colambda_2} 
			\dots 
			\xrightarrow{\colambda_{l-1}}
			L_{l-1} 
			\xrightarrow{\colambda_{l}}
			L_l
		\right.
	\right\rbrace
\end{equation*}

There's a map
\begin{align*}
	m'_{ \ucolambda } 
	\colon
	\Gr^{ \ucolambda } 
	&\to 
	\Gr,
	\\
	(L_1,\dots,L_l)
	&\mapsto 
	L_l.
\end{align*}

One can show that the image of $m'_{ \ucolambda }$ is inside $\Gr^{\overline{\colambda}} $, where ${\colambda} = \sum_i \colambda_i$, so we can actually make a map
\begin{equation*}
	m_{ \ucolambda } 
	\colon
	\Gr^{ \ucolambda }
	\to
	\overline{\Gr^{\colambda}}.
\end{equation*}

Then let us define
\begin{equation*}
	\Gr^{\ucolambda}_\mu 
	= 
	m_{ \ucolambda }^{-1}
	\left( 
		\Gr^{\colambda}_{\comu}
	\right).
\end{equation*}

These spaces and maps are useful to construct symplectic resolutions of singular affine $\Gr^{\colambda}_{\comu}$. Let all $\colambda_i$ in $\ucolambda$ be fundamental coweights (which means this is a maximal sequence which splits ${\colambda} = \sum_i \colambda_i$ into nonzero dominant coweights). We will also need the corresponding irreducible highest weight representations $V(\colambda_k)$ of the Langlands dual group $\GLangDual $. Then there's the following result (see \cite{KWWY}, Theorem 2.9)

\begin{theorem} \label{ThmSymplecticResolutionExistence}
	The following are equivalent
	\begin{enumerate}
	\item
		$\Gr^{\colambda}_{\comu}$ possesses a symplectic resolution;
	\item
		$\Gr^{ \ucolambda }_{\comu}$ is smooth and thus $m_{ \ucolambda }$ gives a symplectic resolution of singularities of $\Gr^{\colambda}_{\comu}$.
	\item
		There do not exist coweights $\conu_1, \dots, \conu_l$ such that $\sum_i \conu_i = {\comu}$, for all $k$, $\conu_k $ is a weight of $V(\colambda_k)$ and for some $k$ , $\conu_k$ is not an extremal weight of $V(\colambda_k)$ (that is, not in the Weyl orbit of $\colambda_k$).
	\end{enumerate}
\end{theorem}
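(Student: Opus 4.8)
The plan is to establish the cycle of implications $(2)\Rightarrow(1)$, $(2)\Leftrightarrow(3)$, $(1)\Rightarrow(2)$. The implication $(2)\Rightarrow(1)$ is immediate from standard properties of convolution: the closed convolution product $\Gr^{\ucolambla}$ carries a natural closed $2$-form, obtained by restricting the symplectic form on the leaves of $\Gr$, which is non-degenerate on the smooth locus, and $m_{\ucolambla}$ is projective and restricts to an isomorphism over the open dense stratum $\Gr^{\colambla}\cap\Gr_{\comu}$, hence is birational; so if $\Gr^{\ucolambla}_{\comu}$ is smooth then $m_{\ucolambla}$ is a symplectic resolution of $\Gr^{\colambla}_{\comu}$.

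For $(2)\Leftrightarrow(3)$ I would run a $\T$-fixed point analysis of $X=\Gr^{\ucolambla}_{\comu}$. A $\T$-fixed point of $X$ is a chain $\Tfixed{0}=\UnitG\cdot\GO\xrightarrow{\colambla_1}\Tfixed{\comu^{(1)}}\xrightarrow{\colambla_2}\dots\xrightarrow{\colambla_l}\Tfixed{\comu^{(l)}}$ of $\T$-fixed points of $\Gr$; since $\Tfixed{\comu}$ is the unique $\T$-fixed point of $\Gr^{\colambla}_{\comu}$ (\refProp{PropSlicesProperties}) we must have $\comu^{(l)}=\comu$, so putting $\conu_k=\comu^{(k)}-\comu^{(k-1)}$ we get $\sum_k\conu_k=\comu$, and the relative-position condition at the $k$-th step — that the dominant $W$-representative of $\conu_k$ is $\le\colambla_k$ — is, since $\colambla_k$ is fundamental, the same as saying that $\conu_k$ is a weight of $V(\colambla_k)$. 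Thus $X^{\T}$ is exactly the index set appearing in $(3)$. Moreover $X$ is smooth if and only if it is smooth at every point of $X^{\T}$: using the contracting $\LoopGm$-action of \refProp{PropSlicesProperties} and properness of $m_{\ucolambla}$, the singular locus of $X$ is closed, $\T$-invariant and contracted, so if nonempty it must meet $X^{\T}$. Finally, a fixed point $(\conu_1,\dots,\conu_l)$ is a smooth point of $X$ iff every $\conu_k$ is extremal: the iterated-fibration structure of the convolution (forgetting $L_l$ realizes $\Gr^{\ucolambla}$ as a fibre bundle with fibre $\overline{\Gr^{\colambla_l}}$) reduces the Zariski tangent space at the fixed point to a product whose $k$-th factor is the tangent space of $\overline{\Gr^{\colambla_k}}$ at the translated point $\Tfixed{\conu_k}$, which by \refProp{PropGrCells} is smooth iff $\Tfixed{\conu_k}$ lies in the open orbit $\Gr^{\colambla_k}$, i.e.\ iff $\conu_k\in W\colambla_k$; one also checks that cutting by $\Gr_{\comu}$ is clean near each fixed point, using the $\LoopGm$-equivariant local product structure of $\Gr^{\ucolambla}$ over $\Tfixed{\comu}$. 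Combining the three statements gives $(2)\Leftrightarrow(3)$.

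For $(1)\Rightarrow(2)$ I would show that, for the maximal (fundamental) decomposition $\ucolambla$, the map $\Gr^{\ucolambla}_{\comu}\to\Gr^{\colambla}_{\comu}$ is a $\mathbb{Q}$-factorial terminalization. Terminality: once $\ucolambla$ is refined into fundamental coweights, the transverse singularity of $\Gr^{\ucolambla}_{\comu}$ along any non-open stratum is one of the elementary slices in the classification of minimal degenerations in the affine Grassmannian (Malkin--Ostrik--Vybornov; \cite{KWWY}), and these now have codimension $\ge4$, which — together with normality and the Gorenstein symplectic structure — forces terminal singularities; $\mathbb{Q}$-factoriality follows from the conical structure and the convolution geometry (a Picard-number count against the non-minuscule steps). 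Then Namikawa's theory of Poisson deformations of conical symplectic singularities applies, giving that such a singularity admits a symplectic resolution iff its $\mathbb{Q}$-factorial terminalization is smooth; hence $(1)$ is equivalent to smoothness of $\Gr^{\ucolambla}_{\comu}$, i.e.\ to $(2)$. Alternatively one can argue $\neg(3)\Rightarrow\neg(1)$ directly: a tuple violating $(3)$ exhibits, as the transverse slice to a symplectic leaf $\Gr^{\conu}\cap\Gr_{\comu}$ of $\Gr^{\colambla}_{\comu}$, a slice $\Gr^{\colambla}_{\conu}$ that contains an elementary singularity isomorphic to the closure of a minimal nilpotent orbit of a non-type-$A$ simple Lie algebra; this has no symplectic resolution, and since symplectic resolutions restrict to transverse slices of symplectic leaves, $\Gr^{\colambla}_{\comu}$ has none either.

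The main obstacle is $(1)\Rightarrow(2)$: it needs both the precise classification of the elementary (minimal degeneration) singularities of $\Gr^{\ucolambla}_{\comu}$ together with the structural facts that the fundamental refinement makes them of codimension $\ge 4$ (hence terminal) and keeps the variety $\mathbb{Q}$-factorial, and the global deformation-theoretic input of Namikawa. By contrast $(2)\Leftrightarrow(3)$ is essentially a bookkeeping of $\T$-fixed points plus standard convolution geometry, and $(2)\Rightarrow(1)$ is immediate.
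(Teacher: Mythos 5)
The paper does not prove this theorem: it is cited verbatim from \cite{KWWY}, Theorem 2.9, as the sentence preceding the statement in the paper says, so there is no in-paper proof to compare against. Your outline nevertheless tracks the strategy of that reference: $(1)\Leftrightarrow(2)$ via Namikawa's theorem that a conical symplectic singularity admits a symplectic resolution iff its $\mathbb{Q}$-factorial terminalization is smooth, together with the claim that $\Gr^{\ucolambda}_{\comu}\to\Gr^{\colambda}_{\comu}$ (for the refinement $\ucolambda$ into fundamental coweights) is such a terminalization; and $(2)\Leftrightarrow(3)$ by reducing smoothness of $X=\Gr^{\ucolambda}_{\comu}$ to smoothness at $X^{\T}$ and analyzing the local structure there.

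A few places in the sketch paper over genuine work. In $(2)\Leftrightarrow(3)$ the phrase ``cutting by $\Gr_{\comu}$ is clean'' carries the whole argument; the correct local input is a formal product decomposition of $\Gr^{\ucolambda}_{\comu}$ near a fixed point with increments $(\conu_1,\dots,\conu_l)$ into slices $\Gr^{\colambda_k}_{\conu_k'}$ at the dominant representatives $\conu_k'$ of the $\conu_k$, which is a lemma in \cite{KWWY} and not a one-line transversality check. In $(1)\Rightarrow(2)$, terminality does reduce to the singular locus having codimension $\geq 4$, and this bound does hold once $\ucolambda$ is fundamental (for a fundamental coweight $\colambda_k$, subtracting a single simple coroot never lands on a dominant coweight, so the first dominant coweight below $\colambda_k$ lies at $\langle 2\wtrho, \cdot\rangle$-distance at least $4$); but $\mathbb{Q}$-factoriality is not a ``Picard-number count'' and needs its own argument. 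As a roadmap your outline is accurate, but the technical content lives entirely in \cite{KWWY}, which is why this paper cites the result rather than reproving it.
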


\begin{corollary}
	If all $\colambda_i$ are minuscule, then all three statements of the \refThm{ThmSymplecticResolutionExistence} hold.
\end{corollary}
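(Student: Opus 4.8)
The plan is to reduce the corollary to the equivalence in \refThm{ThmSymplecticResolutionExistence} by verifying that its combinatorial condition (3) is automatic once every $\colambda_i$ is minuscule. First I would recall the standard characterization of minuscule coweights in the normalization fixed in \refSec{SecSlices}: a nonzero dominant coweight $\colambda$ is minuscule exactly when $\left\langle \wtalpha, \colambda \right\rangle \in \{-1,0,1\}$ for every root $\wtalpha$ of $\G$, and this is equivalent to the statement that the weights of the irreducible $\GLangDual$-representation $V(\colambda)$ form a single Weyl orbit, namely $W\colambda$. In particular, when $\colambda$ is minuscule \emph{every} weight of $V(\colambda)$ is extremal, i.e.\ lies in $W\colambda$. (I would also note in passing that a minuscule coweight is necessarily a fundamental coweight, so the running hypothesis preceding \refThm{ThmSymplecticResolutionExistence} that the $\colambda_i$ be fundamental is met; the degenerate case $\colambda_i = 0$, if admitted, is harmless since then $V(\colambda_i)$ is trivial.)

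The main step is then essentially a tautology. Suppose, towards producing an obstruction as in part (3), that there exist coweights $\conu_1, \dots, \conu_l$ with $\sum_i \conu_i = \comu$ and with each $\conu_k$ a weight of $V(\colambda_k)$. Since each $\colambda_k$ is minuscule, the set of weights of $V(\colambda_k)$ equals $W\colambda_k$, so $\conu_k \in W\colambda_k$ for \emph{every} $k$; that is, each $\conu_k$ is an extremal weight of $V(\colambda_k)$. Hence there is no index $k$ for which $\conu_k$ fails to be extremal, so the existence clause in part (3) of \refThm{ThmSymplecticResolutionExistence} is never satisfied — equivalently, condition (3) holds. Applying the theorem, conditions (1) and (2) hold as well: $\Gr^{\ucolambda}_{\comu}$ is smooth and $m_{\ucolambda}$ is a symplectic resolution of $\Gr^{\colambda}_{\comu}$. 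This is the assertion of the corollary.

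I do not expect a genuine obstacle here; the argument is short, and the only point requiring care is the classical input that, in the coweight-for-$\G$ / weight-for-$\GLangDual$ convention used in the paper, "minuscule" is equivalent to "all weights of $V(\colambda)$ are extremal," together with keeping the Langlands-dual bookkeeping of weights versus coweights consistent with the conventions of \refSec{SecSlices} (this is exactly the same condition that appears in \refProp{PropGrCells}(6) governing smoothness of $\overline{\Gr^{\colambda}}$). If one wanted a self-contained account, one could alternatively cite \cite{KWWY} directly, where this corollary is recorded alongside \refThm{ThmSymplecticResolutionExistence}.
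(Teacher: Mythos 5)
Your proof is correct and is exactly the argument the paper implicitly relies on (the corollary is stated without proof because it is immediate from the theorem). The single observation that does the work — a nonzero minuscule coweight has all weights of $V(\colambda)$ extremal, i.e.\ equal to $W\colambda$, so condition (3) of \refThm{ThmSymplecticResolutionExistence} holds vacuously — is precisely the intended content, and your handling of the side remarks (minuscule implies fundamental, and the degenerate case $\colambda_i = 0$ is harmless) is sound.
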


\begin{corollary}
	If $G = \PSL{n}$, then all fundamental coweights are minuscule and for all dominant coweights ${\comu} \leq {\colambda}$ all three statements of the \refThm{ThmSymplecticResolutionExistence} hold.
\end{corollary}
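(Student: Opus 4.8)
The plan is to reduce the statement entirely to the preceding corollary, the one handling the case in which all $\colambda_i$ are minuscule; the only genuine content is then the elementary fact that every fundamental coweight of $\PSL{n}$ is minuscule, together with the observation that an arbitrary dominant coweight splits into fundamental pieces.

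First I would fix the standard type $A_{n-1}$ combinatorial model. Since $\PSL{n} = PGL_n$ is adjoint, its root lattice coincides with its character lattice, realized as the sum-zero sublattice of $\mathbb{Z}^n$ with simple roots $\wtalpha_i = e_i - e_{i+1}$ and positive roots $e_i - e_j$ for $i<j$; dually, the coweight lattice (which equals the cocharacter lattice, $\G$ being adjoint) is $\mathbb{Z}^n / \mathbb{Z}(1,\dots,1)$, and the pairing is $\langle e_i - e_j,\, (c_1,\dots,c_n)\rangle = c_i - c_j$. In this model the $k$-th fundamental coweight is represented by $\colambda_k = (1,\dots,1,0,\dots,0)$ with $k$ ones, since then $\langle \wtalpha_i,\colambda_k\rangle = \delta_{ik}$. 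I would then invoke the standard criterion that a dominant coweight $\colambda$ of $\G$ is minuscule precisely when $\langle \wtalpha,\colambda\rangle \in \{-1,0,1\}$ for every root $\wtalpha$ of $\G$ (equivalently, all weights of the corresponding $\GLangDual$-representation $V(\colambda)$ form a single Weyl orbit), and check it by inspection: $\langle e_i - e_j,\colambda_k\rangle = [i\le k] - [j\le k] \in \{-1,0,1\}$. Hence every fundamental coweight of $\PSL{n}$ is minuscule; equivalently, for $\GLangDual = \mathrm{SL}_n$ one has $V(\colambda_k) = \Lambda^k\mathbb{C}^n$, which is the minuscule representation (here one uses the self-duality of the $A_{n-1}$ root system to identify fundamental coweights of $\PSL{n}$ with fundamental weights of $\mathrm{SL}_n$).

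Next I would note that for any dominant coweight $\colambda$ the maximal splitting $\ucolambda = (\colambda_1,\dots,\colambda_l)$ of $\colambda$ into nonzero dominant coweights consists exactly of fundamental coweights: a fundamental coweight $\colambda_k$ cannot be written as a sum of two nonzero dominant coweights, because pairing a putative decomposition with each simple root $\wtalpha_i$ forces one of the two summands to pair to zero with all simple roots, hence to vanish, since for a simple group the roots span the dual of the coweight space. Consequently every entry of the relevant $\ucolambda$ is a fundamental coweight of $\PSL{n}$, and by the previous step is minuscule, so the preceding corollary applies verbatim and yields all three statements of \refThm{ThmSymplecticResolutionExistence} for $\Gr^{\colambda}_{\comu}$. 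Since $\Gr^{\colambda}_{\comu}$ is nonempty exactly when $\comu \le \colambda$, this covers precisely the cases asserted in the corollary.

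I do not expect a real obstacle: the argument is a direct specialization of the minuscule-case corollary. The only points requiring care are bookkeeping — correctly matching fundamental coweights of $\PSL{n}$ with the minuscule weights of the Langlands dual $\mathrm{SL}_n$ via the self-duality of type $A_{n-1}$, and making sure that the maximal fundamental decomposition is the very $\ucolambda$ to which \refThm{ThmSymplecticResolutionExistence} is applied.
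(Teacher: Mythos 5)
Your proof is correct and follows the route the paper clearly has in mind: it reduces the claim to the preceding corollary (all $\colambda_i$ minuscule implies all three statements) by the classical type-$A$ fact that every fundamental coweight of $\PSL{n}$ is minuscule, checked concretely via $\langle e_i - e_j,\colambda_k\rangle \in \{-1,0,1\}$. The paper states this corollary without proof, and your argument supplies exactly the intended specialization, together with the small (correct) bookkeeping observation that the maximal decomposition of a dominant coweight into nonzero dominant pieces is its decomposition into fundamental coweights.
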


\begin{corollary}
	If $G$ is not $\PSL{n}$, then there exist dominant coweights ${\comu} \leq {\colambda}$ such that all three statements of the \refThm{ThmSymplecticResolutionExistence} \textbf{are not} satisfied.
\end{corollary}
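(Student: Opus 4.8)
The plan is to deduce everything from \refThm{ThmSymplecticResolutionExistence}, for which it suffices to violate its condition (3) for a single well-chosen pair ${\comu} \leq {\colambda}$. Concretely, I would look for a dominant coweight ${\colambda}$ together with a maximal decomposition $\ucolambda = (\colambda_1, \dots, \colambda_l)$ into fundamental coweights, a dominant ${\comu} \leq {\colambda}$, and coweights $\conu_1, \dots, \conu_l$ with $\sum_i \conu_i = {\comu}$, each $\conu_k$ a weight of $V(\colambda_k)$, and at least one $\conu_k$ not extremal. Exhibiting such data is precisely the negation of (3), so by the equivalences in the theorem all three statements fail for that pair. I expect the simplest instance --- $l = 1$, so that $\ucolambda = (\colambda)$ with $\colambda$ itself a single fundamental coweight --- to already work.

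The first real step is to pin down the right fundamental coweight. Recall the standard criterion: a fundamental coweight $\varpi_i$ of $\G$ is minuscule exactly when $\left\langle \alpha, \varpi_i \right\rangle \leq 1$ for every positive root $\alpha$ of $\G$, i.e. exactly when $\alpha_i$ occurs with coefficient at most $1$ in the highest root $\theta$ of $\G$; and, under the standard identification of fundamental coweights of $\G$ with fundamental weights of $\GLangDual$, this is the same as saying the irreducible $\GLangDual$-representation $V(\varpi_i)$ is minuscule, i.e. that all of its weights lie in the single Weyl orbit $W\varpi_i$. Now if $\G$ is simple of adjoint type and $\G \neq \PSL{n}$, then $\G$ is not of Dynkin type $A$, and outside type $A$ the highest root always has some coefficient $\geq 2$. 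Choosing $i$ with this property and setting $\colambda = \colambda_1 := \varpi_i$, we get a fundamental coweight --- and $(\colambda_1)$ is automatically a maximal decomposition of $\colambda$, since the monoid of dominant coweights is freely generated by the fundamental coweights --- for which $V(\colambda_1)$ is not minuscule, so there is a weight $\conu$ of $V(\colambda_1)$ with $\conu \notin W\colambda_1$.

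Finally I would take ${\comu}$ to be the unique dominant coweight in the orbit $W\conu$ and set $\conu_1 := {\comu}$. Three quick checks finish the argument: (i) ${\comu}$ is still a weight of $V(\colambda_1)$, because the weight set of a representation is $W$-stable; (ii) ${\comu}$ is not extremal, because $W{\comu} = W\conu$ is disjoint from $W\colambda_1$; and (iii) ${\comu} \leq {\colambda}$, because ${\colambda}_1$ is dominant and the dominant representative of any weight of $V(\colambda_1)$ is $\leq {\colambda}_1$ in the dominance order. Thus $(\conu_1) = ({\comu})$ is exactly the data forbidden by statement (3) of \refThm{ThmSymplecticResolutionExistence}, so (3) --- and hence (1) and (2) --- fail for this ${\comu} \leq {\colambda}$.

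The real content here is only the classical Lie-theoretic input --- that a simple Lie algebra outside type $A$ has a non-minuscule fundamental weight, which one could alternatively just read off the tables of minuscule weights --- and the rest is bookkeeping with dominance order and Weyl orbits; so the "main obstacle" is essentially trivial once \refThm{ThmSymplecticResolutionExistence} is available. The only point that deserves a little care is keeping the lattice conventions straight: the $\conu_k$ must be weights of $V(\colambda_k)$ viewed as a representation of the simply connected $\GLangDual$, which under the conventions already fixed in \refSec{SecSlices} are automatically coweights of $\G$, so no integrality obstruction arises.
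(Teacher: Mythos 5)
The paper states this corollary (as it does for the two preceding corollaries after \refThm{ThmSymplecticResolutionExistence}) with no proof of its own, deferring to \cite{KWWY}, so there is nothing internal to compare against; your argument is the natural elementary one and it is correct. The strategy of taking $l=1$, choosing a single non-minuscule fundamental coweight $\colambda=\colambda_1$, and setting $\comu$ to be the dominant representative of some non-extremal weight $\conu$ of $V(\colambda_1)$ cleanly exhibits the data that negates condition (3): $\comu$ is a weight of $V(\colambda_1)$ (Weyl stability), is non-extremal ($W\comu = W\conu$ is disjoint from $W\colambda_1$), and satisfies $\comu\le\colambda$ (dominance order on weights of an irreducible representation). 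The one piece of genuine Lie theory --- that the adjoint groups outside type $A$, i.e.\ those other than $\PSL{n}$, all possess a non-minuscule fundamental coweight, read off from a coefficient $\ge 2$ in the highest root --- is standard and your criterion $\langle\alpha,\varpi_i\rangle\le 1$ for all positive roots $\alpha$ is the correct one, since that supremum is attained at $\theta$. You also correctly flag the only subtlety, namely that under the paper's adjoint-type convention the weight lattice of $\GLangDual$ coincides with $\CocharLattice{\A}$, so $\comu$ really is a coweight of $\G$ with no integrality issue. Nothing to fix.
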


From this time on we assume that the pair of dominant coweights ${\comu} \leq {\colambda}$ is such that the statements of the \refThm{ThmSymplecticResolutionExistence} hold.

There are important special cases of these symplectic resolutions

\begin{example}
	Let $G=\PSL{2}$, then the cocharacter and coroot lattice are character (or, equivalently, weight) and root lattices of $\SL{2}$ respectively. For any integer $n\geq 1$ by setting $\colambda = (n+1)\coomega$ and $\comu = (n-1)\coomega$ we get that $\Gr^{\colambda}_{\comu}$ is $\mathcal{A}_n$-singularity and the map $\Gr^{\left(\coomega, \dots, \coomega \right)}_{\comu} \to \Gr^{\colambda}_{\comu}$ is a symplectic resolution of singularity.
\end{example}

\begin{example}
	Let $\conu$ be a minuscule coweight of $\G$ and $\iota$ is the Cartan involution and set $\colambda = \conu + \iota\conu $, $\comu = 0$. Then we get a resolution $\Gr^{\left(\conu, \iota\conu \right)}_0 \to \Gr^{\conu+ \iota\conu}_0$ is the Springer resolution. Here $\Gr^{\left(\conu, \iota\conu \right)}_0 \cong T^*\left( \G/\PP^-_\conu \right)$ (the cotangent bundle of a partial flag variety), where $\PP^-_\conu$ is the maximal parabolic corresponding to $\conu$ (i.e. the Lie algebra of $\PP^-_\conu$ has all roots $\wtalpha$ of $\G$ satisfying $\left\langle \conu, \wtalpha \right\rangle \leq 0$). $\Gr^{\conu+ \iota\conu}_0$ is the affinization.

\end{example}

\begin{example}
	Let $G = \PSL{n}$, $\coomega_1$ be highest weight of the defining representation of $\SL{n}$ (so it is a $\PSL{n}$-coweight). Set also $\colambda = n \coomega_1$, $\underline{\colambda} = \left( \coomega_1, \dots, \coomega_1 \right)$ and ${\comu} = 0$. Then $\Gr^{\left(\coomega_1, \dots, \coomega_1 \right)}_0 \cong T^*\left( \G/\B \right)$ is the cotangent bundle of full flag variety, $\Gr^{n \coomega_1}_0$ is the nilpotent cone and $\Gr^{\left(\coomega_1, \dots, \coomega_1 \right)}_0 \to \Gr^{n \coomega_1}_0$ is the Springer resolution.
\end{example}

\medskip

Recall that $\GK\rtimes\LoopGm$ acts on $\Gr$, so it also acts diagonally on $\Gr^{\times l}$ for any $l$ (see e.g. \cites{Da}). 

\begin{proposition}
	\leavevmode
	\begin{enumerate}
	\item
		For any sequence $\ucolambda$ the subscheme $\Gr^{ \ucolambda } \subset \Gr^{\times l}$ is $\GO\rtimes\LoopGm$-invariant
	\item
		The resolution of singularities $m_{ \ucolambda } \colon \Gr^{ \ucolambda }_{\comu} \to \Gr^{\colambda}_{\comu}$ is $\T$-equivariant.
	\end{enumerate}
\end{proposition}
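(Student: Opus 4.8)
The plan is to deduce both assertions from one observation: the convolution variety $\Gr^{\ucolambda}$ and the map $m'_{\ucolambda}$ are assembled entirely from the diagonal action of $\GK\rtimes\LoopGm$ on powers of $\Gr$, so equivariance is almost formal, provided one is careful about which subgroup really preserves the slice.

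For the first part, I would begin from the fact that for each dominant coweight the relation $L_{i-1}\xrightarrow{\colambda_i}L_i$ on $\Gr\times\Gr$ is, by definition, a union of diagonal $\GK$-orbits, and hence is stable under the diagonal action of $\GK$, in particular of $\GO$. For the remaining factor $\LoopGm$: it normalizes $\GK$ inside $\GK\rtimes\LoopGm$, so the diagonal $\LoopGm$-action permutes the diagonal $\GK$-orbits on $\Gr\times\Gr$; since those orbits are indexed by the discrete set of dominant coweights and $\LoopGm$ is connected, each orbit is fixed, so the relation $\xrightarrow{\colambda_i}$ is $\LoopGm$-invariant as well. Thus every condition $L_{i-1}\xrightarrow{\colambda_i}L_i$ defining $\Gr^{\ucolambda}$ is preserved by the diagonal $\GO\rtimes\LoopGm$-action, and the only further ingredient is the base point $\UnitG\cdot\GO$ occurring in the first condition, which is fixed by $\GO$ (it is the identity coset in $\Gr=\GK/\GO$) and by the $t$-scaling $\LoopGm$. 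Hence the $\mathbb{C}$-points of $\Gr^{\ucolambda}$ form a $\GO\rtimes\LoopGm$-stable set, and so $\Gr^{\ucolambda}$ is $\GO\rtimes\LoopGm$-invariant as a subscheme of $\Gr^{\times l}$.

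For the second part, $m'_{\ucolambda}$ is simply the last-coordinate projection $(L_1,\dots,L_l)\mapsto L_l$, so $m'_{\ucolambda}\bigl(g\cdot(L_1,\dots,L_l)\bigr)=gL_l=g\cdot m'_{\ucolambda}(L_1,\dots,L_l)$ for every $g$ in the acting group; together with the first part this exhibits $m_{\ucolambda}\colon\Gr^{\ucolambda}\to\overline{\Gr^{\colambda}}$ as a $\GO\rtimes\LoopGm$-equivariant map. Now $\T=\A\times\LoopGm$ embeds into $\GO\rtimes\LoopGm$, the constant loops $\A\subset\G\subset\GO$ commuting with the $t$-scaling, so $m_{\ucolambda}$ is in particular $\T$-equivariant. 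By \refProp{PropSlicesProperties} the slice $\Gr^{\colambda}_{\comu}$ is a $\T$-invariant subscheme of $\overline{\Gr^{\colambda}}$; therefore its preimage $\Gr^{\ucolambda}_{\comu}=m_{\ucolambda}^{-1}(\Gr^{\colambda}_{\comu})$ is a $\T$-invariant subscheme of $\Gr^{\ucolambda}$, and the restriction $m_{\ucolambda}\colon\Gr^{\ucolambda}_{\comu}\to\Gr^{\colambda}_{\comu}$ is $\T$-equivariant, as claimed.

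The whole argument is essentially bookkeeping; the only step that is not a tautology is the $\LoopGm$-invariance of the relations $\xrightarrow{\colambda_i}$, where one must invoke connectedness of $\LoopGm$ against the discreteness of the orbit index set rather than compute anything directly. The other point to keep in mind is that one really must descend from $\GK$ to the subtorus $\T$ in the second part: the map $m_{\ucolambda}$ is \emph{not} $\GK$- or $\GO$-equivariant on the slices, because the slice $\Gr^{\colambda}_{\comu}$ is only $\T$-invariant, not $\GK$-invariant.
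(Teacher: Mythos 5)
The paper itself states this proposition without proof (deferring to \cites{Da}), so there is no in-text argument to compare against; on its own merits, your proposal is correct in structure and in its final remark (the point that $m_{\ucolambda}$ is only $\T$-equivariant on the slice, not $\GO$-equivariant, because $\Gr^{\colambda}_{\comu}$ is not $\GO$-invariant, is exactly the distinction one must not blur). The only step that is not fully airtight is the ``connectedness plus discreteness'' justification of the $\LoopGm$-invariance of the relation $\xrightarrow{\colambda_i}$. The orbits $O_\mu \subset \Gr\times\Gr$ are locally closed but not open or closed, so the induced action of $\LoopGm$ on the index set of orbits is a permutation action that one still has to argue is ``continuous'' before invoking connectedness; this is not automatic from the bijection with dominant coweights. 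The cleaner and genuinely one-line way to close this is to observe that each diagonal $\GK$-orbit $O_\mu$ contains the point $(\Tfixed{0},\Tfixed{\mu})$, and since both $\Tfixed{0}=\UnitG\cdot\GO$ and $\Tfixed{\mu}$ are $\LoopGm$-fixed (by Proposition~\ref{PropGrFixedLocus}, $\Gr^{\T}=\Gr^{\A}$), the orbit $O_\mu$ contains a $\LoopGm$-fixed point; combined with the fact that $\LoopGm$ normalizes $\GK$ and hence sends $\GK$-orbits to $\GK$-orbits, this forces $g\cdot O_\mu=O_\mu$ for every $g\in\LoopGm$. Alternatively, one can compute directly: if $g_1^{-1}g_2 = h_1\, t^{\comu}\, h_2$ with $h_1,h_2\in\GO$, then after $t\mapsto at$ one gets $h_1(at)\,a^{\comu}\,t^{\comu}\,h_2(at)$, and $h_1(at)a^{\comu}\in\GO$ since $a^{\comu}\in\A\subset\GO$, so the double-coset is unchanged. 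Either fix makes the argument rigorous; the rest of your proof, including the reduction of part~(2) to equivariance of the last-coordinate projection and the $\T$-invariance of $\Gr^{\colambda}_{\comu}$ from Proposition~\ref{PropSlicesProperties}, is exactly right.
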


\subsection{Equivariant geometry}

Here we review some facts about $\T$-action on the slices. One can find the proofs in \cites{Da}.

\begin{proposition} \label{PropSliceFixedLocus}
	The fixed loci $X^\T$ and $X^\A$ are equal to the following disjoint union of finitely many points
	\begin{equation*}
		X^\T 
            =
            X^\A
		= 
		\left\lbrace 
			\left( 
				\Tfixed{\comu_1}, 
				\dots, 
				\Tfixed{\comu_l} 
			\right) 
			\in 
			\Gr^{\times l} 
			\left\vert
				\begin{matrix}
					\;\forall i \;\: 
					\comu_i - \comu_{i-1} 
					\text{ is a weight of } 
					V(\colambda_i)
					\\
					\comu_l = {\comu}, 
					\quad 
					\comu_0 = 0
				\end{matrix}
			\right.
		\right\rbrace.
	\end{equation*}
\end{proposition}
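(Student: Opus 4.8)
The plan is to reduce the computation of $X^\T = X^\A$ to the known description of the $\T$-fixed locus of the affine Grassmannian $\Gr$ (\refProp{PropGrFixedLocus}) together with the orbit characterization of the relation $L_1 \xrightarrow{\colambda} L_2$ (\refCor{CorOrbitsInGrGr} and the subsequent remark), and then to impose the slice condition coming from the transversal slice $\Gr_{\comu}$. First I would observe that $X = \Gr^{\ucolambda}_{\comu} = m_{\ucolambda}^{-1}\bigl(\Gr^{\colambda}_{\comu}\bigr)$ sits inside $\Gr^{\ucolambda} \subset \Gr^{\times l}$, and that the $\T$-action on $\Gr^{\times l}$ is the diagonal action, so a tuple $(L_1,\dots,L_l)$ is $\T$-fixed iff each $L_i$ is a $\T$-fixed point of $\Gr$. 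By \refProp{PropGrFixedLocus}, $\Gr^\T = \Gr^\A = \{\Tfixed{\conu} : \conu \in \CocharLattice{\A}\}$, so any $\T$-fixed point of $X$ has the form $(\Tfixed{\comu_1},\dots,\Tfixed{\comu_l})$ for some cocharacters $\comu_1,\dots,\comu_l$.

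Next I would translate the three defining conditions on such a tuple. The convolution condition $\UnitG\cdot\GO \xrightarrow{\colambda_1} L_1 \xrightarrow{\colambda_2}\cdots\xrightarrow{\colambda_l} L_l$ says that each pair $(L_{i-1},L_i)$ lies in the $\GK$-orbit indexed by some dominant $\conu \le \colambda_i$; using translation by a representative of $L_{i-1}$ one reduces the $i$-th condition to $\Tfixed{\comu_i - \comu_{i-1}} \in \overline{\GO\cdot\Tfixed{\colambla_i}}$, i.e. the dominant representative of $\comu_i - \comu_{i-1}$ in its Weyl orbit is $\le \colambla_i$. Since $\colambda_i$ is minuscule (we are in the situation where the hypotheses of \refThm{ThmSymplecticResolutionExistence} hold, in particular by the first corollary all $\colambla_i$ are taken minuscule), the coweights $\conu$ with dominant representative $\le \colambla_i$ are exactly the weights of $V(\colambla_i)$, which for a minuscule weight form a single Weyl orbit; in any case "dominant representative $\le\colambla_i$" is equivalent to "$\comu_i - \comu_{i-1}$ is a weight of $V(\colambla_i)$". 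The condition $(L_1,\dots,L_l)\in m_{\ucolambda}^{-1}(\Gr_{\comu})$ forces $L_l = m_{\ucolambda}(L_1,\dots,L_l)$ to lie in $\overline{\Gr^{\colambla}}\cap\Gr_{\comu}$; but by \refProp{PropSlicesProperties}(2) the only $\T$-fixed point of the slice $\Gr^{\colambla}_{\comu}$ is $\Tfixed{\comu}$, so $L_l = \Tfixed{\comu}$, i.e. $\comu_l = \comu$. Together with the normalization $\comu_0 = 0$ (the base point $\UnitG\cdot\GO = \Tfixed{0}$), this yields exactly the set described in the statement. Conversely, any tuple satisfying these conditions is visibly $\T$-fixed and lies in $X$, giving the reverse inclusion; finiteness is then clear since each $\comu_i - \comu_{i-1}$ ranges over a finite weight set and $\comu_l$ is fixed.

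Finally I would note that the equality $X^\T = X^\A$ is inherited directly from the equality $\Gr^\T = \Gr^\A$ of \refProp{PropGrFixedLocus}: the same argument runs verbatim with $\A$ in place of $\T$, since the slice condition and the convolution condition are phrased in terms of $\GO$- and $\GK$-orbits which are acted on compatibly, and the only $\A$-fixed point of $\Gr^{\colambla}_{\comu}$ is again $\Tfixed{\comu}$ by \refProp{PropSlicesProperties}(2). The main obstacle I anticipate is the careful bookkeeping in the middle step: showing that the convolution relation $L_{i-1}\xrightarrow{\colambla_i} L_i$ between two $\T$-fixed points $\Tfixed{\comu_{i-1}}$, $\Tfixed{\comu_i}$ is equivalent to $\comu_i - \comu_{i-1}$ being a weight of $V(\colambla_i)$ — this uses the Mirković–Vilonen / geometric Satake dictionary identifying $\overline{\GO\cdot\Tfixed{\colambla}}$-membership of $\Tfixed{\conu}$ with $\conu$ being a weight of $V(\colambla)$ and the $W$-invariance of the cells (\refProp{PropGrCells}(1)), and one must check it is insensitive to the choice of representatives. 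Everything else is a direct assembly of results already recorded in this section.
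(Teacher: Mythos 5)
Your proposal is correct and follows essentially the same route as the paper's proof: reduce to the diagonal $\T$-action to see each $L_i = \Tfixed{\comu_i}$, translate the convolution condition into ``$\comu_i - \comu_{i-1}$ is a weight of $V(\colambda_i)$'' with $\comu_0 = 0$, impose $\comu_l = \comu$ from the slice condition, and rerun the argument for $\A$ using $\Gr^\T = \Gr^\A$. The only difference is that you spell out the translation-by-representative reduction and the geometric Satake dictionary more explicitly than the paper does, but the logical content is the same.
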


\begin{proof}
	Since the $\T$-action is diagonal, we have that $\left(L_1,\dots,L_l\right)\in X^\T$ iff $\forall i$ $L_i$ has form $\Tfixed{\comu_i}$ for some coweight $\comu_i$. Next, conditions for such points to be in $X$ are
	\begin{enumerate}
	\item[a]
		A condition to be in $\Gr^{\ucolambda}$. That is, $\forall i$ $L_{i-1}\xrightarrow{\colambda_i} L_i $ which gives that $\comu_i-\comu_{i-1}$ is a weight of $V(\colambda_i)$ with convention $\comu_0 = 0$ to make $L_0=\Tfixed{0}=\UnitG\cdot\GO$.
	\item[b]
A condition to be in $m_{\ucolambda }^{-1}\left( \Gr_{\comu} \right)$. That is, $L_l = m_{\ucolambda } (L_1,\dots, L_l) = \Tfixed{{\comu}}$, which is equivalent to $\comu_l = {\comu} $.
	\end{enumerate}
	This gives the description of $X^\T$ in the proposition.
        Using $\Gr^\A = \Gr^\T$, a similar argument shows the same for $X^\A$.
\end{proof}

\begin{remark}
	If $\colambda_i$ is minuscule, the constraint of $\conu_i$ to be a weight of $V(\colambda_i)$ just means that $\comu_i - \comu_{i-1}  \in W\colambda_i$.
\end{remark}

\begin{remark}
	If all $\colambda_i$ are minuscule, the fixed points are in bijection with a basis of the weight ${\comu}$ subspace of the $\GLangDual$-representation $V(\colambda_1)\otimes\dots\otimes V(\colambda_l)$.
\end{remark}

\begin{remark}
    The tight connection between the cohomology of sheaves on the resolution of slices for $\G$ and the representation theory of the Langlands dual $\GLangDual$ is known as geometric Satake correspondence \cites{G,MVi1,MVi2}. A well-known spectial case is
    \begin{equation*}
	\CoHlgy
	{
		\Gr^
		{
			\left( 
				\colambda_1, 
				\dots, 
				\colambda_l 
			\right)
		}
		_{\comu}
	}
	\simeq
	V_{\colambda_1}
	\otimes
	\dots
	\otimes
	V_{\colambda_l}
	\left[
		\comu
	\right]
    \end{equation*}
    Since the resolution of slices are $\T$-equivariantly formal, this also follows from $\T$-equivariant cohomology and the combinatorics of the fixed points.
\end{remark}

\begin{notation}
	For a fixed point $p\in X^{\T}$ there are two related sequences of coweights. Let 
	\begin{equation*}
		p =
		\left(
			\Tfixed{0},
			\Tfixed{\comu_1}, 
			\dots, 
			\Tfixed{\comu_{l-1}}, 
			\Tfixed{\comu}
		\right).
	\end{equation*}

	Then for all $i$, $0 < i < l$, we define
	\begin{equation*}
		\sigmapi{p}{i} = \comu_i,
	\end{equation*}
	and for coherent notation we set
	\begin{align*}
		\sigmapi{p}{0} = 0,\\
		\sigmapi{p}{l} = \comu.
	\end{align*}

	This gives a sequence
	\begin{equation*}
		\sigmap{p} =
		\left(
			\sigmapi{p}{0}, \dots, \sigmapi{p}{l}
		\right).
	\end{equation*}

	We also define for all $i$, $1\leq i \leq l$
	\begin{equation*}
		\deltapi{p}{i} = \sigmapi{p}{i} - \sigmapi{p}{i-1}.
	\end{equation*}

	One forms the second sequence from these coweights
	\begin{equation*}
		\deltap{p} =
		\left(
			\deltapi{p}{1}, \dots, \deltapi{p}{l}
		\right).
	\end{equation*}

	Informally speaking, $\deltap{p}$ is the sequence of increments, so $\sigmap{p}$ is the sequence of their partial sums. This explains the notation.
\end{notation}

\begin{figure}
	\centering
	\includegraphics{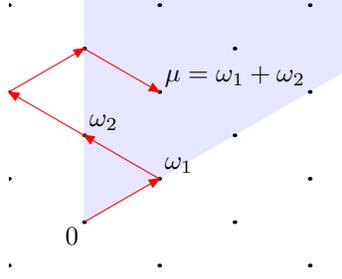}
	\caption{A typical path associated to a fixed point.} \label{FigPathExample}
\end{figure}

One useful way to think about the fixed points of $\Gr^{ \ucolambda }_{\comu}$ is that they are one-to-one certain piecewise-linear paths in the coweight lattice. Let $P_p$ be the piecewise linear path connecting points 
\begin{equation*}
	0 
	= 
	\sigmapi{p}{0}, 
	\sigmapi{p}{1},
	\dots, 
	\sigmapi{p}{l-1}, 
	\sigmapi{p}{l} 
	= 
	\comu
\end{equation*} 
in the coweight lattice.

The paths $P_p$ are the ones that have $l$ segments, start at the origin, and end at ${\comu}$. The $i$th segment must be a weight of $V(\colambda_i)$ (as said earlier, this is equivalent to being a Weyl reflection of $\colambda_i$ if $\colambda_i$ is minuscule). See \refFig{FigPathExample} for an example of a typical path in the case $\G = \PSL{3}$ and $\Gr^{ \left(\coomega_1, \coomega_1, \coomega_1, \coomega_1, \coomega_2\right) }_{\coomega_1+\coomega_2}$.

Using the paths $P_p$ is useful for a combinatorial description of the tangent weights at the fixed points. We postpone the proof until the appendix.

\begin{figure}
	\centering
	\begin{subfigure}{0.45\textwidth}
		\centering
		\includegraphics{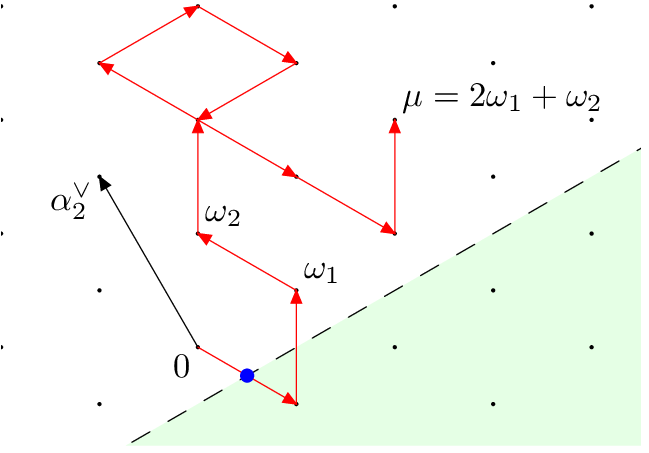}
		\caption{Weight multiplicities of $\wtalpha_2-\hbar$ and $-\wtalpha_2$ are $1$. }\label{FigLatticeMultiplicities1}
	\end{subfigure}
	~
	\begin{subfigure}{0.45\textwidth}
		\centering
		\includegraphics{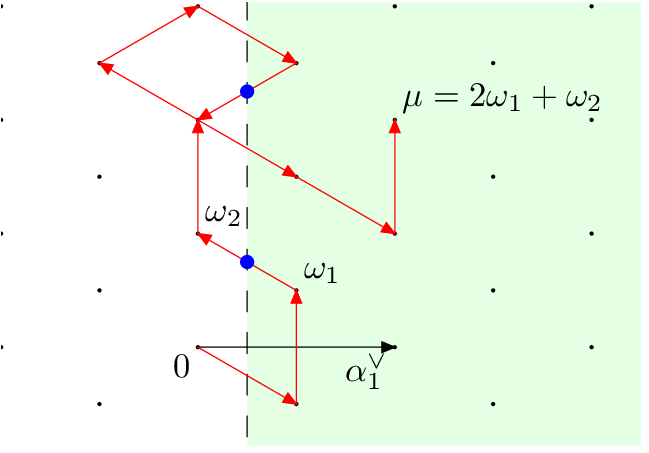}
		\caption{Weight multiplicities of $\wtalpha_1$ and $-\wtalpha_2-\hbar$ are $2$.}\label{FigLatticeMultiplicities2}
	\end{subfigure}
\caption{Weight multiplicities in $T_p X$ for $p$ represented by the red path $P_p$.}\label{FigLatticeMultiplicities}
\end{figure}

\begin{theorem} \label{ThmLatticeMultiplicities}
The only weights in $T_p X$ are of form $\wtalpha+n\hbar$ for a root $\wtalpha$ and $n\in\mathbb{Z}$. Moreover, the multiplicity of weight $\wtalpha+n\hbar$ is the number of crossings of $P_p$ with the hyperplane 
\begin{equation*}
\left\langle \bullet, \wtalpha \right\rangle - \left( n + \dfrac{1}{2} \right) = 0
\end{equation*}
in the direction of the halfspace containing $0$.
\end{theorem}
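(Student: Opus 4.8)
The plan is to realize $T_p X$ as a subquotient of the tangent space of the ambient convolution space $\Gr^{\ucolambda}$, and to reorganize everything combinatorially in terms of the path $P_p$. Concretely, I would first compute $T_p\Gr^{\ucolambda}$, then produce a short exact sequence of $\T$-modules $0\to T_pX\to T_p\Gr^{\ucolambda}\to T_{\Tfixed{\comu}}\Gr^{\comu}\to 0$ expressing the slice condition $L_l\in\Gr_{\comu}$, and finally compare the three terms.

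For $T_p\Gr^{\ucolambda}$: since each $\colambda_i$ is minuscule, $\overline{\Gr^{\colambda_i}}=\G/\PP^-_{\colambda_i}$ is a smooth partial flag variety (\refProp{PropGrCells}), so forgetting coordinates exhibits $\Gr^{\ucolambda}$ as an iterated fibre bundle whose $i$-th fibre over $(L_1,\dots,L_{i-1})$ is $\{L_i:L_{i-1}\xrightarrow{\colambda_i}L_i\}$, a translate $g\cdot\overline{\Gr^{\colambda_i}}$ when $L_{i-1}=g\GO$. Thus $\Gr^{\ucolambda}$ is smooth and $T_p\Gr^{\ucolambda}=\bigoplus_{i=1}^{l}T_{p,i}$, where $T_{p,i}$ is the tangent space at $\Tfixed{\sigmapi{p}{i}}=\sigmapi{p}{i-1}(t)\cdot\Tfixed{\deltapi{p}{i}}$ to the $i$-th fibre. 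Left multiplication by $\sigmapi{p}{i-1}(t)^{-1}$ identifies this fibre with $\overline{\Gr^{\colambda_i}}$, intertwining the $\T$-action with that on $\overline{\Gr^{\colambda_i}}$ up to the automorphism $(a,s)\mapsto(a\cdot\sigmapi{p}{i-1}(s),\,s)$ of $\T=\A\times\LoopGm$, which sends a weight $\wtalpha+n\hbar$ to $\wtalpha+(n+\langle\wtalpha,\sigmapi{p}{i-1}\rangle)\hbar$; and the tangent weights of $\overline{\Gr^{\colambda_i}}$ at $\Tfixed{\deltapi{p}{i}}$ (with $\deltapi{p}{i}\in W\colambda_i$) are exactly the roots $\wtalpha$ with $\langle\wtalpha,\deltapi{p}{i}\rangle=1$, all with vanishing $\hbar$-part. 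Hence I expect the multiplicity of $\wtalpha+n\hbar$ in $T_p\Gr^{\ucolambda}$ to be the number of $i$ with $\langle\wtalpha,\sigmapi{p}{i-1}\rangle=n$ and $\langle\wtalpha,\sigmapi{p}{i}\rangle=n+1$; since every segment of $P_p$ changes $\langle\bullet,\wtalpha\rangle$ by $-1$, $0$ or $1$ (minusculeness), this is the number of segments of $P_p$ that cross the hyperplane $\langle\bullet,\wtalpha\rangle=n+\tfrac{1}{2}$ with $\langle\bullet,\wtalpha\rangle$ increasing.

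For the exact sequence I would invoke the transversal slice structure (cf.\ \cite{KWWY}): near $\Tfixed{\comu}$ one has a $\T$-equivariant local product $\overline{\Gr^{\colambda}}\cong\Gr^{\comu}\times\Gr^{\colambda}_{\comu}$ with projection $\pi$ onto the smooth orbit $\Gr^{\comu}=\GO\cdot\Tfixed{\comu}$. Then $f:=\pi\circ m_{\ucolambda}$ is a $\T$-equivariant morphism from a neighbourhood of $p$ in $\Gr^{\ucolambda}$ onto a neighbourhood of $\Tfixed{\comu}$ in $\Gr^{\comu}$; since $m_{\ucolambda}$ and $\pi$ are $\GO$-equivariant, every fibre of $f$ is locally a $\GO$-translate of $f^{-1}(\Tfixed{\comu})=X$, so (\refProp{PropSlicesProperties}) is smooth of dimension $\langle 2\wtrho,\colambda-\comu\rangle=\dim\Gr^{\ucolambda}-\dim\Gr^{\comu}$, and $f$ is flat and smooth by miracle flatness. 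Therefore $df_p$ is surjective with kernel $T_pX$, which gives the asserted sequence. Together with the standard computation that $T_{\Tfixed{\comu}}\Gr^{\comu}$ — the image of the orbit-map differential $\mathfrak{g}(\OO)\to T_{\Tfixed{\comu}}\Gr=\mathfrak{g}(\K)/\mathrm{Ad}_{\comu(t)}\mathfrak{g}(\OO)$ — has weights $\wtalpha+n\hbar$ with $0\le n<\langle\wtalpha,\comu\rangle$, this determines all three terms.

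Finally I would assemble them. As $P_p$ runs from $0$ to $\comu$ with all steps of $\langle\bullet,\wtalpha\rangle$ in $\{-1,0,1\}$, the net of up- minus down-crossings of the level $n+\tfrac{1}{2}$ is the total change of $\langle\bullet,\wtalpha\rangle$ across it, i.e.\ $[\langle\comu,\wtalpha\rangle>n]-[0>n]$. Subtracting from the up-crossing count in $T_p\Gr^{\ucolambda}$ the contribution of $T_{\Tfixed{\comu}}\Gr^{\comu}$ (namely $1$ exactly when $0\le n<\langle\comu,\wtalpha\rangle$) leaves the down-crossings when $n\ge 0$ and the up-crossings when $n\le -1$ — in either case precisely the crossings in the direction of the half-space containing $0$, as claimed, and in particular no pure multiple of $\hbar$ survives. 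I expect the main obstacle to be the third paragraph: transporting the transversal slice structure of $\overline{\Gr^{\colambda}}$ rigorously through the (non-isomorphism) resolution $m_{\ucolambda}$ to obtain the exact sequence for $T_pX$; the tangent computation on $\Gr^{\ucolambda}$ is routine apart from sign bookkeeping, and the closing combinatorics is elementary.
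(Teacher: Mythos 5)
Your argument is correct in substance and, up to the polish you yourself flag, essentially complete. A caveat on the comparison: the paper announces that the proof of \refThm{ThmLatticeMultiplicities} is "postponed to the appendix," but Appendix~\ref{AppendixA} only \emph{uses} the theorem (e.g.\ in \refCor{CorTWallsPositions}) and refers to an explicit combinatorial description of tangent vectors as "segments" in~\cite{Da}, realized geometrically by the $\T$-invariant curves $C^{ij}_{p,\wtalpha+n\hbar}$. So the paper's route is hands-on: each crossing of $P_p$ is matched to an explicit $\SL{2}$-orbit whose tangent line at $p$ carries the advertised weight, and that concreteness is reused later when classifying invariant curves and unbroken maps. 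Your route is structural: filter $T_p\Gr^{\ucolambda}$ along the convolution fibration (noting that each fibre is a translate $\sigmapi{p}{i-1}(t)\cdot\overline{\Gr^{\colambda_i}}$ and carefully tracking the $\hbar$-shift $n\mapsto n+\langle\wtalpha,\sigmapi{p}{i-1}\rangle$ introduced by the left translation, which you do correctly), then cut down to $T_pX$ via a $\T$-equivariant submersion onto $\Gr^{\comu}$. The linear algebra and the final bookkeeping are right: the multiplicity in $T_p\Gr^{\ucolambda}$ is the number of up-crossings of the level $n+\tfrac12$, the subtraction of $T_{\Tfixed{\comu}}\Gr^{\comu}$ removes $[0\le n<\langle\comu,\wtalpha\rangle]$, and a two-case check against the sign of $n$ confirms that what remains is precisely the count of crossings into the halfspace containing~$0$. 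On the point you single out as the main obstacle, you should phrase the slice projection on the ambient $\Gr$ (which is smooth as an ind-scheme) rather than on $\overline{\Gr^{\colambda}}$ (possibly singular): near $\Tfixed{\comu}$ the étale product $\Gr\cong\Gr^{\comu}\times\Gr_{\comu}$ gives a $\T$-equivariant (at least infinitesimally) projection $\pi$, you compose with the last-coordinate projection $\pi_l$ rather than with $m_{\ucolambda}$, and then $X=(\pi\circ\pi_l)^{-1}(\Tfixed{\comu})$ near $p$; the $\GO$-equivariance of $\pi_l$ and of the orbit $\Gr^{\comu}$ makes nearby fibres $\GO$-translates of $X$, so with the dimension count miracle flatness does close the argument. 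The trade-off between the two routes is the usual one: your proof determines the weights cleanly but does not exhibit the invariant curves, which the paper needs later anyway.
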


An illustration of the statement of \refThm{ThmLatticeMultiplicities} is shown on \refFig{FigLatticeMultiplicities}. The dashed lines and green halfphanes correspond to pairs of weights, the blue dots show the intersections under consideration. We used the standard identification of (short) roots $\wtalpha_1$ and $\wtalpha_2$ with vectors of length squared 2 via a properly normalized invariant inner product to visualize them.

The following straightforward corollary is useful in further computations

\begin{corollary} \label{CorAMultiplisities}
	Let $p \in X^{\A}=X^{\T}$. Then
	\begin{enumerate}
	\item
		A weight $\wtalpha$ can have non-zero multiplicity in $T_p X$ only if $\wtalpha$ is a root,
	\item
		If $\wtalpha$ is a positive root, then the multiplicity of $\wtalpha$ in $T_p X$ is the number of such $i$ that
		\begin{equation*}
			\left\langle
				\deltapi{p}{i},
				\wtalpha
			\right\rangle
			= 
			-1.
		\end{equation*}
		Equivalently, this is
		\begin{equation*}
			N -
			\left\langle
				\comu,
				\wtalpha
			\right\rangle,
		\end{equation*}
		where $N$ is the number of such $i$ that
		\begin{equation*}
			\left\langle
				\deltapi{p}{i},
				\wtalpha
			\right\rangle
			= 
			+1.
		\end{equation*}
	\item
		If $\wtalpha$ is a negative root, then the multiplicity of $\wtalpha$ in $T_p X$ is equal to the multiplicity of $-\wtalpha$ in $T_p X$.
	\end{enumerate}
\end{corollary}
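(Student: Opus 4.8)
The plan is to deduce \refCor{CorAMultiplisities} directly from \refThm{ThmLatticeMultiplicities} by specializing the hyperplane-crossing count to the subtorus $\A = \ker\hbar$. First I would observe that restricting a $\T$-weight $\wtalpha + n\hbar$ to $\A$ just sets $\hbar = 0$, so the $\A$-multiplicity of a weight $\wtalpha$ in $T_p X$ is the sum over all $n \in \mathbb{Z}$ of the $\T$-multiplicities of $\wtalpha + n\hbar$. Since \refThm{ThmLatticeMultiplicities} says the only $\T$-weights are of the form $\wtalpha + n\hbar$ with $\wtalpha$ a root, part (1) is immediate: a non-root $\wtalpha$ receives zero contribution from every $n$.

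For part (2), take $\wtalpha$ a positive root. By the theorem, the $\A$-multiplicity of $\wtalpha$ equals the total number of crossings of the path $P_p$ with the family of parallel hyperplanes $\left\langle \bullet, \wtalpha \right\rangle = n + \tfrac12$ over all $n \in \mathbb{Z}$, each crossing counted in the direction of the halfspace containing $0$, i.e. with sign according to whether the path moves toward or away from the side of $0$. The key point is that $P_p$ is a concatenation of the segments $\deltapi{p}{1}, \dots, \deltapi{p}{l}$, and for a minuscule $\colambda_i$ each increment $\deltapi{p}{i}$ lies in the Weyl orbit $W\colambda_i$, so the integer $\left\langle \deltapi{p}{i}, \wtalpha \right\rangle$ takes only the values $-1, 0, +1$ (this is the standard fact that pairings of minuscule weights with roots are small; one should cite or recall it, perhaps from the appendix material underlying \refThm{ThmLatticeMultiplicities}). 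A segment with pairing $+1$ crosses exactly one hyperplane of the family, moving away from the $0$-side, hence contributes $-1$ with the sign convention — wait, here I must be careful with the orientation convention in the theorem statement; I would pin down that "in the direction of the halfspace containing $0$" counts a crossing positively when the path moves from the far side toward the $0$-side. A segment with $\left\langle \deltapi{p}{i}, \wtalpha \right\rangle = -1$ then contributes $+1$, a segment with pairing $0$ contributes $0$, and a segment with pairing $+1$ contributes $-1$. Summing, the multiplicity equals $\#\{i : \left\langle \deltapi{p}{i}, \wtalpha\right\rangle = -1\} - \#\{i : \left\langle \deltapi{p}{i}, \wtalpha\right\rangle = +1\}$; but since multiplicities are nonnegative, and in fact every individual hyperplane $\left\langle\bullet,\wtalpha\right\rangle = n+\tfrac12$ is crossed a nonnegative net number of times by a path from $0$ to $\comu$, the telescoping along each hyperplane forces the multiplicity of $\wtalpha$ to equal $N - \left\langle \comu, \wtalpha\right\rangle$ where $N = \#\{i : \left\langle \deltapi{p}{i}, \wtalpha\right\rangle = +1\}$, using $\sum_i \left\langle \deltapi{p}{i}, \wtalpha\right\rangle = \left\langle \comu, \wtalpha\right\rangle$. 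Reconciling the two displayed formulas in part (2) is then just this identity $\#\{=-1\} = N - \left\langle\comu,\wtalpha\right\rangle$.

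For part (3), I would invoke the symplectic structure: $X = \Gr^{\ucolambda}_{\comu}$ is a symplectic resolution and the symplectic form has $\A$-weight $0$ (only $\LoopGm$ scales it, by the earlier Poisson computation giving weight $-\hbar$), so the $\A$-representation $T_p X$ is symplectic and hence weights come in pairs $\wtalpha, -\wtalpha$ with equal multiplicity. Alternatively one can see it from the hyperplane count: the family for $-\wtalpha$ is the same family with reversed orientation, and summing over all $n$ the net crossing numbers match because $P_p$ goes from $0$ to $\comu$ — but the symplectic argument is cleaner.

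The main obstacle I anticipate is bookkeeping the sign/orientation convention in \refThm{ThmLatticeMultiplicities} consistently: the theorem counts crossings "in the direction of the halfspace containing $0$," and I need to be certain whether a $+1$-pairing segment is counted with $+1$ or $-1$, and to confirm that the relevant count is a signed net count (so that it can legitimately be negative-looking before one uses nonnegativity) versus an unsigned count of crossings of hyperplanes lying between $0$ and $\comu$. Once that convention is fixed, the rest is the elementary telescoping argument plus the minuscule-pairing fact, and the two equivalent formulas in part (2) fall out of $\sum_i \left\langle\deltapi{p}{i},\wtalpha\right\rangle = \left\langle\comu,\wtalpha\right\rangle$.
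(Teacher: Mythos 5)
Parts (1) and (3) are fine: part (1) is immediate from \refThm{ThmLatticeMultiplicities} once you observe that the restriction from $\T$ to $\A$ sums the multiplicities of $\wtalpha + n\hbar$ over $n$, and part (3) via the zero $\A$-weight of the symplectic form is exactly the right argument. The problem is in part (2).

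Your per-segment contribution rule (``a $-1$-pairing segment contributes $+1$, a $+1$-pairing segment contributes $-1$'') is not correct, because the meaning of ``toward the halfspace containing $0$'' depends on the sign of $n+\tfrac12$, not only on the direction of the segment. Writing $a_i = \left\langle\sigmapi{p}{i},\wtalpha\right\rangle$, a segment with $a_i = a_{i-1}-1$ crosses the hyperplane at $n+\tfrac12$ with $n = a_i$; this down-crossing is counted only when $n+\tfrac12 > 0$, i.e.\ $a_i\geq 0$, and is discarded when $a_i\leq -1$. Conversely a segment with $a_i = a_{i-1}+1$ crosses the hyperplane with $n = a_{i-1}$ and \emph{is} counted when $a_i\leq 0$. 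So the quantity you compute, $N_- - N_+$, is not the multiplicity (and indeed $N_- - N_+ = -\left\langle\comu,\wtalpha\right\rangle \leq 0$, which cannot be a dimension); the theorem's count is an \emph{unsigned} count of crossings in the specified direction, each contributing $+1$. The subsequent appeal to ``telescoping'' is an assertion, not an argument, and it is this step that carries the whole content of part (2).

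Here is what the missing argument is. The multiplicity of $\wtalpha$ equals
\begin{equation*}
\#\left\lbrace i : a_i = a_{i-1}-1,\ a_i\geq 0\right\rbrace
+
\#\left\lbrace i : a_i = a_{i-1}+1,\ a_i\leq 0\right\rbrace ,
\end{equation*}
and you want this to equal $N_- = \#\left\lbrace i : a_i = a_{i-1}-1\right\rbrace$. The difference is
\begin{equation*}
\#\left\lbrace i : a_i = a_{i-1}-1,\ a_i\leq -1\right\rbrace
-
\#\left\lbrace i : a_i = a_{i-1}+1,\ a_i\leq 0\right\rbrace ,
\end{equation*}
which is (down-crossings) minus (up-crossings) of the hyperplanes $\left\langle\bullet,\wtalpha\right\rangle = m+\tfrac12$ over $m\leq -1$. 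Since $a_0=0$ and $a_l=\left\langle\comu,\wtalpha\right\rangle\geq 0$ both lie strictly above every such hyperplane (this is where dominance of $\comu$ enters), each of these hyperplanes is crossed the same number of times in each direction, so the difference is zero. This gives $N_-$, and the identity $\sum_i\left\langle\deltapi{p}{i},\wtalpha\right\rangle=\left\langle\comu,\wtalpha\right\rangle$ then gives the second displayed formula $N_+-\left\langle\comu,\wtalpha\right\rangle$. Your draft has all the right ingredients (the minuscule pairing fact, the telescoping identity, the endpoints $0$ and $\comu$), but they need to be combined into this cancellation argument rather than into a signed per-segment rule that is false pointwise.
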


\bigskip

The slices have a natural collection line bundles (see \cites{Da})
The line bundle $\OOO{1}$ gives a collection of line bundles on $\Gr^{ \ucolambda }_{\comu}$ via coordinate-wise pullbacks:

\begin{equation*}
	\LBundle{i} = \pi_i^* \OOO{1},
\end{equation*}
where
\begin{align*}
	\pi_i
	\colon 
	\Gr^{ \ucolambda }_{\comu} 
	&\to 
	\Gr, 
	\quad 
	0 \leq i \leq l 
	\\
	(L_0,L_1,\dots,L_l)
	&\mapsto 
	L_i.
\end{align*}

They turn out to be $\T$-equivariant. For our goal it's more relevant to consider the following related collection of line bundles

\begin{equation} \label{EqEBundleDefinition}
	\EBundle{i} = \LBundle{i}/\LBundle{i-1} \text{ for } 1 \leq i \leq l.
\end{equation}

We know that these line bundles are sufficient for our purposes, more precisely the following holds. More precisely, the following statements follow from similar statements for $\LBundle{i}$ in \cites{Da}

\begin{proposition} \label{PropSecondCohomologyGeneration} 
    \leavevmode
    \begin{enumerate}
        \item 
            $\EqCoHlgyk{\T}{X}{2}$, is generated as a vector space by $\ChernE{\T}{i}$ $(1 \leq i < l)$ and constants $\EqCoHlgyk{\T}{\pt}{2}$.
        \item 
            $\CoHlgyk{X}{2}$ is generated as a vector space by  $\ChernE{}{i}$ $(1 \leq i < l)$.
        \item 
            $\Pic{X} \otimes_{\mathbb{Z}} \mathbb{Q}$ is generated as a vector space by $\EBundle{i}$ $(1 \leq i < l)$.
    \end{enumerate}
.
\end{proposition}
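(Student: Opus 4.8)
\medskip
\noindent\emph{Proof strategy.} The plan is to deduce all three statements from the corresponding assertions for the line bundles $\LBundle{i}$ proved in \cite{Da}, using the defining relation $\EBundle{i}\cong\LBundle{i}\otimes\LBundle{i-1}^{-1}$ --- i.e. $\ChernE{}{i}=c_1(\LBundle{i})-c_1(\LBundle{i-1})$ in $\CoHlgyk{X}{2}$, $\ChernE{\T}{i}=c_1^{\T}(\LBundle{i})-c_1^{\T}(\LBundle{i-1})$ equivariantly, and $[\EBundle{i}]=[\LBundle{i}]-[\LBundle{i-1}]$ in $\Pic{X}\otimes_{\mathbb{Z}}\mathbb{Q}$ --- together with the triviality of the two ``boundary'' bundles $\LBundle{0}$ and $\LBundle{l}$.

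First I would record those triviality facts. The projection $\pi_{0}$ is the constant map of $\Gr^{\ucolambda}_{\comu}$ onto the base point $\UnitG\cdot\GO=\Tfixed{0}$, which is $\T$-fixed, so $\LBundle{0}=\pi_{0}^{*}\OOO{1}$ is $\T$-equivariantly the trivial bundle twisted by a single character of $\T$; in particular $c_1(\LBundle{0})=0$ in $\CoHlgyk{X}{2}$, $[\LBundle{0}]=0$ in $\Pic{X}\otimes_{\mathbb{Z}}\mathbb{Q}$, and $c_1^{\T}(\LBundle{0})\in\EqCoHlgyk{\T}{\pt}{2}$. At the other end $\pi_{l}=m_{\ucolambda}$ factors through the affine variety $\Gr^{\colambda}_{\comu}$, which is normal and, by \refProp{PropSlicesProperties}, is contracted by the $\LoopGm\subset\T$-action onto the $\T$-fixed point $\Tfixed{\comu}$. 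This contraction extends to a morphism $a\colon\mathbb{A}^{1}\times\Gr^{\colambda}_{\comu}\to\Gr^{\colambda}_{\comu}$; since $\Gr^{\colambda}_{\comu}$ is normal, the line bundle $a^{*}(\OOO{1}|_{\Gr^{\colambda}_{\comu}})$ is pulled back from $\Gr^{\colambda}_{\comu}$ along the second projection, so its restriction to $\{1\}\times\Gr^{\colambda}_{\comu}$ (which recovers $\OOO{1}|_{\Gr^{\colambda}_{\comu}}$) is isomorphic to its restriction to $\{0\}\times\Gr^{\colambda}_{\comu}$ (which is trivial, as $a$ is constant there); hence $\OOO{1}|_{\Gr^{\colambda}_{\comu}}$ is trivial. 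Moreover, as $\A$ and $\LoopGm$ commute and the limit point $\Tfixed{\comu}$ is $\A$-fixed, the contraction is a $\T$-equivariant deformation retraction of $\Gr^{\colambda}_{\comu}$ onto $\Tfixed{\comu}$, whence $\EqCoHlgyk{\T}{\Gr^{\colambda}_{\comu}}{2}=\EqCoHlgyk{\T}{\pt}{2}$. Pulling back along $m_{\ucolambda}$ then gives $c_1(\LBundle{l})=0$, $[\LBundle{l}]=0$ in $\Pic{X}\otimes_{\mathbb{Z}}\mathbb{Q}$, and $c_1^{\T}(\LBundle{l})\in\EqCoHlgyk{\T}{\pt}{2}$.

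Next I would feed these into the $\LBundle{i}$-statements of \cite{Da}. From $c_1(\LBundle{0})=0$ and the relation above, telescoping gives $c_1(\LBundle{i})=\sum_{j\le i}\ChernE{}{j}$ for every $i$, so the linear span of $\ChernE{}{1},\dots,\ChernE{}{l-1}$ contains $c_1(\LBundle{0}),\dots,c_1(\LBundle{l-1})$, and it contains $c_1(\LBundle{l})$ as well since $c_1(\LBundle{l})=0$; conversely each $\ChernE{}{i}=c_1(\LBundle{i})-c_1(\LBundle{i-1})$ lies in the span of the $c_1(\LBundle{j})$. Thus the classes $\ChernE{}{i}$ $(1\le i<l)$ span exactly the same subspace of $\CoHlgyk{X}{2}$ as the full collection $c_1(\LBundle{0}),\dots,c_1(\LBundle{l})$, which by \cite{Da} is all of $\CoHlgyk{X}{2}$; this proves (2). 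The identical computation in $\Pic{X}\otimes_{\mathbb{Z}}\mathbb{Q}$, using $[\LBundle{0}]=[\LBundle{l}]=0$, gives (3); and the same linear algebra carried out in $\EqCoHlgyk{\T}{X}{2}$ modulo the subspace of constants $\EqCoHlgyk{\T}{\pt}{2}$ --- which is precisely where $c_1^{\T}(\LBundle{0})$ and $c_1^{\T}(\LBundle{l})$ lie --- gives (1).

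The reduction and the telescoping are routine, so I expect the only genuine obstacle to be the triviality of $\OOO{1}$ restricted to the singular affine slice $\Gr^{\colambda}_{\comu}$, and its equivariant refinement that $c_1^{\T}(\LBundle{l})$ is a constant. On a general affine variety a line bundle need not be trivial, nor even rationally trivial, so the contracting $\LoopGm$-action supplied by \refProp{PropSlicesProperties} is what makes this go through; the $\mathbb{A}^{1}\times(-)\to(-)$ argument above --- resting on normality of $\Gr^{\colambda}_{\comu}$ and on $\LoopGm$-equivariant contractibility --- is, to my knowledge, the cleanest way to pin it down, and it is essentially the one step not already contained in \cite{Da}.
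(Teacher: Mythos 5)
Your proof is correct and follows the same route as the paper: reduce to the analogous statements for the $\LBundle{i}$ proved in \cite{Da}. The paper itself only records that the reduction holds, without spelling it out; you have correctly identified the two facts that make the telescoping work --- triviality of $\LBundle{0}$ (immediate, since $\pi_0$ is the constant map to a $\T$-fixed point) and triviality of $\LBundle{l}$ (which factors through the normal, $\LoopGm$-contractible affine slice $\Gr^{\colambda}_{\comu}$, so that $\Pic(\mathbb{A}^1\times\Gr^{\colambda}_{\comu})\cong\Pic(\Gr^{\colambda}_{\comu})$ by seminormality and the $\T$-equivariant contraction kills $\EqCoHlgyk{\T}{\Gr^{\colambda}_{\comu}}{2}$ down to constants) --- and these are exactly the relations the paper later invokes when it asserts $\sum_{i=1}^l\ChernE{}{i}=0$ in the appendix.
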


So, to study quantum multiplication by elements of $\EqCoHlgyk{\T}{X}{2}$ and the quantum connection, it's enough to study quantum multiplication by $\ChernE{\T}{i}$.

\medskip

We know the action of $\ChernE{\T}{i}$ in the fixed point basis. It's diagonal with eigenvalues given by restrictions to the fixed points. For the purposes of this paper it's enough to look only $\A$-equivariant weights (set $\hbar = 0$ in the formulas from \cites{Da}).

\begin{proposition} \label{PropLWeights}
\leavevmode
    \begin{enumerate}
        \item
            The $\A$-weight of a line bundle $\LBundle{i}$ at a fixed point $p \in X^{\A}$ is 
            \begin{equation*}
    	   \CorootScalar{\sigmapi{p}{i}}{\bullet}
            \end{equation*}
        \item
            The $\A$-weight of a line bundle $\EBundle{i}$ at a fixed point $p \in X^{\A}$ is 
            \begin{equation*}
    	   \CorootScalar{\deltapi{p}{i}}{\bullet}
            \end{equation*}
    \end{enumerate}
\end{proposition}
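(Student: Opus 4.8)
The plan is to reduce both statements to the known behavior of the single line bundle $\OOO{1}$ on the affine Grassmannian $\Gr$ itself, then transport the answer along the projections $\pi_i$ and take the quotient in \refEq{EqEBundleDefinition}. First I would recall (from the cited work \cites{Da}) that $\OOO{1}$ on $\Gr$ is the ample generator of $\Pic(\Gr)$, the so-called determinant line bundle, and that its $\A$-weight at the torus-fixed point $\Tfixed{\conu}$ (for a coweight $\conu$) is $\CorootScalar{\conu}{\bullet}$, i.e. the linear functional $\bullet \mapsto \CorootScalar{\conu}{\bullet}$ on the Cartan. This is the key geometric input: the fiber of $\OOO{1}$ over $\Tfixed{\conu}$ is a one-dimensional $\A$-representation whose character is computed by pairing $\conu$ with the normalized invariant form. (Concretely this comes from the action of $\A$ on the highest-weight line of the corresponding representation of $\GK$, where $\Tfixed{\conu}$ sits; the exponent is exactly the value $\CorootScalar{\conu}{\bullet}$.)

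For part (1), consider a fixed point $p = \left( \Tfixed{0}, \Tfixed{\sigmapi{p}{1}}, \dots, \Tfixed{\comu} \right) \in X^{\A}$. By definition $\LBundle{i} = \pi_i^* \OOO{1}$, so the fiber of $\LBundle{i}$ at $p$ is the fiber of $\OOO{1}$ at $\pi_i(p) = \Tfixed{\sigmapi{p}{i}}$. Since $\pi_i$ is $\T$-equivariant, this identification respects the $\A$-action, and therefore the $\A$-weight of $\LBundle{i}$ at $p$ equals the $\A$-weight of $\OOO{1}$ at $\Tfixed{\sigmapi{p}{i}}$, which is $\CorootScalar{\sigmapi{p}{i}}{\bullet}$ by the input above. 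For part (2), I would take the $\A$-weight of a tensor quotient to be the difference of $\A$-weights: the fiber of $\EBundle{i} = \LBundle{i} \otimes \LBundle{i-1}^{-1}$ at $p$ is the fiber of $\LBundle{i}$ at $p$ tensored with the dual of the fiber of $\LBundle{i-1}$ at $p$, so its $\A$-weight is
\begin{equation*}
	\CorootScalar{\sigmapi{p}{i}}{\bullet} - \CorootScalar{\sigmapi{p}{i-1}}{\bullet}
	=
	\CorootScalar{\sigmapi{p}{i} - \sigmapi{p}{i-1}}{\bullet}
	=
	\CorootScalar{\deltapi{p}{i}}{\bullet},
\end{equation*}
using bilinearity of $\CorootScalar{\bullet}{\bullet}$ and the definition $\deltapi{p}{i} = \sigmapi{p}{i} - \sigmapi{p}{i-1}$. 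The passage to $\A$-equivariant weights (setting $\hbar = 0$) is harmless since none of these fibers involve the loop-rotation $\LoopGm$ in a way that changes the statement; the $\hbar$-corrections, if any, are recorded separately in \cites{Da}.

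The main obstacle I expect is not in the formal manipulation — which is just equivariance of pullback plus bilinearity — but in pinning down precisely the $\A$-weight of $\OOO{1}$ at $\Tfixed{\conu}$ with the correct normalization, i.e. verifying that the relevant character is exactly $\CorootScalar{\conu}{\bullet}$ and not, say, a multiple or a shift by $\wtrho$. This requires unwinding the construction of the determinant bundle on $\Gr$ and matching the normalization of $\CorootScalar{\bullet}{\bullet}$ (shortest coroot has length squared $2$) against the natural pairing appearing there. Since the proposition explicitly says these statements "follow from similar statements for $\LBundle{i}$ in \cites{Da}," I would simply cite that reference for the weight of $\LBundle{i}$ (equivalently of $\OOO{1}$), and present the argument above as the short deduction of the $\EBundle{i}$ case from it; the whole proof is then a couple of lines.
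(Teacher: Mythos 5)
Your proposal is correct and takes essentially the same route as the paper, which gives no explicit proof and simply refers the reader to \cites{Da} for the $\T$-equivariant weights of $\LBundle{i}$ (with the instruction to set $\hbar = 0$ to get the $\A$-weights). Your reduction of part (2) to part (1) by bilinearity and the definition $\deltapi{p}{i} = \sigmapi{p}{i} - \sigmapi{p}{i-1}$ is the only step the paper leaves implicit, and it is exactly right.
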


\subsection{Equivariant cohomology and stable envelopes}

From now on until the appendix the base ring for all cohomology is the field $\mathbb{Q}$, unless stated otherwise. All the results generalize straightforwardly to the case of any field of characteristic $0$.

Computations in equivariant cohomology heavily rely on the well-known localization Theorem\cite{AB}.

\begin{theorem}
(Atiyah-Bott)
The restriction map in the localized equivariant cohomology
\begin{equation*}
	\EqCoHlgy{\T}{X}_{loc} 
	\to 
	\EqCoHlgy{\T}{X^{\T}}_{loc}
\end{equation*}
is an isomorphism.
\end{theorem}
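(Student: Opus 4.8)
The plan is to run the classical Borel--Atiyah--Bott--Quillen localization argument, exploiting that $X^\T$ is a finite set of points (\refProp{PropSliceFixedLocus}) and that $X$ is a finite-dimensional variety with a finite $\T$-invariant cell decomposition (Corollary~\ref{CorTCellSuructure}; concretely, the Bialynicki-Birula decomposition of the contracting $\LoopGm$-action). The key reduction is: it suffices to produce a single nonzero homogeneous element $e \in \EqCoHlgy{\T}{\pt}$ that annihilates the relative cohomology $\EqCoHlgy{\T}{(X,X^\T)}$. Indeed, $\EqCoHlgy{\T}{\pt}$ is a polynomial domain and $\Frac\EqCoHlgy{\T}{\pt}$ inverts all of its nonzero homogeneous elements, so the existence of such an $e$ makes $\EqCoHlgy{\T}{(X,X^\T)}$ vanish after localization.

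To find $e$, I would use the long exact sequence of the pair in $\T$-equivariant cohomology,
\[
	\cdots \to \EqCoHlgy{\T}{(X,X^\T)} \to \EqCoHlgy{\T}{X} \to \EqCoHlgy{\T}{X^\T} \to \cdots ,
\]
together with excision, which identifies $\EqCoHlgy{\T}{(X,X^\T)}$ with the cohomology of a $\T$-invariant neighbourhood of $X\setminus X^\T$. The local input is standard: if $Y$ is a $\T$-space with $Y^\T=\emptyset$, then near any orbit the stabilizer has identity component a proper subtorus $\T'\subsetneq\T$, a $\T$-invariant slice neighbourhood $U$ is $\T$-equivariantly of the form $\T\times_{\T'}F$, so $\EqCoHlgy{\T}{U}\cong H^*_{\T'}(F)$ as a module over the ring homomorphism $\EqCoHlgy{\T}{\pt}\to H^*_{\T'}(\pt)$; any nonzero character of $\T$ trivial on $\T'$ (one exists because $\T'\neq\T$) then annihilates $\EqCoHlgy{\T}{U}$. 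Because $X^\T$ is finite and the cell decomposition is finite, $X\setminus X^\T$ is covered by \emph{finitely many} such invariant opens $U_1,\dots,U_N$; a Mayer--Vietoris induction over this cover shows that the product of the corresponding characters, a nonzero homogeneous element, annihilates $\EqCoHlgy{\T}{X\setminus X^\T}$ and hence $\EqCoHlgy{\T}{(X,X^\T)}$. This is the $e$ we wanted.

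Plugging $\EqCoHlgy{\T}{(X,X^\T)}_{loc}=0$ back into the (localized, hence still exact) long exact sequence yields the isomorphism $\EqCoHlgy{\T}{X}_{loc}\xrightarrow{\sim}\EqCoHlgy{\T}{X^\T}_{loc}$, which is the assertion. I expect the one genuinely delicate point — and the only place where anything about $X$ beyond its being a $\T$-space enters — to be promoting the pointwise statement ``every non-fixed orbit has a proper stabilizer'' into a single global annihilator; in our situation this is immediate from the finiteness of $X^\T$ and of the Bialynicki-Birula cells, but for a general $\T$-variety it requires compactness or a careful exhaustion argument.
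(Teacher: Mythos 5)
The paper does not prove this theorem: it is stated as a black box, attributed to Atiyah--Bott and cited from \cite{AB}, so there is no in-paper argument to compare yours against. Your sketch is the standard Quillen/Atiyah--Bott localization argument and is essentially correct. Two small points are worth tightening. First, what the slice theorem gives you near a non-fixed orbit through $x$ is $U \simeq \T \times_{\T_x} F$ with $\T_x$ the full (possibly disconnected) stabilizer, so $\EqCoHlgy{\T}{U} \cong H^*_{\T_x}(F)$; a character $\chi$ of $\T$ chosen to vanish on the identity component $\T'$ of $\T_x$ a priori need not vanish on $\T_x$, but since $\T$ is abelian one has $H^*_{\T_x}(\pt;\mathbb{Q}) \cong H^*_{\T'}(\pt;\mathbb{Q})$, so $\chi$ does restrict to zero and annihilates $\EqCoHlgy{\T}{U}$ as you want --- this step really does use rational coefficients, which is consistent with the paper's convention. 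Second, you are right to flag the passage from a pointwise annihilator to a global one as the place where noncompactness bites: Atiyah--Bott get finiteness from compactness, whereas here it comes from the finite Bia\l{}ynicki--Birula decomposition (equivalently, finitely many orbit types and a finite invariant open cover of $X \setminus X^\T$), after which a Mayer--Vietoris induction produces a single homogeneous $e \in \EqCoHlgy{\T}{\pt}$ killing $\EqCoHlgy{\T}{(X,X^\T)}$, and the localized long exact sequence of the pair gives the isomorphism. In short, your blind proof is a correct filling-in of a result the paper only quotes.
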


Here and later we use notation $\EqCoHlgy{\T}{X}_{loc} := \EqCoHlgy{\T}{X}\otimes_{\EqCoHlgy{\T}{\pt}} \Frac \EqCoHlgy{\T}{\pt}$.

If $X$ is proper then there is the fundamental class $\left[ X \right] \in \EqHlgyk{\A}{X}{2\dim X}$. By this gives a pushforward to a point homomorphism
\begin{align*}
	\EqCoHlgy{\T}{X}
	&\to
	\EqCoHlgy{\T}{\pt}
	\\
	\gamma
	&\mapsto
	\int\limits_X
	\gamma 
\end{align*}
quite often called integration by analogy with de Rham cohomology. Then the localization theorem allows one to compute this using only restrictions to the fixed locus.

\begin{equation} \label{EqLocalizationFormula}
	\int\limits_X
	\gamma
	=
	\sum_{Z\subset X^{\T}}
	\int\limits_Z
	\dfrac
	{
		\resZ
		{
			\gamma
		}
		{
			Z
		}
	}
	{
		\Euler{\T}{N_{Z/X}}
	}
\end{equation}
where the sum is taken over all components $Z \subset X^{\T}$, $\Euler{\T}{\bullet}$ is the equivariant Euler class, $N_{Z/X}$ is the normal bundle to $Z$ in $X$.

If $X$ is not proper the fundamental class lies in Borel-Moore (equivariant) homology $\left[ X \right] \in \EqBMHlgyk{\T}{X}{2\dim X}$. This pairs naturally with the equivariant cohomology with compact support $\EqCompCoHlgy{\T}{X}$ and gives a pushforward map
\begin{equation*}
	\EqCompCoHlgy{\T}{X}
	\to 
	\EqCoHlgy{\T}{\pt}.
\end{equation*}
In general, we have the following natural maps
\begin{equation} \label{EqCDCohomology}
	\begin{tikzcd}
		\EqCompCoHlgy{\T}{X} \arrow[d,"\iota^*"] \arrow[r]
		&
		\EqCoHlgy{\T}{X} \arrow[d,"\iota^*"]
		\\
		\EqCompCoHlgy{\T}{X^{\T}} \arrow[r]
		&
		\EqCoHlgy{\T}{X^{\T}}
	\end{tikzcd}
\end{equation}
The vertical arrows are restrictions. $X^{\T}$ is closed subvariety, so the inclusion $\iota\colon X^{\A} \to X$ is proper and the pullback is well-defined in the cohomology with compact support). After the localization the vertical arrows in \refEq{EqCDCohomology} become isomorphisms.

Assume now that $X^{\T}$ is proper. Then the bottom arrow in \refEq{EqCDCohomology} becomes an isomorphism because the compact support condition becomes vacuous. Then we get an isomorphism
\begin{equation*}
	\EqCompCoHlgy{\T}{X}
	\xrightarrow{\sim}
	\EqCoHlgy{\T}{X} 
\end{equation*}
which gives the pushforward
\begin{equation*}
	\EqCoHlgy{\T}{X}_{loc}
	\to 
	\EqCoHlgy{\T}{\pt}_{loc}
\end{equation*}
We still denote it by $\int\limits_X \gamma$. The relation \refEq{EqLocalizationFormula} still holds. Some authors use it to define $\int\limits_X \gamma$. As a side affect, a priori, the answer one gets by this definition is in $\EqCoHlgy{\T}{\pt}_{loc} = \Frac \EqCoHlgy{\T}{\pt}$ because of the division by Euler classes. However, we see that the image of non-localized compactly supported classes $\EqCompCoHlgy{\T}{X}$ is inside $\EqCoHlgy{\T}{\pt}$.

This defines on $\EqCoHlgy{\T}{X}_{loc}$ the structure of a Frobenius algebra over $\EqCoHlgy{\T}{pt}_{loc}$. In particular, we often use the natural pairing

\begin{equation*}
	\left\langle
		\gamma_1,
		\gamma_2
	\right\rangle_X
	=
	\int\limits_X
	\gamma_1
	\cup
	\gamma_2.
\end{equation*}

If $X$ is clear we omit it in the notation.

We think of $\EqCoHlgy{\T}{\pt}$ as the ground ring. We usually write the multiplication in $\EqCoHlgy{\T}{\pt}$ by $\cdot$ to simplify notation.

The multiplication in $\EqCoHlgy{\T}{X}$ is still denoted by $\cup$. We also refer to it as \textbf{classical multiplication} as opposed to its deformation, quantum multiplication. 

\subsection{Stable Envelopes}

For details we refer the reader to \cites{MOk}. Under certain choices (namely, chamber $\C$ and a polarization $\epsilon$ on $X^{\A}$) one can define a map going the "wrong way".

\begin{theorem} \label{ThmStableEnvelopesDefinition}
	(Maulik-Okounkov)
	There exists a unique map of $\EqCoHlgy{\T}{pt}$-modules 
		\begin{equation*}
			\StabCE{\C}{\epsilon} 
			\colon 
			\EqCoHlgy{\T}{X^\A} 
			\to 
			\EqCoHlgy{\T}{X}
		\end{equation*}
	such that for any component $Z \subset X^\A$ and $\gamma \in \EqCoHlgy{\T/\A}{Z}$, the stable envelope $\Gamma = \StabCEpt{\C}{\epsilon}{\gamma}$ satisfies
	\begin{enumerate}
	\item 
		$\supp \Gamma \subset \Attr^f_\C \left( Z \right)$,
	\item 
		$\resZ{\Gamma}{Z} = \pm \Euler{\A}{N^{-\C}_{p}}\cup \gamma$, where the sign is fixed by the polarization: $\resA{\resZ{\Gamma}{Z}}{A} = \resPol{\epsilon}{Z} \cup \resA{\gamma}{A}$,
	\item 
		$\deg_\A \resZ{\Gamma}{Z'} < \frac{1}{2}\dim X$ for any $Z' \lC{\C} Z$.
	\end{enumerate}
\end{theorem}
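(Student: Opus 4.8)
The plan is to recall the construction of \cite{MOk}: all the hypotheses needed there --- a holomorphic symplectic $X$ with a torus acting so that $\A = \ker\hbar$ preserves the form, a proper fixed locus $X^\A$, and a subtorus $\LoopGm \subset \T$ whose flow contracts $X$ --- have been verified for $X = \Gr^{\ucolambda}_{\comu}$ in the preceding subsections, so the theorem applies directly; what follows outlines the two halves of the argument, uniqueness and existence.

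First I would fix the combinatorial backbone. The chamber $\C$ determines a partial order $\lC{\C}$ on the (here finite, by \refProp{PropSliceFixedLocus}) set of components of $X^\A$, and the key geometric fact, a consequence of the $\LoopGm$-contraction, is that the full attracting set $\Attr^f_\C(Z)$ is \emph{closed} and equals the union of the ordinary attracting sets $\Attr_\C(Z')$ over $Z' \lC{\C} Z$. In particular the fundamental classes $[\overline{\Attr_\C(Z')}]$ are defined, and since $\Attr_\C(Z')$ is Lagrangian these classes sit in cohomological degree $\dim X$ --- the same degree as $\Euler{\A}{N^{-\C}_{Z'}}$, which is exactly what makes conditions (2) and (3) compatible.

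For uniqueness I would argue by downward induction on $\lC{\C}$. If $\Gamma, \Gamma'$ are two stable envelopes of the same class then $\Delta := \Gamma - \Gamma'$ is supported on $\Attr^f_\C(Z)$ and restricts to $0$ on $Z$ by (2). If $\resZ{\Delta}{Z''}=0$ for every $Z''$ strictly above some $Z'$ in the order, then the support condition forces $\resZ{\Delta}{Z'}$ to be divisible by $\Euler{\A}{N^{-\C}_{Z'}}$, an element of $\A$-degree exactly $\tfrac{1}{2}\dim X$; but (3) says $\deg_\A \resZ{\Delta}{Z'} < \tfrac{1}{2}\dim X$, so $\resZ{\Delta}{Z'}=0$. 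Hence $\Delta$ restricts to zero everywhere on $X^\A$ and vanishes by the localization theorem; $\EqCoHlgy{\T}{\pt}$-linearity of the difference map then gives uniqueness on all of $\EqCoHlgy{\T}{X^\A}$.

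Existence I would establish by building $\Gamma$ inductively, again along $\lC{\C}$, starting from maximal components where one takes (the polarization-corrected) $[\overline{\Attr_\C(Z)}]$ and (3) is vacuous; at each later step one extends the already-built class over the next $\overline{\Attr_\C(Z')}$ by an a priori choice (a corrected fundamental class with the prescribed diagonal term) and then subtracts a correction supported on the strictly smaller attracting sets so as to enforce (3), noting that the support axiom (1) is automatically preserved because every term added lives on some $\overline{\Attr_\C(Z'')}$. The main obstacle --- and the only real content beyond bookkeeping --- is showing that the required correction always exists: this is a degree/dimension count, verifying that classes supported on the smaller strata realize, upon restriction to each lower fixed component, precisely the range of $\A$-degrees ($<\tfrac{1}{2}\dim X$) needed to cancel the error introduced at that stage. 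The process terminates since the poset is finite, and the resulting map is $\EqCoHlgy{\T}{\pt}$-linear by construction, which completes the sketch.
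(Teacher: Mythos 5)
The paper does not prove this theorem: it is quoted from Maulik--Okounkov, and the text introduces the result with only ``for details we refer the reader to \cites{MOk}.'' There is therefore no in-paper argument against which to check your sketch. That said, your outline does track the broad structure of the Maulik--Okounkov construction --- partial order induced by the chamber, downward induction for uniqueness via the support and degree axioms, inductive correction for existence --- with two caveats worth recording. First, in the existence step you start ``from maximal components where \ldots (3) is vacuous''; with the ordering used here, where $Z' \lC{\C} Z$ means $Z'$ lies in $\Attr^f_\C\left(Z\right)$, axiom (3) is vacuous precisely for the \emph{minimal} components $Z$, those for which $\Attr^f_\C\left(Z\right) = \Attr_\C\left(Z\right)$ is already closed, and the induction then climbs the order. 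Second, asserting that the required correction ``always exists'' by ``a degree/dimension count'' compresses the real content of the Maulik--Okounkov argument: producing the correction as a \emph{non-localized} class supported on the strictly smaller strata is the hard step, and it relies on properness of the attracting sets and on equivariant formality rather than on pure dimension counting. A self-contained proof along your lines would have to fill in exactly that point; as an account of what the cited theorem says and why one should expect it, the sketch is serviceable, but it is not something the paper itself ever proves.
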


In the spaces of our interest $X^{\A}$ is finite discrete, so $\EqCoHlgy{\T}{X^\A} $ has a basis $\CohUnit{p}$ of classes those only non-zero restriction is $1$ at $p\in X^{\A}$. To simplify notation we write
\begin{equation*}
	\StabCEpt{\C}{\epsilon}{p}
\end{equation*}
instead of
\begin{equation*}
	\StabCEpt{\C}{\epsilon}{\CohUnit{p}}.
\end{equation*}

The classes $\StabCEpt{\C}{\epsilon}{p}$ may be thought of as a refined version of (the Poincar\'{e}-dual class to) the fundamental cycle $\pm\left[ \overline{\Attr_\C (p)} \right]$. This explains the first two conditions in the \refThm{ThmStableEnvelopesDefinition}. The last condition is required to ensure these classes behave well under deformations of the symplectic variety.

For symplectic resolutions, if one restricts $\StabCEpt{\C}{\epsilon}{p}$ to the torus $\A$ (sets $\hbar = 0$), it has a non-zero coefficient only at $p$ \cites{MOk}. Since the fixed point is a basis in the localized cohomology $\EqCoHlgy{\T}{X}_{loc}$, stable envelopes also form a basis in the localized cohomology. This is why we sometimes refer to $\StabCEpt{\C}{\epsilon}{p}$ as the stable basis.

\subsection{Classical multiplication in stable basis}

Let us first discuss in general relation between classical multiplication of a stable envelope by a divisor and restrictions of stable envelopes.

First we define two families of diagonal operators
\begin{align}
	\label{EqHOperDef}
	\Hterm{i}
	\colon
	\EqCoHlgy{\A}{X^{\A}}
	&\to
	\EqCoHlgy{\A}{X^{\A}}
	\\
	\notag
	\CohUnit{p}
	&\mapsto
	\left[
		\CorootScalar{\deltapi{p}{i}}{\bullet}
		+
		\dfrac{\hbar}{2}
		\CorootScalar{\deltapi{p}{i}}{\comu}
	\right]
	\CohUnit{p}
	\\
	\notag
	\\
	\label{EqOmega0OperDef}
	\OmegaOperator{ij}{0}
	\colon
	\EqCoHlgy{\A}{X^{\A}}
	&\to
	\EqCoHlgy{\A}{X^{\A}}\\
	\notag
	\CohUnit{p}
	&\mapsto
	\CorootScalar
	{\deltapi{p}{i}}
	{\deltapi{p}{j}}
	\CohUnit{p}
\end{align}

Then we introduce a family of strictly triangular operators. Let $i,j$ be such that $0\leq i, j \leq l$, $\wtalpha$ be a root and $\epsilon$ be a polarization. We define
\begin{equation}
	\label{EqOmegaAlphaOperDef}
	\OmegaOperator{ij}{-\alpha,\epsilon}
	\colon
	\EqCoHlgy{\A}{X^{\A}}
	\to
	\EqCoHlgy{\A}{X^{\A}}
\end{equation}
by the following property: for any $p \in X^{\A}$
\begin{itemize}
    \item 
        If there exists $q \in X^{\A}$ satisfying conditions 
        \begin{align}
            \deltapi{q}{i} 
            &=
            \deltapi{p}{i} - \coalpha,
            \notag
            \\
            \deltapi{q}{j} 
            &=
            \deltapi{p}{j} + \coalpha,
            \label{EqQIsAdjacentToP}
            \\
            \deltapi{q}{k}
            &=
            \deltapi{p}{k}
            \text{ for all }
            k,\; k \neq i,\; k \neq j,
            \notag
        \end{align}	
        then
        \begin{equation*}
        	\OmegaOperator{ij}{-\alpha,\epsilon} \left( \CohUnit{p} \right)
        	=
        	\sigma^{\epsilon}_{p,q}
        	\dfrac{\CorootScalar{\coalpha}{\coalpha}}{2}
        	\CohUnit{q}.
        \end{equation*}
        The signs $\sigma^{\epsilon}_{p,q} \in \signset$ are the following product of signs
        \begin{equation} \label{EqSignsInOmega}
        	\sigma^{\epsilon}_{p,q}
        	=
        	\sign{\tilde{\C}}{\epsilon}{q}{X}
        	\sign{\tilde{\C}}{\epsilon}{p}{X}
        	=
        	\dfrac
        	{
        		\resPol{\epsilon}{p}
        	}
        	{
        		\Euler{\A}{N^{-\widetilde{\C}}_{p/X}}
        	}
        	\dfrac
        	{
        		\Euler{\A}{N^{-\widetilde{\C}}_{q/X}}
        	}
        	{
        		\resPol{\epsilon}{q}
        	},
        	\end{equation}
        where $\widetilde{\C}$ is any Weyl chamber adjacent to the wall $\wtalpha = 0$. The sign does not depend on this choice as shown in \cites{Da}.
    \item
        If there's no such $q$, we set
        \begin{equation*}
        	\OmegaOperator{ij}{-\alpha,\epsilon}
        	\left( \CohUnit{p} \right)
        	=
        	0.
        \end{equation*}
\end{itemize}

\begin{remark}
	By the conditions on $\deltap{p}$'s from the minuscule weight restriction, the existence for such $q\in X^{\A}$ is equivalent to the following two conditions on $p$
	\begin{align}
	\label{EqExistenceOfQ1}
	\left\langle
		\deltapi{p}{i},
		\wtalpha
	\right\rangle
	&= 1,
	\\
	\label{EqExistenceOfQ2}
	\left\langle
		\deltapi{p}{j},
		\wtalpha
	\right\rangle
	&= -1.
	\end{align}
\end{remark}

Now for any Weyl chamber we define the following operator
\begin{equation}
	\label{EqOmegaCOperDef}
	\OmegaOperator{ij}{\C,\epsilon}
	= 
	\dfrac{1}{2}
	\OmegaOperator{ij}{0}
	+
	\sum\limits_{\alpha \gC{\C} 0}
	\OmegaOperator{ij}{\alpha,\epsilon}.
\end{equation}

\begin{remark}
	The notation $\OmegaOperator{ij}{\C,\epsilon}$ suggests that these operators are related to Casimir operators in representation theory. We'll make this statement more precise later. The minus sign for $-\alpha$ in the definition of $\OmegaOperator{ij}{-\alpha,\epsilon}$ is introduced to make this comparison easier.
\end{remark}

\begin{remark}
	If one chooses polarization $\resPol{\epsilon}{p} = \Euler{\A}{N^{-\C}_{p/X}}$, then the signs in $\OmegaOperator{ij}{\alpha,\epsilon}$ with a simple $\alpha$ (with respect to $\C$) are all $+1$. We will need it later to choose the right basis in representation-theoretic language.
\end{remark}

Similar to stable envelopes, for a simpler notation we write
\begin{equation*}
	\Hterm{i}
	\left( p \right)
	\text{ and }
	\OmegaOperator{ij}{\C,\epsilon}
	\left( p \right)
\end{equation*}
instead of
\begin{equation*}
	\Hterm{i}
	\left( \CohUnit{p} \right)
	\text{ and }
	\OmegaOperator{ij}{\C,\epsilon}
	\left( \CohUnit{p} \right).
\end{equation*}

The main result from \cites{Da} on classical multiplication in the stable basis is the following theorem.

\begin{theorem} \label{ThmReformulatedClassicalMultiplication}
	The classical multiplication is given by the following formula
	\begin{equation*}
		\OpChernE{\T}{i}
		\StabCEpt{\C}{\epsilon}{p}
		=
		\StabCE{\C}{\epsilon}
		\left[
			\Hterm{i}
			\left(
				p
			\right)
			+
			\hbar
			\sum\limits_{j < i}
			\OmegaOperator{ji}{-\C,\epsilon}
			\left(
				p
			\right)
			-
			\hbar
			\sum\limits_{i < j}
			\OmegaOperator{ij}{-\C,\epsilon}
			\left(
				p
			\right)
		\right].
	\end{equation*}
	
\end{theorem}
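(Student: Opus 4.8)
The plan is to compute the classical product $\ChernE{\T}{i} \cup \StabCEpt{\C}{\epsilon}{p}$ by working in the localized cohomology $\EqCoHlgy{\T}{X}_{loc}$, where the fixed-point basis $\CohUnit{q}$ diagonalizes multiplication by divisors. First I would expand $\StabCEpt{\C}{\epsilon}{p}$ in the fixed-point basis: by \refThm{ThmStableEnvelopesDefinition}, its restriction to $p$ equals $\pm \Euler{\A}{N^{-\C}_{p/X}}$ with the sign pinned by $\epsilon$, and by the support condition its restriction to $q$ vanishes unless $q \in \Attr_\C^f(p)$, i.e.\ $q \lC{\C} p$; moreover for symplectic resolutions the off-diagonal restrictions are divisible by $\hbar$. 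Since $\ChernE{\T}{i}$ acts on $\CohUnit{q}$ by the scalar $c_i(q) := \CorootScalar{\sigmapi{q}{i}}{\bullet} + \tfrac{\hbar}{2}\CorootScalar{\deltapi{q}{i}}{\comu}$ (using \refProp{PropLWeights} for the $\A$-part and the known $\hbar$-correction from \cites{Da}), the product $\ChernE{\T}{i} \cup \StabCEpt{\C}{\epsilon}{p}$ has restriction $c_i(q)\,\resZ{\StabCEpt{\C}{\epsilon}{p}}{q}$ at each fixed point $q$.

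Next I would re-expand this product back in the stable basis, writing $\ChernE{\T}{i}\cup\StabCEpt{\C}{\epsilon}{p} = \sum_q M_{qp}\,\StabCEpt{\C}{\epsilon}{q}$. The transition matrix between the stable basis and the fixed-point basis is triangular with respect to $\lC{\C}$ with invertible diagonal entries, so $M$ is determined and is itself triangular. The diagonal entry $M_{pp}$ is read off from the $p$-restriction: $M_{pp} = c_i(p)$, which after bookkeeping the $\A$-weight $\CorootScalar{\deltapi{p}{i}}{\bullet}$ versus $\CorootScalar{\sigmapi{p}{i}}{\bullet}$ and the sign conventions is exactly $\Hterm{i}(p)$ up to the correction terms that get absorbed. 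The off-diagonal entries $M_{qp}$ with $q \neq p$ are each divisible by $\hbar$ (since they come from off-diagonal restrictions of stable envelopes), and a degree count forces them to be \emph{exactly} of order $\hbar^1$ — this is where the bound $\deg_\A \resZ{\Gamma}{Z'} < \tfrac12 \dim X$ in the third stable-envelope axiom does the work, killing all higher-order-in-$\hbar$ contributions. The surviving $O(\hbar)$ terms are supported on fixed points $q$ obtained from $p$ by a single elementary move $\deltapi{p}{i}\mapsto\deltapi{p}{i}-\coalpha$, $\deltapi{p}{j}\mapsto\deltapi{p}{j}+\coalpha$ along a root $\wtalpha$ (by \refThm{ThmLatticeMultiplicities} and \refCor{CorAMultiplisities}, these are the only fixed points whose attracting relation to $p$ is "distance one"), and the coefficient is computed from the restriction of $\StabCEpt{\C}{\epsilon}{p}$ to such a $q$, which by the MO description of stable envelopes on a wall is $\sigma^\epsilon_{p,q}\,\tfrac{\CorootScalar{\coalpha}{\coalpha}}{2}$ times the appropriate Euler factor. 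Matching the sign of $\coalpha$ against the chamber $\C$ and sorting by whether $j<i$ or $i<j$ produces precisely the two sums $\hbar\sum_{j<i}\OmegaOperator{ji}{-\C,\epsilon}(p) - \hbar\sum_{i<j}\OmegaOperator{ij}{-\C,\epsilon}(p)$.

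The main obstacle I anticipate is the sign and degree bookkeeping in the last step: verifying that the only nonzero off-diagonal contributions are the "adjacent" ones (this requires the combinatorics of \refThm{ThmLatticeMultiplicities} together with the support/degree axioms, to rule out contributions from $q$'s that differ from $p$ by more than one elementary move, as well as self-energy-type corrections), and checking that the signs $\sigma^\epsilon_{p,q}$ coming out of the Euler-class ratios in \refEq{EqSignsInOmega} are independent of the auxiliary adjacent chamber $\widetilde{\C}$ and glue correctly with the overall sign of $\StabCEpt{\C}{\epsilon}{p}$ fixed by the polarization $\epsilon$. Since this is a reformulation of the main computation of \cites{Da}, I would cite that paper for the delicate sign verification and present here the conceptual triangularity-plus-degree argument that organizes the answer into the stated closed form.
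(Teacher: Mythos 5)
The paper does not prove this theorem: it is explicitly imported from \cites{Da} with the remark ``The main result from \cites{Da} on classical multiplication in the stable basis is the following theorem.'' So there is no proof in this paper for your argument to track. That said, your outline is a reasonable reconstruction of how the argument is organized, and you honestly delegate the hard part (the explicit sign and coefficient computation) to \cites{Da}, which is what the paper itself does.

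A few small inaccuracies worth flagging. First, your eigenvalue $c_i(q)$ is written with $\CorootScalar{\sigmapi{q}{i}}{\bullet}$, but the relevant Chern class is that of $\EBundle{i}$, not $\LBundle{i}$, so by \refProp{PropLWeights} the $\A$-part is $\CorootScalar{\deltapi{q}{i}}{\bullet}$; you acknowledge the $\sigma$-vs-$\delta$ bookkeeping but your displayed formula is the wrong one. Second, the attribution of the ``exactly $\hbar^1$'' step is slightly muddled: the third stable-envelope axiom gives that off-diagonal restrictions are divisible by $\hbar$, but exactness of the power comes from the separate observation that the transition coefficients $M_{qp}$ live in cohomological degree $2$ (so degree one as polynomials), hence a nonzero multiple of $\hbar$ is forced to be a constant times $\hbar$. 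You mention both ingredients but present them as if the axiom alone ``does the work.'' Third, and more substantively, to run the degree argument you implicitly assume the coefficients $M_{qp}$ lie in $\EqCoHlgy{\T}{\pt}$ rather than only in the fraction field; this is true but needs justification (the stable envelope map is $\EqCoHlgy{\T}{\pt}$-linear and becomes an isomorphism onto a full-rank submodule, which is where integrality of the coefficients comes from). These are precisely the points that \cites{Da} handles carefully, so your decision to cite it for the delicate parts is the right one.
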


%??? Families and curves
\section{Quantum Cohomology}
\label{SecQuantum}

In this chapter we compute the multiplication by a divisor in the quantum cohomology. This gives rise to the quantum differential equation, which we match in the next paper with certain representation-theoretic object, called trigonometric Knizhnik-Zamolodchikov connection.

Our computation is based on the idea of the reduction to Picard rank $1$, appearing in \cite{BMO} for the case of the cotangent bundle of the full flag variety. We follow this paper quite closely for the definition of the reduced class and the application of a deformation family to pass to the wall cases.

The first partial result on the purely quantum cohomology was done in M. Viscardi's thesis \cite{V}. Let us point out some issues with this computation
\begin{enumerate}
	\item
		The computation was done only for the simplest case of $\ucolambda = \left( \colambda_1, \colambda_2 \right)$, it was only mentioned that the general case can be done by reduction to Picard rank $1$, but no actual computation,
	\item
		The choice of basis was the fixed point basis, which allowed only to compute the leading part of the purely quantum multiplication. It was completely unprotected from corrections in $\hbar$,
	\item
		The classical part was not computed. Moreover, even if it's done (for example, by a version of \refProp{PropLWeights} with $\T$-weight, see \cites{Da}) in the fixed point basis it must be diagonal, which makes it hard to identify with asymmectic trigonometric Knizhnik-Zamolodchikov connection. As we see from the asymmetry of the final answer this means that the corrections to the purely quantum part must be assymmetric (in particular, the corrections do appear!),
	\item
		The diagonal part was explicitly omitted, but never restored, which lead to appearance of "truncated" Casimir operators, missing exactly the Cartan (diagonal!) part.
\end{enumerate}

We don't have these problems in our computation.

\subsection{Preliminaries on quantum cohomology}

Quantum cohomology is a deformation of ordinary equivariant cohomology by three-point Gromov-Witten invariants (by a formal parameter $q$). We define it by the following formula 
\begin{equation*}
	\left\langle 
		\gamma_1, \gamma_2 * \gamma_3 
	\right\rangle_X
	= 
	\left\langle 
		\gamma_1, \gamma_2 \cup \gamma_3 
	\right\rangle_X
	+
	\sum_
	{
		\dd\in \HlgyEffk{X, \mathbb{Z}}{2}
	}
	\left\langle
		\gamma_1, \gamma_2, \gamma_3
	\right\rangle^X_{0,3,\dd} q^\dd
\end{equation*}
for any $\gamma_1, \gamma_2, \gamma_3 \in \EqCoHlgy{\T}{X}$. Here $\left\langle \bullet, \bullet, \bullet \right\rangle^X_{0,3,\dd}$ are genus zero three-point degree $\dd$ equivariant Gromov-Witten invariants.

The three-point Gromov-Witten invariants are the integrals over a virtual fundamental cycle
\begin{equation*}
	\left\langle
		\gamma_1, \gamma_2, \gamma_3
	\right\rangle^X_{0,3,\dd}
	=
	\int\limits_
	{
		\left[
			\MapModuli{0,3}{X,\dd}
		\right]^{vir}
	}
	\ev_1^*
	\gamma_1
	\cup
	\ev_2^*
	\gamma_2
	\cup
	\ev_3^*
	\gamma_3
\end{equation*}
of the Deligne-Mumford stack of stable genus $0$ degree $\dd$ maps to $X$ with $3$ marked points.

The integration in this formula in the pushforward map
\begin{equation*}
	\EqCoHlgy{\T}{\MapModuli{0,3}{X,\dd}}
	\to
	\EqCoHlgy{\T}{\pt}
\end{equation*}
which is given by the virtual fundamental cycle $\left[ \MapModuli{0,3}{X,\dd} \right]^{vir} \in \EqHlgy{\T}{\MapModuli{0,3}{X,\dd}}$.

For the spaces of our interest $\MapModuli{0,3}{X,\dd}$ are not proper in general and the virtual fundamental class is only in Borel-Moore (eqivariant) homology $\left[ \MapModuli{0,3}{X,\dd} \right]^{vir} \in \EqBMHlgy{\T}{\MapModuli{0,3}{X,\dd}}$. However, let us assume that there is proper $\T$-equivariant map to an affine $\T$-variety $X_0$
\begin{equation*}
	\pi
	\colon
	X
	\to
	X_0
\end{equation*}
such that $X_0$ is contracted to a point $x \in X_0$ by some cocharacter $\colambda \colon \Gm \to \T$. We know that for the spaces $X = \Gr^{\ucolambda}_{\comu}$ it holds.

Under this assumption the $\T$-fixed locus $\MapModuliFixed{\T}{0,3}{X,\dd}$ is proper since all $\T$-invariant mas have the image in $\pi^{-1} \left( x \right)$ which is proper. Then the integration (or pushforward to a point)
\begin{equation*}
	\EqCoHlgy{\T}{\MapModuli{0,3}{X,\dd}}_{loc}
	\to
	\EqCoHlgy{\T}{\pt}_{loc}
\end{equation*}
can be defined as mentioned in the previous chapter and computed via the localization formula.

\medskip

Let us denote by $\QuantumPowerSeries$ the algebra of formal power series in $q^\dd$, where $\dd \in \HlgyEffk{X, \mathbb{Z}}{2}$ are the effective curve classes.

The quantum multiplication gives the quantum equivariant cohomology
\begin{equation*}
	\QCoHlgy{\T}{X} 
	= 
	\EqCoHlgy{\T}{X} 
	\otimes_\mathbb{Q}
	\QuantumPowerSeries
\end{equation*}
a structure of a supercommutative associative algebra. This is a deformation of $\EqCoHlgy{\T}{X}$.

We will split the quantum multiplication into classical part and purely quantum part
\begin{equation}
	\gamma_1 * \gamma_2 
	= 
	\gamma_1 \cup \gamma_2
	+
	\gamma_1 \qdot \gamma_2.
\end{equation}

We are interested in the case when $\gamma_3$ is a divisor, that is $\gamma_3 = D \in \EqCoHlgyk{\A}{X}{2}$. Then we can apply the divisor equation to reduce three-point invariants to two-point invariants
\begin{equation} \label{EqDivisorEquation}
	\left\langle
		\gamma_1, \gamma_2, D
	\right\rangle^X_{0,3,\dd}
	=
	\left\langle 
		D,
		\dd
	\right\rangle
	\left\langle
		\gamma_1, \gamma_2
	\right\rangle^X_{0,2,\dd},
\end{equation}
where $\left\langle D, \dd \right\rangle$ is the pairing of a homology and cohomology class under the natural map $\EqCoHlgy{\T}{X} \to \CoHlgy{X}$.

\subsection{Reduction to Picard rank 1}

\subsubsection{Two approaches to the reduced virtual class}

One can consider non-equivaraint counts instead of $\T$-equivariant ones. There are two related reasons why these counts must vanish for a symplectic resolution $X$ and $\dd \neq 0$. Let us briefly outline them.

The first one comes from a surjective cosection in the obstruction bundle. See \cite{KL} for a more general notion of cosection localization.

The cosection appears as follows. We work with the tangent-obstruction theory relative to the prestable curves, so let us show how the construction work on a fixed prestable curve $C$. Let $\MapModuliC{C}{X,\dd}$ be the moduli space of stable maps from $C$ to $X$ with the degree class $\dd\neq 0$. The standard relative tangent-obstruction theory for $\MapModuliC{C}{X,\dd}$ is defined by the natural morphism
\begin{equation*}
	R\pi_* 
	\left(
		\ev^* TX
	\right)^\vee 
	\to 
	L{\MapModuliC{C}{X,\dd}},
\end{equation*}
where
\begin{align*}
	\ev
	&\colon
	C \times \MapModuliC{C}{X,\dd}
	\to
	X,
	\\
	\pi
	&\colon
	C \times \MapModuliC{C}{X,\dd}
	\to
	\MapModuliC{C}{X,\dd}
\end{align*}
are the natural maps and $L{\MapModuliC{C}{X,\dd}}$ is the cotangent complex of $\MapModuliC{C}{X,\dd}$.

Let $\omega_\pi$ be the relative dualizing sheaf. Then the pairing with the symplectic form $\omega$ and the pullback of forms induces a map
\begin{equation*}
	\ev^*
	\left(
		TX
	\right)
	\to
	\omega_\pi
	\otimes
	\left(
		\mathbb{C} \omega
	\right)^*.
\end{equation*}
This gives a map of complexes
\begin{equation*}
	R\pi_*
	\left(
		\omega_\pi
	\right)^\vee
	\otimes
	\mathbb{C}
	\omega
	\to
	R\pi_*
	\left(
		\ev^*
		\left(
			TX
		\right)^\vee
	\right).
\end{equation*}
Then we truncate
\begin{equation*}
	\tau_{\leq -1}
	R\pi_*
	\left(
		\omega_\pi
	\right)^\vee
	\otimes
	\mathbb{C}
	\omega
	\to
	R\pi_*
	\left(
		\ev^*
		\left(
			TX
		\right)^\vee
	\right).
\end{equation*}
This truncation is a trivial bundle. It carries non-trivial equivariant structure as long as the torus acts non-trivially on $\omega$. This map gives a surjective cosection in the obstructions.

The second reason to vanish is the deformations of $X$. On one hand, the virtual fundamental class is deforamtion invariant. On the other hand, we know that $X$ has a large family of universal deformations and, moreover, a generic deformation has \textbf{no} projective curves. This means that $\left[ \MapModuli{0,3}{X',\dd} \right]^{vir} = 0$ at a generic symplectic deformation $X'$ of $X$ for $\dd\neq 0$. Combining these two facts, we get that for $\dd\neq 0$ the virtual fundamental class $\left[ \MapModuli{0,3}{X,\dd} \right]^{vir} = 0$ for $X$ itself.
 
\begin{remark}
	Same vanishings happen in $\A$-eqivariant count instead of $\T$-equivariant one. The symplectic form has the zero $\A$-weight, so we would still have a non-vanishing cosection of weight zero in the obstruction bundle. Similarly, in the universal deformation the $\A$-action on $X$ extends to the fiberwise $\A$-action in the universal deformation family.
\end{remark}

\begin{remark}
	Now we see how the $\T$-equivariant counts destroy the vanishing constructions. In the first approach the cosection gets a non-zero weight $-\hbar$. The second approach doesn't work because $\T$  scales the base of the deformation with weight $\hbar$ and the zero fiber becomes distinct.
\end{remark}

Each of these reasons suggest how one may modify the counting problem to avoid such vanishing. More precisely, how to track the part which vanishes when one passes to the non-equivariant case, and define the remaining part, which is called the \textbf{reduced fundamental class}. 

First, let us show how one modifies the tangent-obstruction bundle. Above we showed that there is a map
\begin{equation*}
	\iota
	\colon
	\tau_{\leq -1}
	R\pi_*
	\left(
		\omega_\pi
	\right)^\vee
	\otimes
	\mathbb{C}
	\omega
	\to
	R\pi_*
	\left(
		\ev^*
		\left(
			TX
		\right)^\vee
	\right).
\end{equation*}
which is responsible for the appearance of a surjective cosection in the obstruction bundle. Now we can "kill" it by considering the mapping cone of $\iota$, $C(\iota)$. Then the induced map
\begin{equation*}
	C(\iota)
	\to
	{\MapModuliC{C}{X,\dd}}
\end{equation*}
give a perfect obstruction theory. This is the reduced obstruction theory, and the associated fundamental class is called the reduced virtual fundamental class.

The second approach comes from one-parametric families of deformations. Let $\widetilde{X} \to \mathcal{B}$ be a symplectic deformation of $\left( X, \omega \right)$ over a smooth curve $\mathcal{B}$. Let us denote the point in the base over which $\left( X, \omega \right)$ lies as $b\in \mathcal{B}$. Pick $\dd \in \Hlgyk{X,\mathbb{Z}}{2} \subset \Hlgyk{\widetilde{X},\mathbb{Z}}{2}$. We assume that the deformation $\widetilde{X}$ and are in general position, that is
\begin{enumerate}
	\item
		The only fiber of $\widetilde{X} \to \mathcal{B}$ where there are curves of degree $d$ is $\left( X, \omega \right)$ over $b$.
	\item
		The following composition is an isomorphism
		\begin{equation*}
			T_b \mathcal{B}
			\to
			\CoHlgyk{X,TX}{2}
			\xrightarrow{\sim}
			\CoHlgyk{X,T^*X}{2}
			\xrightarrow{d\cap}
			\mathbb{C}
		\end{equation*}
		where the first map is the Kodaira-Spencer map, the second map is induced by the symplectic structure, the last map is the pairing with the homology class $\dd$.
\end{enumerate}

\begin{remark}
	If $X$ is a symplectic resolution and we have the universal family of the deformations, then the first condition says that the image of $\mathcal{B}$ intersects exactly once the hyperplane (or intersection of hyperplanes) associated to $\dd$. The second condition means that this intersection is transversal.
\end{remark}

The following result is the Theorem 1 of \cite{MP}.

\begin{proposition} \label{PropRedClassViaFamily}
	In the setting above, the embedding
	\begin{equation*}
		\MapModuli{0,n}{X,\dd} 
		\to 
		\MapModuli{0,n}{\widetilde{X},\dd}
	\end{equation*}
	is an isomorphism of stacks. Under this isomorphism the virtual classes
	\begin{equation*}
		\left[
			\MapModuli{0,n}{X,\dd}
		\right]^{red}
		=
		\left[
			\MapModuli{0,n}{\widetilde{X},\dd}
		\right]^{vir}
	\end{equation*}
	where the right-hand side is the usual virtual fundamental class associated to $\widetilde{X}$.
\end{proposition}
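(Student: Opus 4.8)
The plan is to prove this in two independent steps: first that the natural closed immersion $\MapModuli{0,n}{X,\dd}\hookrightarrow\MapModuli{0,n}{\widetilde{X},\dd}$ induced by the fiber inclusion $X\hookrightarrow\widetilde{X}$ is an isomorphism of stacks, and then, working on this common stack, that the reduced obstruction theory of $X$ agrees with the ordinary relative obstruction theory of $\widetilde{X}$. For the first step I would use a rigidity argument: given a stable genus $0$ degree $\dd$ map $f\colon C\to\widetilde{X}$, the composite of $f$ with $\widetilde{X}\to\mathcal{B}$ is a morphism from a proper connected nodal rational curve to the smooth affine curve $\mathcal{B}$, hence constant, and by assumption (1) its value is forced to be $b$; so $f$ factors through the fiber $X=(\widetilde{X}\to\mathcal{B})^{-1}(b)$. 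The same argument runs in families: over a base $S$, the composite of the universal curve with $\widetilde{X}\to\mathcal{B}$ is constant along the proper genus $0$ fibers, hence, $\mathcal{B}$ being separated, factors through $S$ via a section, which is then pinned to the constant section $b$; so the whole family factors through $X\times S$. Thus the two moduli functors coincide, marked points playing no role, which yields the isomorphism of stacks.

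For the comparison of virtual classes I would set $M:=\MapModuli{0,n}{X,\dd}=\MapModuli{0,n}{\widetilde{X},\dd}$ and compare the two perfect obstruction theories over $M$. Since $X$ is the fiber over $b$ of the smooth morphism $\widetilde{X}\to\mathcal{B}$, its normal bundle is canonically trivial, $N_{X/\widetilde{X}}\cong\mathcal{O}_X\otimes_{\mathbb{C}}T_b\mathcal{B}$, and there is a short exact sequence $0\to TX\to T\widetilde{X}|_X\to N_{X/\widetilde{X}}\to 0$. Pulling back along the universal map $\ev$ and applying $R\pi_*$ (with $\pi$ the universal curve over $M$) gives a distinguished triangle
\begin{equation*}
R\pi_*\ev^*TX\longrightarrow R\pi_*\ev^*T\widetilde{X}\longrightarrow R\pi_*\ev^*N_{X/\widetilde{X}}\xrightarrow{\delta}R\pi_*\ev^*TX[1].
\end{equation*}
In genus $0$ the third term is simply $\mathcal{O}_M\otimes T_b\mathcal{B}$ concentrated in degree $0$, since $R^1\pi_*$ of the structure sheaf of a prestable genus $0$ curve vanishes. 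Dualizing this triangle and using relative Serre duality (which turns $(R\pi_*\ev^*N_{X/\widetilde{X}})^\vee$ into a trivial summand built from $\tau_{\le -1}R\pi_*(\omega_\pi)^\vee$) exhibits the obstruction theory $(R\pi_*\ev^*T\widetilde{X})^\vee\to L$ of $\MapModuli{0,n}{\widetilde{X},\dd}$ as the cone of a map of exactly the shape of the map $\iota\colon\tau_{\le -1}R\pi_*(\omega_\pi)^\vee\otimes\mathbb{C}\omega\to R\pi_*(\ev^*TX)^\vee$ used to define the reduced theory in the excerpt. The crucial input is assumption (2): the connecting map $\delta$ is the Kodaira--Spencer obstruction map for deforming maps out of $X$ into $\widetilde{X}$, and its composite with the cosection $R\pi_*\ev^*TX[1]\to R^1\pi_*\omega_\pi\otimes(\mathbb{C}\omega)^*=(\mathbb{C}\omega)^*$ coming from the symplectic form is precisely the isomorphism $T_b\mathcal{B}\xrightarrow{\sim}(\mathbb{C}\omega)^*$ encoded in assumption (2). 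This transversality is exactly what identifies $\delta$ (up to shift and the canonical dualities) with $\iota$, so by the octahedral axiom $(R\pi_*\ev^*T\widetilde{X})^\vee\simeq C(\iota)$, compatibly with the maps to the cotangent complex $L$. Hence the two perfect obstruction theories agree and so do their virtual classes, giving $[\MapModuli{0,n}{X,\dd}]^{red}=[\MapModuli{0,n}{\widetilde{X},\dd}]^{vir}$.

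I expect the main obstacle to be the second step carried out rigorously at the level of complexes over the Artin stack of prestable curves, rather than just on cohomology sheaves: producing an honest morphism of obstruction theories over $M$, tracking the shifts and the Serre-duality identifications, and above all checking that the connecting map $\delta$ really is the Kodaira--Spencer obstruction so that assumption (2) can be invoked to see it lands transversally to the cosection. Everything goes through $\T$-equivariantly verbatim, since the deformation $\widetilde{X}\to\mathcal{B}$, the symplectic form, and the normal bundle sequence carry compatible $\T$-actions — the fiberwise $\A$-action together with the scaling of $\mathcal{B}$ with weight $\hbar$ — so the comparison of equivariant virtual classes follows at once. This is carried out in detail in \cite{MP}.
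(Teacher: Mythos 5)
The paper does not give a proof of this proposition at all: it simply cites Theorem~1 of \cite{MP} and moves on, so there is no internal argument to compare your sketch against. With that caveat, your outline is the standard argument behind the Maulik--Pandharipande statement, and it is essentially sound.

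Two points are worth sharpening. First, the rigidity argument in step one uses that $\mathcal{B}$ is \emph{affine} to conclude that a proper connected rational curve maps to it constantly; the proposition as stated only asks for a smooth curve $\mathcal{B}$, though in the paper's actual application $\mathcal{B}$ is a line $L_0\subset\AffSpace^2$, so no harm is done — you should just state the hypothesis you are using. Second, and more substantively, the phrase ``this transversality is exactly what identifies $\delta$ (up to shift and the canonical dualities) with $\iota$'' glosses over the real content of step two. After dualizing the triangle
\begin{equation*}
R\pi_*\ev^*TX\to R\pi_*\ev^*T\widetilde{X}\to\mathcal{O}_M\otimes T_b\mathcal{B}\xrightarrow{\ \delta\ }R\pi_*\ev^*TX[1]
\end{equation*}
one finds that $(R\pi_*\ev^*T\widetilde{X})^\vee$ is the \emph{fiber} of a map $\psi\colon (R\pi_*\ev^*TX)^\vee\to\mathcal{O}_M[1]\otimes T_b^*\mathcal{B}$, while the reduced theory $C(\iota)$ is by definition the \emph{cone} of a map $\iota\colon\mathcal{O}_M[1]\otimes\mathbb{C}\omega\to(R\pi_*\ev^*TX)^\vee$. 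These are maps in opposite directions and a fiber and a cone over the same object are not the same in general, so there is no ``identification of $\delta$ with $\iota$.'' What assumption~(2) actually buys you is that the composite $\psi\circ\iota\colon\mathcal{O}_M[1]\otimes\mathbb{C}\omega\to\mathcal{O}_M[1]\otimes T_b^*\mathcal{B}$ is an isomorphism (it is precisely the composition in~(2), tensored with $\mathcal{O}_M[1]$); this forces the trivial line summand to split off, $(R\pi_*\ev^*TX)^\vee\simeq\mathcal{O}_M[1]\oplus C(\iota)$, whence $\operatorname{fib}(\psi)\simeq C(\iota)$. So the octahedral-axiom invocation is in the right spirit, but the precise mechanism is the splitting produced by the nondegenerate pairing in~(2), and stating it that way both makes the argument correct and makes clear why assumption~(2) is not dispensable. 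Your remaining caveats — compatibility with the maps to the cotangent complex and the identification of $\delta$ with the Kodaira--Spencer map — are exactly the technical work \cite{MP} carries out, and it is appropriate to defer to that reference.
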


From both points we see that the virtual dimension of the reduced class
\begin{equation*}
	\left[ 
		\MapModuli{0,n}{X,\dd} 
	\right]^{red}
\end{equation*}
is the virtual dimension of
\begin{equation*}
	\left[ 
		\MapModuli{0,n}{X,\dd} 
	\right]^{vir}
\end{equation*}
plus $1$.

\subsubsection{Choice of basis}

Let $\left( X, \omega \right)$ be a symplectic variety with a torus $\T$ acting on it in such a way that the symplectic form $\omega$ is scaled by $\T$-weight $\hbar$.

Then from the explicit description of the reduced tangent-obstruction theory we see that it misses one trivial line subbundle in the obstructions with weight $-\hbar$. This gives
\begin{equation*}
	\left[
		\MapModuli{0,n}{X,\dd}
	\right]^{vir}
	=
	-\hbar
	\left[
		\MapModuli{0,n}{X,\dd}
	\right]^{red}
\end{equation*}
in $\EqBMHlgy{\T}{\MapModuli{0,n}{X,\dd}}$, $\dd \neq 0$.

This allows us to write for $\dd \neq 0$
\begin{equation*}
	\left\langle
		\gamma_1, \gamma_2
	\right\rangle^{X,vir}_{0,2,\dd}
	=
	-\hbar
	\left\langle
		\gamma_1, \gamma_2
	\right\rangle^{X,red}_{0,2,\dd}
\end{equation*}
where the tags $vir$ and $red$ denote the fundamental class we use. In order to recover the usual Gromov-Witten invariant we have to use the non-reduced class in \refEq{EqDivisorEquation}. However the reduced class allows us to apply the trick from \cite{BMO}.

In our case $X$ is a symplectic variety, so it has trivial canonical class. Then the virtual dimension of $\MapModuli{0,2}{X,\dd}$ is 
\begin{equation*}
	-
	\left\langle
		K_X,
		\dd
	\right\rangle
	+
	\dim X
	+ n
	- 3
	=
	\dim X - 1.
\end{equation*}
Thus the degree of $\left[ \MapModuli{0,n}{X,\dd} \right]^{red}$ is $2\dim X - 2$ and the degree of $\left[ \MapModuli{0,n}{X,\dd} \right]^{red}$ is $2\dim X$.

Let $\gamma_1, \gamma_2 \in \EqCoHlgy{\T}{X}$ be non-localized classes. If $\gamma_1 \cup \gamma_2$ have compact support, then the support of $\ev_1^* \gamma_1 \cup \ev_2^* \gamma_2$ is also proper and the $2$-point reduced Gromov-Witten invariant
\begin{equation*}
	\left\langle
		\gamma_1, \gamma_2
	\right\rangle^{X,red}_{0,2,\dd}
	\in
	\EqCoHlgy{\T}{\pt}
\end{equation*}
lies in non-localized cohomology of a point. We get that for such classes the invariants are $\left\langle \gamma_1, \gamma_2 \right\rangle^X_{0,2,\dd}$ are non-zero by dimension argument only if we have
\begin{equation*}
	\deg \gamma_1
	+ 
	\deg \gamma_2
	\geq
	2 \dim X.
\end{equation*}

Possibility to get invariants in non-localized $\EqCoHlgy{\T}{\pt}$ shows why it is important to work with non-localized classes. We have finitely many terms to compute, it makes the problem more doable.

At the same time it shows disadvantages in using the fixed point basis in equivariant cohomology. By definition, it lies in the localized cohomology. If one replaces it via pushforwards $\iota_{p,*} \CohUnit{}$ of the units from the fixed points it is better, because these classes are non-localized. In this case we get that
\begin{equation*}
	\left\langle
		\iota_{q,*} \CohUnit{}, \iota_{p,*} \CohUnit{}
	\right\rangle^{X,red}_{0,2,\dd}
	\in
	\EqCoHlgyk{\T}{\pt}{2\dim X}
\end{equation*}
which means that we have to compute a polynomial of degree $\dim X$. There is a better choice.

Assume that $X^{\T}$ is finite and the conditions for the existence of the stable envelopes are satisfied. Then we can take 
\begin{align*}
	\gamma_1
	&= 
	\StabCEpt{-\C}{\DualPol{\epsilon}}{q},
	\\
	\gamma_2
	&= 
	\StabCEpt{\C}{\epsilon}{p}.
\end{align*}
As we said in the previous chapter, these classes are dual, so computing these $2$-point invariants exactly gives us the matrix of the purely quantum multiplication in the stable basis.

Recall that the degree of $\StabCEpt{\C}{\epsilon}{p}$ is $\dim X$. Then
\begin{equation*}
	\left\langle
		\StabCEpt{-\C}{\DualPol{\epsilon}}{q},
		\StabCEpt{\C}{\epsilon}{p}
	\right\rangle^{X,red}_{0,2,\dd}
	\in
	\EqCoHlgyk{\T}{\pt}{0}
	\simeq
	\mathbb{Q},
\end{equation*}
so the invariants are just numbers.

In particular, if $X^{\T} = X^{\A}$ these number can be computed via $\A$-equivariant localization instead of $\T$-equivariant one (as usual, we set $\A \subset \ker \hbar$ to be a subtorus acting by symplectomorphisms).

For a symplectic resolution $X$ this is convenient, since the stable envelope $\StabCEpt{\C}{\epsilon}{p}$ restricted to $\A$ vanish everywhere except the point $p$. Then we get

\begin{equation*}
	\left\langle
		\StabCEpt{-\C}{\DualPol{\epsilon}}{q},
		\StabCEpt{\C}{\epsilon}{p}
	\right\rangle^{X,red}_{0,2,\dd}
	=
	\resPol{\DualPol{\epsilon}}{q}
	\resPol{\epsilon}{p}
	\int\limits_
	{
		\left[
			\MapModuliFixed{\A}{0,(p,q)}{X,\dd}
		\right]^{red}
	}
	1,
\end{equation*}
where the integral is defined via $\A$-equivariant localization and $\MapModuliFixed{\A}{0,(p,q)}{X,\dd}$ is a subspace of the $\A$-fixed locus
\begin{equation*}
	\left(
		\MapModuli{0,2}{X,\dd}
	\right)^{\A}
	=
	\bigsqcup\limits_{p,q\in X^{\A}}
	\MapModuliFixed{\A}{0,(p,q)}{X,\dd}
\end{equation*}
which maps the first marked point to $p$ and the second marked point to $q$.

\subsubsection{Reduction to walls}

We reduced the computation of the quantum multiplication to the computation of $\A$-localized integrals
\begin{equation*}
	\int\limits_
	{
		\left[
			\MapModuliFixed{\A}{0,(p,q)}{X,\dd}
		\right]^{red}
	}
	1.
\end{equation*}

The space $\A$-fixed maps to $X$ has components of high dimension, sice we saw that even $\T$-invariant curves are not isolated in $X$. In principle, it is possible to compute the integral via deformation to the case which has only isolated $\A$-invariant projective curves. This appoach was used in \cite{BMO} for one of the most canonical example of the sypmlectic resolution, the Springer resolution.

Let $X$ be a symplectic resolution and let $\widetilde{X}$ be its sufficiently generic twistor deformation over $\AffSpace^2$ by a pair of line bundles:
\begin{equation*}
	\begin{tikzcd}
		X \arrow[r,hook] \arrow[d]
		&
		\widetilde{X} \arrow[d]
		\\
		0 \arrow[r,hook]
		&
		\AffSpace^2
	\end{tikzcd}.
\end{equation*}
By "sufficiently generic" we mean that 
\begin{enumerate}
	\item
		The generic fiber of $\widetilde{X} \to \AffSpace^2$ has no curves. By the discussion of the wall structure in twistor deformations, this means the fibers with curves lie over a union of finitely many lines $H \subset \AffSpace^2$ passing through the origin $0 \in \AffSpace^2$.
	\item
		If $H \subset \AffSpace^2$ is such a line passing through $0$, that there are curves over $H\setminus 0$, then curves there is $d\in \Hlgyk{X,\mathbb{Q}}{2}$ such that the degrees of all such curves are multiples of $\dd$. We consider $\dd$ up to scaling. Then we denote such a line by $H_\dd$ and refer to them as "walls" in $\AffSpace^2$.
\end{enumerate}

The main example of such a deformation is the family over a plane $\AffSpace^2 \simeq P \subset \mathcal{B}^{u}$ in the base of the universal symplectic deformation of $X$
\begin{equation*}
	\begin{tikzcd}
		X \arrow[r,hook] \arrow[d]
		&
		\widetilde{X} \arrow[r,hook] \arrow[d]
		&
		\widetilde{X}^{u} \arrow[d]
		\\
		0 \arrow[r,hook]
		&
		P \arrow[r,hook]
		&
		\mathcal{B}^{u}
	\end{tikzcd}
\end{equation*}
as long as $P$ intersects transversely all the walls. The walls in $P$ are exactly the intersections with the walls in $\mathcal{B}^{u}$.

\medskip

Let $L_0 \subset \AffSpace^2$ be any line passing through the origin $0 \in \AffSpace^2$ which is not a wall. We get a family $\widetilde{X}_0$ over $L_0$
\begin{equation*}
	\begin{tikzcd}
		X \arrow[r,hook] \arrow[d]
		&
		\widetilde{X}_0 \arrow[r,hook] \arrow[d]
		&
		\widetilde{X} \arrow[d]
		\\
		0 \arrow[r,hook]
		&
		L_0 \arrow[r,hook]
		&
		\AffSpace^2
	\end{tikzcd}
\end{equation*}

Then by the \refProp{PropRedClassViaFamily} the integral over the reduced virtual class can be computed via $\widetilde{X}_0$
\begin{equation*}
	\int\limits_
	{
		\left[
			\MapModuliFixed{\A}{0,(p,q)}{X,\dd}
		\right]^{red}
	}
	1
	=
	\int\limits_
	{
		\left[
			\MapModuliFixed{\A}{0,(\widetilde{p}_0,\widetilde{q}_0)}{\widetilde{X}_0,\dd}
		\right]^{vir}
	}
	1,
\end{equation*}
where $\widetilde{p}_0$ and $\widetilde{q}_0$ are the components of $\widetilde{X}_0^{\A}$ containing $p$ and $q$ respectively. They have exactly one point in the fiber over every point $\ALine$, they show how the $\A$-fixed points $p$, $q$ vary in the family $\widetilde{X}_0^{\A}$.

Now we can use the deformation invariance of the virtual fundamental class 
\begin{equation*}
	\left[ \MapModuliFixed{\A}{0,(\widetilde{p}_0,\widetilde{q}_0)}{\widetilde{X}_0,\dd} \right]^{vir}.
\end{equation*}
Parallel shifts along $L_0$ give us a quotient
\begin{equation*}
	\AffSpace^2
	\twoheadrightarrow
	\ALine
\end{equation*}
whose fiber over $0$ is $L_0$ and over another point $t\neq 0$ is a parallel line $L_t$. Let $\widetilde{X}_t$ be the family $\widetilde{X}$ restricted to $L_t$. Then the composition
\begin{equation*}
 	\widetilde{X}
 	\to
 	\AffSpace^2
	\twoheadrightarrow
	\ALine
\end{equation*}
gives a family whose fiber over $0$ is $\widetilde{X}_0$
\begin{equation*}
	\begin{tikzcd}
		\widetilde{X}_0 \arrow[r,hook] \arrow[d]
		&
		\widetilde{X} \arrow[d]
		&
		\widetilde{X}_t \arrow[l,hook'] \arrow[d]
		\\
		0 \arrow[r,hook]
		&
		\ALine
		&
		t \arrow[l,hook']
	\end{tikzcd}
\end{equation*}
In other words, it's a deformation of $\widetilde{X}_0$. The fixed components $\widetilde{p}_0$ deform to $\widetilde{p}_t \subset \widetilde{X}_t^{\A}$. By the deformation invariance of the virtual class
\begin{equation*}
	\int\limits_
	{
		\left[
			\MapModuliFixed{\A}{0,(\widetilde{p}_0,\widetilde{q}_0)}{\widetilde{X}_0,\dd}
		\right]^{vir}
	}
	1
	=
	\int\limits_
	{
		\left[
			\MapModuliFixed{\A}{0,(\widetilde{p}_t,\widetilde{q}_t)}{\widetilde{X}_t,\dd}
		\right]^{vir}
	}
	1.
\end{equation*}
First let us point that some effective curve classes $\dd$ are repserented only by curves over the origin $0\in \AffSpace^2$. Then the moduli space on the RHS is empty and the integral is zero.

From now on we assume that effective $\dd$ has representatives not only over the origin. By construction, this means that there is a wall $H_{\dd}$ which has curves of degree $\dd$ in the fibers over it. By construction $L_t$ intersects with the hyperplane $H_{\dd}$ at exactly one point and this intersection is transverse. Call the fiber of $\widetilde{X}_t$ over this point $X^{\dd}$ and denote by $p',q' \in \left( X^{\dd} \right)^{\A}$ the fibers of $\widetilde{p}_t$ and $\widetilde{q}_t$. Then by \refProp{PropRedClassViaFamily} we have

\begin{equation*}
	\int\limits_
	{
		\left[
			\MapModuliFixed{\A}{0,(\widetilde{p}_t,\widetilde{q}_t)}{\widetilde{X}_t,\dd}
		\right]^{vir}
	}
	1
	=
	\int\limits_
	{
		\left[
			\MapModuliFixed{\A}{0,(p',q')}{X^{\dd},\dd}
		\right]^{red}
	}
	1.
\end{equation*}
Combining all the equalities of integrals, we get that the integral of interest
\begin{equation}
	\int\limits_
	{
		\left[
			\MapModuliFixed{\A}{0,(p,q)}{X,\dd}
		\right]^{red}
	}
	1
	=
	\int\limits_
	{
		\left[
			\MapModuliFixed{\A}{0,(p',q')}{X^{\dd},\dd}
		\right]^{red}
	}
	1
\end{equation}
if there is a wall $H_\dd$, and zero otherwise. 

This formula relates a count of curves of degree $\dd$ with a count of curves in a generic fiber $X^{\dd}$ over the corresponding wall $H_\dd$.

The invariant curves in $X^{\dd}$ are much simpler as we will see in examples. For the spaces of our interest the moduli space $\MapModuliFixed{\A}{0,(p',q')}{X^{\dd},\dd}$ has only isolated points.

We summarize the reduction in the following statement

\begin{proposition} \label{PropRank1Reduction}
	Let $X$ be a symplectic resolution satisfying the assumptions in the definition of the stable envelopes and let $X^{\A}$ be finite. If for an effective curve class $\dd$ there is a wall $H_{\dd}$ in a sufficiently generic $2$-parametric deformation 
	\begin{equation*}
		\left\langle
			\StabCEpt{-\C}{\DualPol{\epsilon}}{q},
			\StabCEpt{\C}{\epsilon}{p}
		\right\rangle^{X,vir}_{0,2,\dd}
		=
		-\hbar
		\resPol{\DualPol{\epsilon}}{q}
		\resPol{\epsilon}{p}
		\int\limits_
		{
			\left[
				\MapModuliFixed{\A}{0,(p',q')}{X^\dd,\dd}
			\right]^{red}
		}
		1
	\end{equation*}
	for any $p,q \in X^{\A}$ and the corresponding $\A$-fixed points $p'$, $q'$ in a generic fiber $X^{\dd}$ over the wall $H_{\dd}$.
	
	If there is no wall of form $H_{\dd}$, then
	\begin{equation*}
		\left\langle
			\StabCEpt{-\C}{\DualPol{\epsilon}}{q},
			\StabCEpt{\C}{\epsilon}{p}
		\right\rangle^{X,red}_{0,2,\dd}
		=
		0.
	\end{equation*}
\end{proposition}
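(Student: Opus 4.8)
The plan is to assemble into the stated formula the chain of identifications developed in this subsection. First I would invoke the reduced obstruction theory: since $X$ is a symplectic resolution on which $\T$ scales $\omega$ with weight $\hbar$, the reduced perfect obstruction theory differs from the standard one by a single trivial obstruction line of weight $-\hbar$, so that $\left[ \MapModuli{0,2}{X,\dd} \right]^{vir} = -\hbar \left[ \MapModuli{0,2}{X,\dd} \right]^{red}$ for $\dd \neq 0$; this converts the two-point virtual invariant into $-\hbar$ times the reduced one. Next comes a dimension count: the reduced class has degree $2\dim X$, while $\StabCEpt{\C}{\epsilon}{p}$ and $\StabCEpt{-\C}{\DualPol{\epsilon}}{q}$ each have degree $\dim X$, so the reduced invariant lies in $\EqCoHlgyk{\T}{\pt}{0} \simeq \mathbb{Q}$. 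Being a number, it can be computed by $\A$-equivariant localization, since $X^{\T} = X^{\A}$; and because the $\A$-restriction of a stable envelope is supported only at its own fixed point, this localization collapses to $\resPol{\DualPol{\epsilon}}{q}\, \resPol{\epsilon}{p}$ times the integral of $1$ over $\left[ \MapModuliFixed{\A}{0,(p,q)}{X,\dd} \right]^{red}$.

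The geometric heart is then the deformation argument. I would fix a sufficiently generic two-parameter twistor deformation $\widetilde{X} \to \AffSpace^2$, choose a line $L_0$ through the origin which is not a wall, and apply \refProp{PropRedClassViaFamily} to identify the reduced class on $X$ with the honest virtual class on the total space $\widetilde{X}_0$ over $L_0$, the $\A$-fixed components $\widetilde{p}_0, \widetilde{q}_0$ tracking $p, q$. Then, realizing the parallel translates $L_t$ as the fibers of a projection $\AffSpace^2 \twoheadrightarrow \AffSpace^1$, deformation invariance of the virtual class equates the integrals over $\widetilde{X}_0$ and over $\widetilde{X}_t$. If $\dd$ is represented only by curves over the origin, the moduli space over $L_t$ is empty and the invariant vanishes, which is the second clause of the proposition. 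Otherwise there is a wall $H_{\dd}$ meeting $L_t$ transversely in a single point; applying \refProp{PropRedClassViaFamily} a second time, now to the generic fiber $X^{\dd}$ of $\widetilde{X}_t$ over that point, rewrites the virtual integral over $\widetilde{X}_t$ as the reduced integral over $X^{\dd}$. Concatenating these equalities of integrals yields the displayed formula.

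The main obstacle --- the part requiring genuine care rather than formula chasing --- is justifying that the chosen deformation is ``sufficiently generic'' in the precise sense needed to apply \refProp{PropRedClassViaFamily} twice. One must use the wall-and-chamber structure of twistor deformations of conical symplectic resolutions to know that fibers carrying curves lie over finitely many lines through the origin, that each such line carries a well-defined primitive degree class, that a generic line $L_0$ and a generic translate $L_t$ are transverse to every wall and meet $H_{\dd}$ in exactly one reduced point, and that the Kodaira--Spencer map composed with the pairing against $\dd$ is an isomorphism, so that the transversality hypotheses of \refProp{PropRedClassViaFamily} hold on the nose. A secondary point is properness of the $\A$-fixed moduli spaces occurring in the chain: every $\A$-invariant stable map lands in the fiber of the proper affinization $X \to X_0$ over the contracting point, which makes all the integrals well defined in non-localized $\EqCoHlgy{\T}{\pt}$ and, after passing to the localized theory, justifies use of the $\A$-localization formula.
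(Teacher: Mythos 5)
Your proposal reproduces the paper's proof essentially step for step: the substitution of $-\hbar$ times the reduced class for the virtual one, the dimension count placing the pairing in $\EqCoHlgyk{\T}{\pt}{0}$, the $\A$-localization collapsing the restrictions of stable envelopes to their base points, and then the two applications of \refProp{PropRedClassViaFamily} joined by deformation invariance along the projection $\AffSpace^2 \twoheadrightarrow \ALine$. Your closing remarks about what "sufficiently generic" must entail (finitely many wall lines, a primitive degree per wall, transversality of $L_t$ to $H_{\dd}$, nondegeneracy of the Kodaira--Spencer pairing) and about properness of the $\A$-fixed moduli via the affinization map are exactly the points the paper also relies on, so the argument is a faithful match.
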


\begin{remark}
	Note that it is important for us that we are doing $\A$-localized computation, not $\T$-equivariant one. The torus $\T$ would scale the base and make the origin a distinguished point, the deformation invariance would be destroyed.
\end{remark}

\subsection{Computation of wall contributions}

\subsubsection{Wall deformations}

Now we restrict to the case $X = \Gr^{\ucolambda}_{\comu}$. Then the $2$-parameter deformation can be made by restricting the Beilinson-Drinfeld Grassmannian to a generic plane $P \simeq \AffSpace^2$ containing the origin in the base $\AffSpace^l$

\begin{equation*}
	\begin{tikzcd}
		\Gr^{\ucolambda}_{\comu} \arrow[r,hook] \arrow[d]
		&
		\widetilde{X} \arrow[r,hook] \arrow[d]
		&
		\GrBD^{\ucolambda}_{\comu} \arrow[d]
		\\
		0 \arrow[r,hook]
		&
		P
		\arrow[r,hook]
		&
		\AffSpace^l
	\end{tikzcd}
\end{equation*}

The walls in $P$ are the intesections of $P$ with the walls in $\AffSpace^l$, that is $H_{ij} = \left\lbrace t_i=t_j \right\rbrace$. As mentioned before, all fibers in the complement of these walls have no projective curves.

Now let us compute the contribution from a generic fiber over each wall.

Let $\left( t_1,\dots, t_l \right) \in H^{ij} \subset \AffSpace^l$ be a generic point in the wall $H_{ij}$, that is $t_i=t_j$ ($i<j$) and all other $t_k$ are pairwise distinct. Then we know that the fiber of the global convolution Grassmannian splits over this point in the following way
\begin{equation*}
	\GrBD^{\ucolambda}_{\left(t_1,\dots,t_l\right)}
	\simeq
	\Gr^{\colambda_i,\colambda_j}
	\times
	\prod_
	{
		\begin{smallmatrix}
			k\neq i
			\\
			k \neq j
		\end{smallmatrix}
	}
	\Gr^{\colambda_k}.
\end{equation*}

The restriction of the global line bundles $\EBundleTwisted{i}$, $\EBundleTwisted{j}$ are the pull-backs of $\EBundle{1}$ and $\EBundle{2}$ respectively from $\Gr^{\colambda_i,\colambda_j}$. For other $k$, $\EBundleTwisted{k}$ restricts to a pullback of $\EBundle{1} \simeq \OOO{1}$ from $\Gr^{\colambda_k}$. These are isomorphisms as $\A$-equivariant line bundles.

We are interested in the fiber of the global subspace $\GrBD^{\ucolambda}_{\comu, \left( t_1,\dots,t_l \right)} \subset \GrBD^{\ucolambda}_{\left(t_1,\dots,t_l\right)}$. We denote this fiber as $X^{ij}$:
\begin{equation*}
	X^{ij}
	\subset 
	\Gr^{\colambda_i,\colambda_j}
	\times
	\prod_
	{
		\begin{smallmatrix}
			k\neq i
			\\
			k \neq j
		\end{smallmatrix}
	}
	\Gr^{\colambda_k}.
\end{equation*}

From the period map we find that any algebraic curve $C$ in $X^{ij}$ must have fundamental class of form $c(e_i-e_j)$. This gives that the degree with respect to the restrictions of all line bundles $\EBundleTwisted{k}$ is zero unless $k=i$ or $k=j$. Since $\OOO{1}$ is ample on $\Gr^{\colambda}$ and generate the Picard group, this implies that the projection of $C$ to each factor of $\Gr^{\colambda_k}$ is constant. Moreover, the degrees with respect to $\EBundleTwisted{i}$ and $\EBundleTwisted{j}$ must be opposite. That means that the degree with respect to $\LBundle{2}$ on $\Gr^{\colambda_i,\colambda_j}$ is zero (as the restricition of $\EBundleTwisted{i}\EBundleTwisted{j}$), which similarly forces the projection to the last coordinate of $\Gr^{\colambda_i,\colambda_j}$ to be constant.

Now we compute the integrals over the moduli space of stable maps $\MapModuli{0,2}{X^{ij},\dd}$ via $\A$-equivariant localization.

The $\A$-fixed locus has a description, similar to the $\A$-fixed locus of $\Gr^{\ucolambda}_{\comu}$ in \refProp{PropSliceFixedLocus}:

\begin{proposition} \label{PropWallSliceFixedLocus}
	The fixed locus $\left( X^{ij} \right)^\A$ is the following disjoint union of finitely many points
	\begin{equation*}
			\left(
				\left(
					\Tfixed{\conu_i}, 
					\Tfixed{\conu_j}
				\right), 
				\Tfixed{\conu_1}, 
				\dots,
				\widehat{\Tfixed{\conu_i}},
				\dots,
				\widehat{\Tfixed{\conu_j}}, 
				\dots,
				\Tfixed{\conu_l}
			\right) 
	\end{equation*}
	where the hats denote omitted terms, $\left( \Tfixed{\conu_i}, \Tfixed{\conu_j} \right) \in \left( \Gr^{\colambda_i,\colambda_j} \right)^{\A}$ and $\Tfixed{\conu_k} \in \left( \Gr^{\colambda_k} \right)^{\A}$ for all $k$, $k \neq i$, $k \neq j$, and, finally, 
	\begin{equation*}
		\sum_{k\neq i} 
		\conu_k
		=
		\comu.
	\end{equation*}
\end{proposition}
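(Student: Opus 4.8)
The plan is to reduce \refProp{PropWallSliceFixedLocus} to \refProp{PropSliceFixedLocus} by exploiting the product decomposition of the fiber $\GrBD^{\ucolambda}_{\left(t_1,\dots,t_l\right)}$ over a generic point of the wall $H_{ij}$. Since the $\A$-action on the Beilinson-Drinfeld Grassmannian is fiberwise (it comes from the $\GK$-action, which does not move the points $t_k\in\AffSpace^1$), the identification
\begin{equation*}
	\GrBD^{\ucolambda}_{\left(t_1,\dots,t_l\right)}
	\simeq
	\Gr^{\colambda_i,\colambda_j}
	\times
	\prod_{k\neq i,\,k\neq j}
	\Gr^{\colambda_k}
\end{equation*}
is $\A$-equivariant. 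Therefore the fixed locus of the product is the product of the fixed loci, and I would invoke \refProp{PropGrFixedLocus} to identify $\left(\Gr^{\colambda_k}\right)^{\A}$ with the discrete set of points $\Tfixed{\conu_k}$ for $\conu_k$ a weight of $V(\colambda_k)$ (using $\overline{\Gr^{\colambda_k}}$ and the minuscule, hence smooth, condition), and similarly describe $\left(\Gr^{\colambda_i,\colambda_j}\right)^{\A}$ via the two-step convolution: a fixed point is a pair $\left(\Tfixed{\conu_i},\Tfixed{\conu_i+\conu_j}\right)$ with $\conu_i$ a weight of $V(\colambda_i)$ and $\conu_j$ a weight of $V(\colambda_j)$, which in the notation above we record by the pair $\left(\Tfixed{\conu_i},\Tfixed{\conu_j}\right)$ of increments.

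Next I would impose the condition cutting out $X^{ij} = \GrBD^{\ucolambda}_{\comu,\left(t_1,\dots,t_l\right)}$ inside this product. As in the proof of \refProp{PropSliceFixedLocus}, the slice condition $m_{\ucolambda}(L_1,\dots,L_l)=\Tfixed{\comu}$ translates, after passing to fixed points and to the increment coordinates $\conu_k = \deltapi{p}{k}$, into the single linear relation $\sum_k \conu_k = \comu$; reindexing so that the $i$- and $j$-increments are absorbed into the $\Gr^{\colambda_i,\colambda_j}$ factor, this becomes exactly $\sum_{k\neq i}\conu_k=\comu$ (where the $k=i$ term in that sum is understood to contribute $\conu_i+\conu_j$). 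The remaining convolution constraints along the chain are automatically encoded: for the isolated factors $\Gr^{\colambda_k}$ there is no constraint beyond $\conu_k$ being a weight of $V(\colambda_k)$ because a single Schubert orbit imposes only that, and for the merged factor the two-step chain $\UnitG\cdot\GO\xrightarrow{\colambda_i}\bullet\xrightarrow{\colambda_j}\bullet$ is exactly what defines $\Gr^{\colambda_i,\colambda_j}$, so no extra relation survives.

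The one point that needs genuine care — and the step I expect to be the main obstacle — is verifying that the product decomposition of the \emph{ambient} Beilinson-Drinfeld fiber restricts correctly to the decomposition of the \emph{subvariety} $X^{ij}$, i.e. that the convolution conditions genuinely decouple over a generic wall point. Concretely, one must check that when $t_i=t_j$ collide but all other $t_k$ are generic, the chain of transversality/convolution conditions $L_{k-1}\xrightarrow{\colambda_k}L_k$ reorganizes into a single local convolution at the collided point (governing the $\Gr^{\colambda_i,\colambda_j}$ factor) together with independent single-point data at the remaining generic points; this is the factorization property of the Beilinson-Drinfeld Grassmannian, and I would cite the relevant statement from \cites{Da} or \cite{KWWY} rather than reprove it. Granting that, the fixed-point computation is then a purely formal consequence: take $\A$-fixed points in each factor by \refProp{PropGrFixedLocus}, impose $\sum_{k\neq i}\conu_k=\comu$, and one arrives at the claimed list.

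Finally I would note, as a sanity check mirroring the remark after \refProp{PropSliceFixedLocus}, that since $X^{ij}$ is itself (a fiber in) a family of convolution spaces with the same formal structure, one could alternatively obtain \refProp{PropWallSliceFixedLocus} directly by applying \refProp{PropSliceFixedLocus} to the "collapsed" sequence of coweights $\left(\colambda_i,\colambda_j,\colambda_1,\dots,\widehat{\colambda_i},\dots,\widehat{\colambda_j},\dots,\colambda_l\right)$ — so only $\A^{\vee}$-weight-lattice combinatorics and the factorization input are needed, and no new geometry.
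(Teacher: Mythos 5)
The paper marks this proof as \emph{Omitted}, so your proposal supplies an argument the paper does not state. Your strategy is correct and is the natural one: the $\A$-action on the Beilinson-Drinfeld family is fiberwise, so the factorization of the wall fiber into $\Gr^{\colambda_i,\colambda_j}\times\prod_{k\neq i,j}\Gr^{\colambda_k}$ is $\A$-equivariant; the fixed locus of each factor is finite and described via \refProp{PropGrFixedLocus} (together with smoothness of $\overline{\Gr^{\colambda_k}}$ for minuscule $\colambda_k$ and the convolution description of $\Gr^{\colambda_i,\colambda_j}$); and intersecting with the slice condition yields the single linear constraint on cocharacters. You also correctly flag the one genuinely geometric input — that the convolution conditions decouple over distinct collision points — as a factorization statement to be cited rather than rederived.

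The one thing to fix is notation, because as written the last step is hard to parse. In the statement of \refProp{PropWallSliceFixedLocus}, and as equations \refEq{EqLimitPoints1}--\refEq{EqLimitPoints2} later make explicit, the pair $(\Tfixed{\conu_i},\Tfixed{\conu_j})\in(\Gr^{\colambda_i,\colambda_j})^{\A}$ is the pair of actual lattices in the two-step convolution, so $\conu_j$ is \emph{cumulative}: the constraints are $\conu_i\in W\colambda_i$ and $\conu_j-\conu_i\in W\colambda_j$. This is exactly why the slice constraint reads $\sum_{k\neq i}\conu_k=\comu$ with the index $i$ omitted: $\conu_j$ already carries the contribution of the $i$-th step, and indeed $\sum_{k\neq i}\conu_k=\conu_j+\sum_{k\neq i,j}\conu_k=\sum_k\deltapi{p}{k}$ for the corresponding limit point $p\in X^{\A}$. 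You instead set $\conu_j$ up as the increment and then append a parenthetical about the ``$k=i$ term'' contributing $\conu_i+\conu_j$ — but the sum runs over $k\neq i$, so there is no $k=i$ term (you mean $k=j$), and the remark is only needed as a patch for having chosen the opposite convention from the statement. Adopt the cumulative reading from the start and the bookkeeping closes without special pleading; the rest of the argument stands.
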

\begin{proof}
	Omitted.
\end{proof}

The identification of the line bundles also allow us to compare the fixed points of $X^{ij}$ and the central fiber $X$, that is, to say which $\A$-fixed points go to which fixed points of $X$ as we approach the central fiber along the hyperplane.

For any $\T$-equivariant line bundle $\widetilde{\LL}$ on $\widetilde{X}$ the $\A$-weights on a point $p' \in \left( X^{ij} \right)^{\A}$ and its limit $p \in X ^{\A}$ must be the same. This is because $\A$-action commutes with $\LoopGm$ and the limit is taken by the $\LoopGm$. The $\A$-weights of $\EBundle{i}$ uniquely determine $\deltai{i}$ of the fixed point. 

Then the limit of 
\begin{equation*}
	p'
	=
	\left(
		\left(
			\Tfixed{\conu_i}, 
			\Tfixed{\conu_j}
		\right), 
		\Tfixed{\conu_1}, 
		\dots,
		\widehat{\Tfixed{\conu_i}},
		\dots,
		\widehat{\Tfixed{\conu_j}}, 
		\dots,
		\Tfixed{\conu_l}
	\right) 
\end{equation*}
is $p$ with
\begin{align}
	\deltapi{p}{k}
	&=
	\conu_k
	\text{ if }
	k\neq j
	\label{EqLimitPoints1}
	\\
	\deltapi{p}{j}
	&=
	\conu_j
	-
	\conu_i.
	\label{EqLimitPoints2}
\end{align}

\medskip

Similarly to the case of the slices one finds that $\A$-invariant curves in $\Gr^{\colambda_i,\colambda_j}$ are of form $C^{12}_{p,\wtalpha}$, where
\begin{equation*}
	p
	=
	\left( \Tfixed{\conu_i}, \Tfixed{\conu_j} \right)
	\in 
	\left( \Gr^{\colambda_i,\colambda_j} \right)^{\A}
\end{equation*}
and
\begin{align}
	\left\langle
		\conu_i,
		\wtalpha
	\right\rangle
	&> 0,
	\label{EqTangentWeightInHalfspace}
	\\
	\left\langle
		\conu_j,
		\wtalpha
	\right\rangle
	&= 0.
	\label{EqRightEndFixed}
\end{align}

Recall that $C^{12}_{p,\wtalpha}$ has two fixed points: $p$ and
\begin{equation*}
	q
	=
	\left( \Tfixed{\conu_i-\coalpha}, \Tfixed{\conu_j} \right)
\end{equation*}
and the tangent weight of $C^{12}_{p,\wtalpha}$ at $p$ and $q$ are $\wtalpha$ and $-\wtalpha$ respectively.

These curves are isolated. 

\subsubsection{Unbroken maps}

Now we need to discuss which stable $\A$-invariant maps give zero contributions. This needs the notion of broken maps by Okounkov and Pandharipande \cite{OP}. The classification of the unbroken maps is identical to the one in the case of the cotangent bundles to a flag variety, see \cite{Su}.

Recall that $X^{ij}$ is symplectic because it comes from a twistor deformation. We reduced the tangent-obstuction theory on the moduli stack $\MapModuli{0,2}{X^{ij},d}$ to make sure that the obstruction bundle doesn't have a non-vanishing cosection with zero $\A$-weight. However, there are two cases than an extra such cosection appears and has no way to cancel with deformation.

\medskip

Let $Z$ be an algebraic symplectic variety. Consider an $\A$-fixed stable map
\begin{equation*}
	\left[
		f\colon C \to Z
	\right]
	\in
	\left(
		\MapModuli{0,n}{Z,\dd}
	\right)^{\A}
\end{equation*}
\begin{itemize}
\item
	We say that $f$ has a \textbf{breaking subcurve} $\widetilde{C} \subset C$ if $\widetilde{C}$ is contracted by $f$ and the disconnected curve $C \setminus \widetilde{C}$ has at least two connected components which have positive degree under $f$.
\item
	We say that $f$ has a \textbf{breaking node} $s \in C$, if two non-contracted components $C_1,C_2 \subset C$ meet at $s$ and have tangent weights $a_1, a_2$ with $a_1+a_2 \neq 0$.
\end{itemize}

We call a stable $\A$-invariant map $f\colon C \to Z$ \textbf{broken}, if there's at least one broken subcurve or broken node. If a map $f$ is not broken, we call it \textbf{unbroken}. By combinatorial description of the components of $\left( \MapModuli{0,n}{Z,\dd} \right)^{\A}$ (see \cite{HKK+}) we get that in any component of $\left( \MapModuli{0,n}{Z,\dd} \right)^{\A}$ either all maps are broken or all maps are unbroken.

Both a broken subcurve and a broken node give rise to a non-vanishing cosection of zero $\A$-weight in the obstruction bundle, which implies vanishing of the reduced fundamental cycle on components of broken maps.

\medskip

Let us return back to the case $Z = X^{ij}$, $n=2$. Then there's a classification of all unbroken maps

\begin{proposition} \label{PropUnbrokenMaps}
	Let $\left[ f\colon C \to X^{ij} \right]$ be an unbroken map with two fixed points $c_+,c_- \in C$. Then there's exactly two cases
	\begin{enumerate}
	\item \label{IrredCase}
		$C\simeq \PLine$, $f$ is a cover over an $\A$-invariant curve $C_{p'q'}$, branched over $\A$-fixed points $p',q'$ of $C_{p'q'}$; the two marked points are sent to $p',q'$. 
	\item \label{RedCase}
		$C$ has two rational components $C_0$, $C_1$ meeting at a node. $C_0$ is contracted by $f$ to an $\A$-fixed point $p'$ and has both fixed points. $C_1$ is mapped by $f$ as $C$ in case \ref{IrredCase}.
	\end{enumerate}
\end{proposition}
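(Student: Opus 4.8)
The plan is to run the standard analysis of $\A$-fixed stable maps, exactly as in \cite{Su} for cotangent bundles of flag varieties, supplying the two geometric inputs specific to $X^{ij}$: the list of $\A$-invariant curves (which we already have from the previous subsection) and, crucially, the non-existence of \emph{chains} of such curves.

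First I would recall the shape of an arbitrary $\A$-fixed stable map $[f\colon C\to X^{ij}]$. Its image is a connected union of $\A$-fixed points and $\A$-invariant curves; the domain $C$ is a genus-$0$ nodal curve, so its dual graph is a tree; and every irreducible component of $C$ is either contracted by $f$ to an $\A$-fixed point of $X^{ij}$, or is mapped onto an $\A$-invariant curve by a cover $z\mapsto z^{k}$ totally ramified over the two torus-fixed points of that curve. By the description preceding \refProp{PropWallSliceFixedLocus}, every $\A$-invariant curve of $X^{ij}$ is constant in all factors except $\Gr^{\colambda_i,\colambda_j}$, where it is one of the isolated rational curves $C^{12}_{p,\wtalpha}$; in particular a non-contracted component covering $C^{12}_{p,\wtalpha}$ with degree $k$ has tangent weights $+k\wtalpha$ and $-k\wtalpha$ at its two ends, and $C^{12}_{p,\wtalpha}$ joins $p=(\Tfixed{\conu_i},\Tfixed{\conu_j})$ to $q=(\Tfixed{\conu_i-\coalpha},\Tfixed{\conu_j})$.

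Next I would prove the \textbf{no-chains} statement: two distinct non-contracted components of $C$ can never meet at a node. Suppose they did, covering $C^{12}_{p_A,\wtalpha}$ and $C^{12}_{p_B,\beta}$ and sharing the $\A$-fixed point $r$. If the node is not breaking, the two tangent weights at $r$ cancel, so $\wtalpha$ and $\beta$ are proportional; as the root system of a simple group is reduced, $\beta=\pm\wtalpha$, and up to reorienting we may take $\beta=\wtalpha$, so that $r$ is the ``$p$''-end of one of the two curves and the ``$q$''-end of the other. Writing $p_A=(\Tfixed{\conu_i},\Tfixed{\conu_j})$ and $p_B=(\Tfixed{\conu_i'},\Tfixed{\conu_j'})$ and using $q_B=r=p_A$, the description of $C^{12}_{p_B,\wtalpha}$ gives $\conu_i'-\coalpha=\conu_i$, hence $\langle\conu_i',\wtalpha\rangle=\langle\conu_i,\wtalpha\rangle+\langle\coalpha,\wtalpha\rangle\geq 1+2=3$ (using $\langle\conu_i,\wtalpha\rangle>0$ from \refEq{EqTangentWeightInHalfspace} and $\langle\coalpha,\wtalpha\rangle=2$ for a coroot and its root); this contradicts $\conu_i'$ being a weight of the minuscule representation $V(\colambda_i)$, for which $\langle\conu_i',\wtalpha\rangle\in\{-1,0,1\}$. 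Since a contracted subcurve adjacent to two positive-degree components would be a breaking subcurve, it follows that $C$ has a single non-contracted component $C_1$, which maps as a degree-$k$ cover of one curve $C_{p'q'}$ totally ramified over $p',q'$. Finally I would organize the contracted components: each lies in a contracted tree attached to $C_1$ at one point, necessarily a torus-fixed point of $C_1$ (a special point on a non-contracted component which is not torus-fixed would move under $\A$), and since a contracted rational component is stable only with $\geq 3$ special points while there are just two marked points, a short bookkeeping leaves exactly the two possibilities of the statement — no contracted component, giving $C\simeq\PLine$ with both marked points at the torus-fixed points of the cover (case \ref{IrredCase}); or a single contracted $C_0$ carrying both marked points, glued to $C_1$ at one torus-fixed point $p'$ (case \ref{RedCase}).

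The main obstacle is the no-chains step: it is the only place where the concrete geometry of $X^{ij}$ intervenes — the explicit tangent weights $\pm\wtalpha$ of the curves $C^{12}_{p,\wtalpha}$ together with the minuscule bound $\langle\nu,\wtalpha\rangle\in\{-1,0,1\}$ — and it is precisely what keeps $\MapModuliFixed{\A}{0,(p',q')}{X^{ij},\dd}$ a union of isolated points. Everything else is the standard combinatorial classification of $\A$-fixed stable maps and the breaking/cosection vanishing, which can be quoted from \cite{OP} and \cite{Su}.
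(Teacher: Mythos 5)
Your proof is correct and follows essentially the same route as the paper's: first rule out chains of non-contracted components by showing the tangent weights at a would-be shared node cannot sum to zero, then use stability and the budget of two marked points to force exactly the two listed configurations. Your no-chains step is somewhat more explicit than the paper's, spelling out the minuscule bound on $\langle \conu, \wtalpha \rangle$ that makes the half-space condition \refEq{EqTangentWeightInHalfspace} give the needed contradiction, whereas the paper invokes that equation without elaboration.
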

\begin{proof}
	First let us note that two non-contracted components can't meet at a node since by \refEq{EqTangentWeightInHalfspace} the weights at this node can't sum to $0$, so the node would be broken.
	
	If there's no contracted component, then it must be the case \ref{IrredCase}.

	Then if there is a contracted curve, it can't have two nodes because then it would be a breaking subcurve. Then by stability it must contain both marked points and one node. The non-contracted component meeting at this node must connect two points and can't meet another component at the other point. We get the case \ref{RedCase}.
\end{proof}
%???Pictures

The integral we want to compute is
\begin{equation*}
	\int\limits_
	{
		\left[
			\MapModuliFixed{\A}{0,(p',q')}{X^{\dd},\dd}
		\right]^{red}
	}
	1
\end{equation*}
for $\dd$ a multiple of $e_i - e_j$, and we see that we have two cases:
\begin{enumerate}
	\item
	If $p'\neq q'$, then there are only contributions from the case \ref{IrredCase} of \refProp{PropUnbrokenMaps}.
	\item
	If $p' = q'$, then there are only contributions from the case \ref{RedCase} of \refProp{PropUnbrokenMaps}.
\end{enumerate}

\begin{remark}
	The contribution of curves of type \ref{RedCase} in \refProp{PropUnbrokenMaps} is diagonal, and can be uniquely reconstructed from the equation
	\begin{equation*}
		D*1 = D.
	\end{equation*}
\end{remark}

\subsubsection{Off-diagonal case}

In this subsection we assume $p'\neq q'$.

\medskip

In this case there is a curve $C_{p'q'}$ connecting $p'$ and $q'$ if and only if
\begin{itemize}
	\item
		$p'$ and $q'$ project to the same point in
		\begin{equation*}
			\prod_{
				\begin{smallmatrix}
					k\neq i
					\\
					k \neq j
				\end{smallmatrix}
			}
			\Gr^{\colambda_k}.
		\end{equation*} 
	
	\item
		Let $p''$ and $q''$ be the projections to $\Gr^{\colambda_i,\colambda_j}$ of $p'$ and $q'$ respectively. Then there must be a coroot $\coalpha$ such that
		\begin{equation*}
			\sigmapi{p''}{1} 
			= 
			\sigmapi{q''}{1}
			-
			\coalpha.
		\end{equation*}
\end{itemize}

\begin{remark}
	The first condition also implies
	\begin{equation*}
		\sigmapi{p''}{2} 
		= 
		\sigmapi{q''}{2}.
	\end{equation*}
\end{remark}

\begin{remark}
	For the corresponding limit points $p,q \in X^{\A}$ we get the relations
	\begin{align}
		\notag
		\deltapi{q}{i} 
		&=
		\deltapi{p}{i} - \coalpha,
		\\
		\label{EqPQRelation}
		\deltapi{q}{j} 
		&=
		\deltapi{p}{j} + \coalpha,
		\\
		\notag
		\deltapi{q}{k}
		&=
		\deltapi{p}{k}
		\text{ for all }
		k,\; k \neq i,\; k \neq j,
	\end{align}	
	which is almost the same as the condition to have a non-zero off-diagonal contribution in the classical case \refEq{EqQIsAdjacentToP}. The only difference is that now we don't require $\wtalpha$ to be positive with respect to some chamber. If $p$ satisfies this condition, the corresponding $q$ also satisfies it. In the classical case the relation was asymmetric, and in the quantum case it becomes symmetric.
\end{remark}

\begin{remark}
	Given $p'$ and $\alpha$ such $q'$ is unique if it exists. It exists if and only if conditions \refEq{EqTangentWeightInHalfspace} and \refEq{EqRightEndFixed} are satisfied, or, equivalently, for the corresponding limit point $p \in X$ we have using \refEq{EqExistenceOfQ1} and \refEq{EqExistenceOfQ2}
\begin{align}
	\label{EqQuantumExistenceOfQ1}
	\left\langle
		\deltapi{p}{i},
		\wtalpha
	\right\rangle
	&= 1,
	\\
	\label{EqQuantumExistenceOfQ2}
	\left\langle
		\deltapi{p}{j},
		\wtalpha
	\right\rangle
	&= -1.
\end{align}	
	These conditions is the same as \refEq{EqExistenceOfQ1} and \refEq{EqExistenceOfQ2} in the classical case, again, modulo the requirement on positivity of $\wtalpha$. 
\end{remark}

The curve $C_{p'q'}$ connecting such $p'$ and $q'$ then has projection $C^{12}_{p'',\wtalpha}$ to $\Gr^{\colambda_i,\colambda_j}$ and projects to a point in $\Gr^{\colambda_k}$. The degree by \refProp{PropCHomologyClasses} is
\begin{equation*}
	\dfrac{\CorootScalar{\coalpha}{\coalpha}}{2} 
	\left( e_i - e_j \right).
\end{equation*}

\medskip

Let $f \colon C \xrightarrow{\sim} C_{p'q'} \subset X^{ij}$ be the $m$-fold cover of $C_{p'q'}$ mapping points $c_+,c_-\in C$ to $p'$ and $q'$ respectively. The degree of $f$ by \ref{PropCHomologyClasses} is 
\begin{equation*}
	\dd_f
	=
	m
	\dfrac{\CorootScalar{\coalpha}{\coalpha}}{2} 
	\left( e_i - e_j \right).
\end{equation*}

Then $\left[ f \right] \in \MapModuliFixed{\A}{0,(p',q')}{X^{ij},\dd_f}$ is an isolated point and the unique unbroken map of degree $\dd_f$ connecting $p'$ and $q'$. Let us figure out the $\A$-equivariant reduced fundamental cycle. It is $c_f$ times the fundamental class of $\left[ f \right]$. By $\A$-equivariant localization.
\begin{equation*}
	c_f
	=
	\dfrac{1}{m}
	\dfrac
	{
		1
	}
	{
		\Euler{\A}
		{
			N
			^{red}
			_{
				\left[ f \right] 
				/ 
				\MapModuli{0,(p',q')}
				{X^{ij},\dd_f}
			}
		}
	},
\end{equation*}
where the factor $m$ appears from the automorphisms of $f$ and the $N^{red}$ is the virtual normal bundle in the reduced tangent-obstruction theory. Namely,
\begin{equation*}
	N
	^{red}
	_{
		\left[ f \right] 
		/ 
		\MapModuli{0,(p',q')}
		{X^{ij},\dd_f}
	}
	=
	N
	^{vir}
	_{
		\left[ f \right] 
		/ 
		\MapModuli{0,(p',q')}
		{X^{ij},\dd_f}
	}
	+
	\left(
		\mathbb{C}\omega_{X^{ij}}
	\right)^\vee
\end{equation*}
as virtual $\A$-representations. Torus $\A$ preserves $\omega_{X^{ij}}$, so $\left( \mathbb{C}\omega_{X^{ij}} \right)^\vee$ is a trivial $\A$-representation. It is introduced to cancel a trivial cosection in the obstruction bundle, which enters with negative sign to $N^{vir}$. We want to show that the reduced virtual normal bundle has no trivial weights. For this we keep track where they appear in the usual virtual normal bundle $N^{vir}$.

Recall that usual $N^{vir}$ is the following\cite{HKK+}
\begin{align*}
	N^{vir}
	_{
		\left[ f \right] 
		/ 
		\MapModuli{0,(p',q')}
		{X^{ij},\dd_f}
	}
	&=
	-
	\Autf{C,c_+,c_-,f}
	+
	\Deff{C,c_+,c_-,f}
	-
	\Obsf{C,c_+,c_-,f}
	\\
	&=
	-
	\Autf{C,c_+,c_-}
	+
	\Deff{C,c_+,c_-}
	+
	\Deff{f}
	-
	\Obsf{f}
\end{align*}

Since $C\simeq \PLine$, the infinitesimal automorphisms $\Autf{C,c_+,c_-}$ are $1$-dimensional, generated by a vector field on $\PLine$ vanishing at both $c_+,c_-$. This has $\A$-weight $0$. The deformations vanish: $\Deff{C,c_+,c_-}=0$, there's only one rational curve with two marked points. Then we look at the deformations and obstructions of $f$:
\begin{equation*}
	\Deff{f}
	-
	\Obsf{f}
	=
	\CoHlgyk{C,f^* TX^{ij}}{0}
	-
	\CoHlgyk{C,f^* TX^{ij}}{1}.
\end{equation*}

The map $f$ induces an action on $C$ with respect to a cover of $\A$. With respect to this action the vector bundle $f^* TX^{ij}$ is equivariant.

We will need the following lemma\cite{MOk}.

\begin{lemma} \label{LemEulerOfAPair}
	Let $T$ be an $\A$-equivariant vector bundle over $C$ without zero weights at the fibers $T_{c_+}$, $T_{c_-}$ over the fixed points $c_+,c_- \in C$. Then
	\begin{equation*}
		\Euler{\A}
		{
			\CoHlgyk{C,T\oplus T^\vee}{0}
			-
			\CoHlgyk{C,T\oplus T^\vee}{1}
		}
		=
		(-1)^{\deg T + \rk T + z}
		\Euler{\A}{T_{c_+}}
		\Euler{\A}{T_{c_-}},
	\end{equation*}
	where $z$ is the number of zero weights in $\CoHlgyk{C,T\oplus T^\vee}{1}$. In particular, 
        \begin{equation*} 
            \CoHlgyk{C,T\oplus T^\vee}{0} - \CoHlgyk{C,T\oplus T^\vee}{1}
        \end{equation*}
        has no zero weights.
\end{lemma}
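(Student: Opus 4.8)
The plan is to reduce the statement to the case of a line bundle and then to compute both sides explicitly on $\mathbb{P}^1$. First I would recall that $C\cong\mathbb{P}^1$ and that the relevant cover of $\A$ acts on $C$ through a single character, with exactly the two fixed points $c_+,c_-$; write $\hbar_0$ for the tangent weight at $c_+$, so that the tangent weight at $c_-$ is $-\hbar_0$. By the equivariant Birkhoff--Grothendieck theorem, every $\A$-equivariant vector bundle on $\mathbb{P}^1$ splits $\A$-equivariantly as a direct sum of equivariant line bundles $T\cong\bigoplus_k L_k$. Both sides of the claimed identity are compatible with such direct sums: $H^0$ and $H^1$ are additive, the equivariant Euler class $e_\A$ is multiplicative, and $\deg T$, $\mathrm{rk}\,T$, and the number $z$ of zero weights in $H^1(C,T\oplus T^\vee)=\bigoplus_k H^1(C,L_k\oplus L_k^\vee)$ are all additive over $k$; moreover the hypothesis that $T_{c_\pm}$ carry no zero weights passes to every summand. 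Hence it suffices to treat $T=L$, a single equivariant line bundle. After possibly replacing $L$ by $L^\vee$ (which leaves both sides unchanged: the left side only involves $L\oplus L^\vee$, while on the right $\deg L$ changes by an even integer, $\mathrm{rk}\,L$ and $z$ are unchanged, and the product $e_\A(L_{c_+})\,e_\A(L_{c_-})$ is unchanged because each factor merely changes sign) I may assume $d:=\deg L\ge 0$.

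For such an $L$, I would compute the cohomology as $\A$-modules. Since $d\ge 0$ one has $H^1(C,L)=0$, and $H^0(C,L)$ is $(d+1)$-dimensional with weights forming an arithmetic progression of common difference $-\hbar_0$ running from the weight of the fiber $L_{c_+}$ to the weight of $L_{c_-}$ (immediate in homogeneous coordinates, or via the holomorphic Lefschetz/localization formula $\chi(C,\mathcal F)=\mathcal F_{c_+}/(1-\tau_+^{-1})+\mathcal F_{c_-}/(1-\tau_-^{-1})$ together with the relation $L_{c_-}=L_{c_+}-d\hbar_0$ between the two fiber weights). Dually, $H^0(C,L^\vee)$ vanishes for $d\ge 1$ (and is the one-dimensional module $L_{c_+}^\vee$ when $d=0$), while $H^1(C,L^\vee)$ is $(d-1)$-dimensional with weights the negatives of the $d-1$ "interior" weights of $H^0(C,L)$, namely the arithmetic progression from $-(L_{c_+}-\hbar_0)$ to $-(L_{c_-}+\hbar_0)$.

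Now I would assemble $\chi:=H^0(C,L\oplus L^\vee)-H^1(C,L\oplus L^\vee)$. Each of the $d-1$ interior weights of $H^0(C,L)$ occurs with the opposite sign among the weights of $H^1(C,L^\vee)$, so in $\chi$ they pair off, leaving only the two endpoint weights, those of $L_{c_+}$ and of $L_{c_-}$; the hypothesis that $L_{c_\pm}$ carry no zero weights then gives the "in particular" assertion that $\chi$ has no zero weights. Passing to $e_\A(\chi)=e_\A(H^0(C,L\oplus L^\vee))/e_\A(H^1(C,L\oplus L^\vee))$, each of the $d-1$ interior cancellations contributes a factor $-1$ (a weight over its negative), except that when one interior weight equals $0$ — which is precisely the case $z=1$ for this $L$ — that weight genuinely cancels in the virtual module $\chi$ and contributes no such factor, producing one extra sign; the net sign is $(-1)^{d-1+z}$, so $e_\A(\chi)=(-1)^{d-1+z}\,e_\A(L_{c_+})\,e_\A(L_{c_-})=(-1)^{\deg L+\mathrm{rk}\,L+z}\,e_\A(L_{c_+})\,e_\A(L_{c_-})$. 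Re-multiplying over the summands $L_k$ and using that $e_\A$ of a fiber of $L_k^\vee$ is $-1$ times that of $L_k$ to rewrite everything in terms of the fibers $T_{c_\pm}$ rather than $T^\vee_{c_\pm}$, one recovers the statement for general $T$.

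The main obstacle I anticipate is not conceptual but the sign bookkeeping: pinning down the linearization conventions so that the weights of $H^0(C,L)$ really do run from $L_{c_+}$ to $L_{c_-}$ with step $-\hbar_0$, and then tracking precisely how a vanishing interior weight flips a sign so that the $(-1)^z$ in the statement comes out correctly — the case $d=2$ with $L_{c_+}$ equal to $\hbar_0$ already exhibits this phenomenon and is a good test case. Everything else, namely the equivariant splitting into line bundles and the cohomology of line bundles on $\mathbb{P}^1$, is standard.
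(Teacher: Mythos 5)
Your proof is correct and follows essentially the same route as the paper's: reduce to a single equivariant line bundle $L$ (the paper invokes the splitting principle where you invoke equivariant Birkhoff--Grothendieck, but these play identical roles here), use the $L \leftrightarrow L^\vee$ symmetry to assume $\deg L \ge 0$, list the equidistant weights of $H^0(C,L)$ and $H^1(C,L^\vee)$, and pair off the $d-1$ interior weights to extract the sign $(-1)^{d-1+z}=(-1)^{\deg L+\operatorname{rk}L+z}$ and the surviving endpoint factors $e_\A(L_{c_+})\,e_\A(L_{c_-})$. The only cosmetic difference is that the paper treats the degree-zero case as a separate base case, and your closing remark about rewriting in terms of $T^\vee_{c_\pm}$ is unnecessary since your argument already lands on $e_\A(L_{c_+})\,e_\A(L_{c_-})$ directly.
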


\begin{proof}
	By the splitting principle, it's enough to prove the statement for the case when $T$ is a line bundle: $T = L$. By symmetry of the expression in terms of exchanging $L \leftrightarrow L^\vee$, we assume that $L$ has non-negative degree.
	
	If $L$ is trivial with weight $\wtchi$, then $\deg L =0$, $\CoHlgyk{C,L\otimes L^\vee}{1} = 0$, so $z = 0$. Moreover, $\CoHlgyk{C,L\otimes L^\vee}{0}$ is two-dimensional with weights $\wtchi$ and $-\wtchi$. Then both sides are $-\left( \wtchi \right)^2$.
	
	Now let us assume the degree of $L$ is positive.

	Let the fibers of $TC$ at the fixed points $c_+$ and $c_-$ have weights $\wtomega$ and $-\wtomega$ respectively.
	
	It is well-known that then for any equivariant line bundle $L$ on $C$ of positive degree the weights of $\CoHlgyk{C,L}{0}$ are equidistant
	\begin{equation*}
		L_{c_+},
		L_{c_+} - \wtomega, 
		\dots, 
		L_{c_-} + \wtomega, 
		L_{c_-},
	\end{equation*}
	where $L_{c_+}$ and $L_{c_-}$ are weights of $L$ at $c_+$ and $c_-$. All weights have multiplicity $1$.
	
	For $L^\vee$ we have $\CoHlgyk{C,L^\vee}{0} = 0$.

	Using Serre duality, we find that for the line bundle $L^\vee$ of degree $<-1$ the weights of 
	\begin{equation*}
		\CoHlgyk{C,L^\vee}{1} \simeq \CoHlgyk{C,L\otimes K_C}{0}^*
	\end{equation*}
	are also equidistant and of multiplicity $1$:
	\begin{equation*}
		-L_{c_-}-\wtomega, 
		-L_{c_-}-2\wtomega,
		\dots
		-L_{c_+}+2\wtomega,
		-L_{c_+}+\wtomega.
	\end{equation*}

	Since $L^\vee\otimes K_C$ has negative degree, $\CoHlgyk{C,L}{1} = 0$.

	If a trivial weight is present in $\CoHlgyk{C,L}{0}$ it's also present in $\CoHlgyk{C,L^\vee}{1}$ by the assumption that $L_{c_+}$ and $L_{c_-}$ are not zero. Then the trivial weights cancel. All other weights in $\CoHlgyk{C,L^\vee}{1}$ have a corresponding weight in $\CoHlgyk{C,L}{0}$ with the opposite sign. So we cancel
	\begin{equation*}
		\deg L\otimes K_C + 1 - z = \deg L - 1 - z
	\end{equation*}
	terms, each gives $-1$ sign. This gives a sign factor
	\begin{equation*}
	(-1)^{\deg L - 1 - z} = (-1)^{\deg L + \rk L + z}.
	\end{equation*}
	
	The remaining terms in $\CoHlgyk{C,L}{0}$ are $L_{c_+}$, $L_{c_-}$ which give $\Euler{\A}{T_{c_+}} \Euler{\A}{T_{c_-}}$.
	
	This proves the lemma.

\end{proof}

Since $TX^{ij}$ has a symplectic structure, we can split it into $TX^{ij} \simeq T \oplus T^\vee$ for some equivariant bundle $T$. The tangent weights $T_{p'} X^{ij}$ and $T_{q'} X^{ij}$ are all non-zero because they are roots, then we are it he setting of \refLem{LemEulerOfAPair}. The lemma gives that the subspaces of weight $0$ in $\Deff{f}-\Obsf{f}$ cancel. 

The remaining $1$-dimensional $\Autf{C,c_+,c_-}$ of zero weight cancel with $\left( \mathbb{C}\omega_{X^{ij}} \right)^\vee$ introduced in the definition of $N^{red}_{ \left[ f \right] / \MapModuli{0,(p',q')} {X^{ij},\dd_f} }$ exactly for this cancellation.

The \refLem{LemEulerOfAPair} gives us even an expression for $c_f$ if we choose a splitting of $f^* TX^{ij}$.

Let $\mathfrak{a}_\mathbb{R}'$ be the real space associated to $\ker \wtalpha \subset\mathfrak{a} $ as in previous chapter. We fix a Weyl chamber $\C' \subset \mathfrak{a}_\mathbb{R}'$.

Then we have a splitting
\begin{equation*}
	\left.
		TX^{ij}
	\right\vert_{C_{p'q'}}
	=
	N^{\C'}_{C_{p'q'}/X^{ij}}
	\oplus
	N^{\wtalpha}_{C_{p'q'}/X^{ij}}
	\oplus
	N^{-\C'}_{C_{p'q'}/X^{ij}},
\end{equation*}
where $N^{\wtalpha}_{C_{p'q'}/X^{ij}}$ has all weights which are zero on $\ker \wtalpha$.

By previous consideration we know that
\begin{equation*}
	N^{\wtalpha}_{C_{p'q'}/X^{ij}}
	=
	TC_{p'q'}
	\oplus
	T^*C_{p'q'}
	\oplus
	V,
\end{equation*}
where $V$ is a trivial vector bundle pulled back from $\prod\Gr^{\colambda_k}$.
Let us choose a Weyl chamber $\widetilde{\C} \subset \mathfrak{a}_\mathbb{R}$ adjacent to $\C'$ (that is, we choose whether $\wtalpha$ or $-\wtalpha$ is positive). Then we can split $V$ with respect to the chamber $\widetilde{C}$
\begin{equation*}
	V = V^{\widetilde{C}} \oplus V^{-\widetilde{C}}.
\end{equation*}
This gives a splitting of $\left. TX^{ij} \right\vert_{C_{p'q'}}$
\begin{equation*}
	\left.
		TX^{ij}
	\right\vert_{C_{p'q'}}
	=
	T
	\oplus
	T^\vee
\end{equation*}
where
\begin{equation*}
	T
	=
	N^{-\C'}_{C_{p'q'}/X^{ij}}
	\oplus
	TC_{p'q'}
	\oplus
	V^{-\widetilde{C}}.
\end{equation*}

Then by \refLem{LemEulerOfAPair} we can compute the Euler class
\begin{align*}
	\Euler{\A}
		{
			\CoHlgyk{C,f^* TX^{ij}}{0}
			-
			\CoHlgyk{C,f^* TX^{ij}}{1}
		}
		&=
		(-1)^{\deg f^*T + \rk T + z}
		\Euler{\A}{f^*T_{c_+}}
		\Euler{\A}{f^*T_{c_-}}
		\\
		=
		(-1)^{\deg f^*T + \rk T + z + 1}
		&
		\Euler{\A}{N^{-\widetilde{\C}}_{p'/X^{ij}}}
		\Euler{\A}{N^{-\widetilde{\C}}_{q'/X^{ij}}},
\end{align*}
where we used that at one of points $p'$ and $q'$ the fiber of $T$ is the same as $N^{-\widetilde{\C}}$ and at another point is differs by a sign of the weight coming from $TC_{p'q'}$.

\medskip

First we find $z$. Let us first find all pairs $L, L^\vee$ of line subbundles in a larger bundle, the tangent bundle of $\GrBD^{\ucolambda}_{\left(t_1,\dots,t_l\right)}$

The pullback of the tangent bundle from $\Gr^{\colambda_k}$ are trivial with no trivial weights. So such pairs of line bundles must be in (the pullback of) $T\Gr^{\colambda_i,\colambda_j}$. Then the weights at the fixed points $L_{p''}$ and $L_{q''}$ are roots. Moreover, since the degree 
\begin{equation*}
	\deg L
	= 
	\int_{C_{p'',q''} } \Chern{\A}{L} 
	= 
	\dfrac{L_{p''}- L_{q''} }{\wtalpha}
\end{equation*}
is an integer, then the weights of $L_{p'}$ and $L_{q'}$ must differ by a multiple of $\wtalpha$. A zero weight if $\CoHlgyk{C,f^* L \otimes f^* L^\vee}{1}$ can appear only if $0$ is on the line connecting $L_{p'}$ and $L_{q'}$. Then this means that one of this weight must be $\alpha$, the other is $-\wtalpha$.

???The multiplicities of $\pm\wtalpha$ are both $1$ by \refCor{CorAMultiplisities}. Despite the computations of the multiplicities are done for $\Gr^{\colambda_i,\colambda_j}_{\comu}$, not for $\Gr^{\colambda_i,\colambda_j}$, the multiplicities for $\pm \wtalpha$ are transferable literally. In the language of Appendix in \cites{Da}, the only difference is if the Right Boundary Condition is satisfied or not. For the points of $\left( \Gr^{\colambda_i,\colambda_j} \right)^{\A}$ with $\left\langle \sigmapi{p''}{2}, \wtalpha \right\rangle = 0$, as in our case, the Right Boundary Condition holds as long as the Left Boundary Condition is satisfied. Thus the multiplicity of both $\pm \wtalpha$ at the tangent space at both $p'$ and $q'$ is $1$. Then there's at most one pair of subbundles $L, L^\vee$ which can contribute to $z$.

One can easily identify this pair of subbundles. One is the tangent space $TC_{p'q'} \subset TX^{ij}$, it has degree $2$. The symplectic form gives the dual subbundle of degree $-2$. In particular, this pair is a subbundle in $TX^{ij}$, not in the larger tangent space we considered. Finally, it gives exactly one zero in $\CoHlgyk{C,f^* L \otimes f^* L^\vee}{1}$, so $z=1$.

\medskip

Let us now find the degree of $T$. By $\A$-equivariant localization it is
\begin{equation*}
	\deg T
	=
	\int_{C_{p'q'}}
	\Chern{\A}{T}
	=
	\dfrac
	{
		\resZ
		{
			\Chern{\A}{T}
		}
		{
			p'
		}
		-
		\resZ
		{
			\Chern{\A}{T}
		}
		{
			q'
		}
	}
	{
		\wtalpha
	}
\end{equation*}
We know these Chern classes because we know the multiplicities of tangent $\A$-weigths at the limit points $p,q \in X$ from \refCor{CorAMultiplisities}. The weights $\wtchi$ coming from counting $\left\langle \deltapi{p}{k}, \wtchi \right\rangle = -1$ for $k\neq i$, $k \neq j$ cancel with similar contributions for $q$, because $\deltapi{p}{k} = \deltapi{q}{k}$ for such $k$. So we only have to take contributions from $\deltapi{p}{i}$, $\deltapi{p}{j}$ and similar for $q$. Denote the sums of these $\A$-weights $c_{i,j,p}$ and $c_{i,j,q}$ and get
\begin{equation*}
	\deg T 
	=
	\dfrac
	{
		c_{i,j,p}
		-
		c_{i,j,q}
	}
	{
		\wtalpha
	}.
\end{equation*}
The are coweights $\deltapi{p}{i}$, $\deltapi{q}{i}$, $\deltapi{p}{j}$ and $\deltapi{q}{j}$ are in the Weyl orbit of a miniscule coweights, so the conditions \refEq{EqPQRelation} imply
\begin{align*}
	\deltapi{q}{i} 
	&= 
	s_{\alpha}
	\deltapi{p}{i}
	\\
	\deltapi{q}{j} 
	&= 
	s_{\alpha}
	\deltapi{p}{j}.
\end{align*}
Since the weight multiplicities are determined by pairing with these coweights, we get
\begin{equation*}
	c_{i,j,q}
	=
	s_{\alpha}
	c_{i,j,p}
	=
	c_{i,j,p}
	-
	\left\langle
		c_{i,j,p},
		\coalpha
	\right\rangle
	\wtalpha
\end{equation*}
and
\begin{equation*}
	\deg T 
	=
	\left\langle
		c_{i,j,p},
		\coalpha
	\right\rangle.
\end{equation*}
Let us consider contributions to this sum of tangent weights which are in a pair of opposite roots $\pm\wtbeta$.
\begin{itemize}
	\item
		Assume first that
		\begin{equation*}
			\left\langle \wtbeta, \deltapi{p}{i} \right\rangle 
			= 
			\left\langle \wtbeta, \deltapi{p}{j} \right\rangle
			=
			0.
		\end{equation*}
		Then both $\pm\beta$ do not contribute to $c_{i,j,p}$.
	\item
		Now let the root $\wtbeta$ which satisfy
		\begin{equation*}
			\left\langle \wtbeta, \deltapi{p}{i} \right\rangle 
			= 
			\left\langle \wtbeta, \deltapi{p}{j} \right\rangle
			\neq
			0.
		\end{equation*}
		Without loss of generality we can assume $\left\langle \wtbeta, \deltapi{p}{i} \right\rangle = -1$. Then $\wtbeta$ contributes $2\left\langle \wtbeta, \coalpha \right\rangle \in 2 \mathbb{Z}$ to $\deg T$. The root $-\wtbeta$ does not contribute to $c_{i,j,p}$.
	\item
		Finally, the root $\wtbeta$ which satisfy
		\begin{equation*}
			\left\langle \wtbeta, \deltapi{p}{i} \right\rangle 
			= 
			-
			\left\langle \wtbeta, \deltapi{p}{j} \right\rangle
			\neq 0
		\end{equation*}
		Without loss of generality we can assume $\left\langle \wtbeta, \deltapi{p}{i} \right\rangle = -1$. The contributes of $\wtbeta$ to $\deg T$ is $\left\langle \wtbeta, \coalpha \right\rangle$ via coweight $\deltapi{p}{i}$ and there is a zero contribution via $\delta{p}{j}$. The second root $-\wtbeta$ has no contribution via $\deltapi{p}{i}$ and contributes $\left\langle -\wtbeta, \coalpha \right\rangle$ via $\deltapi{p}{j}$. Then the total contribution of the pair $\pm\wtbeta$ vanishes.
\end{itemize}
This proves that the contribution of all roots is even. Then the degree $\deg T$ is even, so $\deg f^* T = m \deg T$ is also even.

\medskip

The rank $\rk T$ is $\frac{1}{2} \dim X^{ij} = \frac{1}{2} \dim X$. Knowing $z$, $\deg T$ and $\rk T$ we find the Euler class
\begin{equation*}
	\Euler{\A}
	{
		\CoHlgyk{C,f^* TX^{ij}}{0}
		-
		\CoHlgyk{C,f^* TX^{ij}}{1}
	}
	=
	(-1)^{\dim X/2}
	\Euler{\A}{N^{-\widetilde{\C}}_{p'/X^{ij}}}
	\Euler{\A}{N^{-\widetilde{\C}}_{q'/X^{ij}}}.
\end{equation*}

\medskip

Finally, we get that for $p' \neq q'$ and effective curve class $\dd = m \left( e_i -e_j \right)$, such that the limits $p$ and $q$ are satisfying \refEq{EqPQRelation} for some $\coalpha$s, not necessary positive (or, equivalently, $p'$ and $q'$ are connected by a curve in the deformation to the generic fiber $X^{ij}$ on the wall associated to $e_i-e_j$) the contribution of the reduced Gromov-Witten invariants is
\begin{align*}
	\left\langle
		\StabCEpt{-\C}{\DualPol{\epsilon}}{q},
		\StabCEpt{\C}{\epsilon}{p}
	\right\rangle
	_{0,2,m \left( e_i -e_j \right)}
	^{X,red}
	&=
	\resPol{\DualPol{\epsilon}}{q}
	\resPol{\epsilon}{p}
	\int\limits_
	{
		\left[
			\MapModuliFixed
			{\A}{0,(p,q)}
			{X,m \left( e_i -e_j \right)}
		\right]^{red}
	}
	1
	\\
		&=
	\resPol{\DualPol{\epsilon}}{q}
	\resPol{\epsilon}{p}
	\int\limits_
	{
		\left[
			\MapModuliFixed
			{\A}{0,(p',q')}
			{X^{ij},m \left( e_i -e_j \right)}
		\right]^{red}
	}
	1
	\\
	&=
	\dfrac
	{
		(-1)^{\dim X/2}
	}
	{
		m
	}
	\dfrac
	{
		\resPol{\DualPol{\epsilon}}{q}
		\resPol{\epsilon}{p}
	}
	{
		\Euler{\A}{N^{-\widetilde{\C}}_{p'/X^{ij}}}
		\Euler{\A}{N^{-\widetilde{\C}}_{q'/X^{ij}}}
	}
	\\
	&=
	\dfrac{1}{m}
	\dfrac
	{
		\resPol{\epsilon}{q}
		\resPol{\epsilon}{p}
	}
	{
		\Euler{\A}{N^{-\widetilde{\C}}_{p'/X^{ij}}}
		\Euler{\A}{N^{-\widetilde{\C}}_{q'/X^{ij}}}
	}
	\\
	&=
	\dfrac{1}{m}
	\dfrac
	{
		\resPol{\epsilon}{q}
		\resPol{\epsilon}{p}
	}
	{
		\Euler{\A}{N^{-\widetilde{\C}}_{p/X^{ij}}}
		\Euler{\A}{N^{-\widetilde{\C}}_{q/X^{ij}}}
	}
	\\
	&=
	\sign{\tilde{\C}}{\epsilon}{q}{X}
	\sign{\tilde{\C}}{\epsilon}{p}{X}
	\dfrac{1}{m}
	\\
	&=
	\sigma^{\epsilon}_{p,q}
	\dfrac{1}{m},
\end{align*}
where $\sigma^{\epsilon}_{p,q}$ is exactly the same sign as appeared in \refEq{EqSignsInOmega}.

If $p\neq q $ and they don't satisfy \refEq{EqPQRelation} for these $i<j$ and any $\coalpha$, then the contribution is zero.

\medskip

We summarize the computation of off-diagonal terms. Let $p \neq q$ be related by \ref{EqPQRelation} for a coroot $\coalpha$ and $i<j$. Then the maps connecting $p$ and $q$ give the following off-
\begin{align*}
	D \qdot \StabCEpt{\C}{\epsilon}{p}
	&=
	-\hbar
	\sum\limits_{m=1}^\infty
	q^{m \cdot \dd\left(i,j,\alpha\right)}
	\left\langle
		D,
		m
		\frac{\CorootScalar{\coalpha}{\coalpha}}{2} 
		\left( e_i - e_j \right)
	\right\rangle
	\sigma^{\epsilon}_{p,q}
	\dfrac{1}{m}
	\StabCEpt{\C}{\epsilon}{q}
	+
	\dots
	\\
	&=
	-\hbar
	\left\langle
		D, 
		e_i - e_j
	\right\rangle
	\dfrac
	{q^{\dd\left(i,j,\alpha\right)}}
	{1-q^{\dd\left(i,j,\alpha\right)}}
	\dfrac{\CorootScalar{\coalpha}{\coalpha}}{2}
	\sigma^{\epsilon}_{p,q}
	\StabCEpt{\C}{\epsilon}{q}
	+
	\dots
	\\
	&=
	-\hbar
	\left\langle
		D, 
		e_i - e_j
	\right\rangle
	\dfrac
	{q^{\dd\left(i,j,\alpha\right)}}
	{1-q^{\dd\left(i,j,\alpha\right)}}
	\StabCEpt{\C}{\epsilon}{\OmegaOperator{ij}{\alpha,\epsilon} p}
	+
	\dots
\end{align*}
where $\dd\left(i,j,\alpha\right) = \frac{\CorootScalar{\coalpha}{\coalpha}}{2} \left( e_i - e_j \right)$, dots stay for terms with $\StabCEpt{\C}{\epsilon}{r}$ for $r \neq q$ and $\OmegaOperator{ij}{\alpha,\epsilon}$ is the operator defined in \refEq{EqOmegaAlphaOperDef}.

\subsubsection{Diagonal case}

Now let us consider the case $p'=q'$. The computation is quite similar to the off-diagonal case, so we will skip some details.

We know by \refProp{PropUnbrokenMaps} that the $\A$-fixed unbroken maps with both marked ponts mapped to $p' = q'$ are
\begin{equation*}
	f
	\colon
	C
	\to
	X^{ij}
\end{equation*}
where $C$ has two irreducible rational components
\begin{equation*}
	C = C_0 \cup C_1,
\end{equation*}
$C_0$ and $C_1$ meet at a node $s$, $C_0$ is contracted by $f$ to $p'=q'$, both marked points $c_+,c_-$ are in $C_0$, $C_1$ $m$-foldly covers $C_{p'r'}$ under $f$, where $r' \in \left( X^{ij} \right)^{\A}$. The tangent weight $\wtalpha$ of  $C_{p'r'}$ at $p'$ uniquely determines $r'$. 

The degree of $f$ by \ref{PropCHomologyClasses} is 
\begin{equation*}
	\dd_f
	=
	m
	\dfrac{\CorootScalar{\coalpha}{\coalpha}}{2} 
	\left( e_i - e_j \right).
\end{equation*}

For each pair $p',r'\in\left( X^{ij}\right)^{\A}$ connected by an $\A$ invariant curve, and every positive integer $m$ there is unique such map $f$. Thus $\left[ f \right]$ is an isolated point in $\MapModuliFixed{\A}{0,(p',p')}{X^{ij},\dd_f}$. We are interested in finding the reduced fundamental class at this point. It's $c_f$ times the class of the point $\left[ f \right]$. As previously,

\begin{equation*}
	c_f
	=
	\dfrac{1}{m}
	\dfrac
	{
		1
	}
	{
		\Euler{\A}
		{
			N
			^{red}
			_{
				\left[ f \right] 
				/ 
				\MapModuli{0,(p',p')}
				{X^{ij},\dd_f}
			}
		}
	},
\end{equation*}
where $N^{red}$ is the reduced virtual normal bundle, which is the same correction of the virtual normal bundle as before.

The usual virtual normal bundle to $\left[ f \right]$ is 
\begin{equation*}
	N^{vir}
	_{
		\left[ f \right] 
		/ 
		\MapModuli{0,(p',p')}
		{X^{ij},\dd_f}
	}
	=
	-
	\Autf{C,c_+,c_-}
	+
	\Deff{C,c_+,c_-}
	+
	\Deff{f}
	-
	\Obsf{f}
\end{equation*}

The infinitesimal automorphism bundle $\Autf{C,c_+,c_-}$ is now $2$-dimensional and comes from automorphisms of $C_1$ preserving the node $s$. One weight subspace is generated by rotations preserving $s$ and $f^{-1}(r')$. Similarly to the case $p'\neq q'$, this one has weight $0$. The new weight subspace is generated by parallel shifts in $\ALine\simeq C_1 \setminus \lbrace s \rbrace$. This one has the same weight as the weight of $f^* TC_{p'r'}$ at $f^{-1}(r')$, that is $-\wtalpha/m$.

The deformations $\Deff{C,c_+,c_-}$ come from the smoothing of the node $s$. The weight of this deformation is the sum of adjacent tangent weigths
\begin{equation*}
	0 + \wtalpha/m
\end{equation*}

Then we get
\begin{equation*}
	\Euler{\A}
	{
		-
		\Autf{C,c_+,c_-}
		+
		\Deff{C,c_+,c_-}
		+
		\left(
			\mathbb{C}\omega_{X^{ij}}
		\right)^\vee
	}
	=
	\dfrac{\wtalpha/m}{-\wtalpha/m}
	=
	-1.
\end{equation*}
In particular, all zero $\A$-weights cancel.

For the deformations and obstructions we have
\begin{equation*}
	\Deff{f}
	-
	\Obsf{f}
	=
	\CoHlgyk{C,f^* TX^{ij}}{0}
	-
	\CoHlgyk{C,f^* TX^{ij}}{1}.
\end{equation*}
To compute these sheaf cohomology, let us write the following exact sequence of sheaves
\begin{equation*}
	\begin{tikzcd}
		\OO_C
		\arrow[r]
		&
		\OO_{C_0} \oplus \OO_{C_1}
		\arrow[r]
		&
		\OO_s.
	\end{tikzcd}
\end{equation*}
Tensoring it with a vector bundle $Tf^* TX^{ij}$, we get
\begin{equation*}
	\begin{tikzcd}
		f^* TX^{ij}
		\arrow[r]
		&
		\left.
			f^* TX^{ij}
		\right\vert_{C_0}
		\oplus
		\left.
			f^* TX^{ij}
		\right\vert_{C_1}
		\arrow[r]
		&
		\left.
			f^* TX^{ij}
		\right\vert_{s}.
	\end{tikzcd}
\end{equation*}

The associated long exact sequence of sheaf cohomology gives
\begin{align*}
	\Deff{f}
	-
	\Obsf{f}
	&=
	\CoHlgyk
	{
		C_0,
		f^* TX^{ij}
	}{0}
	-
	\CoHlgyk
	{
		C_0,
		f^* TX^{ij}
	}{1}
	\\
	&+
	\CoHlgyk
	{
		C_1,
		f^* TX^{ij}
	}{0}
	-
	\CoHlgyk
	{
		C_1,
		f^* TX^{ij}
	}{1}
	\\
	&-
		\CoHlgyk
	{
		s,
		f^* TX^{ij}
	}{0}
\end{align*}
where we used that only these cohomologies don't vanish by the dimension of the support.

Moreover, $\left. f^* TX^{ij} \right\vert_{C_0}$ is a constant vector bundle with the same weight as $\left. f^* TX^{ij} \right\vert_{s}$, so 
\begin{equation*}
    \CoHlgyk{C_0,f^* TX^{ij}}{1} = 0
\end{equation*}
and $\CoHlgyk{C_0,f^* TX^{ij}}{0}$ cancels with $\CoHlgyk{s,f^* TX^{ij}}{0}$.

We get
\begin{equation*}
	\Deff{f}
	-
	\Obsf{f}
	=
	\CoHlgyk
	{
		C_1,
		f^* TX^{ij}
	}{0}
	-
	\CoHlgyk
	{
		C_1,
		f^* TX^{ij}
	}{1}
\end{equation*}

The map $f$ restricted to $C_1$ is exactly the same is the off-diagonal case. By applying the same arguments, we get

\begin{equation*}
	\Euler{\A}
	{
		\CoHlgyk
		{
			C_1,
			f^* TX^{ij}
		}{0}
		-
		\CoHlgyk
		{
			C_1,
			f^* TX^{ij}
		}{1}
	}
	=
	(-1)^{\dim X/2}
	\Euler{\A}{N^{-\widetilde{\C}}_{p'/X^{ij}}}
	\Euler{\A}{N^{-\widetilde{\C}}_{r'/X^{ij}}}
\end{equation*}
In particular, $\Deff{f} - \Obsf{f}$ has no zero weights.

Then the Euler class of the reduced virtual normal bundle is
\begin{equation*}
	\Euler{\A}
	{
		N
		^{red}
		_{
			\left[ f \right] 
			/ 
			\MapModuli{0,(p',p')}
			{X^{ij},\dd_f}
		}
	}
	=
	(-1)^{\dim X/2+1}
	\Euler{\A}{N^{-\widetilde{\C}}_{p'/X^{ij}}}
	\Euler{\A}{N^{-\widetilde{\C}}_{r'/X^{ij}}}
\end{equation*}

Finally, the contribution from $f$ to 
$
	\left\langle
		\StabCEpt{-\C}{\DualPol{\epsilon}}{p},
		\StabCEpt{\C}{\epsilon}{p}
	\right\rangle
	_{0,2,\dd_f}
	^{X,red}
$
is
\begin{align*}
	c_f
	\resPol{\DualPol{\epsilon}}{p}
	\resPol{\epsilon}{p}
	&=
	\dfrac
	{
		(-1)^{\dim X/2+1}
	}
	{
		m
	}
	\dfrac
	{
		\resPol{\DualPol{\epsilon}}{p}
		\resPol{\epsilon}{p}
	}
	{
		\Euler{\A}{N^{-\widetilde{\C}}_{p'/X^{ij}}}
		\Euler{\A}{N^{-\widetilde{\C}}_{r'/X^{ij}}}
	}
	\\
	&=
	\dfrac
	{
		(-1)^{\dim X/2+1}
	}
	{
		m
	}
	\dfrac
	{
		\resPol{\DualPol{\epsilon}}{p}
		\resPol{\epsilon}{p}
	}
	{
		\Euler{\A}{N^{-\widetilde{\C}}_{p/X^{ij}}}
		\Euler{\A}{N^{-\widetilde{\C}}_{r/X^{ij}}}
	}
	\\
	&=
	\dfrac
	{
		(-1)^{\dim X/2+1}
	}
	{
		m
	}
	\dfrac
	{
		\Euler{\A}{N_{p/X^{ij}}}
	}
	{
		\Euler{\A}{N^{-\widetilde{\C}}_{p/X^{ij}}}
		\Euler{\A}{N^{-\widetilde{\C}}_{r/X^{ij}}}
	}
	\\
	&=
	\dfrac
	{
		(-1)^{\dim X/2+1}
	}
	{
		m
	}
	\dfrac
	{
		\Euler{\A}{N^{\widetilde{\C}}_{p/X^{ij}}}
	}
	{
		\Euler{\A}{N^{-\widetilde{\C}}_{r/X^{ij}}}
	}
	\\
	&=
	-
	\dfrac{1}{m}
	\dfrac
	{
		\Euler{\A}{N^{-\widetilde{\C}}_{p/X^{ij}}}
	}
	{
		\Euler{\A}{N^{-\widetilde{\C}}_{r/X^{ij}}}
	}.
\end{align*}
where $p,r\in X^{\A}$ are limits of $p',r' \in \left( X^{ij} \right)^{\A}$ in the central fiber $X$.

In what follows we denote
\begin{equation*}
	\omega_{r,p} = 
	\dfrac
	{
		\Euler{\A}{N^{-\widetilde{\C}}_{p/X^{ij}}}
	}
	{
		\Euler{\A}{N^{-\widetilde{\C}}_{r/X^{ij}}}
	}
	\in
	\EqCoHlgy{\A}{\pt}_{loc}
\end{equation*}
the coefficients appearing here. They are the same factors $\omega_{p,q}$ as in the computation of the classical multiplication in \cites{Da} and by the same reasoning do not depend on a choice of $\widetilde{\C}$ as long as it is adjacent to the wall $\wtalpha = 0$.

\begin{remark}
	Let us show how one can derive this result form
	\begin{equation*}
	D * 1 = D 
	\Longleftrightarrow 
	D \qdot 1 = 0.
	\end{equation*}
	First, we have
	\begin{equation*}
		1 
		= 
		\sum_{p \in X^{\A}}
		\dfrac
		{
			\StabCEpt{\C}{\epsilon}{p}
		}
		{
			\resPol{\epsilon}{p}
		}
		\mod \hbar
	\end{equation*}
	in $\EqCoHlgy{\T}{X}_{loc}$ due to the off-diagonal vanishing and the definition of the stable envelopes.
	Then we have
	\begin{equation*}
		D \qdot 1
		=
		\sum_{p \in X^{\A}}
		\dfrac
		{
			D \qdot
			\StabCEpt{\C}{\epsilon}{p}
		}
		{
			\resPol{\epsilon}{p}
		}
		\mod \hbar
	\end{equation*}
	Then we see that contributions of a cover of an irreducible curve connecting $p'$ and $q'$ exactly cancels with a diagonal contribution of the same cover with additional rational component contracting to $q'$.
\end{remark}

\subsubsection{Quantum multiplication}

Let us define operators similar to \refEq{EqOmega0OperDef} and \refEq{EqOmegaCOperDef} used for the purely classical multiplication. Unfortunately, contributions from long and short roots give different contributions to the degree because of the factor $\dfrac{\CorootScalar{\coalpha}{\coalpha}}{2}$. This is why we split the operators into two groups

\begin{align*}
	\OmegaOperatorTilde{ij}{0,\mathrm{st}}
	\colon
	\EqCoHlgy{\A}{X^{\A}}
	&\to
	\EqCoHlgy{\A}{X^{\A}}_{loc}
	\\
	\CohUnit{p}
	&\mapsto
	\left[
		\sum_q
		\omega_{q,p}
	\right]
	\CohUnit{p}
\end{align*}
where the summation runs over all $q$ such that $p$ and $q$ satisfy the relation \ref{EqPQRelation} for a \textbf{short} coroot $\coalpha$ and $i<j$.
 
Similarly, we define
\begin{align*}
	\OmegaOperatorTilde{ij}{0,\mathrm{lg}}
	\colon
	\EqCoHlgy{\A}{X^{\A}}
	&\to
	\EqCoHlgy{\A}{X^{\A}}_{loc}
	\\
	\CohUnit{p}
	&\mapsto
	2
	\left[
		\sum_q
		\omega_{q,p}
	\right]
	\CohUnit{p}
\end{align*}
where the summation runs over all $q$ such that $p$ and $q$ satisfy the relation \ref{EqPQRelation} for a \textbf{long} coroot $\coalpha$ and $i<j$.

\begin{align*}
	\OmegaOperatorTilde{ij}{\mathrm{st},\epsilon}
	&=
	\OmegaOperatorTilde{ij}{0,\mathrm{st}}
	+
	\sum\limits_{\coalpha \textrm{ short}}
	\OmegaOperator{ij}{\alpha,\epsilon},
	\\
	\OmegaOperatorTilde{ij}{\mathrm{lg},\epsilon}
	&=
	\OmegaOperatorTilde{ij}{0,\mathrm{lg}}
	+
	\sum\limits_{\coalpha \textrm{ long}}
	\OmegaOperator{ij}{\alpha,\epsilon},
\end{align*}
where by our convention the shortest coroot is normalized as $\CorootScalar{\coalpha}{\coalpha} = 2$. Since we only consider root systems with minuscule weights, long roots appear in types $B$ and $C$ with norm squared equal to $4$. The coefficient $2$ in the definition of $\OmegaOperator{ij}{0,\mathrm{lg}}$ is exactly $\dfrac{\CorootScalar{\coalpha}{\coalpha}}{2}$ for the long coroots.

Now we can sum the computation of this chapter in one statement.

\begin{theorem}
	The purely quantum multiplication by a divisor $D \in \EqCoHlgyk{\T}{\Gr^{\ucolambda}_{\comu}}{2}$ is given by the following formula
	\begin{equation*}
		D \qdot \StabCEpt{\C}{\epsilon}{p}
		=
		-\hbar
		\sum\limits_{i<j}
		\left\langle
			D,
			e_i - e_j
		\right\rangle
		\StabCE{\C}{\epsilon}
		\left[
			\dfrac{q^{e_i - e_j}}{1-q^{e_i - e_j}}
			\OmegaOperatorTilde{ij}{\mathrm{st},\epsilon}
			\left(
				p
			\right)
			+
			\dfrac{q^{2(e_i - e_j)}}{1-q^{2(e_i - e_j)}}
			\OmegaOperatorTilde{ij}{\mathrm{lg},\epsilon}
			\left(
				p
			\right)
		\right]
	\end{equation*}
\end{theorem}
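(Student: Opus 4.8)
The plan is to assemble the wall-by-wall analysis of this section into a single closed formula. Both sides of the asserted identity depend on $D \in \EqCoHlgyk{\T}{X}{2}$ only $\mathbb{Q}$-linearly, and in fact only through the numbers $\langle D, e_i - e_j \rangle$; by \refProp{PropSecondCohomologyGeneration} it is therefore enough to treat $D = \ChernE{\T}{i}$, the constant divisors contributing nothing on either side since they pair trivially with effective curve classes. Writing $\gamma_2 * D = \gamma_2 \cup D + \gamma_2 \qdot D$ and applying the divisor equation \refEq{EqDivisorEquation}, the purely quantum part $\qdot D$ is determined by the two-point invariants $\langle \StabCEpt{-\C}{\DualPol{\epsilon}}{q}, \StabCEpt{\C}{\epsilon}{p} \rangle^{X}_{0,2,\dd}$; since $\{\StabCEpt{\C}{\epsilon}{p}\}$ and $\{\StabCEpt{-\C}{\DualPol{\epsilon}}{q}\}$ are dual under the pairing $\langle -, - \rangle$, computing these invariants for all pairs $(p,q)$ yields the full matrix of $\qdot D$ in the stable basis. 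Passing to the reduced virtual class introduces the overall $-\hbar$, and \refProp{PropRank1Reduction} reduces each such invariant to an $\A$-localized integral over a moduli of maps into a generic fiber $X^{ij}$ over a wall $H_{ij}$; in particular an effective class $\dd$ contributes at all only if it is a positive multiple of some $e_i - e_j$, and then only through the single wall $H_{ij}$.

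Next I would feed in the classification of unbroken $\A$-fixed stable maps from \refProp{PropUnbrokenMaps}: each contributing map is an $m$-fold cover of an $\A$-invariant curve $C_{p'q'}$, either with distinct ends $p' \neq q'$ (case \ref{IrredCase}) or with an extra rational component contracted to $p' = q'$ (case \ref{RedCase}); all broken maps are annihilated by the cosection in the obstruction bundle. Such a curve $C_{p'q'}$ is governed by a coroot $\coalpha$ obeying \refEq{EqQuantumExistenceOfQ1}--\refEq{EqQuantumExistenceOfQ2} for the limit point $p \in X^{\A}$, carries curve class $\tfrac{\CorootScalar{\coalpha}{\coalpha}}{2}(e_i - e_j)$, and the Euler-class computations already carried out in the off-diagonal and diagonal cases above give the reduced contribution of the $m$-fold cover in degree $\dd_f = m\tfrac{\CorootScalar{\coalpha}{\coalpha}}{2}(e_i - e_j)$: it equals $\tfrac{1}{m}\sigma^{\epsilon}_{p,q}$ when $p' \neq q'$, where $q$ is obtained from $p$ by the shift \refEq{EqPQRelation}, and $-\tfrac{1}{m}\omega_{r,p}$ when $p' = q'$, where $r$ is the limit point at the other end of the bridge curve, again obtained from $p$ by \refEq{EqPQRelation}.

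It then remains to resum. Combining the divisor pairing $\langle D, \dd_f \rangle = m\tfrac{\CorootScalar{\coalpha}{\coalpha}}{2}\langle D, e_i - e_j\rangle$ with the factor $\tfrac{1}{m}$ above, the dependence on $m$ cancels and the sum over multiple covers becomes the geometric series $\sum_{m \geq 1} q^{m\frac{\CorootScalar{\coalpha}{\coalpha}}{2}(e_i - e_j)} = \tfrac{q^{\frac{\CorootScalar{\coalpha}{\coalpha}}{2}(e_i - e_j)}}{1 - q^{\frac{\CorootScalar{\coalpha}{\coalpha}}{2}(e_i - e_j)}}$. A short coroot ($\CorootScalar{\coalpha}{\coalpha} = 2$) thus produces the factor $\tfrac{q^{e_i - e_j}}{1 - q^{e_i - e_j}}$ and a long coroot ($\CorootScalar{\coalpha}{\coalpha} = 4$, which occurs only in types $B$ and $C$) the factor $\tfrac{q^{2(e_i - e_j)}}{1 - q^{2(e_i - e_j)}}$, while the accompanying integer $\tfrac{\CorootScalar{\coalpha}{\coalpha}}{2} \in \{1, 2\}$ is exactly the weighting built into $\OmegaOperatorTilde{ij}{\mathrm{st},\epsilon}$ and $\OmegaOperatorTilde{ij}{\mathrm{lg},\epsilon}$. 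Summing the off-diagonal contributions over all admissible $\coalpha$ reproduces the sums $\sum_{\coalpha \text{ short}}\OmegaOperator{ij}{\alpha,\epsilon}$ and $\sum_{\coalpha \text{ long}}\OmegaOperator{ij}{\alpha,\epsilon}$, summing the diagonal contributions over all partners $r$ reproduces the diagonal operators $\OmegaOperatorTilde{ij}{0,\mathrm{st}}$ and $\OmegaOperatorTilde{ij}{0,\mathrm{lg}}$, and the two together give $\OmegaOperatorTilde{ij}{\mathrm{st},\epsilon}$ and $\OmegaOperatorTilde{ij}{\mathrm{lg},\epsilon}$; finally one sums over the walls $i < j$ and applies $\StabCE{\C}{\epsilon}$, which produces the stated identity.

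The main obstacle is not any individual step, since the geometric input (the unbroken-map classification, the application of \refLem{LemEulerOfAPair}, and the proofs that $\deg T$ is even and that $z = 1$) is already in place; rather it is keeping signs consistent through the assembly. One must carry the sign $(-1)^{\dim X/2}$ from \refLem{LemEulerOfAPair} together with the relations among $\resPol{\epsilon}{p}$, $\resPol{\DualPol{\epsilon}}{p}$ and the Euler classes $\Euler{\A}{N^{\widetilde{\C}}_{p/X}}$, $\Euler{\A}{N^{-\widetilde{\C}}_{p/X}}$, and also the normalization of the pairing $\langle \StabCEpt{-\C}{\DualPol{\epsilon}}{q}, \StabCEpt{\C}{\epsilon}{p}\rangle$, so that the off-diagonal terms land precisely on $\sigma^{\epsilon}_{p,q}$ and the diagonal terms assemble with the correct overall $-\hbar$. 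A convenient internal check is the relation $D * 1 = D$, equivalently $D \qdot 1 = 0$: expanding $1 = \sum_p \StabCEpt{\C}{\epsilon}{p}/\resPol{\epsilon}{p} \mod \hbar$ one sees the contribution of a cover of $C_{p'q'}$ in the off-diagonal term cancel against the diagonal contribution of the same cover with a tail contracted to $q'$, which fixes the relative sign of the two families.
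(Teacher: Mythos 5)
Your proposal is correct and follows the same route as the paper's own (very terse) proof, which simply cites the divisor equation, \refProp{PropRank1Reduction}, and the wall-by-wall computations and declares the assembly "straightforward". You have spelled out that assembly explicitly — the reduction to $D = \ChernE{\T}{i}$, the classification of unbroken maps feeding the off-diagonal and diagonal contributions, the geometric-series resummation over multiple covers with the factor $\tfrac{\CorootScalar{\coalpha}{\coalpha}}{2}$ matching the operator normalizations, and the $D*1 = D$ sign check — all of which is consistent with the intermediate results in the section.
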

\begin{proof}
	This follows from the Divisor Equation \refEq{EqDivisorEquation} and the reduction in \refProp{PropRank1Reduction}. Then the identification of terms in operators $\OmegaOperator{ij}{\mathrm{sh},\epsilon}$, $\OmegaOperator{ij}{\mathrm{lg},\epsilon}$ and the curve contributions found is straightforward.
\end{proof}

\begin{corollary} \label{CorPurelyQuantumPart}
	The purely quantum multiplication by the first Chern classes $\ChernE{\T}{i}$ in $\EqCoHlgy{\T}{\Gr^{\ucolambda}_{\comu}}$ is the following
	\begin{align*}
		\ChernE{\T}{i} \qdot \StabCEpt{\C}{\epsilon}{p}
		=
		&
		-\hbar
		\sum\limits_{i<j}
		\StabCE{\C}{\epsilon}
		\left[
			\dfrac{q^{e_i - e_j}}{1-q^{e_i - e_j}}
			\OmegaOperatorTilde{ij}{\mathrm{st},\epsilon}
			\left(
				p
			\right)
			+
			\dfrac{q^{2(e_i - e_j)}}{1-q^{2(e_i - e_j)}}
			\OmegaOperatorTilde{ij}{\mathrm{lg},\epsilon}
			\left(
				p
			\right)
		\right]
		\\
		&+\hbar
		\sum\limits_{j<i}
		\StabCE{\C}{\epsilon}
		\left[
			\dfrac{q^{e_j - e_i}}{1-q^{e_j - e_i}}
			\OmegaOperatorTilde{ji}{\mathrm{st},\epsilon}
			\left(
				p
			\right)
			+
			\dfrac{q^{2(e_j - e_i)}}{1-q^{2(e_j - e_i)}}
			\OmegaOperatorTilde{ji}{\mathrm{lg},\epsilon}
			\left(
				p
			\right)
		\right]
	\end{align*}
\end{corollary}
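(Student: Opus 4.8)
The plan is to obtain this corollary as a direct specialization of the preceding theorem, taking the divisor to be $D = \ChernE{\T}{i} = c_1\!\left(\EBundle{i}\right)$, $\EBundle{i} = \LBundle{i}/\LBundle{i-1}$. By \refProp{PropSecondCohomologyGeneration} these classes (together with constants, which multiply trivially in the purely quantum part) span $\EqCoHlgyk{\T}{\Gr^{\ucolambda}_{\comu}}{2}$, so this is also the only case one needs. The theorem expresses $D \qdot \StabCEpt{\C}{\epsilon}{p}$ as a single sum over pairs $a < b$, weighted by the pairing $\left\langle D, e_a - e_b \right\rangle$; the entire content of the corollary is the evaluation of that pairing for $D = \ChernE{\T}{i}$ and the ensuing reorganization of the sum.

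First I would compute $\left\langle \ChernE{\T}{i}, e_a - e_b \right\rangle$ for $a < b$. For this I would reuse the analysis of $\A$-invariant curves on the wall fibers $X^{ab}$ carried out in the wall-deformation subsection: any curve of class proportional to $e_a - e_b$ projects to a point in every factor $\Gr^{\colambda_k}$ with $k \neq a, b$, hence $\EBundle{k}$ has degree $0$ on it for all such $k$, while on the two remaining coordinates $\EBundle{a}$ and $\EBundle{b}$ acquire opposite degrees. Combining this with \refProp{PropCHomologyClasses} and the normalization of the basis $\{e_i\}$ (the primitive curve in the $X^{ab}$-direction, i.e.\ the one with $\tfrac{\CorootScalar{\coalpha}{\coalpha}}{2}=1$, has $\EBundle{a}$-degree $+1$), one gets $\left\langle \ChernE{\T}{i}, e_a - e_b \right\rangle = \delta_{ia} - \delta_{ib}$.

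Next I would substitute this value into the formula of the preceding theorem. The coefficient $\delta_{ia} - \delta_{ib}$ is nonzero only in two situations: when $a = i$, forcing $b > i$ and contributing with sign $+1$; and when $b = i$, forcing $a < i$ and contributing with sign $-1$. The first situation reproduces exactly the term $-\hbar \sum_{i < j}\StabCE{\C}{\epsilon}\left[\tfrac{q^{e_i-e_j}}{1-q^{e_i-e_j}}\OmegaOperatorTilde{ij}{\mathrm{st},\epsilon}(p) + \tfrac{q^{2(e_i-e_j)}}{1-q^{2(e_i-e_j)}}\OmegaOperatorTilde{ij}{\mathrm{lg},\epsilon}(p)\right]$ (with $j$ in the role of $b$); the second situation, after the extra minus sign, reproduces $+\hbar \sum_{j < i}\StabCE{\C}{\epsilon}\left[\tfrac{q^{e_j-e_i}}{1-q^{e_j-e_i}}\OmegaOperatorTilde{ji}{\mathrm{st},\epsilon}(p) + \tfrac{q^{2(e_j-e_i)}}{1-q^{2(e_j-e_i)}}\OmegaOperatorTilde{ji}{\mathrm{lg},\epsilon}(p)\right]$ (with $j$ in the role of $a$ and the index pair in $\OmegaOperatorTilde{}{}$ written in increasing order). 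Adding the two gives the displayed formula.

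The argument is essentially bookkeeping, so the only real obstacle is getting the pairing $\left\langle \ChernE{\T}{i}, e_a - e_b \right\rangle$ right together with the matching of index conventions: keeping track of which summation variable is the larger index when passing from the single sum over $a < b$ in the theorem to the two sums (over $i<j$ and over $j<i$) in the corollary, and making sure the sign flip in the $b = i$ branch lands correctly. Nothing beyond the previously established properties of the line bundles $\EBundle{i}$ and of the effective curve classes $e_i - e_j$ is required.
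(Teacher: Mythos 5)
Your proposal is correct and follows essentially the same route as the paper: specialize the preceding theorem to $D = \ChernE{\T}{i}$, evaluate the pairing $\left\langle \ChernE{\T}{i}, e_a - e_b \right\rangle$, and split the sum over ordered pairs $a<b$ into the branch $a=i$ (giving the $\sum_{i<j}$ term) and the branch $b=i$ (giving the $\sum_{j<i}$ term, with the sign flip absorbed). That reindexing is exactly the content of the corollary and your bookkeeping is right.

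The one thing worth flagging is that your derivation of $\left\langle \ChernE{\T}{i}, e_a - e_b \right\rangle = \delta_{ia} - \delta_{ib}$ is far more elaborate than the situation calls for. You invoke the wall-fiber curve analysis, \refProp{PropCHomologyClasses}, and a normalization argument — but none of that is needed. The classes $e_1,\dots,e_l$ are \emph{defined} (around Eq.~\refEq{EqDefinitionOfEClasses}) as the standard basis in $\mathbb{Q}^l$ dual to the surjection $(z_1,\dots,z_l) \mapsto \sum_i z_i\,\ChernE{}{i}$, so $\left\langle \ChernE{}{i}, e_j \right\rangle = \delta_{ij}$ holds by construction, and the paper states this verbatim immediately after that equation. \refProp{PropCHomologyClasses} tells you how to express a given curve class $C^{ij}_{p,\wtalpha+n\hbar}$ in the $e_k$ coordinates — it is not what fixes the normalization of the $e_k$, and if one relied on it here the argument would be circular. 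The paper's one-line proof ("This follows from the definition of classes $e_i$") is the cleaner way to say exactly what your last two paragraphs establish.
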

\begin{proof}
	This follows from the definition of classes $e_i$.
\end{proof}

A direct computation shows that

\begin{lemma}
	\begin{equation*}
	\OmegaOperator{ij}{0}
	=
	\OmegaOperatorTilde{ij}{0,\mathrm{st}}
	+
	\OmegaOperatorTilde{ij}{0,\mathrm{lg}}
	+
	\CorootScalar{\colambda_i}{\colambda_j}
	\end{equation*}
\end{lemma}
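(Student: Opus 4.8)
The plan is to check the identity in the fixed-point basis $\{\CohUnit{p}\}$. The operators $\OmegaOperator{ij}{0}$, $\OmegaOperatorTilde{ij}{0,\mathrm{st}}$, $\OmegaOperatorTilde{ij}{0,\mathrm{lg}}$ are all diagonal there and $\CorootScalar{\colambda_i}{\colambda_j}$ is a scalar, so the statement is equivalent to a numerical identity at every $p\in X^{\A}$, namely
\[
	\CorootScalar{\deltapi{p}{i}}{\deltapi{p}{j}}
	\;=\;
	\CorootScalar{\colambda_i}{\colambda_j}
	+
	\sum_{\coalpha}\tfrac{\CorootScalar{\coalpha}{\coalpha}}{2}\,\omega_{q(\coalpha),p},
\]
where $\coalpha$ runs over all coroots with $\langle\deltapi{p}{i},\wtalpha\rangle=1$ and $\langle\deltapi{p}{j},\wtalpha\rangle=-1$ --- equivalently, over the fixed points $q$ adjacent to $p$ in coordinates $i,j$ via \refEq{EqPQRelation} --- and $q(\coalpha)$ is the corresponding fixed point; here $\tfrac{\CorootScalar{\coalpha}{\coalpha}}{2}$ equals $1$ for short coroots and $2$ for long ones, so the right-hand sum is exactly the eigenvalue of $\OmegaOperatorTilde{ij}{0,\mathrm{st}}+\OmegaOperatorTilde{ij}{0,\mathrm{lg}}$ at $p$ by their definitions.

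For the second step I would evaluate $\omega_{q,p}=\Euler{\A}{N^{-\widetilde{\C}}_{p/X}}/\Euler{\A}{N^{-\widetilde{\C}}_{q/X}}$ with $\widetilde{\C}$ adjacent to the wall $\wtalpha=0$, the same factor that appears in the classical multiplication of \cite{Da}. By \refCor{CorAMultiplisities} the $\A$-tangent weights at $p$ and at $q$ are roots whose multiplicities are determined by the pairings $\langle\deltapi{p}{k},\bullet\rangle$, and since $q$ is obtained from $p$ only by the reflection $s_\alpha$ in the coordinates $i$ and $j$, the two Euler classes agree except in the factors contributed by roots paired nontrivially with $\wtalpha$ by $\deltapi{p}{i}$ or $\deltapi{p}{j}$; this gives each $\omega_{q(\coalpha),p}$ explicitly as a ratio of products of roots. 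Individually these are merely rational functions in the equivariant parameters, but the subsums over short coroots and over long coroots are forced to be rational numbers: up to sign they are the reduced two-point invariants $\langle\StabCEpt{-\C}{\DualPol{\epsilon}}{p},\StabCEpt{\C}{\epsilon}{p}\rangle^{X,red}_{0,2,e_i-e_j}$ and $\langle\StabCEpt{-\C}{\DualPol{\epsilon}}{p},\StabCEpt{\C}{\epsilon}{p}\rangle^{X,red}_{0,2,2(e_i-e_j)}$ computed in the diagonal case above, and these lie in $\EqCoHlgyk{\A}{\pt}{0}\simeq\mathbb{Q}$ because $\StabCEpt{\C}{\epsilon}{p}$ has degree $\dim X$ and the reduced moduli space has homological dimension $\dim X$. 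Carrying out this cancellation and reading off the constant is the main --- and really the only --- computation; it is routine but requires careful bookkeeping of the tangent weights.

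The last step is to match that constant with $\CorootScalar{\deltapi{p}{i}}{\deltapi{p}{j}}-\CorootScalar{\colambda_i}{\colambda_j}$. Since $\deltapi{p}{i}\in W\colambla_i$ and $\deltapi{p}{j}\in W\colambla_j$ are Weyl translates of minuscule coweights, all pairings $\langle\deltapi{p}{i},\wtalpha\rangle$, $\langle\deltapi{p}{j},\wtalpha\rangle$ lie in $\{-1,0,1\}$; using the Weyl-invariance of $\CorootScalar{\cdot}{\cdot}$ one may rewrite $\CorootScalar{\deltapi{p}{i}}{\deltapi{p}{j}}=\CorootScalar{\colambla_i}{w\,\deltapi{p}{j}}$ for $w\in W$ with $w\,\deltapi{p}{i}=\colambla_i$, so that $\CorootScalar{\deltapi{p}{i}}{\deltapi{p}{j}}-\CorootScalar{\colambla_i}{\colambla_j}=\CorootScalar{\colambla_i}{w\,\deltapi{p}{j}-\colambla_j}$ becomes an explicit sum of terms $\tfrac{\CorootScalar{\coalpha}{\coalpha}}{2}\langle\colambla_i,\wtalpha\rangle$ over the positive coroots removed in passing from $\colambla_j$ down to $w\,\deltapi{p}{j}$; comparing this with the constant from the previous step is then a finite combinatorial verification. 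The decisive difficulty throughout is the middle step --- showing that the a priori parameter-dependent sum of Euler-class ratios collapses to the claimed rational number and identifying it; the diagonalization and the final comparison are formal consequences of the definitions and of the fixed-point combinatorics already in place.
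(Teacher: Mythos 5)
Your approach -- reducing to an eigenvalue identity at each fixed point $p$ and evaluating the ratios $\omega_{q,p}$ of Euler classes -- is the right one, and it is what the paper's laconic ``a direct computation shows'' must mean. The dimension argument that reduced two-point invariants lie in $\mathbb{Q}$ is correct and a useful consistency check. But the proposal stops short of a proof: both the middle step (collapsing $\sum_q\omega_{q,p}$ to an explicit number) and the final step (matching that number with $\CorootScalar{\deltapi{p}{i}}{\deltapi{p}{j}} - \CorootScalar{\colambda_i}{\colambda_j}$) are outlined but not executed, and these \emph{are} the content of the lemma. Calling the remaining work ``routine but requires careful bookkeeping of the tangent weights'' is accurate, but that bookkeeping is the proof; as written this is a plan, not an argument. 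It also cannot be delegated to rationality: knowing $\sum_q\omega_{q,p}\in\mathbb{Q}$ does not pin down the value, and signs are precisely what slip here --- for instance, whether $\omega_{q,p}$ comes out $+1$ or $-1$ for the two fixed points of $\Gr^{(\coomega,\coomega)}_0\cong T^*\PLine$ is exactly the sort of thing the bookkeeping must resolve, and the answer determines whether the lemma holds with the stated sign of $\CorootScalar{\colambda_i}{\colambda_j}$.

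Two smaller issues. First, your identification of the short- and long-coroot subsums with the reduced invariants at degrees $e_i-e_j$ and $2(e_i-e_j)$ respectively is off: at degree $2(e_i-e_j)$, $m=2$ covers of short-coroot curves contribute \emph{alongside} $m=1$ covers of long-coroot curves, so the long sum is obtained only after subtracting half the degree-$(e_i-e_j)$ invariant; saying ``up to sign they are the reduced two-point invariants'' elides this. Second, the closing claim that ``the final comparison [is a] formal consequence of the definitions'' overstates the case: the comparison depends on the output of the middle step, so it is not formal in any sense independent of the omitted computation. Given that the paper's own proof is essentially absent, this proposal is not materially less complete than the source; but for a genuine proof one must actually carry out the cancellation of $\Euler{\A}{N^{-\widetilde{\C}}_{p/X^{ij}}}/\Euler{\A}{N^{-\widetilde{\C}}_{q/X^{ij}}}$ across all $q$ adjacent to $p$ (using \refCor{CorAMultiplisities}) and confirm the resulting constant.
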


For simply-laced types $ADE$ all roots are short, so $\OmegaOperator{ij}{\mathrm{lg},\epsilon} = 0$ and
\begin{equation*}
	\OmegaOperatorTilde{ij}{\epsilon}
	:=
	\OmegaOperatorTilde{ij}{\mathrm{st},\epsilon}
	=
	\OmegaOperator{ij}{\epsilon}
	-
	\CorootScalar{\colambda_i}{\colambda_j},
\end{equation*}
where
\begin{equation*}
	\OmegaOperator{ij}{\epsilon}
	:=
	\OmegaOperator{ij}{0,\mathrm{st}}
	+
	\sum\limits_{\alpha}
	\OmegaOperator{ij}{\alpha,\epsilon}
	=
	\OmegaOperator{ij}{\C,\epsilon}
	+
	\OmegaOperator{ij}{-\C,\epsilon}
\end{equation*}
for any Weyl chamber $\C$.

\begin{remark}
	This explains why we used tilde in the notation for the diagonal contribution operator $\OmegaOperatorTilde{ij}{0}$ in the quantum case: it will differ from the Cartan part of the Casimir operator by a constant shift $\CorootScalar{\colambda_i}{\colambda_j}$. 
\end{remark}

In what follows we use notation
\begin{equation*}
	\Kterm{ij}
	\left(
		p
	\right)
	= 
	\CorootScalar{\colambda_i}{\colambda_j}
	\CohUnit{p}.
\end{equation*}

Then we can write the following formula for the quantum multiplication

\begin{theorem} \label{ThmFullQuantumMultiplication}
	The quantum multiplication by the first Chern classes $\ChernE{\T}{i}$ in $\EqCoHlgy{\T}{\Gr^{\ucolambda}_{\comu}}$ for a simply-laced $\G$ is the following
	\begin{align*}
		\ChernE{\T}{i} * \StabCEpt{\C}{\epsilon}{p}
		&=
		\StabCE{\C}{\epsilon}
		\left[
			\Hterm{i}
			\left(
				p
			\right)
			+
			\hbar
			\sum\limits_{j \neq i}
			\dfrac
			{
				q^{e_i}
				\OmegaOperator{ij}{\C,\epsilon}
				\left(
					p
				\right) 
				+
				q^{e_j}
				\OmegaOperator{ij}{-\C,\epsilon}
				\left(
					p
				\right)
			}
			{
				q^{e_i} - q^{e_j}
			}
		\right]
		\\
		&-
		\hbar
		\StabCE{\C}{\epsilon}
		\left[
			\sum\limits_{j < i}
			\dfrac
			{
				q^{e_j}
				\Kterm{ij}
				\left(
					p
				\right)
			}
			{
				q^{e_i} - q^{e_j}
			}
			+
			\sum\limits_{j > i}
			\dfrac
			{
				q^{e_i}
				\Kterm{ij}
				\left(
					p
				\right)
			}
			{
				q^{e_i} - q^{e_j}
			}
		\right],
	\end{align*}
	where the rational function is assumed to be expanded into a power series in the region
	\begin{equation*}
		\left|
			q^n
		\right|
		\ll
		\left|
			q^n
		\right|
		\text{ iff }
		n<m.
	\end{equation*}
\end{theorem}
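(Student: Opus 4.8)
The plan is to obtain the formula by simply adding together the classical and the purely quantum parts of the product, since $\ChernE{\T}{i} * \StabCEpt{\C}{\epsilon}{p} = \ChernE{\T}{i}\cup\StabCEpt{\C}{\epsilon}{p} + \ChernE{\T}{i}\qdot\StabCEpt{\C}{\epsilon}{p}$. The first summand is read off from \refThm{ThmReformulatedClassicalMultiplication}, the second from \refCor{CorPurelyQuantumPart}. The first step is to specialize \refCor{CorPurelyQuantumPart} to the simply--laced case: then all roots are short, so $\OmegaOperatorTilde{ij}{\mathrm{lg},\epsilon}=0$, and by the identities recorded just before the statement for simply--laced $\G$ one has $\OmegaOperatorTilde{ij}{\mathrm{st},\epsilon}=\OmegaOperatorTilde{ij}{\epsilon}=\OmegaOperator{ij}{\C,\epsilon}+\OmegaOperator{ij}{-\C,\epsilon}-\Kterm{ij}$ for any Weyl chamber $\C$. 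Thus the purely quantum product collapses to two sums, over $j>i$ and over $j<i$, with coefficients $\dfrac{q^{e_i-e_j}}{1-q^{e_i-e_j}}$ and $\dfrac{q^{e_j-e_i}}{1-q^{e_j-e_i}}$ respectively, both applied to $\StabCE{\C}{\epsilon}\bigl[\OmegaOperatorTilde{ij}{\epsilon}(p)\bigr]$, where I use that $\OmegaOperatorTilde{ij}{\epsilon}=\OmegaOperatorTilde{ji}{\epsilon}$.

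Next I would record the elementary symmetries of the operators, all immediate from the definitions \refEq{EqOmega0OperDef}, \refEq{EqOmegaAlphaOperDef}, \refEq{EqOmegaCOperDef}: $\OmegaOperator{ij}{0}$ and $\Kterm{ij}$ are symmetric in $i\leftrightarrow j$; exchanging the two slots $i,j$ gives $\OmegaOperator{ji}{\alpha,\epsilon}=\OmegaOperator{ij}{-\alpha,\epsilon}$, hence $\OmegaOperator{ji}{\pm\C,\epsilon}=\OmegaOperator{ij}{\mp\C,\epsilon}$ and $\OmegaOperatorTilde{ij}{\epsilon}=\OmegaOperatorTilde{ji}{\epsilon}$. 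Using these, the classical part of \refThm{ThmReformulatedClassicalMultiplication} rewrites as $\StabCE{\C}{\epsilon}\left[\Hterm{i}(p)+\hbar\sum_{j<i}\OmegaOperator{ij}{\C,\epsilon}(p)-\hbar\sum_{j>i}\OmegaOperator{ij}{-\C,\epsilon}(p)\right]$, so that both halves of the product are now expressed through the same operators $\Hterm{i}$, $\OmegaOperator{ij}{\pm\C,\epsilon}$, $\Kterm{ij}$.

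The third step is the power--series bookkeeping. In the asserted formula the rational functions are to be expanded so that $q^{e_i-e_j}$ is the small parameter whenever $i<j$, which is the effective--cone region appearing below \refEq{EqQuantMultFormula}. I would factor the denominator $q^{e_i}-q^{e_j}$ through $q^{e_j}$ when $j>i$ and through $q^{e_i}$ when $j<i$, expand the resulting geometric series, and substitute $\OmegaOperatorTilde{ij}{\epsilon}=\OmegaOperator{ij}{\C,\epsilon}+\OmegaOperator{ij}{-\C,\epsilon}-\Kterm{ij}$ into the $\Kterm{ij}$--terms as well. One then finds that, term by term in $j$, the right-hand side of the theorem equals $-\OmegaOperator{ij}{-\C,\epsilon}-\dfrac{q^{e_i-e_j}}{1-q^{e_i-e_j}}\OmegaOperatorTilde{ij}{\epsilon}$ for $j>i$ and $\OmegaOperator{ij}{\C,\epsilon}+\dfrac{q^{e_j-e_i}}{1-q^{e_j-e_i}}\OmegaOperatorTilde{ij}{\epsilon}$ for $j<i$ (as coefficients inside $\StabCE{\C}{\epsilon}[\,\cdot\,(p)]$). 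These are precisely the $j$-th terms of (classical) $+$ (purely quantum) computed in the previous step; summing over $j\neq i$ and applying $\StabCE{\C}{\epsilon}$ yields the statement.

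There is no real geometric obstacle here: beyond \refThm{ThmReformulatedClassicalMultiplication} and \refCor{CorPurelyQuantumPart}, the whole argument is an identity of formal power series. The one genuinely delicate point is consistency of conventions -- one must keep the two expansion regimes ($i<j$ versus $i>j$) and the slot--swap identities $\OmegaOperator{ji}{\pm\C,\epsilon}=\OmegaOperator{ij}{\mp\C,\epsilon}$ perfectly aligned, since any slip there quietly interchanges the $\OmegaOperator{ij}{\C,\epsilon}$ and $\OmegaOperator{ij}{-\C,\epsilon}$ contributions and also misplaces the constant $\Kterm{ij}$ relative to $\OmegaOperatorTilde{ij}{\epsilon}$.
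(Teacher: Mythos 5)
Your proposal is correct and takes essentially the same route as the paper: both expand the rational coefficient $\bigl(q^{e_i}\Omega^{ij}_{\C,\epsilon}+q^{e_j}\Omega^{ij}_{-\C,\epsilon}\bigr)/(q^{e_i}-q^{e_j})$ via the identity $\widetilde{\Omega}^{ij}_{\epsilon}=\Omega^{ij}_{\C,\epsilon}+\Omega^{ij}_{-\C,\epsilon}-\Kterm{ij}$ and the slot symmetry $\Omega^{ji}_{-\C,\epsilon}=\Omega^{ij}_{\C,\epsilon}$, matching the result term-by-term against the classical part of \refThm{ThmReformulatedClassicalMultiplication} and the purely quantum part of \refCor{CorPurelyQuantumPart}. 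The paper runs the algebra from the right-hand side outward while you run it from the two inputs toward the claimed form, but the computation is the same.
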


\begin{proof}
	Let us expand the fraction
	\begin{equation*}
		\hbar
		\dfrac
		{
			q^{e_i}
			\OmegaOperator{ij}{\C,\epsilon}
			\left(
				p
			\right) 
			+
			q^{e_j}
			\OmegaOperator{ij}{-\C,\epsilon}
			\left(
				p
			\right)
		}
		{
			q^{e_i} - q^{e_j}
		}
	\end{equation*}
	for $i<j$.
	\begin{align}
		\notag
		\hbar
		\dfrac
		{
			q^{e_i}
			\OmegaOperator{ij}{\C,\epsilon}
			\left(
				p
			\right) 
			+
			q^{e_j}
			\OmegaOperator{ij}{-\C,\epsilon}
			\left(
				p
			\right)
		}
		{
			q^{e_i} - q^{e_j}
		}
		&=
		\hbar
		\dfrac
		{
			q^{e_i}
			\left[
				\OmegaOperator{ij}{\C,\epsilon}
				\left(
					p
				\right) 
				+
				\OmegaOperator{ij}{-\C,\epsilon}
				\left(
					p
				\right) 
			\right]
			+
			\left[
				q^{e_j}
				-
				q^{e_i}
			\right]
			\OmegaOperator{ij}{-\C,\epsilon}
			\left(
				p
			\right)
		}
		{
			q^{e_i} - q^{e_j}
		}
		\\
		\notag
		&=
		\hbar
		\left[
			-
			\OmegaOperator{ij}{-\C,\epsilon}
			\left(
				p
			\right)
			+
			\dfrac
			{
				q^{e_i}
			}
			{
				q^{e_i} - q^{e_j}
			}
			\OmegaOperator{ij}{\epsilon}
			\left(
				p
			\right)
		\right]
		\\
		\notag
		&=
		-
		\hbar
		\OmegaOperator{ij}{-\C,\epsilon}
		\left(
			p
		\right)
		-
		\hbar
		\dfrac
		{
			q^{e_i-e_j}
		}
		{
			1 - q^{e_i-e_j}
		}
		\OmegaOperator{ij}{\epsilon}
		\left(
			p
		\right)
		\\
		\notag
		&=
		-
		\hbar
		\OmegaOperator{ij}{-\C,\epsilon}
		\left(
			p
		\right)
		-
		\hbar
		\dfrac
		{
			q^{e_i-e_j}
		}
		{
			1 - q^{e_i-e_j}
		}
		\OmegaOperatorTilde{ij}{\epsilon}
		\left(
			p
		\right)
		\\
		\label{EqExpansionIJ}
		&\phantom{=}\;
		-
		\hbar
		\dfrac
		{
			q^{e_i-e_j}
		}
		{
			1 - q^{e_i-e_j}
		}
		\Kterm{ij}
		\left(
			p
		\right).
	\end{align}
	Similarly for the case $j<i$:
	\begin{align}
		\notag
		\hbar
		\dfrac
		{
			q^{e_i}
			\OmegaOperator{ij}{\C,\epsilon}
			\left(
				p
			\right) 
			+
			q^{e_j}
			\OmegaOperator{ij}{-\C,\epsilon}
			\left(
				p
			\right)
		}
		{
			q^{e_i} - q^{e_j}
		}
		&=
		\hbar
		\dfrac
		{
			\left[
				q^{e_i}
				-
				q^{e_j}
			\right]
			\OmegaOperator{ij}{\C,\epsilon}
			\left(
				p
			\right) 
			+
			q^{e_j}
			\left[
				\OmegaOperator{ij}{\C,\epsilon}
				\left(
					p
				\right)
				+
				\OmegaOperator{ij}{-\C,\epsilon}
				\left(
					p
				\right)
			\right]
		}
		{
			q^{e_i} - q^{e_j}
		}
		\\
		\notag
		&=
		\hbar
		\left[
			\OmegaOperator{ij}{\C,\epsilon}
			\left(
				p
			\right)
			+
			\dfrac
			{
				q^{e_j}
			}
			{
				q^{e_i} - q^{e_j}
			}
			\OmegaOperator{ij}{\epsilon}
			\left(
				p
			\right)
		\right]
		\\
		&=
		\notag
		\hbar
		\OmegaOperator{ji}{-\C,\epsilon}
		\left(
			p
		\right)
		+
		\hbar
		\dfrac
		{
			q^{e_j-e_i}
		}
		{
			1 - q^{e_j-e_i}
		}
		\OmegaOperator{ij}{\epsilon}
		\left(
			p
		\right)
		\\
		\notag
		&=\hbar
		\OmegaOperator{ji}{-\C,\epsilon}
		\left(
			p
		\right)
		+
		\hbar
		\dfrac
		{
			q^{e_j-e_i}
		}
		{
			1 - q^{e_j-e_i}
		}
		\OmegaOperatorTilde{ij}{\epsilon}
		\left(
			p
		\right)
		\\
		\label{EqExpansionJI}
		&\phantom{=}\;
		+
		\hbar
		\dfrac
		{
			q^{e_j-e_i}
		}
		{
			1 - q^{e_j-e_i}
		}
		\Kterm{ij}
		\left(
			p
		\right),
	\end{align}
	where we used $\OmegaOperator{ij}{\C,\epsilon} = \OmegaOperator{ji}{-\C,\epsilon}$.
	
	Then we see that the first term in \refEq{EqExpansionIJ} and \refEq{EqExpansionJI} is the classical term in \refThm{ThmReformulatedClassicalMultiplication}. The second term in \refEq{EqExpansionIJ} and \refEq{EqExpansionJI} is the purely quantum part in \refCor{CorPurelyQuantumPart}, expanded in the correct region. The last term in both expressions is exactly compensated by terms with $\Kterm{ij}$.
\end{proof}
\section{Representation-Theoretic Reformulation}
\label{SecRepTheory}

\subsection{Trigonometric Knizhnik-Zamolodchikov connection}

First let us recall some constructions from representation theory. Let $\GLangDual$ be a semisimple complex connected Lie group and $\gLangDual$ be its Lie algebra. We denote the Cartan subalgebra as $\hLangDual \subset \gLangDual$ and the root subspaces with respect to $\hLangDual$ as $\grootsub{\coalpha}^\vee$.

Since $\gLangDual$ is simple, it admits a non-degenerate symmetric $\gLangDual$-invariant pairing
\begin{equation*}
	\CorootScalar{\bullet}{\bullet}
	\colon
	\gLangDual \otimes \gLangDual 
	\to 
	\mathbb{C}
\end{equation*}
which is unique up to scaling. Taking the dual map and identifying $\left( \gLangDual \right)^* \simeq \gLangDual$ via the same symmetric form, we get a $\gLangDual$-invariant element
\begin{equation*}
	\Omega \in \gLangDual \otimes \gLangDual.
\end{equation*}
The image of this element in the universal enveloping algebra $U\left( \gLangDual \right)$ is the quadratic Casimir element. We call $\Omega$ the Casimir operator.

We normalize $\CorootScalar{\bullet}{\bullet}$ by the condition that on the longest coroot $\wtalpha$ we have $\CorootScalar{\wtalpha}{\wtalpha} = 2$.

Since $\CorootScalar{\bullet}{\bullet}$ is $\hLangDual$-invariant, it pairs $\hLangDual$ with $\hLangDual$ and $\grootsub{\coalpha}^\vee$ with $\grootsub{-\coalpha}^\vee$. This gives a splitting
\begin{equation*}
	\Omega 
	= 
	\Omega_0 
	+ 
	\sum\limits_{\coalpha} 
	\Omega_{\coalpha},
\end{equation*}
where
\begin{align*}
	\Omega_0 
	&\in
	\hLangDual \otimes \hLangDual,
	\\ 
	\Omega_{\coalpha}
	&\in
	\grootsub{\coalpha}^\vee \otimes \grootsub{-\coalpha}^\vee.
\end{align*}

Later it is important for us that the definition of $\Omega$ implies the following normalization conditions
\begin{align} \label{EqOmegaNormalization}
	\CorootScalarNoComa{\Omega_{\coalpha}} 
	&= 
	1,
	\\
	\CorootScalarNoComa{\Omega_{0}} 
	&= 
	\rk \gLangDual.
\end{align}

Quite often the terms operators $\Omega_{\alpha}$ and $\Omega_0$ are written in the following form
\begin{align*}
	\Omega_0 
	= 
	\sum_i
	h_i \otimes h^i
	\\
	\Omega_{\alpha}
	=
	e_{\coalpha} \otimes e_{-\coalpha}
\end{align*}
for dual bases $h_i$, $h^i$ in $\hLangDual$ and a non-trivial $e_{\coalpha} \in \grootsub{\coalpha}^\vee$. However, by \refEq{EqOmegaNormalization} this assumes that
\begin{equation*}
	\CorootScalar{e_{\coalpha}}{e_{-\coalpha}}
	=
	1.
\end{equation*}
We use a \textbf{different} normalization later for $e_{\coalpha}$, so we prefer not to write it in this form.

Now we fix a Weyl chamber $\C$. Then we can define a "half-Casimir" operator
\begin{equation*}
	\Omega_\C 
	=
	\dfrac{1}{2}
	\Omega_0
	+
	\sum\limits_{\coalpha\gC{\C}0}
	\Omega_{\coalpha}
\end{equation*}

It is easy to see that
\begin{equation*}
	\Omega
	=
	\Omega_\C
	+
	\Omega_{-\C}.
\end{equation*}
Often this decomposition is made with respect to the dominant chamber $\C = +$, then $-\C=-$ is the antidomimant chamber, and
\begin{equation*}
	\Omega
	=
	\Omega_{+}
	+
	\Omega_{-}.
\end{equation*}

For our reasons it's better to keep $\C$ arbitrary.

Let $V_1, \dots, V_l$ be finite-dimensional complex representations of $\GLangDual$. Then for any
\begin{equation*}
	X \in \gLangDual
\end{equation*}
we define the operator $X^i$ acting on the tensor product
\begin{equation*}
	V_1 \otimes \dots \otimes V_l
\end{equation*}
via action of $X$ on $V_i$:
\begin{equation*}
	X^i
	\cdot
	v_1 \otimes \dots \otimes v_i \otimes \dots \otimes v_l
	=
	v_1 \otimes \dots \otimes X \cdot v_i \otimes \dots \otimes v_l
\end{equation*}
Similarly, for any
\begin{equation*}
	Y \in
	\gLangDual \otimes \gLangDual
\end{equation*}
we can define the operator $Y^{ij}$ acting on the same tensor product in such a way, that the first component of $Y$ acts on $V_i$ and the second component acts on $V_j$:
\begin{equation*}
	Y^{ij}
	\cdot
	v_1 \otimes \dots \otimes v_i \otimes \dots \otimes v_j \otimes \dots \otimes v_l
	=
	\sum_k
	v_1 \otimes \dots \otimes Y^{(1)}_k v_i \otimes \dots \otimes Y^{(2)}_k v_j \otimes \dots \otimes v_l,
\end{equation*}
where
\begin{equation*}
	Y = 
	\sum_k
	Y^{(1)}_k \otimes Y^{(2)}_k.
\end{equation*}

\medskip

Let us introduce formal variables $z_1,\dots,z_l$.

The trigonometric Knizhnik-Zamolodchikov connection is a collection of differential operators
\begin{equation} \label{EqTrigKZ}
	\nabla^{KZ,\C}_i
	=
	z_i\dfrac{\partial \phantom{z_i}}{\partial z_i}
	-
	H^i
	- 
	\hbar
	\sum_{j\neq i} 
	\dfrac
	{
		z_i \Omega_{\C}^{ij}+z_j \Omega_{-\C}^{ij}
	}
	{
		z_i - z_j
	}
\end{equation}
on the space
\begin{equation*}
	\mathbb{C}[\hbar](z_1,\dots,z_l)
	\otimes
	V_1 
	\otimes
	\dots 
	\otimes 
	V_l.
\end{equation*}

Here $H \in \hLangDual$ is any element in the Cartan subalgebra. These operators commute.

We think geometrially about this object as a connection vector bundle with fiber $V_1 \otimes \dots \otimes V_l$ over the base which is an open part of $\AffSpace^l$ with coordinates $z_1, \dots, z_l$. Commutativity of the operators means the flatness of the connection.

To combine the connections for various $H$ in one object, let us extend the base to an open subset of $\AffSpace^{l}\otimes \AffSpace \left( \gLangDual \right) \simeq \AffSpace^{l+r}$, where $r$ is the rank of $\gLangDual$. If we fix a basis a basis $h_{\overline{k}}$ in $\hLangDual$, then $\AffSpace^{l}\otimes \AffSpace \left( \gLangDual \right)$ has coordinates $z_1,\dots,z_l$ and $h^{\overline{1}},\dots,h^{\overline{r}}$, the vectors $h^{\overline{k}} \in \mathfrak{h}$ form the dual basis and can be identified with vectors in $\hLangDual$ via $\CorootScalar{\bullet}{\bullet}$. The connection in $z$-direction will be of form \refEq{EqTrigKZ} with
with
\begin{equation*}
	H^i = 	
	\sum_{\overline{k}}
	h^{\overline{k}}
	h_{\overline{k}}^i.
\end{equation*}

For the $a$-direction we use the operators
\begin{equation*}
	H_{\overline{k}}
	=
	h_{\overline{k}}
	=
	\sum_i
	h_{\overline{k}}^i.
\end{equation*}
These are not differential operators, but we need them for comparison. 

The $\hLangDual$-invariance of $\Omega_\C$ and commutativity of $\hLangDual$ implies that these operators commute.

The commutativity with the Cartan (which are the operators $H_{\overline{k}}$) allows us to restrict these operators from the whole tensor product $V_1 \otimes \dots \otimes V_l$ in the fiber to its weight subspace
\begin{equation*}
	V_1 
	\otimes 
	\dots 
	\otimes 
	V_l
	\left[
		\comu
	\right].
\end{equation*}

For further comparison it is convenient to add to the operators $\nabla^{KZ,\C}_i$ an extra term
\begin{equation*}
	\widehat{\nabla}^{KZ,\C}_i
	=
	\nabla^{KZ,\C}_i
	-
	\dfrac{\hbar}{2}
	\sum_{\overline{k}}
	h^i_{\overline{k}}
	H_{\overline{k}}.
\end{equation*}
Equivalently, one replaces $H^i$ in \refEq{EqTrigKZ} by 
\begin{equation*}
	\widehat{H}^i
	=
	H^i
	+
	\dfrac{\hbar}{2}
	\sum_{\overline{k}}
	h^i_{\overline{k}}
	H_{\overline{k}}.
\end{equation*}

After restriction to the subspace of weight $\comu$ this gives operators
\begin{equation*}
	\widehat{H}^i
	=
	H^i
	+
	\dfrac{\hbar}{2}
	\sum_{\overline{k}}
	h^i_{\overline{k}}
	\left\langle
		h_{\overline{k}},
		\comu
	\right\rangle
	=
	\sum_{\overline{k}}
	\left[
		h^{\overline{k}}
		+
		\dfrac{\hbar}{2}
		\CorootScalar{\comu}{h^{\overline{k}}}
	\right]
	h_{\overline{k}}^i.
\end{equation*}

The operators $\widehat{\nabla}^{KZ,\C}_i$ still commute because $\widehat{H}^i$ and $H^i$ differ by a shift in $\AffSpace \left( \hLangDual \right)$.

\medskip

To see explicitly the terms of the quantum multiplication in trigonometric KZ connection it is useful to expand the operator in a power series in the region
\begin{equation*}
	|z_1|
	\ll 
	|z_2|
	\ll 
	\dots
	\ll
	|z_l|,
\end{equation*}
i.e. write base change this operator as an operator on
\begin{equation*}
	\mathbb{C}((z_1/z_2 \dots z_{l-1}/z_l)) 
	\otimes 
	\Sym^*
	\left( 
		\mathfrak{h}^\vee 
		\oplus 
		\mathbb{C}\hbar 
	\right)
	\otimes
	V_1\
	\otimes
	\dots 
	\otimes 
	V_l
	[\comu]
\end{equation*}

Then the connection becomes

\begin{align*} 
	\notag
	\widehat{\nabla}^{KZ,\C}_i 
	= 
	z_i\dfrac{\partial \phantom{z_i}}{\partial z_i} 
	-
	\widehat{H}^i
	&-
	\hbar
	\left[
		\sum_{j<i}
		\Omega_{-\C}^{ji}
		-
		\sum_{j>i}
		\Omega_{-\C}^{ij}
	\right]
	\\
	&-
	\hbar
	\left[
		\sum_{j<i} 
		\dfrac
		{
			(z_j/z_i)
		}
		{
			1 - (z_j/z_i)
		}
		\Omega^{ij}
		-
		\sum_{j>i}
		\dfrac
		{
			(z_i/z_j)
		}
		{
			1 - (z_i/z_j)
		}
		\Omega^{ij}
	\right]
\end{align*}

where one treats fractions $\dfrac{x}{1-x}$ as power series $\sum\limits_{i=1}^{\infty} x^i$. We see that it has almost the same terms as the quantum multiplication. In particular, the first term in brackets in the expansion corresponds to the classical part in the multiplication by a divisor, and the second term in brackets is the quantum correction (modulo some gauge transform).

\subsection{Quantum connection}

Let us consider a trivial vector fiber bundle over the affine associated to $\EqCoHlgyk{\T}{X}{2}$ with the fiber $\QCoHlgy{\T}{X}$.

The quantum connection is the following collection of differential operators
\begin{equation} \label{EqQDE}
	\nabla^Q_{D}
	=
	\partial_D
	-
	D * ,
\end{equation}
where $D \in \EqCoHlgyk{\T}{X}{2}$ and the operators $\partial_D$ act as if formally $q^{\dd} = e^{\dd}$
\begin{equation*}
	\partial_D q^\dd 
	= 
	\left\langle
		D, \dd
	\right\rangle
	q^\dd
\end{equation*}
on $q^\dd$ with $\dd \in \Hlgyk{X, \mathbb{Z}}{2}$, and trivially on $q$-constant classes $\EqCoHlgy{\T}{X} \subset \QCoHlgy{\T}{X}$. The pairing is well-defined if since it vanishes for all constant equivariant classes $D$, $D \in \EqCoHlgy{\T}{\pt} \subset \EqCoHlgy{\T}{X}$.

Because of the vanishing of $\partial_D$ when $D \in \EqCoHlgy{\T}{\pt}$, we have that for these $D$ the operator $\nabla^Q_{D}$ just multiplies by $D$ and these directions are less interesting. This allows one to reduce base to
\begin{equation*}
	\EqCoHlgyk{\T}{X}{2}
	/
	\EqCoHlgyk{\T}{\pt}{2}
	=
	\CoHlgyk{X}{2},
\end{equation*}
by taking any subspace of $\EqCoHlgyk{\T}{X}{2}$ complementary to $\EqCoHlgyk{\T}{\pt}{2}$.

From \refProp{PropSecondCohomologyGeneration} we know that the classes $\ChernE{\T}{i}$ and a basis in $\EqCoHlgyk{\T}{\pt}{2}$ generate $\EqCoHlgyk{\T}{X}{2}$, possibly with some relations.

Taking $D_i = \ChernE{\T}{i}$ in \refEq{EqQDE} we have differential operators
\begin{equation*}
	\nabla^Q_{i}
	=
	q^{e_i}\dfrac{\partial \phantom{q^{e_i}}}{\partial q^{e_i}}
	-
	D_i *.
\end{equation*}
By \refThm{ThmFullQuantumMultiplication} their action on $\StabCEpt{\C}{\epsilon}{p}$ is the following
\begin{align*}
	\nabla^Q_{i} 
	\StabCEpt{\C}{\epsilon}{p}
	&=
	- D_i * \StabCEpt{\C}{\epsilon}{p}
	\\
	&=
	\StabCE{\C}{\epsilon}
	\left[
		-
		\Hterm{i}
		\left(
			p
		\right)
		-
		\hbar
		\sum\limits_{j \neq i}
		\dfrac
		{
			q^{e_i}
			\OmegaOperator{ij}{\C,\epsilon}
			\left(
				p
			\right) 
			+
			q^{e_j}
			\OmegaOperator{ij}{-\C,\epsilon}
			\left(
				p
			\right)
		}
		{
			q^{e_i} - q^{e_j}
		}
	\right]
	\\
	&+
	\hbar
	\StabCE{\C}{\epsilon}
	\left[
		\sum\limits_{j < i}
		\dfrac
		{
			q^{e_j}
			\Kterm{ij}
			\left(
				p
			\right)
		}
		{
			q^{e_i} - q^{e_j}
		}
		+
		\sum\limits_{j > i}
		\dfrac
		{
			q^{e_i}
			\Kterm{ij}
			\left(
				p
			\right)
		}
		{
			q^{e_i} - q^{e_j}
		}
	\right].
\end{align*}

Now we perform a gauge transform to get rid of terms with $\Kterm{ij}$. Take
\begin{equation*}
	\psi
	=
	-
	\hbar
	\sum_{i<j}
	\CorootScalar{\colambda_i}{\colambda_j}
	\ln
	\left(
		1
		-
		\dfrac
		{
			q^{e_i}
		}
		{
			q^{e_j}
		}
	\right)
	\in
	\QuantumPowerSeries.
\end{equation*}

Then if we do a gauge transform
\begin{equation*}
	\widehat{\nabla}^Q_{i}
	=
	e^{-\psi}
	\nabla^Q_{i}
	e^{\psi}
	=
	\nabla^Q_{i}
	+
	q^{e_i}\dfrac{\partial \psi}{\partial q^{e_i}}
\end{equation*}
and compute
\begin{equation*}
	q^{e_i}\dfrac{\partial \psi}{\partial q^{e_i}}
	=
	-\hbar
	\sum_{j<i}
	\CorootScalar{\colambda_i}{\colambda_j}
	\dfrac{q^{e_j}}{q^{e_i}-q^{e_j}}
	-\hbar
	\sum_{j>i}
	\CorootScalar{\colambda_i}{\colambda_j}
	\dfrac{q^{e_i}}{q^{e_i}-q^{e_j}},
\end{equation*}
we get
\begin{equation*}
	\widehat{\nabla}^Q_{i} 
	\StabCEpt{\C}{\epsilon}{p}
	=
	\StabCE{\C}{\epsilon}
	\left[
		-
		\Hterm{i}
		\left(
			p
		\right)
		-
		\hbar
		\sum\limits_{j \neq i}
		\dfrac
		{
			q^{e_i}
			\OmegaOperator{ij}{\C,\epsilon}
			\left(
				p
			\right) 
			+
			q^{e_j}
			\OmegaOperator{ij}{-\C,\epsilon}
			\left(
				p
			\right)
		}
		{
			q^{e_i} - q^{e_j}
		}
	\right].
\end{equation*}
Now the quantum connection has the form almost identical to the one of the trigonometric Knizhnik-Zamolodchikov connection.

%\medskip
%
%The equivariant cohomology of the point are
%\begin{equation*}
%	\EqCoHlgy{\T}{\pt}
%	\simeq
%	\Sym^*
%	\hLangDual
%	\otimes
%	\mathbb{Q}[\hbar]
%\end{equation*}
%and the elements of $\hLangDual$ and $\hbar$ have degree $2$. Then
%\begin{equation*}
%	\EqCoHlgyk{\T}{\pt}{2}
%	\simeq
%	\hLangDual
%	\oplus
%	\mathbb{Q}\hbar
%\end{equation*}
%
%For $\hLangDual$ we picked a basis $a_{\overline{k}}$. The operator \refEq{EqQDE} for $D = h_{\overline{k}}$ is
%\begin{equation*}
%	\mathbf{H}_{\overline{k}}
%	:=
%	\nabla^Q_{\overline{k}}
%	=
%	-
%	h_{\overline{k}}
%\end{equation*}
%and for $D = \hbar$ is 
%\begin{equation*}
%	\mathbf{H}_{\hbar}
%	:=
%	\nabla^Q_{\hbar}
%	=
%	-
%	\hbar.
%\end{equation*}

\subsection{Identification}

Finally, we show that the quantum connection of $\Gr^{\ucolambda}_{\comu}$ can be identified with the trigonometric Knizhnik-Zamolodchikov connection. Our assumptions are that $\G$ is simply-laced and all $\lambda_i$ in $\ucolambda = \left( \colambda_1, \dots, \colambda_l \right)$

We consider the Knizhnik-Zamolodchikov connection on
\begin{equation*}
	V^{\ucolambda}_{\comu}
	=
	V_{\colambda_1}
	\otimes
	\dots
	\otimes 
	V_{\colambda_l}
	\left[ 
		\comu
	\right],
\end{equation*}
where $V_{\colambda_i}$ are irreducible finite dimensional representations with highest weight $\colambda_i$.

The trigonometric Knizhnik-Zamolodchikov operators act on
\begin{equation} \label{EqVHatDef}
	\widehat{V}^{\ucolambda}_{\comu}
	=
	\mathbb{C}((z_1/z_2 \dots z_{l-1}/z_l)) 
	\otimes 
	\Sym^*
	\left( 
		\mathfrak{h}^\vee 
		\oplus 
	\mathbb{C}\hbar 
	\right)
	\otimes 
	V^{\ucolambda}_{\comu}
\end{equation}

Let us start with writing explicitly the standard generators of $U_{\gLangDual}$. Fix a Weyl chamber $\C$, let $\coalpha_i$, $1\leq i \leq \rk \mathfrak{g}$ be the simple roots with respect to $\C$. Then on $e_{\coalpha_i}$, $e_{-\coalpha_i}$, $\wtalpha_i$ we have the standard relations
\begin{align*}
	\left[
		\wtalpha_i,
		\wtalpha_j
	\right]
	&=
	0
	\\
	\left[
		e_{\coalpha_i},
		e_{-\coalpha_i}
	\right]
	&=
	\wtalpha_i
	\\
	\left[
		\wtalpha_j,
		e_{\pm\coalpha_i}
	\right]
	&=
	\left\langle
		\wtalpha_j,
		\pm
		\coalpha_i
	\right\rangle
	e_{\pm\coalpha_i}.
\end{align*}
This defines $e_{\pm\coalpha_i}$ uniquely up to scaling via the exponent of the adjoint action of $\hLangDual$, in particular, $e_{\coalpha_i} \otimes e_{-\coalpha_i}$ is defined uniquely. Similarly, if we take $e_{\coalpha} \in \grootsub{\coalpha}^\vee$ such that
\begin{equation*}
	\left[
		e_{\coalpha},
		e_{-\coalpha}
	\right]
	=
	\wtalpha,
\end{equation*}
then this uniquely determines $e_{\coalpha} \otimes e_{-\coalpha}$. The space $\grootsub{\coalpha}^\vee \otimes \grootsub{-\coalpha}^\vee$ has dimension one, so $e_{\coalpha} \otimes e_{-\coalpha}$ is a multiple of $\Omega_\alpha$. To figure out the coefficient, look at
\begin{align*}
	\CorootScalar{\coalpha}{\coalpha}
	&=
	\CorootScalar
	{
		\left[
			e_{\coalpha},
			e_{-\coalpha}
		\right]
	}
	{\coalpha}
	\\
	&=
	\CorootScalar
	{
		e_{\coalpha}
	}
	{
		\left[
			e_{-\coalpha},
			\coalpha
		\right]
	}
	\\
	&=
	\CorootScalar
	{
		e_{\coalpha}
	}
	{
		2
		e_{-\coalpha}
	},
\end{align*}
and using the normalization \refEq{EqOmegaNormalization}, we get
\begin{equation*}
	e_{\coalpha} \otimes e_{-\coalpha}
	=
	\dfrac
	{
		\CorootScalar{\coalpha}{\coalpha}
	}
	{
		2
	}
	\Omega_\alpha.
\end{equation*}
In particular, in simply-laced case
\begin{equation*}
	e_{\coalpha} \otimes e_{-\coalpha}
	=
	\Omega_\alpha.
\end{equation*}

Then we pick a basis in $V_{\colambda_i}$. Since $\colambda_i$ is minuscule, the weights of this representation are $W\colambda_i$.

Let $v_{\conu_0}$ be the highest weight vector with respect to $\C$. Then for any $\conu \in W\colambda_i$ we have a sequence
\begin{equation*}
	\conu_0,
	\conu_1,
	\dots,
	\conu_i
	=
	\conu,
\end{equation*}
such that the difference $\conu_{j-1} - \conu_j$ is a simple root with respect to $\C$.
Then we define
\begin{equation*}
	v_{\nu}
	=
	e_{\conu_i - \conu_{i-1}}
	\dots
	e_{\conu_{1} - \conu_{0}}
	v_{\conu_{0}}.
\end{equation*}
If there are different such sequences, then the this vector doesn't depend on such a choice.

A straightforwad check shows that for simple $\coalpha_i$ and $v_{\conu_1}\otimes v_{\conu_2} \in V_{\colambda_1}\otimes V_{\colambda_2}$
\begin{equation*}	
	e_{\coalpha_i}
	\otimes
	e_{\coalpha_i}
	\cdot
	v_{\conu_1}\otimes v_{\conu_2}
	=
	\begin{cases}
		v_{\conu_1+\coalpha_i} \otimes v_{\conu_2-\coalpha_i}
		&\text{ if }
		\conu_1+\coalpha_i 
		\in 
		W \colambda_1
		\text{ and }
		\conu_2-\coalpha_i 
		\in 
		W \colambda_2
		\\
		0
		&\text{ otherwise.}
	\end{cases}
\end{equation*}

\medskip

On the side of quantum connection we fix the polarization
\begin{equation*}
	\resPol{\epsilon}{p}
	=
	\Euler{\A}{N^{-\C}_{p/X}}
\end{equation*}
and we omit it in the notation.

\medskip

Now we are ready to identify the sides.

There is an isomorphism
\begin{equation*}
	V^{\ucolambda}_{\comu}
	\xrightarrow{\sim}
	\Hlgy{\left( \Gr^{\ucolambda}_{\comu} \right)^{\T},\mathbb{C}}
\end{equation*}
given by sending $v_{\conu_1}\otimes \dots \otimes v_{\conu_l}$ to $\CohUnit{p}$, where $p\in\left( \Gr^{\colambda}_{\comu} \right)^{\T}$ is the following:
\begin{equation*}
	\deltapi{p}{i} = \conu_i
\end{equation*}
for all $i$.

Then we can get a isomorphism
\begin{equation*}
	\EqHlgy{\T}{pt}_{loc}
	\otimes
	V^{\ucolambda}_{\comu}
	\xrightarrow{\sim}
	\EqHlgy{\T}{\left( \Gr^{\ucolambda}_{\comu} \right)^{\T},\mathbb{C}}_{loc}
	\xrightarrow[\sim]{\StabC{\C}}
	\EqHlgy{\T}{\Gr^{\ucolambda}_{\comu},\mathbb{C}}_{loc},
\end{equation*}
which we denote by $S_{\C}$.

Then the following diagrams
\begin{equation*}
	\begin{tikzcd}
		\EqHlgy{\T}{pt}_{loc}
		\otimes
		V^{\ucolambda}_{\comu}
		\arrow[r,"\Omega^{ij}_\alpha"]
		\arrow[d,"S_{\C}"]
		&
		\EqHlgy{\T}{pt}_{loc}
		\otimes
		V^{\ucolambda}_{\comu}
		\arrow[d,"S_{\C}"]
		\\
		\EqHlgy{\T}{\Gr^{\ucolambda}_{\comu},\mathbb{C}}_{loc}
		\arrow[r,"\OmegaOperator{ij}{\alpha}"]
		&
		\EqHlgy{\T}{\Gr^{\ucolambda}_{\comu},\mathbb{C}}_{loc}
	\end{tikzcd}
\end{equation*}
\begin{equation*}
	\begin{tikzcd}
		\EqHlgy{\T}{pt}_{loc}
		\otimes
		V^{\ucolambda}_{\comu}
		\arrow[r,"\Omega^{ij}_0"]
		\arrow[d,"S_{\C}"]
		&
		\EqHlgy{\T}{pt}_{loc}
		\otimes
		V^{\ucolambda}_{\comu}
		\arrow[d,"S_{\C}"]
		\\
		\EqHlgy{\T}{\Gr^{\ucolambda}_{\comu},\mathbb{C}}_{loc}
		\arrow[r,"\OmegaOperator{ij}{0}"]
		&
		\EqHlgy{\T}{\Gr^{\ucolambda}_{\comu},\mathbb{C}}_{loc}
	\end{tikzcd}
\end{equation*}
commute for a simply-laced $\G$. The operators $\widehat{H}^i$ and $\Hterm{i}$ coincide as well.

\medskip

Sending $z_i$ to $q^{e_i}$ provides a morphism
\begin{equation*}
	\widehat{V}^{\ucolambda}_{\comu}
	\hookrightarrow
	\mathbb{C}((z_1/z_2 \dots z_{l-1}/z_l)) 
	\otimes 
	\EqCoHlgy{\T}{\pt}_{loc}
	\otimes
	V^{\ucolambda}_{\comu}
	\xrightarrow[\sim]{S_{\C}}
	\QCoHlgy{\T}{\Gr^{\ucolambda}_{\comu},\mathbb{C}}_{loc}
\end{equation*}
where $\widehat{V}^{\ucolambda}_{\comu}$ is from \refEq{EqVHatDef} and the last arrow becomes an isomophism if there is only one relation on $e_i$. We call this composition $S_{\C}$ as well because they are obviously related.

Then we summarize the computations in the main theorem.

\begin{theorem}
	\label{ThmKZequalsQ}
		For a simply-laced $\G$ the quantum connection for $\Gr^{\underline{\lambda}}_\mu$ matches with the trigonometric Knizhnik-Zamolodchikov connection. That is, the following diagram
	\begin{equation*}
		\begin{tikzcd}
			\widehat{V}^{\ucolambda}_{\comu}
			\arrow[r,"\widehat{\nabla}^{KZ,\C}_i"]
			\arrow[d,"S_{\C}"]
			&
			\widehat{V}^{\ucolambda}_{\comu}
			\arrow[d,"S_{\C}"]
			\\
			\EqCoHlgy{\T}{\Gr^{\ucolambda}_{\comu},\mathbb{C}}_{loc}
			\arrow[r,"\widehat{\nabla}^{Q}_i"]
			&
			\EqCoHlgy{\T}{\Gr^{\ucolambda}_{\comu},\mathbb{C}}_{loc}
		\end{tikzcd}
	\end{equation*}
	commutes.
\end{theorem}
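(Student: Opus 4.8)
The plan is to establish the commutativity of the diagram by comparing the two families of operators term by term after transporting the trigonometric Knizhnik--Zamolodchikov operators to $\EqCoHlgy{\T}{\Gr^{\ucolambda}_{\comu},\mathbb{C}}_{loc}$ along $S_{\C}$, using the explicit formula for quantum multiplication in \refThm{ThmFullQuantumMultiplication} together with the gauge transform $e^{\psi}$ producing $\widehat{\nabla}^Q_i$ and the $q^{e_i}$-expansion of $\widehat{\nabla}^{KZ,\C}_i$ recorded above. There are three pieces to match: the differential part, the Cartan part $\widehat{H}^i$ versus $\Hterm{i}$, and the Casimir part $\Omega^{ij}_{\C}$, $\Omega^{ij}_{-\C}$ versus $\OmegaOperator{ij}{\C,\epsilon}$, $\OmegaOperator{ij}{-\C,\epsilon}$.

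First I would match the differential parts: under $z_i\mapsto q^{e_i}$ the Euler field $z_i\partial_{z_i}$ becomes $q^{e_i}\partial_{q^{e_i}}=\partial_{D_i}$, which is the derivative part of $\nabla^Q_i$, and this is left unchanged by both gauge modifications. For the Cartan part, \refEq{EqHOperDef} shows that $\Hterm{i}$ is diagonal in the stable basis with eigenvalue $\CorootScalar{\deltapi{p}{i}}{\bullet}+\tfrac{\hbar}{2}\CorootScalar{\deltapi{p}{i}}{\comu}$ on $\CohUnit{p}$; under the identification $\deltapi{p}{i}=\conu_i$ this is precisely the action of $\widehat{H}^i$ on $v_{\conu_1}\otimes\dots\otimes v_{\conu_l}$ after restriction to the weight-$\comu$ subspace, so $S_{\C}$ intertwines them.

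The core of the argument is the commuting squares for the Casimir pieces. For $\Omega^{ij}_0$ this is immediate, since by \refEq{EqOmega0OperDef} and the definition of $\Omega_0$ both $S_{\C}\circ\Omega^{ij}_0$ and $\OmegaOperator{ij}{0}\circ S_{\C}$ are diagonal with eigenvalue $\CorootScalar{\deltapi{p}{i}}{\deltapi{p}{j}}$. For $\Omega^{ij}_{\alpha}$ I would use the chosen polarization $\resPol{\epsilon}{p}=\Euler{\A}{N^{-\C}_{p/X}}$, for which all signs $\sigma^{\epsilon}_{p,q}$ attached to simple roots equal $+1$; the straightforward check recorded just before the statement then identifies $\OmegaOperator{ij}{\alpha,\epsilon}$ with $\Omega^{ij}_{\alpha}=e_{\coalpha}\otimes e_{-\coalpha}$ (note $e_{\coalpha}\otimes e_{-\coalpha}=\Omega_{\alpha}$ in the simply-laced case) for simple $\alpha$, and the case of a general positive root is handled by conjugating with the Weyl group action, which acts compatibly on fixed points and on the weight vectors $v_{\conu}$. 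Summing over $\alpha\gC{\C}0$ and adding $\tfrac12\Omega_0$ gives $S_{\C}\circ\Omega^{ij}_{\C}=\OmegaOperator{ij}{\C,\epsilon}\circ S_{\C}$, and likewise for $-\C$.

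Finally I would check that the expansion conventions agree: the rational functions $z_i/(z_i-z_j)$ in $\widehat{\nabla}^{KZ,\C}_i$ are expanded in the region $|z_1|\ll\dots\ll|z_l|$, which under $z_i\mapsto q^{e_i}$ is exactly the expansion in the effective cone $e_i-e_j>0$ used in \refThm{ThmFullQuantumMultiplication}, so $\tfrac{z_i\Omega^{ij}_{\C}+z_j\Omega^{ij}_{-\C}}{z_i-z_j}$ is carried to $\tfrac{q^{e_i}\OmegaOperator{ij}{\C,\epsilon}+q^{e_j}\OmegaOperator{ij}{-\C,\epsilon}}{q^{e_i}-q^{e_j}}$ with the same expansion. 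Collecting the differential, Cartan and Casimir contributions, $S_{\C}$ intertwines $\widehat{\nabla}^{KZ,\C}_i$ with $\widehat{\nabla}^Q_i$, which is the claim. The main obstacle I expect is the non-simple-root case of the Casimir square: one must show that the sign $\sigma^{\epsilon}_{p,q}$ of \refEq{EqSignsInOmega} for an arbitrary positive root, built out of ratios of equivariant Euler classes of normal bundles, reproduces the sign with which the raising/lowering operators $e_{\coalpha}\otimes e_{-\coalpha}$ act on the basis $\{v_{\conu}\}$, which is itself pinned down only up to the choice of a path realizing $\conu$ from the highest weight; verifying independence of these choices on both sides and their agreement is the delicate point.
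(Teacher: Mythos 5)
Your proof follows the same overall plan as the paper: transport the operators along $S_{\C}$, match the Euler field with $q^{e_i}\partial_{q^{e_i}}$, match $\widehat{H}^i$ with $\Hterm{i}$, verify the Casimir squares, and check the expansion conventions. The one place where the argument as stated would fail is the Weyl-conjugation step for non-simple roots. On the representation side $w\,\Omega^{ij}_{\alpha_k}\,w^{-1}=\Omega^{ij}_{\alpha}$ does preserve $V^{\ucolambda}_{\comu}$ (both operators are weight-preserving), but on the geometric side a Weyl element $w$ does not act as an automorphism of $\Gr^{\ucolambda}_{\comu}$: acting on the path data $\deltap{p}$ takes you to a fixed point of a different slice $\Gr^{\ucolambda}_{w\comu}$, and changing the chamber replaces $\Stab_{\C}$ by $\Stab_{w\C}$, which is related to $\Stab_{\C}$ by an R-matrix rather than by a permutation of fixed points. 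So there is no commuting square for simple roots that you can literally conjugate into the one for $\alpha$.

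What the paper intends — and what the last paragraph of your proposal correctly identifies as the real content — is a direct sign comparison: the coefficient $\sigma^{\epsilon}_{p,q}$ of \refEq{EqSignsInOmega} is intrinsic (it is independent of the adjacent chamber $\widetilde{\C}$, as recalled from \cite{Da}), and it has to be matched against the sign with which $e_{\coalpha}\otimes e_{-\coalpha}$ acts on the path-defined basis $\{v_{\conu}\}$; the latter is well-defined because $v_{\nu}$ is independent of the chain of simple-root lowerings used to reach $\nu$. The comparison is then carried out by iterating the simple-root check along any simple-root chain realizing $\alpha$, using path-independence on the representation side and $\widetilde{\C}$-independence on the geometric side. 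The paper itself only asserts the commuting squares for general $\alpha$, so your proposal is at the same level of detail except for the Weyl-conjugation shortcut, which should be dropped in favor of the direct comparison you flag at the end.
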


%%%%%%%%%%%%%%%%%%%%%%%%%%%%%%%%%%%%%%%%%%%%%%%%%%%%%%%%%%%%%

\appendix
\section{\texorpdfstring{$\T$}{\textT}-equivariant geometry}
\label{AppendixA}

\subsection{Walls in equivariant paramenters}

We first need to explain what we mean by "walls". Define
\begin{equation*}
	\mathfrak{t}_\mathbb{Q} 
	= 
	\CocharLattice{\T}
	\otimes_{\mathbb{Z}}
	\mathbb{Q}
\end{equation*}

For each point $T \in \mathfrak{t}_{\mathbb{Q}}$ one can define the fixed point locus as $X^{T} = X^{nT}$, where $n\in\mathbb{Z}$ is any integer such that $nT \in \CocharLattice{\T} $. Then for a generic $T$ the fixed locus is $X^{\T}$. The locus where $X^{T}$ is larger is a union of hyperplanes called walls. Let us remind how positions of walls are related to the tangent weights of the fixed locus.

Let us first prove the following auxiliary statement

\begin{lemma}\label{LemInvSubvarietyHasFixedPoints}
Let $\T$ be a torus and $\pi\colon X\to X_0$ be a $\T$-equivariant proper morphism to an affine variety. Assume, moreover, $X_0$ has the unique $\T$-fixed point $x\in X_0$ and there is a $\T$-cocharacter $\colambda \colon \Gm \to \T$ contracting $X_0$ to $x$. Then for any $\T$-invariant closed subvariety $Y\subset X$ has a $\T$-fixed point.
\end{lemma}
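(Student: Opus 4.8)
The plan is to deduce the existence of a $\T$-fixed point in $Y$ from the Borel fixed point theorem, applied not to $Y$ itself (which need not be complete) but to a cleverly chosen complete $\T$-invariant closed subvariety of $Y$. The two inputs that make this work are the properness of $\pi$, which forces certain one-parameter limits to exist inside $X$, and the contraction hypothesis on $\colambda$, which pins all $\colambda$-fixed points of $X_0$ down to the single point $x$.

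First I would choose any point $y\in Y$ and consider the orbit map $\Gm\to Y$, $t\mapsto\colambda(t)\cdot y$. Composing with $\pi$ and using that $\colambda$ contracts $X_0$ to $x$, the morphism $t\mapsto\pi(\colambda(t)\cdot y)$ extends to $\mathbb{A}^1\to X_0$ sending $0$ to $x$. Applying the valuative criterion of properness to $\pi$, the orbit map $\Gm\to X$ then extends uniquely to $\mathbb{A}^1\to X$; since $Y$ is closed and contains the image of $\Gm$, the image of $0$, namely the limit $y_0:=\lim_{t\to0}\colambda(t)\cdot y$, lies in $Y$. The standard identity $\colambda(s)\cdot y_0=\lim_{t\to0}\colambda(st)\cdot y=y_0$ shows $y_0$ is fixed by $\colambda(\Gm)$, hence so is $\pi(y_0)\in X_0$; but any $\colambda$-fixed point of $X_0$ equals its own $\colambda$-limit, which is $x$, so $\pi(y_0)=x$.

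Next I would pass to the $\T$-orbit closure $Z:=\overline{\T\cdot y_0}\subset Y$. It is a nonempty $\T$-invariant closed subvariety, and $\pi(Z)=\overline{\T\cdot\pi(y_0)}=\overline{\T\cdot x}=\{x\}$, so $Z\subset\pi^{-1}(x)$. Since $\pi$ is proper, the fiber $\pi^{-1}(x)$ is a complete variety, and therefore so is its closed subvariety $Z$. Now $\T$ is a connected solvable algebraic group acting on the nonempty complete variety $Z$, so the Borel fixed point theorem yields a $\T$-fixed point in $Z\subset Y$, as required.

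The only delicate step, which I would write out carefully, is the use of the valuative criterion to guarantee that $\lim_{t\to0}\colambda(t)\cdot y$ exists \emph{as a point of $X$} (and hence of the closed subscheme $Y$); once that limit is in hand, the rest is a formal consequence of the contraction hypothesis together with the observation that $\pi^{-1}(x)$ is complete, so that Borel's theorem applies.
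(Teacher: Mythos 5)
Your proof is correct and follows essentially the same route as the paper's: use the contraction by $\colambda$ together with properness of $\pi$ to land inside the complete fiber $\pi^{-1}(x)$, then invoke Borel's fixed point theorem on a complete $\T$-invariant closed subvariety of $Y\cap\pi^{-1}(x)$. The paper compresses the argument by observing directly that $x\in\pi(Y)$ (so that $Y\cap\pi^{-1}(x)$ is nonempty, complete, and $\T$-invariant) and applying Borel to it all at once, whereas you first manufacture a $\colambda$-fixed point $y_0\in Y$ via the valuative criterion and then pass to $\overline{\T\cdot y_0}$; this is a slightly more explicit presentation of the same idea.
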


\begin{proof}
The image $\pi \left( Y \right)$ is closed $\T$-invariant. By the action of $\colambda$ on any point, we get that $x \in \pi \left( Y \right)$. So $Y$ intersects non-trivially with $\pi^{-1} \left( x \right)$. The then $Y \cap \pi^{-1} \left( x \right)$ is non-empty and proper, invariant $\T$. Since $\T$ is a torus, there is a $\T$-fixed point in $Y \cap \pi^{-1} \left( x \right)$ (by a theorem due to Borel\cite{B}), and hence in $Y$.
\end{proof}

\begin{remark}
    We could not apply the theorem from \cite{B} immediately to $Y$ since $X$ is not proper, it only has a proper map to $X_0$. However, extra conditions on $X_0$ allowed us to reduce our case to the well-known result.
\end{remark}

Recall that we have such a proper map
\begin{equation*}
    m_{ \ucolambda } 
    \colon
    \Gr^{\ucolambda}_{\comu}
    \to
    \Gr^{\colambda}_{\comu}
\end{equation*}
to a $\T$-equivariant affine space. By \refProp{PropSlicesProperties} $\Gr^{\colambda}_{\comu}$ is contracted to the unique fixed point by $\LoopGm \subset \T$. So the statement holds for $X=\Gr^{\ucolambda}_{\comu}$.

Then there's an immediate corollary of \refLem{LemInvSubvarietyHasFixedPoints} and computation of tangent weights.

\begin{corollary} \label{CorTWallsPositions}
    Given a $T \in \mathfrak{t}_\mathbb{Q} $ the fixed locus $X^T$ is greater than $X^\T$ only if there is an affine root $\wtchi = \wtalpha + n\hbar$ such that $T \in \ker \wtchi$.
\end{corollary}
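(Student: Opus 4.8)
The plan is to combine \refLem{LemInvSubvarietyHasFixedPoints} with the description of tangent weights in \refThm{ThmLatticeMultiplicities}. Fix $T \in \mathfrak{t}_\mathbb{Q}$, choose $m \in \mathbb{Z}_{>0}$ with $mT \in \CocharLattice{\T}$, and let $\T' \subset \T$ be the closure of the image of the one-parameter subgroup $mT \colon \Gm \to \T$, so that by definition $X^T = X^{\T'}$. Since $\T$ is abelian and connected, it preserves $X^{\T'}$ and fixes each of its connected components; and since $X = \Gr^{\ucolambda}_{\comu}$ is smooth, $X^{\T'}$ is smooth, with $T_p X^{\T'} = (T_p X)^{\T'}$ at every point $p$.

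First I would reduce the statement to producing a positive-dimensional component of $X^{\T'}$. Each connected component $Z$ of $X^{\T'}$ is $\T$-invariant and closed in $X$, hence the composition $Z \hookrightarrow X \xrightarrow{m_{\ucolambda}} \Gr^{\colambda}_{\comu}$ is a $\T$-equivariant proper morphism to an affine variety which, by \refProp{PropSlicesProperties}, has a unique $\T$-fixed point and is contracted to it by $\LoopGm \subset \T$; therefore \refLem{LemInvSubvarietyHasFixedPoints} applies and $Z$ contains a $\T$-fixed point. In particular, any zero-dimensional component of $X^{\T'}$ is a single $\T$-fixed point, so it lies in $X^{\T}$. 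Since $X^{\T} \subset X^{\T'}$ always and $X^{\T}$ is a finite set of isolated points by \refProp{PropSliceFixedLocus}, the hypothesis $X^{T} \supsetneq X^{\T}$ forces the existence of a component $Z$ of $X^{\T'}$ with $\dim Z > 0$.

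Next I would examine tangent weights at a $\T$-fixed point of such a $Z$. Pick $p \in Z \cap X^{\T}$, which exists by the previous step. Then $T_p Z = (T_p X)^{\T'}$ is a nonzero subspace of $T_p X$ on which $\T'$, and hence $T$, acts trivially; so $T_p Z$ contains at least one $\T$-weight $\wtchi$ with $\wtchi(T) = 0$. By \refThm{ThmLatticeMultiplicities} every $\T$-weight occurring in $T_p X$, in particular $\wtchi$, has the form $\wtalpha + n\hbar$ for a root $\wtalpha$ and some $n \in \mathbb{Z}$. This exhibits the required affine root $\wtchi = \wtalpha + n\hbar$ with $T \in \ker \wtchi$, which proves the corollary.

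There is essentially no hard computation here; the only step requiring care is the reduction, namely ruling out isolated $\T'$-fixed points that are not $\T$-fixed. This is precisely what \refLem{LemInvSubvarietyHasFixedPoints} is designed to handle, once one observes that each connected component of $X^{\T'}$ is itself $\T$-invariant and still maps properly to the contracting affine base $\Gr^{\colambda}_{\comu}$, so that the non-properness of $X$ causes no trouble.
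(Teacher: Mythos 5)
Your proposal is correct and follows essentially the same route as the paper: both use \refLem{LemInvSubvarietyHasFixedPoints} to place a $\T$-fixed point on each component of $X^T$, and then read off an affine root from the tangent weight description in \refThm{ThmLatticeMultiplicities}. You fill in a small gap the paper elides — explicitly ruling out the possibility that $X^T \supsetneq X^\T$ could arise from extra zero-dimensional components that are not $\T$-fixed, using the lemma once more to show any such isolated point must already be $\T$-fixed — but the argument is otherwise the same.
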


\begin{proof}
    Any component of $X^{T}$ is a closed $\T$-invariant smooth subvariety of $X$ Take any connected component $Z \subset X^{T}$ which is not a point. Then by \refLem{LemInvSubvarietyHasFixedPoints} it has a $\T$-fixed point $p$. The tangent space $T_p Z$ is not trivial and $T_p X$ has only weights of form $\wtalpha + n\hbar$. Thus the value of some affine $\wtalpha + n\hbar$ is zero on $T$ as stated.
\end{proof}

\begin{remark}
    The statement of this Corollary can be formulated in the way that the only walls in $\mathfrak{t}_\mathbb{Q}$ are of form $\wtalpha + n\hbar = 0$.
\end{remark}

The contraction of $\Gr^{\colambda}_{\comu}$ to the $\T$-fixed point is done by $\LoopGm$. Unfortunately, the corresponding cocharacter $\LoopGm \hookrightarrow \T$ can be on the wall, and, equivalently, the fixed locus is larger than $X^{\T}$. However, there exists a convenient perturbation of the cocharacter.

\begin{proposition} \label{PropGenericContraction}
    There exists a cocharacter $\cochi \in \mathfrak{t}_{\mathbb{Q}}$ such that 
    \begin{enumerate}
        \item \label{CondContraction}
            $\Gm$-action given by (a positive power of) $\cochi$ contracts $\Gr^{\colambda}_{\comu}$ to the unique $\T$-fixed point,
        \item \label{CondGeneric}
            $\cochi$ is not on the wall, i.e. $X^\cochi = X^{\T}$.  
    \end{enumerate}
\end{proposition}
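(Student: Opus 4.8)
The plan is to produce $\cochi$ as a generic small perturbation of the natural loop cocharacter $\LoopGm \hookrightarrow \T$ inside the rational cocharacter lattice $\mathfrak{t}_\mathbb{Q}$. Write $\cochi = \cochi_{\hbar} + \varepsilon \cdot \cochi_{\A}$ where $\cochi_{\hbar}$ is (the image of) the loop cocharacter, $\cochi_{\A}$ is a cocharacter of the symplectic torus $\A$, and $\varepsilon \in \mathbb{Q}$ is a small positive rational number. The first task is to arrange condition \ref{CondGeneric}: by \refCor{CorTWallsPositions} the walls in $\mathfrak{t}_\mathbb{Q}$ are exactly the affine hyperplanes $\wtalpha + n\hbar = 0$ for $\wtalpha$ a root of $\G$ and $n \in \mathbb{Z}$, and only finitely many of these meet a bounded neighbourhood of $\cochi_\hbar$ (since $T_p X$ has finitely many weights at each of the finitely many fixed points $p$, by \refThm{ThmLatticeMultiplicities}, only finitely many $n$ occur). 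So it suffices to choose $\cochi_\A$ off all the hyperplanes $\wtalpha = 0$ in $\mathfrak{t}_\A \otimes \mathbb{Q}$ — i.e. $\cochi_\A$ a generic (regular) cocharacter of $\A$ — and then, for $\varepsilon$ small enough and positive, $\cochi$ avoids every relevant wall $\wtalpha + n\hbar = 0$: on such a wall $\langle \wtchi, \cochi\rangle = \langle \wtalpha, \cochi_\A\rangle \varepsilon + n\langle\hbar,\cochi_\A\rangle\varepsilon$ after subtracting the $\cochi_\hbar$-contribution which is $n$ times a fixed nonzero constant... more cleanly: evaluate the affine root $\wtalpha + n\hbar$ on $\cochi_\hbar$ to get $n$ (up to the normalization of $\hbar$), which is nonzero unless $n=0$, and on $\cochi_\A$ to get $\langle \wtalpha, \cochi_\A\rangle \neq 0$; hence for all but finitely many values of $\varepsilon$ the sum is nonzero, and we pick $\varepsilon$ avoiding that finite bad set as well as being small and positive.

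Next I would establish condition \ref{CondContraction}. The key point is that $\LoopGm$ already contracts $X_0 = \Gr^{\colambda}_{\comu}$ to the unique $\T$-fixed point $\Tfixed{\comu}$ (\refProp{PropSlicesProperties}), and contraction is an open condition on the cocharacter in an appropriate sense. Concretely: since $X_0$ is affine with a $\T$-action and $\LoopGm$-contracting structure, its coordinate ring $\mathbb{C}[X_0]$ is $\mathbb{Z}_{\geq 0}$-graded by the $\LoopGm$-weight (the $\hbar$-weight), with degree-zero piece $\mathbb{C}$. Because there are only finitely many $\A$-weights appearing in each graded piece up to the bound forced by the degree, and $\A$ is the torus preserving the symplectic form (weight $0$ on $\hbar$), any cocharacter of the form $\cochi_\hbar + \varepsilon \cochi_\A$ with $\varepsilon$ small still has strictly positive pairing with every nonzero weight $\wtchi$ that occurs in $\mathbb{C}[X_0]$: indeed $\langle \wtchi, \cochi_\hbar \rangle = (\text{$\hbar$-degree of }\wtchi) \geq 1 > 0$ whenever the $\hbar$-degree is positive, so a small perturbation keeps the pairing positive; and the finitely many weights of $\hbar$-degree zero are the zero weight only. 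Hence $\cochi$ acts on $\mathbb{C}[X_0]$ with non-negative weights, degree-zero part $\mathbb{C}$, so a positive power of $\cochi$ contracts $X_0$ to $\Spec \mathbb{C}[X_0]^{\cochi} = \{\Tfixed{\comu}\}$. Passing to the resolution $X = \Gr^{\ucolambda}_\mu$ is automatic since $m_{\ucolambda}$ is proper and $\T$-equivariant: the $\cochi$-limit of any point of $X$ maps under $m_\ucolambda$ to the $\cochi$-limit in $X_0$, which is $\Tfixed{\comu}$, and properness guarantees the limit in $X$ exists.

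Finally I would combine the two steps: choose $\cochi_\A$ regular in $\mathfrak{t}_\A\otimes\mathbb{Q}$ and then $\varepsilon>0$ rational and small enough to simultaneously (a) keep all $\hbar$-positive-degree weights of $\mathbb{C}[X_0]$ paired positively with $\cochi$, and (b) avoid the finitely many affine-root walls $\wtalpha + n\hbar = 0$ near $\cochi_\hbar$; both are finite/open conditions, so such $\varepsilon$ exists, and then $X^\cochi = X^\T$ together with the contraction property hold. The main obstacle I anticipate is purely bookkeeping: making precise that "only finitely many walls are relevant" and that "small $\varepsilon$ preserves positivity of the grading," both of which rest on the finiteness of $X^\T$ and the explicit description of tangent weights in \refThm{ThmLatticeMultiplicities} / \refCor{CorAMultiplisities}; there is no conceptual difficulty, only the need to phrase the genericity argument carefully so that the two conditions on $\varepsilon$ do not conflict.
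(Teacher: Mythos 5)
Your plan is the same as the paper's: perturb the loop cocharacter $\cochi_\hbar$ off the finitely many walls while preserving the contraction property. Both proofs rest on two facts — that the contraction condition is open in $\mathfrak{t}_\mathbb{Q}$, and that the walls are a finite union of hyperplanes and so cannot cover a nonempty open set. The paper phrases this slightly more abstractly (describe the open set, observe it is nonempty, and intersect with the complement of the walls) whereas you explicitly build $\cochi = \cochi_\hbar + \varepsilon\cochi_\A$, but these are the same argument.

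There is, however, a gap in the step where you claim that "a small perturbation keeps the pairing positive" for \emph{every} nonzero weight $\wtchi$ of $\mathbb{C}[X_0]$. As stated, the inequality $\langle\wtchi,\cochi_\hbar\rangle \geq 1$ gives you an $\varepsilon$ for each individual $\wtchi$, but since $\mathbb{C}[X_0]$ carries infinitely many weights, with $\A$-component potentially unbounded, you cannot directly infer that a single $\varepsilon$ works for all of them. Your parenthetical about "the bound forced by the degree" gestures at the right fix, but you never close the loop. The clean way to do it — and what the paper does — is to use that $\mathbb{C}[X_0]$ is a \emph{finitely generated} $\T$-graded algebra: it then suffices to require positivity of $\langle\cdot,\cochi\rangle$ on the finitely many generator weights (positivity on products follows because weights add), and this finite conjunction of strict linear inequalities is manifestly open and manifestly satisfied at $\cochi_\hbar$ itself. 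Without an appeal to finite generation (or some equivalent bound on $\A$-weight versus $\hbar$-degree), the uniform-small-$\varepsilon$ step does not go through. Once you insert that appeal, your argument coincides with the paper's. Incidentally, your final paragraph about lifting the contraction to the resolution $X$ via properness is harmless but unnecessary: condition~(1) in the proposition is a statement about $\Gr^{\colambda}_{\comu}$ itself, not about the resolution.
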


\begin{proof}
    The global functions on $\Gr^{\colambda}_{\comu}$ is a finitely generated algebra graded by $\T$-weights. The condition \ref{CondContraction} is equivalent to saying that all weights of generators have positive value at $\cochi$. It's a finite intersection of open sets, hence an open subset of $\mathfrak{t}_{\mathbb{Q}}$. It's non-empty because the cocharacter $\LoopGm \hookrightarrow \T$ belongs to it. Finally, there are finitely many walls (hyperplanes) where condition \ref{CondGeneric} fails. They can't cover a non-empty open set in $\mathfrak{t}_{\mathbb{Q}}$, so there is $\cochi$ that satisfies both conditions.
\end{proof}

\subsection{Affine root subgroups}

Now we want to describe the fixed locus of the generic point on a wall, i.e. subvarieties $X^{\ker \wtchi} \subset X$.

For an explicit description of $X^{\ker \wtchi}$ we need the $\SL{2}$-subgroups of $\GK$ corresponding to the affine roots. One could define the $\SL{2}$-subgroups by referring to the fact that $\GK$ is a quotient of a Kac-Moody group, so it inherits such root subgroups from it. We provide a more explicit description.

For the group $\SL{2}$ let us denote the standard torus as $\SLtwoH$, the standard Borel and the opposite Borel subgroups as $\SLtwoB$ and $\SLtwoBop$, the corresponding unipotent radicals as $\SLtwoU$ and $\SLtwoUop$.

Fix an isomorphism $\SLtwoH \xrightarrow{\sim} \Gm$.

First, recall what the $\SL{2}$-subgroups corresponding to non-affine roots. Let $\wtalpha$ be a root of $\G$ and $\coalpha \colon \Gm \to \A$ the corresponding coroot. Then there is a homomorphism of algebraic groups

\begin{equation*}
	\iota_{\wtalpha}\colon \SL{2} \to \G
\end{equation*}

such that the diagram

\begin{equation*}
	\begin{tikzcd}
		\SL{2} \arrow[rr,"\iota_{\wtalpha}"] & & \G \\
		\SLtwoH \arrow[u,hook] \arrow[r,"\sim"] & \Gm \arrow[r,"\coalpha"] & \A \arrow[u,hook]
	\end{tikzcd}
\end{equation*}

commutes. This is unique up to a precomposition with an adjoint action of $\SLtwoH$ on $\SL{2}$.

By abuse of notation we use the same notation for
\begin{equation*}
	\iota_{\wtalpha} \colon \SL{2} \to \GK
\end{equation*}
given by postcomposing with the natural inclusion $\G \hookrightarrow \GK$.

One can construct from $\iota_{\wtalpha}$ similar maps for affine roots. Take
\begin{align*}
	Ad_{\Tcowt{t}{n\coomega}}
	\colon 
	\SL{2} 
	&\to 
	\SLF{2}{\K} 
	\\
	\begin{pmatrix}
		a & b \\
		c & d
	\end{pmatrix} 
	&\mapsto
	\begin{pmatrix}
		a & bt^n \\
		ct^{-n} & d
	\end{pmatrix}
\end{align*}
(even though $\Tcowt{t}{\coomega}$ is not a well-defined element of $\SLF{2}{\K}$, conjugation by it is well-defined).

Then define
\begin{equation*}
	\iota_{\wtalpha+n\hbar}
	=
	\iota_{\wtalpha}
	\circ
	Ad_{\Tcowt{t}{n\coomega}}.
\end{equation*}
This maps sends the positive nilpotents of $\mathfrak{sl}_2(2,\mathbb{C})$ to $t^n\grootsub{\wtalpha} = \grootsub{\wtalpha + n\hbar}$, as desired.

\subsection{Invariant curves in \texorpdfstring{$\Gr$}{\textGr}} 

We use the root subgroups to construct $\T$-invarint curves in a way, similar to the case of ordinary flag variety.

Now one can act by the image of $\SL{2}$ under $\iota_{\wtalpha+n\hbar}$ on a $\T$-fixed point $\Tfixed{\colambda}$.

Denote the orbit as
\begin{equation*}
	C_{\colambda,\wtalpha+n\hbar} 
	=
	\iota_{\wtalpha+n\hbar}
	\left(\SL{2}\right)
	\cdot
	\Tfixed{\colambda}.
\end{equation*}

The Cartan of $\SL{2}$ is in the stabilizer of $\Tfixed{\colambda}$ because the point is $\T$-fixed. By the computation of tangent weights of $T_{\Tfixed{\colambda}} \Gr$, the positive unipotent radical $\SLtwoU$ is in the stabilizer iff $n \geq \langle \colambda, \wtalpha \rangle$. Similarly, the negative unipotent radical $\SLtwoUop$ is in the stabilizer iff $n \leq \langle \colambda, \wtalpha \rangle$. This gives that the orbit is a point if $n = \langle \colambda, \wtalpha \rangle$, and the following isomorphism if $n\neq \langle \colambda, \wtalpha \rangle$
\begin{equation*}
	C_{\colambda,\wtalpha+n\hbar} 
	\simeq 
	\SL{2}/\B_\pm
	\simeq 
	\PLine,
\end{equation*}
where $\B_\pm$ is either $\SLtwoB$ or $\SLtwoBop$, depending if $n$ is greater or lesser than $\langle \colambda, \wtalpha \rangle$. I.e. $C_{\colambda,\wtalpha+n\hbar}$ are rational curves in $\Gr$. It's an easy fact that they are $\T$-invariant, and, moreover, all $\T$-invariant curves are of this form.

Curves $C_{\colambda,\wtalpha+n\hbar}$ have two $\T$-fixed points. One is, obviously, $\Tfixed{\colambda}$. The other is given by the image of (any representative in $\SL{2}$ of) the nontrivial element of the Weyl group of $\SL{2}$ ($N(\SLtwoH)/\SLtwoH \simeq \mathbb{Z}/2\mathbb{Z}$). A straightforward computation shows that it is $\Tfixed{s_{\wtalpha+n\hbar} \colambda}$ the image of $\colambda$ under the affine Weyl action:
\begin{equation*}
	s_{\wtalpha+n\hbar} \colambda 
	:=
	s_{\wtalpha} \colambda + n \coalpha 
	=
	\colambda 
	+ 
	\left( 
		n - 
		\langle \colambda, \wtalpha \rangle 
	\right)
	\coalpha.
\end{equation*}
I.e. $s_{\wtalpha+n\hbar}$ is the reflection preserving $\langle \bullet, \wtalpha \rangle = n$ hyperplane in the coweight space.

The tangent weight of $C_{\colambda,\wtalpha+n\hbar}$ for $n<\langle \colambda, \wtalpha \rangle$ at $\Tfixed{\colambda}$ is exactly $\wtalpha + n\hbar$ and it's $-(\wtalpha + n\hbar)$ at $\Tfixed{s_{\wtalpha+n\hbar} \colambda}$. For $n>\langle \colambda, \wtalpha \rangle$ the weights are exchanged.

\subsection{Some \texorpdfstring{$\T$}{\textT}-invariant curves in the resolutions of slices}

Similar $\T$-invariant curves in $\Gr^{ \ucolambda }_{\comu}$ can be constructed. Fix $i,j$ such that $0\leq i<j\leq l$. Consider the following homomorphism
\begin{align*}
	\iota_{\wtalpha+n\hbar}^{i,j} 
	\colon 
	\SL{2} 
	&\to 
	\G^{\times l} 
	\\
	A 
	&\mapsto 
	\left( 
		\UnitG, \dots, \UnitG,
		\iota_{\wtalpha_n\hbar}A,
		\dots, 
		\iota_{\wtalpha_n\hbar}A,
		\UnitG, \dots, \UnitG 
	\right),
\end{align*}
where $i$ is the number of the first nontrivial component and $j$ is the last one. This homomorphism and natural component-wise action of $\G^{\times l}$ on $\Gr^{\times l}$ gives an $\SL{2}$-action.

Take a $\T$-fixed point $p \in \Gr^{ \ucolambda }_{\comu} \subset \Gr^{\times l}$. In general, the orbit under $\iota_{\wtalpha+n\hbar}^{i,j} \left( \SL{2} \right)$ is not contained in $\Gr^{ \ucolambda }_{\comu}$, conditions are no longer satisfied. However, if 
\begin{equation*}
	\langle 
		\sigmapi{p}{i}, \wtalpha 
	\rangle 
	= 
	\langle 
		\sigmapi{p}{j}, \wtalpha 
	\rangle 
	= n
\end{equation*}
or
\begin{equation*}
	\langle 
		\sigmapi{p}{i}, \wtalpha 
	\rangle  
	= n
	\text{ and }
	l = j,
\end{equation*}
then the orbit is in $\Gr^{ \ucolambda }$. This is because the $i$th component (and $j$th in the first case) is fixed, and moreover
\begin{equation*}
	L_{k-1}
	\xrightarrow{\colambda_k}
	L_k
	\Longrightarrow
	\iota_{\wtalpha_n\hbar} (A)
	\cdot
	L_{k-1}
	\xrightarrow{\colambda_k}
	\iota_{\wtalpha+n\hbar} (A)
	\cdot
	L_k
\end{equation*}
for any $i<k\leq j$ and $A \in \SL{2}(\mathbb{C})$.

The stabilizer might fail to be greater than $\SLtwoH$, but if we require in addition
\begin{equation} \label{EqBopIsInStabilizer}
	\langle 
		\sigmapi{p}{k}, \wtalpha 
	\rangle 
	> n
	\text{ for all }
	i < k < j
\end{equation}
then $\SLtwoBop$ is in the stabilizer. Similarly for
\begin{equation} \label{EqBIsInStabilizer}
	\langle 
		\sigmapi{p}{k}, \wtalpha 
	\rangle 
	< n
	\text{ for all }
	i < k < j,
\end{equation}
we have $\SLtwoB$ is in the stabilizer.

In these cases of $\Tfixed{\colambda}$ in $\Gr^{\ucolambda}$ is $\PLine$ (or a point in a degenerated case $j = i+1$ which we ignore). Let us call this curve $C^{ij}_{p,\wtalpha+n\hbar} \subset \Gr^{\ucolambda}$. If \refEq{EqBopIsInStabilizer} holds, then the tangent vector to $C^{ij}_{p,\wtalpha+n\hbar}$ at $p$ corresponds exactly to a segment in the description of tangent vectors in $T_{p} \Gr^{\ucolambda}_{\comu}$ with weight $\wtalpha + n\hbar$. Similar for \refEq{EqBIsInStabilizer} and the weight equal to $-(\wtalpha + n\hbar)$.

The curve $C^{ij}_{p,\wtalpha+n\hbar}$ has two points. One is $p$ by construction. Another one is $q$ uniquely determined by conditions:
\begin{align*}
	\sigmapi{q}{k} 
	&=
	s_{\wtalpha+n\hbar} \sigmapi{p}{k}
	\text{ if }
	i \leq k \leq j,
	\\
	\sigmapi{q}{k} 
	&=
	\sigmapi{p}{k}
	\text{ otherwise}.
\end{align*}

Geometrically, this means that one can get the second fixed point by reflecting the part of the path between $i$th and $j$th vertex with respect to the hyperplane $\langle \bullet, \wtalpha \rangle = n$. See \refFig{FigIrredCurve} for an example. In this figure $\G = \PSL{3}$, the curve $C^{15}_{p,\wtalpha+\hbar}$ in $X = \Gr^{ \left(\coomega_2, \coomega_2, \coomega_2, \coomega_2, \coomega_2, \coomega_2 \right) }_{3\coomega_1}$ connects $p, q \in X^\T$ with $\deltap{p} = (\coomega_2,\coomega_2,\coomega_2,\coomega_2-\coomega_1, \coomega_2-\coomega_1,\coomega_2-\coomega_1)$ and $\deltap{q} = (\coomega_2,\coomega_2-\coomega_1,\coomega_2-\coomega_1,\coomega_2,\coomega_2,\coomega_2-\coomega_1)$.

\begin{figure}
	\centering
	\includegraphics{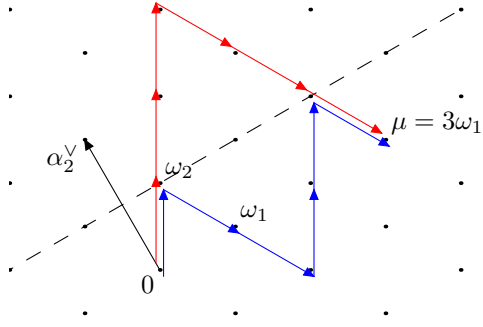}
	\caption{An example of two $\T$-fixed points connected by a curve.}
	\label{FigIrredCurve}
\end{figure}

The last thing we have to care about is if this orbit stays in $\Gr^{\ucolambda}_{\comu} \subset \Gr^{\ucolambda}$. If $j<l$ the last component $L_l$ is fixed and this is trivially true. If $j=l$ then we have three cases:
\begin{itemize}
	\item
		If $n = \langle \sigmapi{p}{j}, \wtalpha \rangle$ then $L_l$ is fixed and all $C^{ij}_{p,\wtalpha+n\hbar}$ is in $\Gr^{\ucolambda}_{\comu}$. This case is similar to the $j<l$ case.
	\item
		If $n<0$ and \refEq{EqBopIsInStabilizer} holds or $n>0$ and \refEq{EqBIsInStabilizer} holds, then the remaining unipotent radical acts on $L_l$ by a subgroup of $\GOne$. This means that open part of $C^{ij}_{p,\wtalpha+n\hbar}$ is inside $\Gr^{\ucolambda}_{\comu}$. Firstly, this part must be $\T$-invariant. Secondly, it must not contain $q$ (the fixed point of $C^{ij}_{p,\wtalpha+n\hbar}$ not equal to $p$) because the affine reflected $\comu$ is not equal to $\comu$. This means that the part of $C^{ij}_{p,\wtalpha+n\hbar}$  inside $\Gr^{\ucolambda}_{\comu}$ is $C^{ij}_{p,\wtalpha+n\hbar}\setminus \{q\}$, i.e. $\PLine \setminus \{\infty\} \simeq \ALine$.
	\item
		In other cases the remaining unipotent radical of $\SL{2}$ acts on $L_l$ moving it away from $\Gr_{\comu}$. Then $p$ is the only point of $C^{ij}_{p,\wtalpha+n\hbar}$  inside $\Gr^{\ucolambda}_{\comu}$.
\end{itemize}

\begin{remark}
	Not all $\T$-invariant curves (even irreducible ones) are of form $C^{ij}_{p,\wtalpha+n\hbar}$. However only these (and chains of them) contribute to the quantum multiplication by certain vanishing arguments.
\end{remark}

In the first two cases we'll still call the curve $C^{ij}_{p,\wtalpha+n\hbar}$ since it won't cause confusion.

\medskip

Now we claim that knowing these actions are enough to construct loci fixed by walls.

First remark in that the tangent vectors of $\T$-invariant curves $C^{ij}_{p,\wtalpha+n\hbar}$ are exactly the "segments" in the description of tangent weights. An easy check shows that all "segments" are tangent vectors to a curve of form $C^{ij}_{p,\wtalpha+n\hbar}$.

\begin{remark}
	The second case for $j=l$ (giving a $\ALine$-curve) is exactly the case when the corresponding tangent weight "segment" goes all the way to the right border and is not zero by the right boundary condition.
\end{remark}

\subsection{All \texorpdfstring{$\T$}{\textT}-invariant curves in the resolutions of slices}

The next crucial remark is that if $j_1\leq i_2$, then $\iota_{\wtalpha+n_1\hbar}^{i_1,j_1}$ and $\iota_{\wtalpha+n_2\hbar}^{i_2,j_2}$ commute. In particular, they commute for each pair of affine weights $\pm \left( \wtalpha + n\hbar \right)$ as long as \refEq{EqBIsInStabilizer} or \refEq{EqBopIsInStabilizer} are satisfied for each $i<j$ we consider. Simultaneous action on a fixed point $p \in X^{\T}$ gives an inclusion map
\begin{equation} \label{EqImagePA}
	\left(
		\PLine 
	\right)^m
	\times
	\ALine
	\hookrightarrow
	\Gr^{\ucolambda}_{\comu}
\end{equation} 
if there's a "segment" for weights $\pm \left( \wtalpha + n\hbar \right)$ going all the way to the right boundary, and
\begin{equation} \label{EqImageP}
	\left(
		\PLine 
	\right)^m
	\hookrightarrow
	\Gr^{\ucolambda}_{\comu}
\end{equation}
otherwise.

The image is a $\T$-invariant, and at a $\T$-fixed point $p$ the tangent space is equal to $T_p X^{\ker \wtalpha + n \hbar}$ (by the combinatorial description \refThm{ThmLatticeMultiplicities}; it's even more clear from the explicit description of tangent vectors in \cites{Da}). Thus the image is an open subvariety of the component of $X^{\ker \wtalpha + n \hbar}$ containing $p$. Now note that the image is proper over $\Gr^{\lambda}_{\comu}$: for \refEq{EqImageP} the image is already proper, and for \refEq{EqImagePA} the projection to $\Gr^{\lambda}_{\comu}$ is exactly the projection to $\ALine$. So the image is closed. This shows it the whole component of $X^{\ker \wtalpha + n \hbar}$. By \refLem{LemInvSubvarietyHasFixedPoints} any component of $X^{\ker \wtalpha + n \hbar}$ has this form.

The last touch is to keep track of $\T$ action on $X^{\ker \wtalpha + n \hbar}$. The weights with with $\T$ acts on tangent spaces of fixed points of $\PLine$ and $\ALine$ are $\pm (\wtalpha + n \hbar)$. 

Let $\wtchi$ be a non-zero character of $\T$. Then we denote by $\PLineWt{\wtchi}$ the projective line $\PLine$ on which $\T$ acts fixing points $0$ and $\infty$ in such a way that the weight of $T_0 \PLine$ is $\wtchi$. Similarly, we denote by $\ALineWt{\wtchi}$ the affine line $\ALine$ on which $\T$ acts fixing $0$ and scaling $T_0 \ALine$ by weight $\wtchi$.

Then the results on $\T$-walls can be summarized in the following theorem.

\begin{theorem} \label{ThmWallFixedLoci}
	\leavevmode
	\begin{enumerate}
		\item $X^{\ker \wtchi} = X^\T$ if $\wtchi$ is not a multiple of $\wtalpha + n\hbar$ for some root $\wtalpha$ and $n\in \mathbb{Z}$.

		\item
		Let $\wtchi = \wtalpha + n\hbar$, where $\wtalpha$ is a root and $n\in \mathbb{Z}$. Then the locus $X^{\ker \wtchi}$ as a $\T$-variety is isomorphic to a disjoint union of the following $\T$-varieties
		\begin{itemize}
			\item
			\begin{equation*} \label{EqInvarintSubspacePA}
				\left( \PLineWt{\wtchi} \right)^m \times \ALineWt{\wtchi}
			\end{equation*}
			if $0\leq n<\left\langle \wtalpha, \comu \right\rangle$,
				
			\item
			\begin{equation} \label{EqInvarintSubspaceP}
				\left( \PLineWt{\wtchi} \right)^m
			\end{equation}
			otherwise.
		\end{itemize}
	\end{enumerate}
\end{theorem}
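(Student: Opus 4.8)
The plan is to prove \refThm{ThmWallFixedLoci} by carefully combining three ingredients already assembled in the appendix: the explicit root-subgroup constructions $\iota_{\wtalpha+n\hbar}^{i,j}$, the combinatorial tangent-weight description of \refThm{ThmLatticeMultiplicities} (equivalently \refCor{CorAMultiplisities}), and the fixed-point existence lemma \refLem{LemInvSubvarietyHasFixedPoints}. For part (1), if $\wtchi$ is not a multiple of any affine root $\wtalpha+n\hbar$, then at every $p\in X^\T$ the tangent space $T_pX$ has no weight vanishing on $\ker\wtchi$ (since all weights are of the form $\wtalpha+n\hbar$ by \refThm{ThmLatticeMultiplicities}); hence every component of $X^{\ker\wtchi}$ is $0$-dimensional at each of its fixed points, and since any such component has a fixed point by \refLem{LemInvSubvarietyHasFixedPoints}, it is a single point. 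So $X^{\ker\wtchi}=X^\T$. This is exactly \refCor{CorTWallsPositions} restated; I would just cite it.

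For part (2), fix $\wtchi=\wtalpha+n\hbar$ and a component $Z\subset X^{\ker\wtchi}$. By \refLem{LemInvSubvarietyHasFixedPoints}, $Z$ contains a $\T$-fixed point $p$. The construction in the subsection ``All $\T$-invariant curves in the resolutions of slices'' produces, from the data of $p$ and $\wtchi$, a simultaneous action of a product of copies of $\SL2$ (via the commuting maps $\iota_{\wtalpha+n\hbar}^{i_k,j_k}$ for the ``segments'' of $P_p$ crossing the hyperplane $\langle\bullet,\wtalpha\rangle=n$) whose orbit of $p$ gives an embedding $(\PLine)^m$ or $(\PLine)^m\times\ALine$ into $X$. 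The key points to verify are: (i) these $\SL2$-actions genuinely commute — this follows from $j_{k}\le i_{k+1}$ once one orders the segments left to right along $P_p$; (ii) the image is $\T$-invariant and its tangent space at $p$ equals $T_pX^{\ker\wtchi}$ — this is the combinatorial matching between ``segments crossing the hyperplane'' and the weight-$\wtchi$ (resp. $-\wtchi$) summands of $T_pX$, read off from \refThm{ThmLatticeMultiplicities}; (iii) the image is closed in $X$, hence equals $Z$ — for this one checks properness over $\Gr^\lambda_\comu$: in the $(\PLine)^m$ case the image is already proper, and in the $(\PLine)^m\times\ALine$ case the map $m_{\ucolambda}$ restricted to the image is the projection to the $\ALine$ factor, which is proper. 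The $\ALine$ factor appears precisely when some segment runs all the way to the right endpoint and the reflected $\comu$ differs from $\comu$, i.e. when $\langle\wtalpha,\comu\rangle\ne n$; combined with the sign analysis of which unipotent radical stabilizes, this gives the dichotomy $0\le n<\langle\wtalpha,\comu\rangle$ versus otherwise.

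Finally I would track the $\T$-weights to upgrade the set-theoretic description to an isomorphism of $\T$-varieties. The fixed points of each $\PLine$-factor carry tangent weights $\pm\wtchi$ by the construction of $C^{i_k,j_k}_{p,\wtalpha+n\hbar}$, and similarly the $\ALine$-factor has weight $\wtchi$ at its unique fixed point; this is exactly the content of the weights computed for $C^{ij}_{p,\wtalpha+n\hbar}$ in the curve subsections. Hence the component is $\T$-equivariantly isomorphic to $(\PLineWt{\wtchi})^m$ or $(\PLineWt{\wtchi})^m\times\ALineWt{\wtchi}$. Taking the disjoint union over all components $p$ (the components of $X^{\ker\wtchi}$ being indexed by those fixed points $p$ that are ``extremal'' among fixed points in their component, or more simply: the decomposition of $X^{\ker\wtchi}$ into connected components) yields the stated isomorphism.

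The main obstacle I anticipate is step (iii) above — showing the $\SL2^{\times m}$-orbit closure is the \emph{entire} component rather than a proper subvariety, and in particular handling the $\ALine$ boundary behavior at $j=l$ cleanly. One must argue that the embedded $(\PLine)^m$ (or $(\PLine)^m\times\ALine$) is both open in $Z$ (via the tangent-space computation at the fixed point $p$, plus $\T$-equivariance forcing openness along the whole image since every point of the smooth image degenerates under a generic cocharacter to a $\T$-fixed point where the dimensions match) and closed in $Z$ (via the properness argument over $\Gr^\lambda_\comu$). Keeping the three cases of the $j=l$ analysis from the curve subsection straight — fixed last component, unipotent radical landing in $\GOne$, or unipotent radical escaping $\Gr_\comu$ — is the delicate bookkeeping, but it is already done for individual curves and only needs to be propagated through the commuting product action.
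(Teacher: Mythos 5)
Your overall architecture matches the paper's ``Follows from preceding discussion'' proof exactly: part (1) by the tangent-weight classification plus \refLem{LemInvSubvarietyHasFixedPoints} (i.e., \refCor{CorTWallsPositions}), and part (2) by assembling the commuting $\iota^{i_k,j_k}_{\wtalpha+n\hbar}$-actions, matching tangent spaces via \refThm{ThmLatticeMultiplicities}, and arguing the image is both open and closed in its component. The commutativity point (ordering the segments left to right so $j_k \le i_{k+1}$) and the properness argument over $\Gr^{\lambda}_{\comu}$ are the right ones.

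The gap is in the final dichotomy, and it is not merely bookkeeping. You write that the $\ALine$ factor appears precisely when some segment reaches the right endpoint and the affine-reflected $\comu$ differs from $\comu$, i.e.\ when $\left\langle \wtalpha, \comu \right\rangle \neq n$. That is a two-way split, but the relevant trichotomy (already visible in the curve subsection of the appendix) has \emph{three} outcomes for a segment with $j=l$: (a) the remaining unipotent radical of $\iota^{il}_{\wtalpha+n\hbar}(\SL{2})$ fixes $\Tfixed{\comu}$, giving a full $\PLine$; (b) it lies in $\GOne$, giving $\ALine$; (c) neither, so the orbit leaves $\Gr_{\comu}$ and contributes only the point $p$. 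Your criterion conflates (b) with ``not (a),'' silently absorbing (c). Concretely, when \refEq{EqBopIsInStabilizer} holds so the remaining unipotent is $\SLtwoU \mapsto \exp\left(\mathbb{C}\, t^n\grootsub{\wtalpha}\right)$: it stabilizes $\Tfixed{\comu}$ iff $n \ge \left\langle \comu,\wtalpha\right\rangle$, lies in $\GOne$ iff $n<0$, and in the remaining window $0 \le n < \left\langle \comu,\wtalpha\right\rangle$ it does \emph{neither} -- the segment contributes nothing beyond the point $p$. (Symmetrically for $\SLtwoUop$ with $n>0$ vs.\ $n\le\left\langle\comu,\wtalpha\right\rangle$.) It is precisely this three-way case split, not the single inequality $\left\langle \wtalpha, \comu \right\rangle \neq n$, that produces the numerical threshold in the theorem, and your ``combined with the sign analysis'' gloss does not actually carry it out. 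Until you do so explicitly you cannot tell, for a given $\wtchi$, whether the half-open segment contributes an $\ALineWt{\wtchi}$ factor, a $\PLineWt{\wtchi}$ factor, or nothing at all to the component through $p$.

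A secondary point you should tighten: the $\ALine$/no-$\ALine$ dichotomy as you have stated it is a condition \emph{at a particular $p$} (existence of a segment to the right boundary with the right sign behavior), whereas the theorem asserts a uniform description of all components of $X^{\ker\wtchi}$ in terms of $n$ and $\left\langle\wtalpha,\comu\right\rangle$ alone. Reconciling the pointwise criterion with the uniform one is exactly where the cancellation between case (c) and the crossing-count of \refThm{ThmLatticeMultiplicities} (the ``right boundary condition'' of the cited appendix) does real work; note that a segment in case (c) also fails to contribute to the tangent multiplicity, so components never ``see'' it. Spelling this out is necessary to justify the clean statement.
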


\begin{proof}
Follows from preceding discussion. 
\end{proof}

\begin{remark}
In the statement of \refThm{ThmWallFixedLoci} $m=0$ is allowed, giving $\ALineWt{\wtchi}$ or a point.
\end{remark}

\begin{remark}
The number $m$ can be determined for each connected component of $X^{\ker \wtchi}$ by counting multiplicities of weights $\pm\left( \wtalpha + n\hbar \right)$ at any fixed point in this component.
\end{remark}

\begin{proposition} \label{PropCAreCoordinateLines}
    The coordinate lines in \refEq{EqInvarintSubspacePA} and \refEq{EqInvarintSubspaceP} are the curves of form $C^{ij}_{p,\wtalpha+n\hbar}$. And, conversely, any curve of form $C^{ij}_{p,\wtalpha+n\hbar}$ is a coordinate line in $X^{\ker \wtalpha+n\hbar}$.
\end{proposition}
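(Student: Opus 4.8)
The plan is to deduce the statement directly from the explicit construction of the components of $X^{\ker\wtchi}$ carried out in the preceding discussion, where $\wtchi = \wtalpha + n\hbar$. Let $Z$ be a connected component of $X^{\ker\wtchi}$; by \refLem{LemInvSubvarietyHasFixedPoints} it contains a $\T$-fixed point $p$. The multiplicity $m$ of the pair of weights $\pm\wtchi$ in $T_p X$ enumerates the ``segments'' of the path $P_p$ carrying these weights, and each such segment is the tangent vector at $p$ of a curve $C^{i_k j_k}_{p,\wtalpha+n\hbar}$ with $0 \le i_k < j_k \le l$ (with at most one of them reaching the right boundary, producing the single $\ALine$-factor). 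The intervals $[i_k,j_k]$ can be ordered so that $j_k \le i_{k+1}$, so the root subgroups $\iota^{i_k,j_k}_{\wtalpha+n\hbar}(\SL{2})$ pairwise commute, and — as recalled just above — their simultaneous action on $p$ gives the isomorphism of $\left(\PLineWt{\wtchi}\right)^m$ (respectively $\left(\PLineWt{\wtchi}\right)^m \times \ALineWt{\wtchi}$) onto $Z$.

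Granting this, both directions are essentially tautological. For the first direction, the $k$-th coordinate line of the product through the base point $p$ is, under the above isomorphism, exactly the orbit $\iota^{i_k,j_k}_{\wtalpha+n\hbar}(\SL{2}) \cdot p$, i.e. the curve $C^{i_k j_k}_{p,\wtalpha+n\hbar}$; in the $\ALine$-case the coordinate $\ALineWt{\wtchi}$ is identified with $C^{i_k j_k}_{p,\wtalpha+n\hbar}\setminus\{q\}\cong\ALine$, matching the analysis of the $j=l$ case above. Running this over the torus-fixed points of $Z$ accounts for every coordinate line. Conversely, if $C^{ij}_{p,\wtalpha+n\hbar}$ is any curve of the stated form, then the defining conditions \refEq{EqBIsInStabilizer} or \refEq{EqBopIsInStabilizer} say precisely that its tangent vector at $p$ is one of the $\pm\wtchi$-segments of $P_p$; hence $\iota^{i,j}_{\wtalpha+n\hbar}$ occurs among the commuting root subgroups used to build the component $Z$ of $X^{\ker\wtchi}$ containing $p$, and $C^{ij}_{p,\wtalpha+n\hbar}$ is the corresponding coordinate line.

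The only genuine work — and the main, rather minor, obstacle — lies in the combinatorial bookkeeping underlying the first paragraph: that the intervals $[i_k,j_k]$ attached to the distinct $\pm\wtchi$-occurrences at $p$ are pairwise nested-or-disjoint with shared endpoints only (so that $j_k \le i_{k+1}$ after reordering and the subgroups commute), and that at most one of them can hit the right boundary vertex $l$ (so there is at most one $\ALineWt{\wtchi}$-factor, in accordance with \refThm{ThmWallFixedLoci}). Both are immediate from the shape of the paths $P_p$: the vertices of $P_p$ lying on the wall $\langle\bullet,\wtalpha\rangle = n$ cut $P_p$ into excursions with disjoint interiors, and a boundary excursion forces $j=l$, which can occur for at most one of them. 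The degenerate values $m=0$ (a single $\ALineWt{\wtchi}$ or a point) make the statement vacuous.
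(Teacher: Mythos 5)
Your proof is correct and takes the same route the paper implicitly relies on when it states that the proposition ``follows directly from the construction'': each component of $X^{\ker\wtchi}$ is exhibited as the image of a product of commuting root subgroups $\iota^{i_k,j_k}_{\wtalpha+n\hbar}(\SL{2})$ acting on a fixed point, and the coordinate lines of $\left(\PLineWt{\wtchi}\right)^m$ (or $\left(\PLineWt{\wtchi}\right)^m\times\ALineWt{\wtchi}$) are precisely the individual orbits $C^{i_k j_k}_{p,\wtalpha+n\hbar}$, so both implications are read off. The only slight imprecision is the phrase ``pairwise nested-or-disjoint'': properly nested intervals cannot in fact occur here (an interior vertex of a putatively larger excursion must be strictly off the wall $\langle\bullet,\wtalpha\rangle = n$, while the endpoint of a putatively smaller excursion inside it must lie on that wall), so the correct statement is exactly what your subsequent sentence gives, namely disjoint interiors with at most shared endpoints, which is what yields $j_k \le i_{k+1}$ after reordering and hence the commutativity.
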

\begin{proof}
    Follows directly from the construction.
\end{proof}

This information is enough to describe $\T$-invariant closed curves. An irreducible closed $\T$-invariant curve $C$ must be scaled with some $\T$-weight $\wtchi$ by $\T$. Then $C\subset X^{\ker \wtchi}$ and we know all the invariant curves in these simple spaces.

\begin{remark}
    Now we see that $\T$-invariant curves do in general come in families, not isolated. This contradicts one's expectation from well-known examples.
\end{remark}

\subsection{Homology}

Following Bia\l{}ynicki-Birula, let us consider the attractors in $X$ to the $\T$-fixed points. Since $X$ is not proper, action by a generic cocharacter may fail to have every point of $X$ is some of the attractors. However, is we use a cocharacter from \refProp{PropGenericContraction}, we can be sure that set-theoretically

\begin{equation*}
    X 
    = 
    \bigsqcup_{p \in X^{\T}} 
    \Attr_{\cochi}\left( p \right)
\end{equation*}

where $\Attr_{\cochi}\left( p \right)$ is the attractor to $p$ with respect to the $\Gm$-action given by (a positive power of) $\cochi$. $X$ is smooth, so by Bia\l{}ynicki-Birula Theorem we have an isomorphism of $\T$-varieties

\begin{equation*}
    \Attr_{\cochi}\left( p \right) 
    \simeq 
    \AffSpace \left( T^{+\cochi}_p X \right),
\end{equation*}
where $\AffSpace \left( T_p X\right)$ is the affine space associated to a complex vector space $T^{+\cochi}_p X \subseteq T_p X$ spanned by tangent vectors with weights positive with respect to $\cochi$. Similar to Morse theory, we define the \textbf{index} $\ind(p)$ of $p \in X^{\T}$ as 
\begin{equation*}
    \ind{p} = \dim T^{+\cochi}_p X.
\end{equation*}

Let $\overline{X} = X \cup \lbrace \infty \rbrace$ be the one-point compactification. Note that
\begin{equation*}
    \overline{X}^{\T}
    =
    X^\T \cup \lbrace \infty \rbrace.
\end{equation*}
For the sake of uniformity, we set
\begin{equation*}
    \Attr_{\cochi}\left( \infty \right)
    =
    \lbrace \infty \rbrace,
\end{equation*}
then
\begin{equation*}
    \overline{X} 
    = 
    \bigsqcup_{p \in \overline{X}^{\T}} 
    \Attr_{\cochi}\left( p \right).
\end{equation*}

This means $\overline{X}$ decomposed into subsets homeomorphic to powers of $\mathbb{C}$. It's not a CW-complex, since the closure of one cell is not, in general, contained in cells of smaller dimension. However, if we order the fixed points by attraction partial order $\lC{\wtchi}$ (see \cites{MOk}) with respect to $\wtchi$, then by definition of the order,

\begin{equation*}
    \overline{ \Attr_{\cochi}\left( p \right) }
    \subseteq
    \bigsqcup_{
    \begin{smallmatrix}
        q \in \overline{X}^{\T}
        \\
        q \leqC{\cochi} p   
    \end{smallmatrix}
    }
    \Attr_{\cochi}\left( q \right)
\end{equation*}

Pick any total order $<$ on $\overline{X}$ refining $\lC{\wtchi}$ and number the points with respect to it:
\begin{equation*}
    \infty = p_0 < p_1 < p_2 < \dots < p_m
\end{equation*}
Then we can filter $\overline{X}$
\begin{equation*} \label{EqFiltration}
    \lbrace \infty \rbrace = X_0 \subset X_1 \subset X_2 \subset \dots \subset X_m = \overline{X}
\end{equation*}
by closed (even compact since $\overline{X}$ is compact) subsets
\begin{equation*}
    X_i
    = 
    \bigsqcup_{
    \begin{smallmatrix}
        q \in \overline{X}^{\T}
        \\
        q < p_i   
    \end{smallmatrix}
    }
    \Attr_{\cochi}\left( q \right).
\end{equation*}

The filtration allow us to compute the Borel-Moore homology of $X$ in a way similar to CW case.
\begin{proposition}
    \leavevmode
    \begin{enumerate}
        \item 
            The integral Borel-Moore homology $\BMHlgy{X,\mathbb{Z}}$ is a free abelian group with even gradings only. The ranks are given by
            \begin{equation*}
                \rk \BMHlgyk{X,\mathbb{Z}}{2k} 
                = \# 
                \left\lbrace
                    p \in X^{\T}
                    \vert
                    \ind(p) = k
                \right\rbrace.
            \end{equation*}
        \item
            Moreover, $\BMHlgyk{X,\mathbb{Z}}{2k}$ has a basis
            \begin{equation*}
                A_p \in \BMHlgyk{X,\mathbb{Z}}{2\ind(p)}
            \end{equation*}
            enumerated by 
            \begin{equation*}
                p \in X^{\T},
                \quad
                \ind(p) = k,
            \end{equation*}
            such that the image of $A_p$ under the natural map
            \begin{equation*}
                \BMHlgy{X,\mathbb{Z}}
                \to
                \BMHlgy{X,X_p,\mathbb{Z}}
            \end{equation*}
            is the fundamental class
            \begin{equation*}
                \left[
                    \Attr_{\cochi}\left( p \right)
                \right]
            \end{equation*}
    \end{enumerate}
\end{proposition}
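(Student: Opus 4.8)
The plan is to run the standard Bialynicki--Birula computation of homology from an attracting-cell filtration, carried out in Borel--Moore homology and then transported from $\overline{X}$ back to $X$. First I would record the two inputs that make the induction work: that $\BMHlgyk{\mathbb{C}^n,\mathbb{Z}}{m}$ equals $\mathbb{Z}$ for $m=2n$ and vanishes otherwise, with the fundamental class $[\mathbb{C}^n]$ as generator; and that for a closed embedding $Z\hookrightarrow Y$ Borel--Moore homology carries a proper pushforward and a long exact sequence $\cdots\to\BMHlgyk{Z,\mathbb{Z}}{m}\to\BMHlgyk{Y,\mathbb{Z}}{m}\to\BMHlgyk{Y\setminus Z,\mathbb{Z}}{m}\to\BMHlgyk{Z,\mathbb{Z}}{m-1}\to\cdots$, giving the excision identification $\BMHlgy{Y,Z,\mathbb{Z}}\simeq\BMHlgy{Y\setminus Z,\mathbb{Z}}$. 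In the situation at hand each $X_{i-1}$ is a closed subset of the compact space $\overline{X}$ contained in $X_i$, the difference $X_i\setminus X_{i-1}$ is the single attracting cell $\Attr_{\cochi}(p_i)$, and by the Bialynicki--Birula theorem — applied with the generic contracting cocharacter of \refProp{PropGenericContraction} — this cell is the affine space $\AffSpace(T^{+\cochi}_{p_i}X)\cong\mathbb{C}^{\ind(p_i)}$.

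Next I would induct on $i$ to prove that $\BMHlgy{X_i,\{\infty\},\mathbb{Z}}$ is free, supported in even degrees, with $\rk\BMHlgyk{X_i,\{\infty\},\mathbb{Z}}{2k}=\#\{\,j\le i:\ \ind(p_j)=k\,\}$. The base case $i=1$ is $\BMHlgy{X_1,\{\infty\},\mathbb{Z}}\simeq\BMHlgy{\Attr_{\cochi}(p_1),\mathbb{Z}}\simeq\BMHlgy{\mathbb{C}^{\ind(p_1)},\mathbb{Z}}$ by excision. For the inductive step, the long exact sequence of the triple $(X_i,X_{i-1},\{\infty\})$ together with the excision isomorphism $\BMHlgy{X_i,X_{i-1},\mathbb{Z}}\simeq\BMHlgy{\Attr_{\cochi}(p_i),\mathbb{Z}}\simeq\BMHlgy{\mathbb{C}^{\ind(p_i)},\mathbb{Z}}$ relates $\BMHlgy{X_{i-1},\{\infty\},\mathbb{Z}}$, $\BMHlgy{X_i,\{\infty\},\mathbb{Z}}$ and $\BMHlgy{\mathbb{C}^{\ind(p_i)},\mathbb{Z}}$; by the inductive hypothesis the first and last of these live in even degrees only, so the connecting map, which drops degree by one, vanishes, and the sequence breaks into short exact sequences $0\to\BMHlgy{X_{i-1},\{\infty\},\mathbb{Z}}\to\BMHlgy{X_i,\{\infty\},\mathbb{Z}}\to\BMHlgy{\mathbb{C}^{\ind(p_i)},\mathbb{Z}}\to0$, which split since the quotient is free. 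Taking $i=m$ and using $\BMHlgy{\overline{X},\{\infty\},\mathbb{Z}}\simeq\BMHlgy{\overline{X}\setminus\{\infty\},\mathbb{Z}}=\BMHlgy{X,\mathbb{Z}}$ gives part (1).

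For part (2), at the $i$-th step I would choose a section of the surjection $\BMHlgy{X_i,\{\infty\},\mathbb{Z}}\twoheadrightarrow\BMHlgy{X_i,X_{i-1},\mathbb{Z}}\simeq\BMHlgy{\Attr_{\cochi}(p_i),\mathbb{Z}}$ and let $\widetilde{A}_{p_i}$ be the image of the fundamental class $[\Attr_{\cochi}(p_i)]$; pushing $\widetilde{A}_{p_i}$ forward along the closed embedding $X_i\hookrightarrow\overline{X}$ and using the identification $\BMHlgy{\overline{X},\{\infty\},\mathbb{Z}}\simeq\BMHlgy{X,\mathbb{Z}}$ defines $A_{p_i}\in\BMHlgyk{X,\mathbb{Z}}{2\ind(p_i)}$, and the split short exact sequences show that the $A_{p_j}$ with $\ind(p_j)=k$ form a basis of $\BMHlgyk{X,\mathbb{Z}}{2k}$. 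For the compatibility claim, note that because the chosen total order refines $\lC{\wtchi}$ one has $\overline{\Attr_{\cochi}(p_i)}\cap(X\setminus X_{p_i})=\Attr_{\cochi}(p_i)$, so $\Attr_{\cochi}(p_i)$ is closed in $X\setminus X_{p_i}$ and $[\Attr_{\cochi}(p_i)]$ determines a class in $\BMHlgy{X\setminus X_{p_i},\mathbb{Z}}\simeq\BMHlgy{X,X_{p_i},\mathbb{Z}}$; naturality of the excision isomorphisms and of the absolute-to-relative maps for the compatible closed inclusions $\Attr_{\cochi}(p_i)\hookrightarrow X_i\setminus\{\infty\}$ and $\Attr_{\cochi}(p_i)\hookrightarrow X\setminus X_{p_i}$ then identifies the image of $A_{p_i}$ in $\BMHlgy{X,X_{p_i},\mathbb{Z}}$ with $[\Attr_{\cochi}(p_i)]$, as required.

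I expect the main obstacle to be organizational rather than conceptual: one must be disciplined about the Borel--Moore formalism (proper pushforward along closed maps, the long exact sequence and excision for closed subspaces), use the compactification $\overline{X}$ only in order to have a filtration by closed subsets of a compact space and then descend along $\BMHlgy{X,\mathbb{Z}}\simeq\BMHlgy{\overline{X},\{\infty\},\mathbb{Z}}$, and keep track through the chain of excision isomorphisms that the distinguished generator contributed at each stage is exactly the fundamental class of the corresponding attracting cell. The geometric inputs needed — that attracting cells are affine spaces and that $\overline{\Attr_{\cochi}(p)}\subset\bigsqcup_{q\le p}\Attr_{\cochi}(q)$ — have already been established above.
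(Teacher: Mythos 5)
Your proof is correct and takes essentially the same approach as the paper: both pass to $\BMHlgy{X,\mathbb{Z}} \simeq \Hlgy{\overline{X},\{\infty\},\mathbb{Z}}$, compute each graded piece of the attracting-cell filtration as $\BMHlgy{\Attr_{\cochi}(p_i),\mathbb{Z}}\simeq\mathbb{Z}$ in degree $2\ind(p_i)$, and then conclude by noting that even-degree support kills the connecting maps and freeness splits the resulting short exact sequences. You spell out the induction and the construction of the distinguished classes $A_p$ more explicitly than the paper (which invokes ``the spectral sequence of the filtration'' and leaves the basis construction implicit), but the content is the same.
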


\begin{proof}
    Recall that
    \begin{equation*}
        \BMHlgy{X,\mathbb{Z}}
        =
        \Hlgy{\overline{X},\lbrace \infty \rbrace,\mathbb{Z}}.
    \end{equation*}
    The natural maps give
    \begin{align*}
        \Hlgy{X_i,X_{i-1},\mathbb{Z}}
        &\simeq
        \Hlgy{\Attr_{\cochi}\left( p_i \right),\overline{\Attr_{\cochi}\left( p_i \right)} \setminus \Attr_{\cochi}\left( p_i \right),\mathbb{Z}}
        \\
        &\simeq
        \BMHlgy{\Attr_{\cochi}\left( p_i \right),\mathbb{Z}}
        =
        \mathbb{Z}
        \left[
            \Attr_{\cochi}\left( p_i \right)
        \right].
    \end{align*}
    The initial step for us is
    \begin{equation*}
        \Hlgy{X_0,\lbrace \infty \rbrace,\mathbb{Z}} = 0
    \end{equation*}
    The final step is applying the spectral sequence to the filtration \refEq{EqFiltration} (or the exact sequence of triple inductively on $p_i \in \overline{X}$). Using, in addition, that
    \begin{itemize}
        \item 
            homology only in even degrees gives that there is no cancellation by differentials,
        \item
            an extension of a free abelian group is trivial
    \end{itemize}
    we get the statement of the proposition.
\end{proof}

Now we use that the Borel-Moore homology is dual to the ordinary homology. I.e. there is a perfect pairing
\begin{equation*}
    \BMHlgyk{X,\mathbb{Z}}{2\dim X - k} 
    \times 
    \Hlgyk{X,\mathbb{Z}}{k} 
    \to \mathbb{Z}
\end{equation*}
given by the intersection number. This allows us to analyze the second homology.

\begin{proposition} \label{PropCClassesGenerateH2}
	\leavevmode
	\begin{enumerate}
	\item
		The curves of form $C^{ij}_{p,\wtalpha+n\hbar}$ generate $\Hlgyk{X,\mathbb{Z}}{2}$ as an abelian group.
	\item
		The curves of form $C^{ij}_{p,\wtalpha+n\hbar}$ generate the submonoid $\HlgyEffk{X,\mathbb{Z}}{2} \subset \Hlgyk{X,\mathbb{Z}}{2}$ of effective curve classes.
	\end{enumerate}
\end{proposition}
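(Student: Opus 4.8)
The plan is to read off both statements from the Bia\l{}ynicki--Birula stratification developed above, the classification of $\T$-invariant curves in \refThm{ThmWallFixedLoci}, and the perfect pairing $\BMHlgyk{X,\mathbb{Z}}{2\dim X-2}\times\Hlgyk{X,\mathbb{Z}}{2}\to\mathbb{Z}$. Fix once and for all a generic contracting cocharacter $\cochi$ as in \refProp{PropGenericContraction}, so that $X=\bigsqcup_{p\in X^{\T}}\Attr_{\cochi}(p)$ and the preceding proposition gives a $\mathbb{Z}$-basis $\{A_p:\ind(p)=\dim X-1\}$ of $\BMHlgyk{X,\mathbb{Z}}{2\dim X-2}$, with $A_p$ restricting to the fundamental class of the codimension-one attractor $\Attr_{\cochi}(p)$. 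Since $\Hlgyk{X,\mathbb{Z}}{2}$ is free (being dual, under the perfect pairing, to the free group $\BMHlgyk{X,\mathbb{Z}}{2\dim X-2}$), it suffices for part~1 to exhibit, for each $p$ with $\ind(p)=\dim X-1$, a compact curve $C_p$ of the form $C^{ij}_{q,\wtalpha+n\hbar}$ so that the square matrix $\bigl(A_q\cdot[C_p]\bigr)$, indexed by the codimension-one fixed points, is unimodular.

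The natural choice of $C_p$ is the $\T$-invariant curve through $p$ tangent to the unique repelling direction at $p$: since $\ind(p)=\dim X-1$ this repelling direction is a one-dimensional $\T$-weight space, so such a curve is unique, and by \refThm{ThmWallFixedLoci} together with \refProp{PropCAreCoordinateLines} it is a coordinate line of the relevant component of $X^{\ker\wtchi}$; it is moreover a $\PLineWt{\wtchi}$ and not an $\ALineWt{\wtchi}$, because the $\ALineWt{\wtchi}$-factor maps isomorphically to the affine cone $\Gr^{\colambda}_{\comu}$ under $m_{\ucolambda}$ and is therefore contracted by $\cochi$ (an attracting direction). Let $p'$ be the other fixed point of $C_p$; then $p\leqC{\cochi}p'$, the open arc $C_p\setminus\{p,p'\}$ lies in $\Attr_{\cochi}(p')$, and $C_p$ meets $\Attr_{\cochi}(p)$ transversally in the single point $p$ (the attracting directions at $p$ together with $T_pC_p$ span $T_pX$). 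Hence $A_p\cdot[C_p]=\pm1$, while $A_q\cdot[C_p]=0$ for every codimension-one fixed point $q$ other than $p$ and $p'$; ordering the codimension-one fixed points so that larger ones in $\leqC{\cochi}$ come first makes $\bigl(A_q\cdot[C_p]\bigr)$ upper-triangular with $\pm1$ on the diagonal. Thus $\{[C_p]\}$ is a $\mathbb{Z}$-basis of $\Hlgyk{X,\mathbb{Z}}{2}$, so the curves $C^{ij}_{p,\wtalpha+n\hbar}$ generate it.

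For part~2, every effective class is represented by an effective $1$-cycle $\sum a_iZ_i$ with $a_i>0$. Degenerating each $Z_i$ along the flow $\cochi(t)$, $t\to0$, and iterating over a spanning collection of one-parameter subgroups of $\T$, one arrives at an effective $\T$-invariant $1$-cycle with nonnegative coefficients in the same homology class; here properness of $m_{\ucolambda}\colon\Gr^{\ucolambda}_{\comu}\to\Gr^{\colambda}_{\comu}$ and \refLem{LemInvSubvarietyHasFixedPoints} keep the limit cycles inside $X$. It then remains to check that the class of an irreducible $\T$-invariant curve $C$ is a nonnegative combination of classes of compact $C^{ij}_{p,\wtalpha+n\hbar}$: such a $C$ is scaled by a single primitive character $\wtchi$, so $C\subset X^{\ker\wtchi}$, by \refCor{CorTWallsPositions} $\wtchi=\wtalpha+n\hbar$ for a root $\wtalpha$, and by \refThm{ThmWallFixedLoci} the component of $X^{\ker\wtchi}$ containing $C$ is $(\PLineWt{\wtchi})^m$ or $(\PLineWt{\wtchi})^m\times\ALineWt{\wtchi}$, whose cone of effective curve classes is the nonnegative span of the classes of its $\PLineWt{\wtchi}$ coordinate lines; by \refProp{PropCAreCoordinateLines} these coordinate lines are curves $C^{ij}_{p,\wtalpha+n\hbar}$. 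Combined with part~1, this gives the description of $\HlgyEffk{X,\mathbb{Z}}{2}$.

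I expect the main obstacle to be the integral (rather than merely rational) content of part~1: one must confirm that the intersection matrix is genuinely unimodular, which comes down to checking that each codimension-one attractor $\overline{\Attr_{\cochi}(p)}$ is met by exactly one of the distinguished curves transversally and by the others only in a single fixed point (controlling in particular the far endpoint $p'$, where $C_p$ lies inside $\overline{\Attr_{\cochi}(p')}$ and enters only through the triangular ordering, not through a transverse count), and that distinct codimension-one fixed points really do produce distinct curves. A secondary technical point is making the degeneration to $\T$-invariant cycles in part~2 rigorous in the non-proper setting and tracking the $\ALineWt{\wtchi}$-components, for which the proper map to the affine cone is again the right tool.
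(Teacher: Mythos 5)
Your argument for part~1 is essentially the paper's own: pick a fixed point $p$ with $\ind(p)=\dim X-1$, take the one-dimensional repelling locus $C_p=\Attr_{-\cochi}(p)$, argue its closure $\overline{C}_p$ is a compact coordinate $\PLine$ (hence some $C^{ij}_{q,\wtalpha+n\hbar}$ by \refProp{PropCAreCoordinateLines}), and show the intersection matrix $\bigl(A_p\cdot\overline{C}_q\bigr)$ against the Borel--Moore basis $\{A_p\}$ is unitriangular, so the $\overline{C}_q$ give a $\mathbb{Z}$-basis of $\Hlgyk{X,\mathbb{Z}}{2}$. The $\pm1$ worry you raise at the end is a non-issue: the intersection of $\overline{C}_p$ with $\Attr_\cochi(p)$ at $p$ is a transverse intersection of complex submanifolds (the repelling tangent line complements $T_p\Attr_\cochi(p)$), hence always $+1$; and for $p\lC{\cochi}q$ the curve $\overline{C}_q$ lies in $\Attr_\cochi(q)\cup\Attr_\cochi(q')$ with $q'\gC{\cochi}q\gC{\cochi}p$, so it misses $X_p$ and contributes zero, exactly as the paper uses. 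Distinctness of the curves is automatic since $\overline{C}_p\ni p$.

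Where you genuinely diverge is part~2. The paper's written proof stops once the $\overline{C}_p$ are shown to form a basis --- that is part~1 --- and leaves the effective-cone claim implicit. Your two-step supplement is the natural way to close this: degenerate any effective $1$-cycle to a $\T$-invariant one (using properness of $m_{\ucolambda}$ and \refLem{LemInvSubvarietyHasFixedPoints} so that limits stay in $X$), then observe that an irreducible $\T$-invariant curve lies in a wall component $X^{\ker\wtchi}\cong(\PLineWt{\wtchi})^m$ or $(\PLineWt{\wtchi})^m\times\ALineWt{\wtchi}$, whose effective cone is spanned by the coordinate $\PLineWt{\wtchi}$'s, and these are $C^{ij}$'s by \refProp{PropCAreCoordinateLines}. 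The one step to spell out is the passage from a $\Gm$-invariant limit cycle to a $\T$-invariant one: a single one-parameter degeneration only yields $\cochi$-invariance. Either iterate over a basis of one-parameter subgroups (invariance under earlier flows persists under later limits since the flows commute), or degenerate the whole $\T$-orbit in the Hilbert scheme proper over $\Gr^{\colambda}_{\comu}$, which is the cleaner phrasing.
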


\begin{proof}
    Take $p \in X^{\T}$ with $\ind(p) = \dim X - 1$. The attractor with respect to $\cochi$ to $p$ has dimension $\dim X - 1$, so the attractor $C_p$ with respect to $-\cochi$ has dimension $1$. Let's call the corresponding weight $\wtalpha + n \hbar$. Then $C_p$ is in the component of $X^{\ker \wtalpha + n \hbar}$ containing $p$. It's an open subset of a line in \ref{EqInvarintSubspacePA} or \ref{EqInvarintSubspaceP}. Moreover, because of the construction of $\cochi$ the closure is proper, so
    \begin{equation*}
        \overline{C}_p 
        = 
        C_p \cup \lbrace q\rbrace,
    \end{equation*}
    $q \in X^\T$. By the construction of the attraction order, $p \lC{\cochi} q$. Finally, since $\overline{C}_p$ is a coordiante line, \refProp{PropCAreCoordinateLines} gives us that 
    \begin{equation*}
        \overline{C}_p 
        = 
        C^{ij}_{p,\wtalpha+n\hbar} \cup \lbrace q\rbrace,
    \end{equation*}
    for some $i,j$.

    Now consider all $\overline{C}_p$ at the same time with the basis $A_p$, all points have index $\dim X - 1$. We have
    \begin{equation*}
        A_{p}
        \cup
        \overline{C}_p 
        = 1
    \end{equation*}
    and 
    \begin{equation*}
        A_{p}
        \cup
        \overline{C}_q 
        = o
    \end{equation*}
    for all $p \lC{\cochi} q$.
    The intersection pairing with these classes is triangular with $1$s on the diagonal. The pairing is prefect, so if $A_p$ is a basis, then $\overline{C}_q$ also form a basis. Using that $\overline{C}_q$ have form $C^{ij}_{p,\wtalpha+n\hbar}$ finishes the proof.
\end{proof}
%???Effective cone

\begin{proposition} \label{PropCurveDegrees}
	Let $C$ be an irreducible $\T$-invariant curve connecting $p,q \in X^{\A}$. Let $\A$ scale $T_p C$ with weight $\wtalpha$. Then the degree of $\LBundle{i}$ restricted to $C$ is
	\begin{equation*}
		\int\limits_{[C]} \ChernL{}{i} 
		=
		\dfrac
		{
			\CorootScalar
			{
				\sigmapi{p}{i} - \sigmapi{q}{i}
			}
			{
				\coalpha
			}
		}
		{
			2
		}.
	\end{equation*}
\end{proposition}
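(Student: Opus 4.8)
The plan is to compute $\int_{[C]}\ChernL{}{i}$ by $\A$-equivariant localization on the normalization of $C$, feeding in the fibre-weight formula of \refProp{PropLWeights}, and then to convert the resulting equality of characters of $\A$ into a pairing with the coroot $\coalpha$ via the relation $\left\langle \coalpha, \wtalpha \right\rangle = 2$. Notably this does not require the classification of all $\T$-invariant curves; it only uses that $C$ joins two fixed points.

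First I would pass to the normalization. Because $X^{\T}$ is finite, the irreducible $\T$-invariant curve $C$ is not pointwise fixed, so $\T$ acts on it with a dense one-dimensional orbit; hence $C$ is rational and its normalization is a $\T$-equivariant morphism $f\colon \PLine \to \Gr^{\ucolambda}_{\comu}$, birational onto $C$, sending the two fixed points of $\PLine$ to $p$ and $q$. Since $C$ is smooth at $p$ (this is implicit in the statement, where $T_p C$ appears), $f$ is a local isomorphism over $p$, so $\A$ acts on the tangent line of $\PLine$ over $p$ with the same weight $\wtalpha$; the two tangent weights of a $\PLine$ are opposite, so the tangent weight over $q$ is $-\wtalpha$. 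Finally $f_*[\PLine]=[C]$, so the degree may be computed on $\PLine$.

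Next I would localize. With $w_p$ and $w_q$ the $\A$-weights of the fibres of $\LBundle{i}$ at $p$ and $q$, the localization formula \refEq{EqLocalizationFormula} applied to $\int_{\PLine}\Chern{\A}{\LBundle{i}}$ gives
\begin{equation*}
	\int\limits_{[C]}\ChernL{}{i}
	=
	\frac{w_p}{\wtalpha} + \frac{w_q}{-\wtalpha}
	=
	\frac{w_p - w_q}{\wtalpha},
\end{equation*}
the left-hand side being the non-equivariant degree. By \refProp{PropLWeights}, $w_p = \CorootScalar{\sigmapi{p}{i}}{\bullet}$ and $w_q = \CorootScalar{\sigmapi{q}{i}}{\bullet}$, so this degree equals $\CorootScalar{\sigmapi{p}{i}-\sigmapi{q}{i}}{\bullet}/\wtalpha$. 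Calling it $d$, the identity $\CorootScalar{\sigmapi{p}{i}-\sigmapi{q}{i}}{\bullet} = d\,\wtalpha$ holds in the character lattice of $\A$; evaluating both sides on $\coalpha$ and using $\left\langle \coalpha, \wtalpha \right\rangle = 2$ yields $\CorootScalar{\sigmapi{p}{i}-\sigmapi{q}{i}}{\coalpha} = 2d$, i.e. $d = \frac{1}{2}\CorootScalar{\sigmapi{p}{i}-\sigmapi{q}{i}}{\coalpha}$, which is the claim.

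The step I expect to need the most care is the first one: confirming the tangent-weight bookkeeping on the normalization when $C$ is singular away from $p$, and making sure the argument really only uses that $C$ joins two fixed points. After that the proof is formal. As a sanity check, for the explicit curves $C^{ij}_{p,\wtalpha+n\hbar}$ of the preceding subsection one has $\sigmapi{q}{k}=s_{\wtalpha+n\hbar}\sigmapi{p}{k}$, so $\sigmapi{p}{i}-\sigmapi{q}{i}$ is manifestly an integer multiple of $\coalpha$ and both sides of the formula can be compared directly.
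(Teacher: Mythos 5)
Your proof is correct and follows essentially the same route as the paper's: equivariant localization, the fibre-weight formula of \refProp{PropLWeights}, and pairing with $\coalpha$ to extract the rational number. The only addition is the careful passage to the normalization $\PLine \to C$ to justify the tangent-weight bookkeeping when $C$ may be singular at $q$, a point the paper's own proof leaves implicit.
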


\begin{proof}
	By $\A$-equivariant localization and \refProp{PropLWeights} we have
	\begin{align*}
		\int\limits_{[C]} \ChernL{}{i} 
		&=
		\dfrac
		{
			\resZ
			{
				\ChernL{\A}{i} 
			}
			{
				p
			}
		}
		{
			\wtalpha
		}
		+
		\dfrac
		{
			\resZ
			{
				\ChernL{\A}{i} 
			}
			{
				q
			}
		}
		{
			-\wtalpha
		}
		\\
		&=
		\dfrac
		{
			\CorootScalar
			{
				\sigmapi{p}{i}
			}
			{
				\bullet
			}
			-
			\CorootScalar
			{
				\sigmapi{q}{i}
			}
			{
				\bullet
			}
		}
		{
			\wtalpha
		}.
	\end{align*}
	
	By dimension argument and using that the Chern classes are in non-localized cohomology, we have that this is in $\EqCoHlgyk{\A}{pt, \mathbb{Q}}{0} = \mathbb{Q}$. To find this number, we can pair both numerator and denominator with $\coalpha$. This gives the statement of the Corollary.
\end{proof}

Recall that $\CoHlgyk{X,\mathbb{Q}}{2}$ is generated by the first Chern classes by \refProp{PropSecondCohomologyGeneration}. This means that the linear map
\begin{align*}
	\mathbb{Q}^l
	&\twoheadrightarrow
	\CoHlgyk{X,\mathbb{Q}}{2}
	\\
	\left(
		z_1, \dots, z_l
	\right)
	&\mapsto
	\sum_i z_i \ChernE{}{i}.
\end{align*}

is surjective. Dualizing, we get an injection
\begin{equation} \label{EqDefinitionOfEClasses}
	\Hlgyk{X,\mathbb{Q}}{2}
	\hookrightarrow
	\mathbb{Q}^l
\end{equation}
We call the standard basis in the target $e_1, \dots, e_l$. By construction,
\begin{equation*}
    \left\langle
        \ChernE{}{i},
        e_j
    \right\rangle
    =
    \delta_{i,j}
\end{equation*}
where the RHS is the Kronecker delta.

The image in \refEq{EqDefinitionOfEClasses} is determined by the relations on Chern classes. For example, the relation
\begin{equation*}
    0 = \ChernL{}{l} = \sum_{i=1}^l \ChernE{}{i}
\end{equation*}
gives a relation for functions on $\CoHlgyk{X,\mathbb{Q}}{2}$
\begin{equation*}
    \sum_{i=1}^l 
    \Big. 
        e_i
    \Big\vert_{\CoHlgyk{X,\mathbb{Q}}{2}}
    = 0
\end{equation*}
which is a standard relation on weights in $A_l$ root system.

Later we identify the homology classes with a linear combination of $e_i$ in the image under \refEq{EqDefinitionOfEClasses}. However, one should keep in mind that there might be relations between $e_i$'s.

\begin{proposition} \label{PropCHomologyClasses}
	The homology class of $C^{ij}_{p,\wtalpha + n\hbar}$ is 
	\begin{equation*}
	\left[
		C^{ij}_{p,\wtalpha + n\hbar}
	\right]
	=
	\dfrac
	{
		\CorootScalar{\coalpha}{\coalpha}
	}
	{
		2
	}
	\sum_{k=i+1}^{j}
	\left\langle
		\deltapi{p}{k}, \wtalpha 
	\right\rangle
	e_{k}
	\end{equation*}
\end{proposition}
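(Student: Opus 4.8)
The plan is to reduce the statement to the single substitution into \refProp{PropCurveDegrees}. By \refEq{EqDefinitionOfEClasses} the space $\Hlgyk{X,\mathbb{Q}}{2}$ is embedded into $\mathbb{Q}^l$ by pairing against $\ChernE{}{1},\dots,\ChernE{}{l}$, and under this embedding the $e_k$-coordinate of a curve class $[C]$ is exactly the degree $\int_{[C]}\ChernE{}{k}$. So the whole statement amounts to computing these $l$ numbers for $C=C^{ij}_{p,\wtalpha+n\hbar}$. Since $\EBundle{k}=\LBundle{k}/\LBundle{k-1}$ by \refEq{EqEBundleDefinition} (with $\LBundle{0}$ trivial, as $L_0\equiv\Tfixed{0}$ is constant on $X$), we have $\ChernE{}{k}=\ChernL{}{k}-\ChernL{}{k-1}$, so it suffices to compute the line-bundle degrees $\int_{[C]}\ChernL{}{k}$ and telescope.

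First I would record the two $\T$-fixed endpoints of $C=C^{ij}_{p,\wtalpha+n\hbar}$ from the appendix construction: one is $p$, and the other, $q$, satisfies $\sigmapi{q}{k}=s_{\wtalpha+n\hbar}\sigmapi{p}{k}$ for $i\le k\le j$ and $\sigmapi{q}{k}=\sigmapi{p}{k}$ otherwise, while the $\A$-weight of $T_pC$ is $\wtalpha$ (up to the sign discussed below). Feeding these into \refProp{PropCurveDegrees} gives $\int_{[C]}\ChernL{}{k}=\tfrac12\CorootScalar{\sigmapi{p}{k}-\sigmapi{q}{k}}{\coalpha}$, and the affine-reflection identity $s_{\wtalpha+n\hbar}\colambda=\colambda+(n-\langle\colambda,\wtalpha\rangle)\coalpha$ gives
\begin{equation*}
	\sigmapi{p}{k}-\sigmapi{q}{k}
	=
	\begin{cases}
		\bigl(\langle\sigmapi{p}{k},\wtalpha\rangle-n\bigr)\,\coalpha, & i\le k\le j,\\
		0, & \text{otherwise.}
	\end{cases}
\end{equation*}
Taking the difference $\ChernL{}{k}-\ChernL{}{k-1}$ cancels the constant $n$ and leaves $\int_{[C]}\ChernE{}{k}=\tfrac{\CorootScalar{\coalpha}{\coalpha}}{2}\langle\deltapi{p}{k},\wtalpha\rangle$ for $i<k\le j$; outside this range the two degrees agree, so the difference vanishes, using that $\langle\sigmapi{p}{i},\wtalpha\rangle=n$ and, when $j<l$, $\langle\sigmapi{p}{j},\wtalpha\rangle=n$ — the very boundary conditions under which $C$ is defined — which kill the would-be contributions at $k=i$ and $k=j+1$. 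Summing over $k$ produces the claimed formula.

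The computation per se is one line; the only real work is bookkeeping. The delicate point is orientation: according to whether the boundary condition \refEq{EqBopIsInStabilizer} or \refEq{EqBIsInStabilizer} holds, the $\A$-weight of $T_pC^{ij}_{p,\wtalpha+n\hbar}$ is $+(\wtalpha+n\hbar)$ or $-(\wtalpha+n\hbar)$, and \refProp{PropCurveDegrees} must be applied with the coroot of the \emph{actual} tangent weight, so one has to check that the factor $\tfrac{\CorootScalar{\coalpha}{\coalpha}}{2}$ and the pairing $\langle\deltapi{p}{k},\wtalpha\rangle$ are read off with the same sign convention used to label the curve. Secondly, the degenerate case $j=l$ must be treated separately: there $C^{ij}_{p,\wtalpha+n\hbar}$ meets $\Gr^{\ucolambda}_{\comu}$ only in a copy of $\ALine$ (it is $\PLine\setminus\{q\}$ with $q\notin\Gr^{\ucolambda}_{\comu}$), so one interprets $[C^{ij}_{p,\wtalpha+n\hbar}]$ through the proper curve obtained as its closure in $\Gr^{\ucolambda}$ and checks that \refProp{PropLWeights} and \refProp{PropCurveDegrees} still compute the needed degrees, the resulting truncation of the sum to $k\le l$ matching the coordinates recorded by \refEq{EqDefinitionOfEClasses}. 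I expect this orientation/boundary bookkeeping — rather than any geometric input beyond \refProp{PropCurveDegrees} — to be the only part that needs care.
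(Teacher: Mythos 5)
Your proof follows exactly the paper's route: apply \refProp{PropCurveDegrees} to compute $\int_{[C]}\ChernL{}{k}$, telescope to $\int_{[C]}\ChernE{}{k}$ via $\ChernE{}{k}=\ChernL{}{k}-\ChernL{}{k-1}$, and read off the coordinates against $e_k$ from \refEq{EqDefinitionOfEClasses}. The two caveats you flag are genuine but are left tacit in the paper's terse proof: the formula is only unambiguous under the normalization where $\A$ scales $T_pC$ by $+\wtalpha$ (i.e.\ \refEq{EqBopIsInStabilizer} holds), since relabeling by $-\wtalpha,-n$ describes the same curve but flips the sign of the right-hand side, and this is the orientation under which each $\int_{[C]}\ChernL{}{k}\geq 0$ as required by nefness of $\LBundle{k}$; and in the $j=l$ case the proposition is implicitly restricted to the subcase $\langle\sigmapi{p}{l},\wtalpha\rangle=n$ where the curve is proper, since a noncompact $\ALine$ carries no class in $\Hlgyk{X,\mathbb{Z}}{2}$.
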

\begin{proof}
	From \refCor{PropCurveDegrees} we get
	\begin{equation*}
		\int\limits_
		{
			\left[
				C^{ij}_{p,\wtalpha + n\hbar}
			\right]
		}
		\ChernL{}{k} 
		=
		\left[
			\left\langle
				\sigmapi{p}{k}, \wtalpha 
			\right\rangle
			- n
		\right]
		\dfrac
		{			
			\CorootScalar
			{
				\coalpha
			}
			{
				\coalpha
			}
		}
		{
			2
		}
	\end{equation*}
	for $i<k<j$ and
	\begin{equation*}
		\int\limits_
		{
			\left[
				C^{ij}_{p,\wtalpha + n\hbar}
			\right]
		}
		\ChernL{}{k} 
		=
		0
	\end{equation*}
	otherwise.
	
	Then passing from $\LBundle{k}$ to $\EBundle{k}$ we get
	\begin{equation*}
		\int\limits_
		{
			\left[
				C^{ij}_{p,\wtalpha + n\hbar}
			\right]
		}
		\ChernE{}{k} 
		=
		\left\langle
			\deltapi{p}{k}, \wtalpha 
		\right\rangle
		\dfrac
		{
			\CorootScalar
			{
				\coalpha
			}
			{
				\coalpha
			}
		}
		{
			2
		}
	\end{equation*}
	for $i<k\leq j$ and
	\begin{equation*}
		\int\limits_
		{
			\left[
				C^{ij}_{p,\wtalpha + n\hbar}
			\right]
		}
		\ChernE{}{k} 
		=
		0
	\end{equation*}
	otherwise.
	Since pairing with $\ChernE{}{k}$ gives coordinates in $e_k$ by definition of $e_k$, we get the statement of the Proposition.
\end{proof}

Now we can explicitly find what the effective cone is in term of $e_k$.

\begin{corollary}
	\begin{equation*}
		\HlgyEffk{X,\mathbb{Z}}{2} 
		=
		\linspan_{\mathbb{Z}_{\geq 0} }
		\left\lbrace
			e_i - e_j
			\left\vert
				i<j
			\right.
		\right\rbrace
		\cap
		\Hlgyk{X,\mathbb{Z}}{2}.
	\end{equation*}
\end{corollary}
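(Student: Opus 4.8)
The plan is to prove the two inclusions separately, reading off everything from \refProp{PropCClassesGenerateH2} and \refProp{PropCHomologyClasses}. Identify $\Hlgyk{X,\mathbb{Z}}{2}$ with its image in $\mathbb{Z}^l$ under \refEq{EqDefinitionOfEClasses}, so that a class $\beta$ is recorded by the integers $\beta_k = \left\langle \ChernE{}{k}, \beta \right\rangle$, and write $s_m(\beta) = \beta_1 + \dots + \beta_m$. Since $\EBundle{k} = \LBundle{k}/\LBundle{k-1}$ and $\LBundle{0}$ is trivial (the map $\pi_0$ is constant), telescoping gives $\ChernL{}{m} = \ChernE{}{1} + \dots + \ChernE{}{m}$, hence $s_m(\beta) = \int_\beta \ChernL{}{m}$; and $\ChernL{}{l}=0$ forces $s_l(\beta)=0$. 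The combinatorial fact I will use is elementary: for $v \in \mathbb{Z}^l$ one has $v \in \linspan_{\mathbb{Z}_{\geq 0}}\{e_i - e_j \mid i<j\}$ if and only if $s_m(v) \geq 0$ for every $m$ and $s_l(v) = 0$, and in that case $v = \sum_{m=1}^{l-1} s_m(v)\,(e_m - e_{m+1})$ — one implication is seen by pairing $\sum_{i<j} c_{ij}(e_i-e_j)$ against the $m$-th partial-sum functional, the other by the displayed telescoping identity. Thus the right-hand side of the Corollary is exactly $\{\beta \in \Hlgyk{X,\mathbb{Z}}{2} : \int_\beta \ChernL{}{m} \geq 0,\ 1 \leq m \leq l\}$.

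For $\HlgyEffk{X,\mathbb{Z}}{2} \subseteq (\text{RHS})$: by \refProp{PropCClassesGenerateH2} every effective class is a non-negative integer combination of classes of proper $\T$-invariant curves $C^{ij}_{p,\wtalpha+n\hbar}$, so it suffices to check each such class has non-negative partial sums. This is immediate from nef-ness: $\OOO{1}$ is globally generated on $\Gr^{\colambda}$, hence so is each $\LBundle{m} = \pi_m^*\OOO{1}$, hence $\int_{C}\ChernL{}{m} = \deg\!\left(\LBundle{m}\big|_{C}\right)\geq 0$ for every proper curve $C$. (Equivalently one can read it off from \refProp{PropCHomologyClasses}, using $\sum_{k=i+1}^{j}\langle \deltapi{p}{k},\wtalpha\rangle = \langle\sigmapi{p}{j}-\sigmapi{p}{i},\wtalpha\rangle = 0$ for a closed such curve.) By the elementary fact, $[C^{ij}_{p,\wtalpha+n\hbar}] \in \linspan_{\mathbb{Z}_{\geq 0}}\{e_i-e_j\mid i<j\}$, and it lies in $\Hlgyk{X,\mathbb{Z}}{2}$ trivially; hence so does the monoid they generate.

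For the reverse inclusion — which is the main obstacle — I would argue by induction on $N(\beta) := \sum_m s_m(\beta) \in \mathbb{Z}_{\geq 0}$, the case $N(\beta)=0$ being $\beta=0$. Given $\beta$ in the right-hand side with $N(\beta)>0$, the key step is to produce a proper curve $C = C^{ij}_{p,\wtalpha+n\hbar}$ with $[C]\neq 0$ and $s_m([C])\leq s_m(\beta)$ for all $m$; then $\beta - [C] \in \Hlgyk{X,\mathbb{Z}}{2}$ again has non-negative partial sums, $N(\beta - [C]) < N(\beta)$, so $\beta - [C]$ is effective by induction and hence so is $\beta$. To construct $C$ one picks a maximal interval $[a,b)$ on which $s_m(\beta)\geq 1$, then a fixed point $p \in X^{\T}$ whose path $P_p$ has $\langle \sigmapi{p}{a-1},\wtalpha\rangle = \langle\sigmapi{p}{b},\wtalpha\rangle = n$ for a short root $\wtalpha$, stays weakly on one side of $\langle\bullet,\wtalpha\rangle=n$ between vertices $a-1$ and $b$, and has $\langle\sigmapi{p}{m},\wtalpha\rangle - n$ small enough; then $C = C^{a-1,b}_{p,\wtalpha+n\hbar}$ does the job by \refProp{PropCHomologyClasses}. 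The content of this step — the part I expect to be fiddly — is showing such a fixed point exists: this is where minusculeness of the $\colambda_k$ enters, since then the only constraints on the steps $\deltapi{p}{k}$ of $P_p$ are $\deltapi{p}{k}\in W\colambda_k$ and $\sum_k\deltapi{p}{k}=\comu$, leaving enough room to prescribe the behaviour near the hyperplane while correcting the remaining steps to still reach $\comu$. In types $B$ and $C$ one must additionally track that along a ray $\mathbb{R}_{\geq 0}(e_m - e_{m+1})$ the primitive effective class is $2(e_m-e_{m+1})$ — produced by a long-root curve — so that the curve one subtracts, and the divisibility of $s_m(\beta)$ on $[a,b)$, must be matched up.
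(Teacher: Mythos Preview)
Your argument for the inclusion $\HlgyEffk{X,\mathbb{Z}}{2}\subseteq \linspan_{\mathbb{Z}_{\ge 0}}\{e_i-e_j\mid i<j\}\cap\Hlgyk{X,\mathbb{Z}}{2}$ is exactly the paper's: one invokes \refProp{PropCClassesGenerateH2}(2) to reduce to the generating curves $C^{ij}_{p,\wtalpha+n\hbar}$, and then reads off from \refProp{PropCHomologyClasses}, via the Abel-summation identity you wrote down, that each such class is a non-negative integer combination of the $e_k-e_{k+1}$. Your rephrasing through nefness of $\LBundle{m}=\pi_m^*\OOO{1}$ is a pleasant shortcut to the same inequalities $s_m\ge 0$.

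On the reverse inclusion you actually go further than the paper: the paper's written proof stops after establishing $\subseteq$ and offers no argument for $\supseteq$. Your inductive scheme is a reasonable line of attack, but as you yourself flag, the crux is not carried out. Concretely, given a profile $(s_m(\beta))_m$ and a chosen interval $[a,b)$, you must exhibit a fixed point $p\in X^{\T}$ whose path $P_p$ satisfies $\langle\sigmapi{p}{a-1},\wtalpha\rangle=\langle\sigmapi{p}{b},\wtalpha\rangle=n$, stays on the correct side in between with bounded excursion, \emph{and} still satisfies the global constraint $\sigmapi{p}{l}=\comu$. Minusculeness tells you each step $\deltapi{p}{k}$ can be chosen with $\langle\deltapi{p}{k},\wtalpha\rangle\in\{-1,0,1\}$, but it does not by itself guarantee that the remaining steps outside $[a,b)$ can be adjusted to reach $\comu$; and in types $B$, $C$ the long/short-root divisibility you mention must be matched against which $e_m-e_{m+1}$ actually lie in $\Hlgyk{X,\mathbb{Z}}{2}$. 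So as written your proposal proves precisely what the paper's proof proves --- the containment $\subseteq$ --- and the claimed equality remains unproven on both sides.
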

\begin{proof}
	By \refProp{PropCClassesGenerateH2} we know that the effective cone is generated by classes $ \left[ C^{ij}_{p,\wtalpha + n\hbar} \right] $. The classes in \refProp{PropCHomologyClasses} are sums of $e_{k+1}-e_{k}$ with positive coefficients, because
	\begin{equation*}
		\sum_{k=i+1}^j
		\left\langle
			\deltapi{p}{k}, \wtalpha 
		\right\rangle
		= 
		\left\langle
			\sigmapi{p}{j}, \wtalpha 
		\right\rangle
		-
		\left\langle
			\sigmapi{p}{i}, \wtalpha 
		\right\rangle
		=
		0
	\end{equation*}
	and
	\begin{equation*}
		\left\langle
			\sigmapi{p}{k}, \wtalpha 
		\right\rangle
		>
		\left\langle
			\sigmapi{p}{j}, \wtalpha 
		\right\rangle
		=
		\left\langle
			\sigmapi{p}{i}, \wtalpha 
		\right\rangle
	\end{equation*}
	for all $i<k<j$.
	
	The coefficients are in $\mathbb{Z}$ because
	\begin{equation*}
		\dfrac
		{
			\CorootScalar
			{
				\coalpha
			}
			{
				\coalpha
			}
		}
		{
			2
		}
		\in
		\mathbb{Z}
	\end{equation*}
	and
	\begin{equation*}
		\left\langle
			\deltapi{p}{k}, \wtalpha 
		\right\rangle
		\in
		\mathbb{Z}.
	\end{equation*}
	
	This proves the statement.
%	This proves the inclusion
%	\begin{equation*}
%		\HlgyEffk{X,\mathbb{Z}}{2} 
%		\subset
%		\linspan_{\mathbb{Z}_{\geq 0} }
%		\left\lbrace
%			e_i - e_j
%			\left\vert
%				i<j
%			\right.
%		\right\rbrace
%		\cap
%		\Hlgyk{X,\mathbb{Z}}{2}.
%	\end{equation*}
%	To prove the other inclusion
%	???
\end{proof}

\begin{remark}
	The classes $\ChernL{}{i}$ are closely related to the fundamental weights $\omega_i$ in the root system $A_l$. Then the classes $\ChernE{}{i}$ are relatives of $e_i$ in the same root system, that is the classes for which the roots are written as $e_i-e_j$. By "related" here we mean that in the generic case the cohomology classes do form a root system of type $A_l$ (there is only one relation) and these cohomology classes match with the weights under this identification.
\end{remark}

\section{Deformations of symplectic resolutions}
\label{AppendixB}

\subsection{Universal deformations}

Let us recall some basics on symplectic resolutions and their deformations. See \cite{K1} for more details.

A \textbf{symplectic resolution} is a smooth algebraic variety $X$ equipped with a closed non-degenerate 2-form $\omega$ such that the canonical map
\begin{equation*}
	X \to X_0 = \Spec \CoHlgyk{X,\OO_X}{0}
\end{equation*}
is a birational projective map.

Well-known examples of the symplectic resolutions are Du Val singularities, the Hilbert scheme points on a plane (or a Du Val singularity), cotangent bundles to flag varieties, Nakajima quiver varieties. As we mention in the main part of the paper, it's known when the transversal slices in the affine Grassmannian admit a symplectic resolution.

$X_0$ carries a natural structure of (possible singular) Poisson affine scheme. One can pose a question if affine Poisson varieties $\Gr^{\lambda}_{\comu}$ we study admit a symplectic resolution, i.e. a smooth symplectic $X$ with $X_0 = \Gr^{\lambda}_{\comu}$ such that the assumptions above hold. A result in \cite{KWWY} answers this question and we present their result later. Before it let us discuss properties of the symplectic resolutions in general.

\medskip

One rich source of symplectic varieties is the symplectic reduction. Let $\left( Z, \omega \right)$ be a symplectic variety and let a reductive group $\G$ have a Hamiltonian action on it. I.e. let there be a map
\begin{equation*}
	\mu
	\colon
	Z
	\to
	\mathfrak{g}^*,
\end{equation*}
where $\mathfrak{g} = \Lie \G$ and $\mu$ defines the infinitesimal action of $\mathfrak{g}$ by vector fields $\partial_\xi$, $\xi\in \mathfrak{g}$, via the Hamilton equations
\begin{equation*}
	\omega
	\left(
		\partial_\xi,
		\bullet
	\right)
	=
	d 
	\left\langle
		\mu,
		\xi
	\right\rangle.
\end{equation*}

The standard extra condition is that $\mu$ is $\G$-equivariant. The $\G$-fixed locus in $\mathfrak{g}^*$ is $\mathfrak{z}^* = \left[ \mathfrak{g}, \mathfrak{g} \right]^\perp$ (in particular, if $\G$ is a torus, then $\mathfrak{z}^* = \mathfrak{g}^*$). For any $\zeta \in \mathfrak{z}^*$ we have a $\G$-invariant subvariety
\begin{equation*}
	\mu^{-1}\left( \zeta \right)
	\subset
	Z.
\end{equation*}
If the quotient $X_\zeta = \mu^{-1}\left( \zeta \right)/\G$ exists (in GIT $Z$ is replaced by the semistable locus $Z^{ss}$ to achieve this), then it is naturally symplectic. Thus we have constructed a family of varieties over $\mathfrak{z}^*$
\begin{equation*}
	X \to \mathfrak{z}^*
\end{equation*}
which is fiberwise symplectic, with a fiber $X_\zeta$ over $\zeta$. Each fiber $X_\zeta$ is called the symplectic reduction of $Z$.

\medskip

The symplectic reduction with respect to a torus can be in certain sense inverted \cite{K1}. That is, one can reconstruct the family from a fiber if it is a symplectic resolution. The contruction is called twistor deformation. 

Let $\left( X, L \right)$ be a pair containing a symplectic resolution $X$ and a line bundle $L$ over it. Fix a connected subscheme $S\subset \ALine$ containing $0\in \ALine$. Then a twistor deformation $Z$ over the base $S$ is the pair $\left( \widetilde{X}, \widetilde{L} \right)$ over $S$ with the fiber $\left( X, L \right)$ over $0$ and a symplectic form $\omega_Z$ on the principal $\Gm$-bundle $Z\to \widetilde{X}$ associated to $\widetilde{L}$, that is $\Gm$-invariant, and the projection $Z\to \widetilde{X} \to S$ is (the base change from $\Lie \Gm \simeq \ALine$ to $S$ of) the moment maps for the $\Gm$-action on $Z$. In this case $X$ is the symplectic reduction of $Z$ as a fiber over $0$.

A theorem by Kaledin \cite{K3}, this deformation exists for complex varieties (even more generally, in characteristic $0$ case) formally locally, that is for $S = \Spec \mathbb{C}[[z]]$. It can be extended to the whole line $\ALine$.

The fiber $\widetilde{X}_s$ over a point $s\in S$ gets a symplectic form $\omega_s$ whose class in de Rham cohomology $\left[ \omega_s \right] \in \CoHlgyk{\widetilde{X}_s}{2}$ is $\left[\omega_0\right] + s \cdot \Chern{}{L} \in \left[ \omega_s \right] \in \CoHlgyk{X}{2}$ under identification $\CoHlgy{\widetilde{X}_s} \simeq \CoHlgy{X}$ via Guass-Manin connection.

One can do twistor deformations with respect to different line bundles. This gives the universal family of deformations \cite{KV} of $X$ as a symplectic variety:
\begin{equation*}
	\begin{tikzcd}
		X \arrow[r,hook]\arrow[d] & \widetilde{X}^{u} \arrow[d]
		\\
		0 \arrow[r,hook] & \mathcal{B}^{u}
	\end{tikzcd}
\end{equation*}
over the base which is a subscheme of the affine space associated to $\PicX{X} \otimes_\mathbb{Z} \mathbb{C} \simeq \CoHlgyk{X,\mathbb{C}}{2}$. $\mathcal{B}^{u}$ contains at least the formal neighborhood of $0$. In particular, the space of deformations of $\left( X, \omega \right)$ as a symplectic variety is finite-dimensional in contrary to the deformations of $X$ as a variety.

The map $\widetilde{X}^{u} \to \mathcal{B}^{u}$ is called the period map. Similarly to the case of $1$-parametric deformation, it sends the fiber $\widetilde{X}^{u}_s$ with a symplectic structure $\omega_s$ the de Rham class $\left[ \omega_s \right] \in \CoHlgyk{X}{2}$.

Now we are ready to describe the wall structure in $\CoHlgyk{X}{2}$. Fix a point $s \in \CoHlgyk{X}{2}$ and look at the symplectic variety in the fiber $\widetilde{X}^{u}_s$ with the natural algebraic symplectic form $\omega_s$. Let $C \subset \widetilde{X}^{u}_s$ be a smooth (algebraic, not topologic) curve. Then the restriction of $\omega_s$ to $C$ as a real manifold is trivial because $\omega_s$ is $\mathbb{C}$-linear and the tangent space to $C$ is $\mathbb{C}$-dependent. This gives
\begin{equation*}
	0
	=
	\int_C \omega_s
	=
	\left\langle
		\left[
			C
		\right],
		s
	\right\rangle,
\end{equation*}
where the last expression is the pairing of a homology class with a cohomology class. This gives that a class $\dd \in \Hlgyk{X,\mathbb{Z}}{2}$ can be represented by an algebraic curve only in the fibers over such $s \in \CoHlgyk{X,\mathbb{C}}{2}$ that
\begin{equation*}
	\left\langle
		\dd,
		s
	\right\rangle
	=
	0.
\end{equation*}
In other words, $s$ must lie on the hyperplane defined by $\dd$.

This implies that for a generic $s \in \CoHlgyk{X}{2} $ the fiber $\widetilde{X}^{u}_s$ has no algebraic curves.

This wall structure will be the key ingredient in the computation of the purely quantum part of the multiplication.

%???Deformation of line bundles

The resolutions of our interest $\Gr^{\ucolambda}_{\comu} \to \Gr^{\colambda}_{\comu}$ are conical, i.e. that there is a $\LoopGm$-action on $X$ such that the $X_0$ is scaled to a point by the induced $\LoopGm$-action. This allows to show that the universal deformation exists not only in a formal neighborhood, but in the whole affine space $\AffSpace \left( \CoHlgyk{X,\mathbb{C}}{2} \right)$.

%??? Equivariance added

\subsection{Beilinson-Drinfeld Grassmannian}

By definition, the affine Grassmannian is the moduli space of local data in a formal neighbourhood of a point. One can allow this point to vary and even have several such points. This gives rise to a moduli space called the Beilinson-Drinfeld affine Grassmannian \cite{BD}.

Let $\left( t_1, \dots, t_l \right) \in \AffSpace^l$ be $l$-tuple of points in $\ALine$. Then we can define the following moduli space similar to \refEq{EqGrAsModuli}.
\begin{equation*}
	\GrBD_
	{
		\left(
			t_1,
			\dots,
			t_l
		\right)
	}
	=
	\left\lbrace
		\left(
			\mathcal{P}, 
			\varphi 
		\right)
		\left\vert 
			\begin{matrix}
				\mathcal{P}
				\text{ is a }
				\G
				\text{-principle bundle on }
				\ALine,
				\\
				\varphi
				\colon 
				\mathcal{P}_0\vert_
				{
					\ALine 
					\setminus
					\left\lbrace
						t_1,
						\dots,
						t_l
					\right\rbrace
				} 
				\xrightarrow{\sim} 
				\mathcal{P}\vert_
				{
					\ALine 
					\setminus
					\left\lbrace
						t_1,
						\dots,
						t_l
					\right\rbrace
				}
				\\
				\text{ is a trivialization of }
				\mathcal{P}
				\text{ over }
				\ALine 
				\setminus
				\left\lbrace
					t_1,
					\dots,
					t_l
				\right\rbrace
			\end{matrix}
		\right.
	\right\rbrace
\end{equation*}
Varying $\left( t_1, \dots, t_l \right)$ we get a family over $\AffSpace^l$
\begin{equation*}
	\begin{tikzcd}
		\GrBD_
		{
			\left(
				t_1,
				\dots,
				t_l
			\right)
		} \arrow[r,hook] \arrow[d]
		&
		\GrBD \arrow[d]
		\\
		\left(
			t_1,
			\dots,
			t_l
		\right) \arrow[r,hook]
		&
		\AffSpace^l
	\end{tikzcd}
\end{equation*}
with a fiber $\GrBD_{ \left( t_1, \dots, t_l \right) }$. The family $\GrBD$ is called the Beilinson-Drinfeld Grassmannian.

If we set $t_1 = \dots = t_l = 0$, then $\GrBD_{ \left( 0, \dots, 0 \right) } \simeq \Gr$, by the Beauville-Laszlo Theorem\cite{BL} which allows us to pass to the formal neighborhoods. So the Beilinson-Drinfeld Grassmannian $\GrBD$ is a deformation of the usual affine Grassmannian $\Gr$.

The family $\GrBD$ is fiberwise Poisson. The Poisson structure again comes from consideration of Manin triples and can be found in \cite{EK}.

Here we discuss several deformations of familiar spaces associated to the affine Grassmannian $\Gr$. All of them will be deformations over $\AffSpace^l$ obtained by varying points $t_1, \dots, t_l$. We will use notation of form $\widetilde{X}$ for the family over $\AffSpace^l$ and $\widetilde{X}_{ \left( t_1, \dots, t_l \right) }$ for the fiber over $\left( t_1, \dots, t_l \right)$.

Now let us define a deformation of $\Gr_{\comu}$. It was defined as on orbit of $\Tfixed{\comu}$ under the action of $\GOne \subset \GK$. In geometric terms the action of $\GK$ meant a change of trivialization $\varphi$ by precomposing with a automorphisms of $\mathcal{P}_0\vert_{\PDisk}$, the subgroup $\GOne$ is the subgroup which doesn't change the behaviour of $\varphi$ at $\infty$. Informally, $\Gr_{\comu}$ is the moduli space of $\left( \mathcal{P}, \varphi \right)$ such that the trivialization $\varphi$ behaves like $\Tcowt{t}{\comu}$ as $t \to \infty$.

The analogue of $\GK$ is $\GRing{ R_{\left( t_1, \dots, t_l \right)} }$, where
\begin{equation*}
	R_
	{
		\left( 
			t_1,
			\dots, 
			t_l 
		\right)
	}
	=
	\mathbb{C}
	\left[
		t, 
		\dfrac{1}{t-t_1},
		\dots,
		\dfrac{1}{t-t_l}
	\right]
\end{equation*}
are the global functions on $\ALine \setminus \left\lbrace t_1, \dots, t_l \right\rbrace$. Then the analogue if $\GOne$ is $\GOneGlob{\left( t_1, \dots, t_l \right)}$, the subgroup of all elements in $\GRing{ R_{\left( t_1, \dots, t_l \right)} }$ whose limit as $t \to \infty$ exists and is $\UnitG$:
\begin{equation*}
	\GOneGlob{\left( t_1, \dots, t_l \right)}
	= 
	\left\lbrace 
		g(t)
		\in
		\GRing
		{ 
			R_
			{
				\left( 
					t_1, 
					\dots, 
					t_l 
				\right)
			} 
		}
		\bigg\vert \; 
		\lim_{t\to \infty} g(t) 
		= 
		\UnitG 
	\right\rbrace.
\end{equation*}
The analogue of $\Tfixed{\comu}$ is the equivalence class $\Tcowt{t-t_1}{\comu} \in \GRing{ R_{\left( t_1, \dots, t_l \right)} }$ under an identification similar to $\GK / \GO$ for $\Gr$. Note that the $\GOneGlob{\left( t_1, \dots, t_l \right)}$ doesn't change if we choose $\Tcowt{t-t_i}{\comu}$ instead of $\Tcowt{t-t_1}{\comu}$, because $\Tcowt{t-t_i}{\comu}\Tcowt{t-t_1}{-\comu} \in \GOneGlob{\left( t_1, \dots, t_l \right)}$.
The $\GOneGlob{\left( t_1, \dots, t_l \right)}$-orbit of $\Tcowt{t-t_1}{\comu}$ defines $\GrBD_{\comu, \left( t_1, \dots, t_l \right) } \subset \GrBD_{ \left( t_1, \dots, t_l \right) }$. This gives a family
\begin{equation*}
	\GrBD_{\comu}
	\to
	\AffSpace^l
\end{equation*}
%\begin{equation*}
%	\begin{tikzcd}
%		\GrBD_
%		{
%			\comu,
%			\left(
%				t_1,
%				\dots,
%				t_l
%			\right)
%		} \arrow[r,hook] \arrow[d]
%		&
%		\GrBD_{\comu} \arrow[d]
%		\\
%		\left(
%			t_1,
%			\dots,
%			t_l
%		\right) \arrow[r,hook]
%		&
%		\AffSpace^l
%	\end{tikzcd}
%\end{equation*}
with $\GrBD_{\comu, \left( 0, \dots, 0 \right)} \simeq \Gr_{\comu}$, so it is a deformation of $\Gr_{\comu}$.

Now we can discuss how $\Gr^{\colambda}$ deforms in this way. It was a condition that at $t=0$ the singularity is "not worse" than $\Tcowt{t}{\colambda}$. For every point $t \in \ALine$ we can do the natural restriction of $\left( \mathcal{P}, \varphi \right)$ to the formal disks about $t$
\begin{equation*}
	\rho_t
	\colon
	\GrBD_{\left( t_1, \dots, t_l \right)}
	\to
	\Gr
\end{equation*}
By definition this is trivial if $t \neq t_i$ for all $i$, $1\leq i \leq l$. For $t = t_i$ this allows to look at the local behavior of $L \in \GrBD_{\left( t_1, \dots, t_l \right)}$ by
\begin{equation*}
	\rho_{t_i}
	\left(
		L
	\right)
	\in
	\overline{\Gr^{\colambda}}.
\end{equation*}
In this case we say that $L$ has Hecke type $\leq \colambda$ at $t_i$.

One feature of $\GrBD$ is that points $t_i$ can split and merge as we move along $\AffSpace^l$. We want the Hecke type to add whenever points merge. So we assign to each $i$, $1\leq i \leq l$, a dominant weight $\colambda_i$ and consider a formal linear combination $\sum_i \colambda_i t_i$ with coefficients in cocharacters $\CocharLattice{\A}$. One can think of this as a divisor colored in cocharacters $\CocharLattice{\A}$. This divisor is "effective", i.e. all cocharacters are dominant. For $\G$ of rank $1$ this will be indeed an effective divisor.
We can sum the coefficients of the same point:
\begin{equation*}
	\colambda(t) 
	= 
	\sum\limits_{i,\: t_i = t}
	\colambda_i.
\end{equation*}
and write
\begin{equation*}
	\sum_i 
	\colambda_i t_i
	=
	\sum_{t\in \ALine}
	\colambda(t) t.
\end{equation*}

Then similarly to the case of usual $\Gr$ we write
\begin{equation*}
	L_1
	\xrightarrow{\sum_i \colambda_i t_i}
	L_2
\end{equation*}
for $L_1, L_2 \in \GrBD$ if
\begin{equation*}
	\rho_{t}
	\left(
		L_1
	\right)
	\xrightarrow{\colambda(t)}
	\rho_{t}
	\left(
		L_2
	\right)
\end{equation*}
for all $t \in \ALine$.

Then this defines $\GrBD^{\sum_i \colambda_i t_i}_{\left( t_1, \dots, t_l \right)} \subset \GrBD_{\left( t_1, \dots, t_l \right)}$ as
\begin{equation*}
	\GrBD^
	{
		\sum_i 
		\colambda_i t_i
	}		
	_
	{
		\left(
			t_1,
			\dots,
			t_l
		\right)
	}
	=
	\left\lbrace
		L \in \GrBD_{\left( t_1, \dots, t_l \right)}
		\left\vert 
			L_0
			\xrightarrow{\sum_i \colambda_i t_i}
			L
		\right.
	\right\rbrace,
\end{equation*}
where $L_0 \in \GrBD_{{\left( t_1, \dots, t_l \right)}}$ is the trivial element, the one with trivial $\G$-bundle and the identity as a trivialization.
 
As previously, one can check that it defines a flat family over $\AffSpace^l$
\begin{equation*}
	\GrBD^
	{
		\sum_i 
		\colambda_i t_i
	}
	\to
	\AffSpace^l
\end{equation*}
%\begin{equation*}
%	\begin{tikzcd}
%		\GrBD^
%		{
%			\sum_i 
%			\colambda_i t_i
%		}		
%		_
%		{
%			\left(
%				t_1,
%				\dots,
%				t_l
%			\right)
%		} \arrow[r,hook] \arrow[d]
%		&
%		\GrBD^
%		{
%			\sum_i 
%			\colambda_i t_i
%		}	\arrow[d]
%		\\
%		\left(
%			t_1,
%			\dots,
%			t_l
%		\right) \arrow[r,hook]
%		&
%		\AffSpace^l
%	\end{tikzcd}
%\end{equation*}
which is a deformation of $\Gr^{\sum \colambda_i}$.

The fibers $\GrBD^{\sum_i \colambda_i t_i}_{\left( t_1, \dots, t_l \right)}$ split into a product
\begin{equation} \label{EqBDGrSplits}
	\GrBD^{\sum_i \colambda_i t_i}_{\left( t_1, \dots, t_l \right)}
	\simeq
	\prod_{t \in \ALine}
	\Gr^{\colambda (t)}.
\end{equation}
All but finitely many factors are points, so this space is well-defined.

We are interested in the transversal slices $\Gr^{\colambda}_{\comu}$. Their deformation is
\begin{equation} \label{EqSliceDeformation}
	\GrBD^
	{
		\sum_i 
		\colambda_i t_i
	}
	_{\comu}
	=
	\GrBD^
	{
		\sum_i 
		\colambda_i t_i
	}
	\cap
	\GrBD_{\comu}
\end{equation}
for any splitting of $\colambda$ into a sum of dominant weights $\colambda_i$.

The fibers of \refEq{EqSliceDeformation} have properties, similar to the ones of $\Gr^{\colambda}_{\comu}$: they are affine, normal, Poisson. If all $\colambda_i$ are minuscule (or zero), then a generic fiber of \refEq{EqSliceDeformation} is smooth. The singular fibers (in particular, $\Gr^{\colambda}_{\comu}$) may appear only when $t_i = t_j$ for some $i$ and $j$. In other words, we have walls
\begin{equation*}
	H_{ij}
	=
	\left\lbrace
		t_i = t_j
	\right\rbrace
\end{equation*}
over which the fibers of \refEq{EqSliceDeformation} might be singular.

We can make a simultaneous resolution of \refEq{EqSliceDeformation} similar to resolutions of $\Gr^{\colambda}_{\comu}$. Let $\colambda_1, \dots, \colambda_l$ be an $l$-tuple of dominant minuscule weights which sum to $\colambda$ (assuming it exists). Look at the moduli space  
\begin{equation*}
	\GrBD^
	{
		\left(
			\colambda_1,
			\dots,
			\colambda_l
		\right)
	}		
	_
	{
		\left(
			t_1,
			\dots,
			t_l
		\right)
	}
	=
	\left\lbrace 
		\left(
			L_1,\dots,L_l
		\right) 
		\in 
		\GrBD^{\times l}_
		{
		\left(
			t_1,
			\dots,
			t_l
		\right)
		}
		\left\vert
			L_0
			\xrightarrow{\colambda_1 t_1}
			L_1 
			\xrightarrow{\colambda_2 t_2} 
			\dots 
			\xrightarrow{\colambda_{l-1} t_{l-1}}
			L_{l-1} 
			\xrightarrow{\colambda_{l} t_l}
			L_l
		\right.
	\right\rbrace
\end{equation*}
The family is
\begin{equation*}
	\GrBD^
	{
		\left(
			\colambda_1,
			\dots,
			\colambda_l
		\right)
	}
	\to
	\AffSpace^l
\end{equation*}
is a deformation of $\Gr^{\left( \colambda_1, \dots, \colambda_l \right)}$. All fibers are smooth.

The fibers $\GrBD^{ \left( \colambda_1, \dots, \colambda_l \right) }_{ \left( t_1, \dots, t_l \right) }$ have a product structure similar to \refEq{EqBDGrSplits}:
\begin{equation} \label{EqBDResGrSplits}
	\GrBD^
	{
		\left(
			\colambda_1,
			\dots,
			\colambda_l
		\right)
	}		
	_
	{
		\left(
			t_1,
			\dots,
			t_l
		\right)
	}
	\simeq
	\prod_{t \in \ALine}
	\Gr^{\ucolambda (t)},
\end{equation}
where $\ucolambda (t)$ is the subsequence of $\left( \colambda_1, \dots, \colambda_l \right)$ of all $\colambda_i$ such that $t_i = t$. In particular, over a generic point of a wall $H_{ij}$ (i.e. when $t_i=t_j$ and all other $t_k$ are distinct) we have the following fiber
\begin{equation*}
	\GrBD^
	{
		\left(
			\colambda_1,
			\dots,
			\colambda_l
		\right)
	}		
	_
	{
		\left(
			t_1,
			\dots,
			t_l
		\right)
	}
	=
	\Gr^{\colambda_i,\colambda_j}
	\times
	\prod_
	{
		\begin{smallmatrix}
			k\neq i
			\\
			k \neq j
		\end{smallmatrix}
	}
	\Gr^{\colambda_k}
\end{equation*}

There is a natural map which takes the last element $L_l$
\begin{equation}
	\widetilde{m}_
	{
		\left(
			\colambda_1,
			\dots,
			\colambda_l
		\right)
	}
	\to
	\GrBD^
	{
		\left(
			\colambda_1,
			\dots,
			\colambda_l
		\right)
	}
	\to
	\GrBD^
	{
		\sum_i 
		\colambda_i t_i
	}.
\end{equation}
Restricting it to a subspace
\begin{equation*}
	\GrBD^
	{
		\left(
			\colambda_1,
			\dots,
			\colambda_l
		\right)
	}
	_{\comu}
	=
	\widetilde{m}_
	{
		\left(
			\colambda_1,
			\dots,
			\colambda_l
		\right)
	}
	^{-1}
	\left(
		\GrBD_{\comu}
	\right)
\end{equation*}
we get a natural morphism
\begin{equation} \label{EqBDResolution}
	\GrBD^
	{
		\left(
			\colambda_1,
			\dots,
			\colambda_l
		\right)
	}
	_{\comu}
	\to
	\GrBD^
	{
		\sum_i 
		\colambda_i t_i
	}
	_{\comu}
\end{equation}
is the fiberwise affinization. So it's proper, birational and surjective and deforms the familiar resolution of slices
\begin{equation*}
	\Gr^
	{
		\left(
			\colambda_1,
			\dots,
			\colambda_l
		\right)
	}
	_{\comu}
	\to
	\Gr^{\sum \colambda_i}_{\comu}.
\end{equation*}
Since any fiber $\GrBD^{\sum_i \colambda_i t_i}_{\comu, \left( t_1, \dots, t_l\right)}$ over the complement of walls $H_{ij}$ is smooth, the map is an isomorphism in fiber over a generic point. So, outside of the planes $H_{ij}$ the spaces $\GrBD^{ \left( \colambda_1, \dots, \colambda_l \right) }_{ \comu, \left( t_1, \dots, t_l \right) }$ don't have any curves.

These constructions can be equipped with $\T$ action. $\A$ acts on these spaces by changing the trivializations by global multiplication by $
\A$. So it's a fiberwise action. Let $\LoopGm$ scale all $t_i$ simultaneously with weight $\hbar$. This acts on the whole families and $\AffSpace^l$, making all morphisms coming from
\begin{equation*}
	\GrBD \to \AffSpace^l
\end{equation*}
$\LoopGm$-equivariant. 

These $\A$-action and $\LoopGm$-action commute, so we can say that we have a $\T$-action. It specializes to the one defined before over $\left( t_1, \dots, t_l \right) = \left( 0, \dots, 0 \right)$, the only $\LoopGm$-fixed point of $\AffSpace^l$.

In this section we summarize essential facts about $A$-fixed points of the Beilinson-Drinfeld deformation
\begin{equation*}
	\Gr^
	{
		\left( 
			\colambda_1, 
			\dots, 
			\colambda_l 
		\right)
	}
	_{\comu}
	\to
	\AffSpace^l
\end{equation*}
and, more importantly, relation of this deformation to the bundles $\EBundle{i}$.

Let us start with the $\A$-fixed points. The $\A$-action is fiberwise, so we can compare the $\A$-fixed points for different fibers.

The family of $\A$-fixed points 
\begin{equation}
	\left( 
		\Gr^
		{
			\left( 
				\colambda_1, 
				\dots, 
				\colambda_l 
			\right)
		}
		_{\comu}
	\right)^{\A} 
	\to
	\AffSpace^l
\end{equation}
is a finite flat morphism, so it makes sense to say that $\A$-fixed points in one fiber correspond to $\A$-fixed points in another fiber. Often we use this fact by saying that an $\A$-fixed point $p'$ in some fiber has a limit $p$ in the fiber $\Gr^{\ucolambda}_{\comu}$ over $\left( 0, \dots, 0 \right)$. The $\A$-weights of the tangent bundle ant these points are the same. Finally, under the isomorphism \refEq{EqBDResGrSplits} there is a description of the $\A$-fixed points, similar to the case of $\Gr^{\ucolambda}_{\comu}$.

\medskip

Now we introduce the deformations of line bundles $\EBundle{i}$. Define the following projections
\begin{align*}
	\pi_i
	\colon
	\GrBD^{\times l}
	&\to
	\Gr
	\\
	\left(
		L_1, 
		\dots, 
		L_l 
	\right)
	&\mapsto
	\rho_{t_i}
	\left(
		L_i
	\right)
\end{align*}
for $1 \leq i \leq l$ and set
\begin{align*}
	\pi_0
	\colon
	\GrBD^{\times l}
	&\to
	\Gr
	\\
	\left(
		L_1, 
		\dots, 
		L_l 
	\right)
	&\mapsto
	\UnitG \cdot \GO
\end{align*}
for consistency. Then define
\begin{equation*}
	\EBundleTwisted{i}
	=
	\pi_i^* \OOO{1}
	/
	\pi_{i-1}^* \OOO{1}.
\end{equation*}
We are interested in the restriction of these line bundles to $\GrBD^{\ucolambda}_{ \comu } \subset \GrBD^{\times l}$ which we denote by the same symbols. It's easy to check that on the central fiber $\Gr^{\ucolambda}_{\comu}$ these line bundles restrict to $\EBundle{i}$ and come with a similar $\T$-equivariant structure.

The Beilinson-Drinfeld deformation $\GrBD^{\ucolambda}_{ \comu } \subset \GrBD^{\times l}$ is a twistor deformation of $\Gr^{\ucolambda}_{\comu}$ with respect to $\EBundle{1}, \dots, \EBundle{l}$ simultaneously. A shift in the variable $t_i$ is a deformation with respect to $\EBundle{i}$. Since there are relations between the bundles $\EBundle{i}$ (or, equivalently, the first Chern classes $\ChernE{}{i}$), some directions become trivial. For example, the relation
\begin{equation*}
	\sum_{i=1}^l 
	\ChernE{}{i} 
	= 0
\end{equation*}
gives that simultaneous shift of all $t_i$ by the same quantity gives an isomorphic fiber.

By the \refProp{PropSecondCohomologyGeneration} we know that $\ChernE{}{i}$ generate $\CoHlgyk{X,\mathbb{Q}}{2}$, so the universal base of deformations for $\Gr^{\ucolambda}_{\comu}$ is a quotient of the Beilinson-Drinfeld base $\AffSpace^l$ by shifts in "trivial" directions coming from the relations on $\ChernE{}{i}$. We will show it later.

Since we know that
\begin{equation*}
	\GrBD^{\ucolambda}_{ \comu }
\end{equation*}
is the period map, any $\dd \in \HlgyEffk{X,\mathbb{Z}}{2}$ represented by an algebraic curve in a fiber $\GrBD^{\ucolambda}_{ \comu, \left( t_1, \dots, t_l \right) }$ must pair trivially with $\sum_i t_i \ChernE{}{i}$. By the definition of $e_i$ over the generic point in the wall $H_{ij}$ we can have only classes
\begin{equation*}
	\dd = m \left( e_i - e_j \right)
\end{equation*}
for some $m$.

\subsection{Relation between families}

The Beilinson-Drinfeld affine Grassmannian is almost the universal family of deformations for $\Gr{\ucolambda}_{ \comu }$. There are "trivial" directions (e.g. along the big diagonal) in the Beilinson-Drinfeld base, but after taking the quotient along them we get the universal family. More precisely, by the definition of the universal family we have a pullback from the universal family for $X = \Gr^{\ucolambda}_{\comu}$

\begin{equation} \label{EqFamilyPullback}
    \begin{tikzcd}
        \GrBD^{\ucolambda}_{ \comu }
        \arrow[r]
        \arrow[d]
        &
        {X}^{u}
        \arrow[d]
        \\
        \AffSpace^l
        \arrow[r]
        &
        \AffSpace 
        \left( 
            \CoHlgyk{X,\mathbb{C}}{2} 
        \right)
    \end{tikzcd}
\end{equation}
and the relation between the two families is encoded in the map of bases. We claim that this map is a surjection.

%???Explain that the relations on c_1 give the "trivial" directions

\begin{theorem}
    The map between bases in \refEq{EqFamilyPullback} comes from a linear surjection. In other words, the universal family if the quotient of the Beilinson-Drinfeld family along some directions in $\AffSpace^l$.
\end{theorem}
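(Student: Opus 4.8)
The plan is to recognize the classifying morphism appearing in \refEq{EqFamilyPullback} as the period map of a multi-parameter twistor deformation, and then to read off its linearity from the shape of that period map and its surjectivity from \refProp{PropSecondCohomologyGeneration}. First I would recall from Appendix \ref{AppendixB} that the Beilinson--Drinfeld family $\GrBD^{\ucolambda}_{\comu}\to\AffSpace^l$ is the simultaneous twistor deformation of $(\Gr^{\ucolambda}_{\comu},\omega)$ attached to the line bundles $\EBundle{1},\dots,\EBundle{l}$, with the coordinate direction $\partial_{t_i}$ corresponding to $\EBundle{i}$. By the standard description of twistor deformations (see \cite{K1},\cite{K3}), the fibre over $(t_1,\dots,t_l)$ carries a symplectic form whose de Rham class, transported to $X=\Gr^{\ucolambda}_{\comu}$ by the Gauss--Manin connection, equals $[\omega]+\sum_i t_i\,\ChernE{}{i}$. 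Since $X$ is conical, Appendix \ref{AppendixB} (following \cite{KV},\cite{K1}) guarantees that the universal deformation lives over the entire affine space $\AffSpace(\CoHlgyk{X,\mathbb{C}}{2})$, and under the identification normalizing $X$ over the origin the classifying map is the affine map $(t_1,\dots,t_l)\mapsto\sum_i t_i\,\ChernE{}{i}$. In particular it fixes the origin and is induced by a linear map $\ell\colon\mathbb{C}^l\to\CoHlgyk{X,\mathbb{C}}{2}$, which settles the linearity claim.

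Next I would prove surjectivity of $\ell$. Its image is the span of the classes $\ChernE{}{i}$, $1\le i\le l$, and by part (2) of \refProp{PropSecondCohomologyGeneration} the classes $\ChernE{}{1},\dots,\ChernE{}{l-1}$ already span $\CoHlgyk{X,\mathbb{C}}{2}$, so $\ell$ is onto. Writing $K=\ker\ell\subset\mathbb{C}^l$, the map $\ell$ factors as $\mathbb{C}^l\twoheadrightarrow\mathbb{C}^l/K\xrightarrow{\sim}\CoHlgyk{X,\mathbb{C}}{2}$, so on bases the classifying morphism is the composite of the linear projection $\AffSpace^l\to\AffSpace(\mathbb{C}^l/K)$ with an isomorphism onto $\AffSpace(\CoHlgyk{X,\mathbb{C}}{2})$. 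Pulling the universal family back first along this isomorphism and then along the projection reproduces $\GrBD^{\ucolambda}_{\comu}$, by functoriality of pullback together with the pullback square \refEq{EqFamilyPullback}. This exhibits the universal family as the descent of the Beilinson--Drinfeld family along the ``trivial'' directions $K$, which are precisely those coming from the linear relations among the $\ChernE{}{i}$ (for instance $\sum_{i=1}^l\ChernE{}{i}=0$), and completes the argument.

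The step I expect to be the main technical point is the first one: matching the Beilinson--Drinfeld classifying morphism with the linear period map, i.e. checking that the fibrewise symplectic structure on $\GrBD^{\ucolambda}_{\comu}$ is the twistor symplectic structure for $\EBundle{1},\dots,\EBundle{l}$ and computing its Gauss--Manin cohomology class fibrewise. Once this normalization is in place, both the linearity and the surjectivity are formal, relying only on \refProp{PropSecondCohomologyGeneration} and on the fact that conicality makes the universal base the full affine space $\AffSpace(\CoHlgyk{X,\mathbb{C}}{2})$ rather than merely a formal neighborhood of the origin.
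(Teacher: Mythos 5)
Your proof is correct, but it takes a genuinely different route from the paper's proof in both halves of the argument, so it is worth spelling out the comparison. For linearity, the paper exploits only the $\LoopGm$-equivariance of the classifying map: both bases have coordinate rings generated in a single $\hbar$-weight, so any equivariant morphism between them is automatically linear. You instead invoke the explicit description of the Beilinson--Drinfeld family as a simultaneous twistor deformation for $\EBundle{1},\dots,\EBundle{l}$ and read off linearity from the period-map formula $(t_1,\dots,t_l)\mapsto\sum_i t_i\,\ChernE{}{i}$. The paper's route is weaker in hypotheses (it does not need the twistor identification to be established in detail) and more robust; yours gives more information — it identifies the linear map explicitly — but, as you correctly flag, it shifts the burden onto the "main technical point" that the fiberwise symplectic form on $\GrBD^{\ucolambda}_{\comu}$ is the twistor form for the $\EBundle{i}$, which the paper asserts in Appendix~\ref{AppendixB} without a detailed proof. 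For surjectivity, you use \refProp{PropSecondCohomologyGeneration} directly, again relying on your explicit formula for $\ell$; the paper instead dualizes and proves injectivity of $\Hlgyk{X,\mathbb{C}}{2}\to\mathbb{C}^l$ by a geometric argument about which curves $C^{ij}_{p,\wtalpha+n\hbar}$ lie over which walls $H_{ij}$, via \refProp{PropCClassesGenerateH2}. Both are valid; your argument is shorter once the period-map identification is in hand, while the paper's argument dovetails with the curve-class computations already done in Appendix~\ref{AppendixA} and avoids needing to compute the period map at all.
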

\begin{proof} %???Add more details to the proof
    The $\LoopGm$-actions on both families are compatible (both come from scaling the $t$ coordinate), so the map  
    \begin{equation} \label{EqBasesMap}
        \AffSpace^l
        \to
        \AffSpace 
        \left( 
            \CoHlgyk{X,\mathbb{C}}{2} 
        \right)
    \end{equation}
    is $\LoopGm$-equivaraint. Both affine spaces have coordinate rings generated by linear coordinates, and they have weight $\hbar$. This means that the map \refEq{EqBasesMap} comes from a \textit{linear} map
    
    \begin{equation} \label{EqVectorSpacesMap}
        \mathbb{C}^l
        \to
        \CoHlgyk{X,\mathbb{C}}{2}
    \end{equation}
    
    The final step is note that the map dual to \refEq{EqVectorSpacesMap} is injective
    \begin{equation*}
        \Hlgyk{X,\mathbb{C}}{2}
        \to
        \mathbb{C}^l.
    \end{equation*}

    %???over walls survive the classes only of "short" curves
    This holds because the basis of $\Hlgyk{X,\mathbb{C}}{2}$ is the curve classes comes from the classes of curves $C^{ij}_{p,\wtalpha+n\hbar}$ by \refProp{PropCClassesGenerateH2}, and each such curve has a representative in the Beilinson-Drinfeld family for fibers exactly over the wall $t_i = t_j$. The linear functions defining such planes are linearly independent if and only if the corresponding homology classes of $C^{ij}_{p,\wtalpha+n\hbar}$ are linearly independent. This is exactly the injectivity of the dual map.
\end{proof}

\bibliography{Quant}

@article{A1,
  title={Knot Categorification from Mirror Symmetry, Part I: Coherent Sheaves},
  author={Aganagic, Mina},
  journal={arXiv preprint arXiv:2004.14518},
  year={2020}
}

@article{A2,
  title={Knot Categorification from Mirror Symmetry, Part II: Lagrangians},
  author={Aganagic, Mina},
  journal={arXiv preprint arXiv:2105.06039},
  year={2021}
}

@article {AB,
    AUTHOR = {Atiyah, M. F. and Bott, R.},
     TITLE = {The moment map and equivariant cohomology},
   JOURNAL = {Topology},
  FJOURNAL = {Topology. An International Journal of Mathematics},
    VOLUME = {23},
      YEAR = {1984},
    NUMBER = {1},
     PAGES = {1--28},
      ISSN = {0040-9383},
   MRCLASS = {58F05 (55N91 57R20 57R22)},
  MRNUMBER = {721448},
MRREVIEWER = {G. J. Heckman},
       DOI = {10.1016/0040-9383(84)90021-1},
       URL = {https://doi.org/10.1016/0040-9383(84)90021-1},
}

@article {BL,
    AUTHOR = {Beauville, Arnaud and Laszlo, Yves},
     TITLE = {Un lemme de descente},
   JOURNAL = {C. R. Acad. Sci. Paris S\'{e}r. I Math.},
  FJOURNAL = {Comptes Rendus de l'Acad\'{e}mie des Sciences. S\'{e}rie I.
              Math\'{e}matique},
    VOLUME = {320},
      YEAR = {1995},
    NUMBER = {3},
     PAGES = {335--340},
      ISSN = {0764-4442},
   MRCLASS = {14J60},
  MRNUMBER = {1320381},
}

@misc{BD,
  title={Quantization of {H}itchin’s integrable system and {H}ecke eigensheaves},
  author={Beilinson, Alexander and Drinfeld, Vladimir},
  year={1991}
}

@article {BK,
    AUTHOR = {Bezrukavnikov, R. and Kaledin, D.},
     TITLE = {Mc{K}ay equivalence for symplectic resolutions of quotient
              singularities},
   JOURNAL = {Tr. Mat. Inst. Steklova},
  FJOURNAL = {Trudy Matematicheskogo Instituta Imeni V. A. Steklova.
              Rossi\u{\i}skaya Akademiya Nauk},
    VOLUME = {246},
      YEAR = {2004},
    NUMBER = {Algebr. Geom. Metody, Svyazi i Prilozh.},
     PAGES = {20--42},
      ISSN = {0371-9685},
   MRCLASS = {14B05 (14F05)},
  MRNUMBER = {2101282},
MRREVIEWER = {Vasile Br\^{\i}nz\u{a}nescu},
}

@book {B,
    AUTHOR = {Borel, Armand},
     TITLE = {Linear algebraic groups},
    SERIES = {Graduate Texts in Mathematics},
    VOLUME = {126},
   EDITION = {Second},
 PUBLISHER = {Springer-Verlag, New York},
      YEAR = {1991},
     PAGES = {xii+288},
      ISBN = {0-387-97370-2},
   MRCLASS = {20-01 (20Gxx)},
  MRNUMBER = {1102012},
MRREVIEWER = {F. D. Veldkamp},
       DOI = {10.1007/978-1-4612-0941-6},
       URL = {https://doi.org/10.1007/978-1-4612-0941-6},
}

@article {BMO,
    AUTHOR = {Braverman, Alexander and Maulik, Davesh and Okounkov, Andrei},
     TITLE = {Quantum cohomology of the {S}pringer resolution},
   JOURNAL = {Adv. Math.},
  FJOURNAL = {Advances in Mathematics},
    VOLUME = {227},
      YEAR = {2011},
    NUMBER = {1},
     PAGES = {421--458},
      ISSN = {0001-8708},
   MRCLASS = {14N35 (14E15 14M15 17B08 20G10 32G34)},
  MRNUMBER = {2782198},
MRREVIEWER = {Andrew Kresch},
       DOI = {10.1016/j.aim.2011.01.021},
       URL = {https://doi.org/10.1016/j.aim.2011.01.021},
}

@article{Da,
  title={Stable envelopes for slices of the affine {G}rassmannian},
  author={Danilenko, Ivan},
  journal={arXiv preprint},
  year={2022}
}

@inproceedings {Dr1,
    AUTHOR = {Drinfeld, V. G.},
     TITLE = {Quantum groups},
 BOOKTITLE = {Proceedings of the {I}nternational {C}ongress of
              {M}athematicians, {V}ol. 1, 2 ({B}erkeley, {C}alif., 1986)},
     PAGES = {798--820},
 PUBLISHER = {Amer. Math. Soc., Providence, RI},
      YEAR = {1987},
   MRCLASS = {17B50 (16A24 17B65 57T05 58F07 82A05 82A15)},
  MRNUMBER = {934283},
}

@article {Dr2,
    AUTHOR = {Drinfeld, V. G.},
     TITLE = {On {P}oisson homogeneous spaces of {P}oisson-{L}ie groups},
   JOURNAL = {Teoret. Mat. Fiz.},
  FJOURNAL = {Teoreticheskaya i Matematicheskaya Fizika},
    VOLUME = {95},
      YEAR = {1993},
    NUMBER = {2},
     PAGES = {226--227},
      ISSN = {0564-6162},
   MRCLASS = {58F05 (16W30)},
  MRNUMBER = {1243249},
MRREVIEWER = {K. C. H. Mackenzie},
       DOI = {10.1007/BF01017137},
       URL = {https://doi.org/10.1007/BF01017137},
}

@book {EFK,
    AUTHOR = {Etingof, Pavel I. and Frenkel, Igor B. and Kirillov, Jr.,
              Alexander A.},
     TITLE = {Lectures on representation theory and
              {K}nizhnik-{Z}amolodchikov equations},
    SERIES = {Mathematical Surveys and Monographs},
    VOLUME = {58},
 PUBLISHER = {American Mathematical Society, Providence, RI},
      YEAR = {1998},
     PAGES = {xiv+198},
      ISBN = {0-8218-0496-0},
   MRCLASS = {32G34 (17B37 17B67 17B68 33D80)},
  MRNUMBER = {1629472},
MRREVIEWER = {V. Leksin},
       DOI = {10.1090/surv/058},
       URL = {https://doi.org/10.1090/surv/058},
}

@article {EK,
    AUTHOR = {Etingof, Pavel and Kazhdan, David},
     TITLE = {Quantization of {L}ie bialgebras. {III}},
   JOURNAL = {Selecta Math. (N.S.)},
  FJOURNAL = {Selecta Mathematica. New Series},
    VOLUME = {4},
      YEAR = {1998},
    NUMBER = {2},
     PAGES = {233--269},
      ISSN = {1022-1824},
   MRCLASS = {17B62 (16W35 17B37 18D20)},
  MRNUMBER = {1669953},
MRREVIEWER = {Benjamin David Enriquez},
       DOI = {10.1007/s000290050030},
       URL = {https://doi.org/10.1007/s000290050030},
}

@article {G,
    AUTHOR = {Ginzburg, V. A.},
     TITLE = {Sheaves on a loop group, and {L}anglands duality},
   JOURNAL = {Funktsional. Anal. i Prilozhen.},
  FJOURNAL = {Akademiya Nauk SSSR. Funktsional\cprime ny\u{\i} Analiz i ego
              Prilozheniya},
    VOLUME = {24},
      YEAR = {1990},
    NUMBER = {4},
     PAGES = {76--77},
      ISSN = {0374-1990},
   MRCLASS = {22E67 (18F20)},
  MRNUMBER = {1092806},
MRREVIEWER = {Helmut Boseck},
       DOI = {10.1007/BF01077338},
       URL = {https://doi.org/10.1007/BF01077338},
}

@book {HKK+,
    AUTHOR = {Hori, Kentaro and Katz, Sheldon and Klemm, Albrecht and
              Pandharipande, Rahul and Thomas, Richard and Vafa, Cumrun and
              Vakil, Ravi and Zaslow, Eric},
     TITLE = {Mirror symmetry},
    SERIES = {Clay Mathematics Monographs},
    VOLUME = {1},
      NOTE = {With a preface by Vafa},
 PUBLISHER = {American Mathematical Society, Providence, RI; Clay
              Mathematics Institute, Cambridge, MA},
      YEAR = {2003},
     PAGES = {xx+929},
      ISBN = {0-8218-2955-6},
   MRCLASS = {14J32 (14N35 32Q25 81T30 81T60)},
  MRNUMBER = {2003030},
MRREVIEWER = {Marcos Mari\~{n}o},
}

@article {IS,
    AUTHOR = {Intriligator, K. and Seiberg, N.},
     TITLE = {Mirror symmetry in three-dimensional gauge theories},
   JOURNAL = {Phys. Lett. B},
  FJOURNAL = {Physics Letters. B. Particle Physics, Nuclear Physics and
              Cosmology},
    VOLUME = {387},
      YEAR = {1996},
    NUMBER = {3},
     PAGES = {513--519},
      ISSN = {0370-2693},
   MRCLASS = {81T30 (81T17)},
  MRNUMBER = {1413696},
       DOI = {10.1016/0370-2693(96)01088-X},
       URL = {https://doi.org/10.1016/0370-2693(96)01088-X},
}

@incollection {K1,
    AUTHOR = {Kaledin, D.},
     TITLE = {Geometry and topology of symplectic resolutions},
 BOOKTITLE = {Algebraic geometry---{S}eattle 2005. {P}art 2},
    SERIES = {Proc. Sympos. Pure Math.},
    VOLUME = {80},
     PAGES = {595--628},
 PUBLISHER = {Amer. Math. Soc., Providence, RI},
      YEAR = {2009},
   MRCLASS = {14E15},
  MRNUMBER = {2483948},
MRREVIEWER = {Baohua Fu},
       DOI = {10.1090/pspum/080.2/2483948},
       URL = {https://doi.org/10.1090/pspum/080.2/2483948},
}

@article{K2,
  title={Symplectic resolutions: deformations and birational maps},
  author={Kaledin, Dmitry},
  journal={arXiv preprint},
  year={2000}
}

@article {K3,
    AUTHOR = {Kaledin, D.},
     TITLE = {On the coordinate ring of a projective {P}oisson scheme},
   JOURNAL = {Math. Res. Lett.},
  FJOURNAL = {Mathematical Research Letters},
    VOLUME = {13},
      YEAR = {2006},
    NUMBER = {1},
     PAGES = {99--107},
      ISSN = {1073-2780},
   MRCLASS = {53D17 (14L30)},
  MRNUMBER = {2200049},
MRREVIEWER = {Baohua Fu},
       DOI = {10.4310/MRL.2006.v13.n1.a8},
       URL = {https://doi.org/10.4310/MRL.2006.v13.n1.a8},
}

@article {KL,
    AUTHOR = {Kiem, Young-Hoon and Li, Jun},
     TITLE = {Localizing virtual cycles by cosections},
   JOURNAL = {J. Amer. Math. Soc.},
  FJOURNAL = {Journal of the American Mathematical Society},
    VOLUME = {26},
      YEAR = {2013},
    NUMBER = {4},
     PAGES = {1025--1050},
      ISSN = {0894-0347},
   MRCLASS = {14N35},
  MRNUMBER = {3073883},
MRREVIEWER = {Flavia Poma},
       DOI = {10.1090/S0894-0347-2013-00768-7},
       URL = {https://doi.org/10.1090/S0894-0347-2013-00768-7},
}

@article {KV,
    AUTHOR = {Kaledin, D. and Verbitsky, M.},
     TITLE = {Period map for non-compact holomorphically symplectic
              manifolds},
   JOURNAL = {Geom. Funct. Anal.},
  FJOURNAL = {Geometric and Functional Analysis},
    VOLUME = {12},
      YEAR = {2002},
    NUMBER = {6},
     PAGES = {1265--1295},
      ISSN = {1016-443X},
   MRCLASS = {32G20 (32J18)},
  MRNUMBER = {1952929},
MRREVIEWER = {Gregory J. Pearlstein},
       DOI = {10.1007/s00039-002-1265-7},
       URL = {https://doi.org/10.1007/s00039-002-1265-7},
}

@article {KWWY,
    AUTHOR = {Kamnitzer, Joel and Webster, Ben and Weekes, Alex and Yacobi,
              Oded},
     TITLE = {Yangians and quantizations of slices in the affine
              {G}rassmannian},
   JOURNAL = {Algebra Number Theory},
  FJOURNAL = {Algebra \& Number Theory},
    VOLUME = {8},
      YEAR = {2014},
    NUMBER = {4},
     PAGES = {857--893},
      ISSN = {1937-0652},
   MRCLASS = {17B37 (14D24 14M15 20G15 53D55)},
  MRNUMBER = {3248988},
MRREVIEWER = {Christian Ohn},
       DOI = {10.2140/ant.2014.8.857},
       URL = {https://doi.org/10.2140/ant.2014.8.857},
}

@article {KZ,
    AUTHOR = {Knizhnik, V. G. and Zamolodchikov, A. B.},
     TITLE = {Current algebra and {W}ess-{Z}umino model in two dimensions},
   JOURNAL = {Nuclear Phys. B},
  FJOURNAL = {Nuclear Physics. B. Theoretical, Phenomenological, and
              Experimental High Energy Physics. Quantum Field Theory and
              Statistical Systems},
    VOLUME = {247},
      YEAR = {1984},
    NUMBER = {1},
     PAGES = {83--103},
      ISSN = {0550-3213},
   MRCLASS = {81E13 (81D15)},
  MRNUMBER = {853258},
       DOI = {10.1016/0550-3213(84)90374-2},
       URL = {https://doi.org/10.1016/0550-3213(84)90374-2},
}

@article {MOb,
    AUTHOR = {Maulik, Davesh and Oblomkov, Alexei},
     TITLE = {Quantum cohomology of the {H}ilbert scheme of points on {$\scr
              A_n$}-resolutions},
   JOURNAL = {J. Amer. Math. Soc.},
  FJOURNAL = {Journal of the American Mathematical Society},
    VOLUME = {22},
      YEAR = {2009},
    NUMBER = {4},
     PAGES = {1055--1091},
      ISSN = {0894-0347},
   MRCLASS = {14N35 (14C05)},
  MRNUMBER = {2525779},
MRREVIEWER = {Cristina Mart\'{\i}nez},
       DOI = {10.1090/S0894-0347-09-00632-8},
       URL = {https://doi.org/10.1090/S0894-0347-09-00632-8},
}

@article {MOk,
    AUTHOR = {Maulik, Davesh and Okounkov, Andrei},
     TITLE = {Quantum groups and quantum cohomology},
   JOURNAL = {Ast\'{e}risque},
  FJOURNAL = {Ast\'{e}risque},
    NUMBER = {408},
      YEAR = {2019},
     PAGES = {ix+209},
      ISSN = {0303-1179},
      ISBN = {978-2-85629-900-5},
   MRCLASS = {14D20 (14F05 16G20 17B37)},
  MRNUMBER = {3951025},
}

@incollection {MP,
    AUTHOR = {Maulik, Davesh and Pandharipande, Rahul},
     TITLE = {Gromov-{W}itten theory and {N}oether-{L}efschetz theory},
 BOOKTITLE = {A celebration of algebraic geometry},
    SERIES = {Clay Math. Proc.},
    VOLUME = {18},
     PAGES = {469--507},
 PUBLISHER = {Amer. Math. Soc., Providence, RI},
      YEAR = {2013},
   MRCLASS = {14N35 (14J10 14J28)},
  MRNUMBER = {3114953},
MRREVIEWER = {Satoshi Minabe},
}

@article {MVi1,
    AUTHOR = {Mirkovi\'{c}, I. and Vilonen, K.},
     TITLE = {Geometric {L}anglands duality and representations of algebraic
              groups over commutative rings},
   JOURNAL = {Ann. of Math. (2)},
  FJOURNAL = {Annals of Mathematics. Second Series},
    VOLUME = {166},
      YEAR = {2007},
    NUMBER = {1},
     PAGES = {95--143},
      ISSN = {0003-486X},
   MRCLASS = {22E55 (11R39 20G05)},
  MRNUMBER = {2342692},
MRREVIEWER = {Peter Fiebig},
       DOI = {10.4007/annals.2007.166.95},
       URL = {https://doi.org/10.4007/annals.2007.166.95},
}

@article {MVi2,
    AUTHOR = {Mirkovi\'{c}, I. and Vilonen, Kari},
     TITLE = {Erratum for ``{G}eometric {L}anglands duality and
              representations of algebraic groups over commutative rings"},
   JOURNAL = {Ann. of Math. (2)},
  FJOURNAL = {Annals of Mathematics. Second Series},
    VOLUME = {188},
      YEAR = {2018},
    NUMBER = {3},
     PAGES = {1017--1018},
      ISSN = {0003-486X},
   MRCLASS = {22E55 (11R39 20G05)},
  MRNUMBER = {3866890},
       DOI = {10.4007/annals.2018.188.3.6},
       URL = {https://doi.org/10.4007/annals.2018.188.3.6},
}

@article {N2,
    AUTHOR = {Nakajima, Hiraku},
     TITLE = {Heisenberg algebra and {H}ilbert schemes of points on
              projective surfaces},
   JOURNAL = {Ann. of Math. (2)},
  FJOURNAL = {Annals of Mathematics. Second Series},
    VOLUME = {145},
      YEAR = {1997},
    NUMBER = {2},
     PAGES = {379--388},
      ISSN = {0003-486X},
   MRCLASS = {14C05 (17B67)},
  MRNUMBER = {1441880},
MRREVIEWER = {Daniel Huybrechts},
       DOI = {10.2307/2951818},
       URL = {https://doi.org/10.2307/2951818},
}

@article {N3,
    AUTHOR = {Nakajima, Hiraku},
     TITLE = {Instantons on {ALE} spaces, quiver varieties, and
              {K}ac-{M}oody algebras},
   JOURNAL = {Duke Math. J.},
  FJOURNAL = {Duke Mathematical Journal},
    VOLUME = {76},
      YEAR = {1994},
    NUMBER = {2},
     PAGES = {365--416},
      ISSN = {0012-7094},
   MRCLASS = {53C25 (17B67 58D27 58E15)},
  MRNUMBER = {1302318},
MRREVIEWER = {Andrew Dancer},
       DOI = {10.1215/S0012-7094-94-07613-8},
       URL = {https://doi.org/10.1215/S0012-7094-94-07613-8},
}

@article {N4,
    AUTHOR = {Nakajima, Hiraku},
     TITLE = {Quiver varieties and {K}ac-{M}oody algebras},
   JOURNAL = {Duke Math. J.},
  FJOURNAL = {Duke Mathematical Journal},
    VOLUME = {91},
      YEAR = {1998},
    NUMBER = {3},
     PAGES = {515--560},
      ISSN = {0012-7094},
   MRCLASS = {17B67 (14D25 16G20 17B35 53C25 58F05)},
  MRNUMBER = {1604167},
MRREVIEWER = {Michael M. Kapranov},
       DOI = {10.1215/S0012-7094-98-09120-7},
       URL = {https://doi.org/10.1215/S0012-7094-98-09120-7},
}

@article {OP,
    AUTHOR = {Okounkov, A. and Pandharipande, R.},
     TITLE = {Quantum cohomology of the {H}ilbert scheme of points in the
              plane},
   JOURNAL = {Invent. Math.},
  FJOURNAL = {Inventiones Mathematicae},
    VOLUME = {179},
      YEAR = {2010},
    NUMBER = {3},
     PAGES = {523--557},
      ISSN = {0020-9910},
   MRCLASS = {14N35 (14C05)},
  MRNUMBER = {2587340},
MRREVIEWER = {Cristina Mart\'{\i}nez},
       DOI = {10.1007/s00222-009-0223-5},
       URL = {https://doi.org/10.1007/s00222-009-0223-5},
}

@article{RT,
  title={Invariants of 3-manifolds via link polynomials and quantum groups},
  author={Nicolai Reshetikhin and Vladimir Turaev},
  journal={Inventiones mathematicae},
  year={1991},
  volume={103},
  pages={547-597}
}

@incollection {SW,
    AUTHOR = {Seiberg, N. and Witten, E.},
     TITLE = {Gauge dynamics and compactification to three dimensions},
 BOOKTITLE = {The mathematical beauty of physics ({S}aclay, 1996)},
    SERIES = {Adv. Ser. Math. Phys.},
    VOLUME = {24},
     PAGES = {333--366},
 PUBLISHER = {World Sci. Publ., River Edge, NJ},
      YEAR = {1997},
   MRCLASS = {81T60 (58D27 81T13)},
  MRNUMBER = {1490862},
MRREVIEWER = {Daniel N. Kabat},
}

@phdthesis{Su,
	title={Stable Basis and Quantum Cohomology of Cotangent Bundles of Flag Varieties},
	author={Su, Changjian},
	school={Columbia University},
	year={2017}
}

@phdthesis{V,
	title={Equivariant Quantum Cohomology and the Geometric {S}atake Equivalence},
	author={Michael Viscardi},
	school={Massachusetts Institute of Technology},
	year={2016}
}

@article{W,
  title={Quantum field theory and the Jones polynomial},
  author={Edward Witten},
  journal={Communications in Mathematical Physics},
  year={1989},
  volume={121},
  pages={351-399}
}

\end{document}